\newcommand*{\halfway}{0.5*\pgfdecoratedpathlength+.5*3pt}
\def\Stealtharrow{{\arrow[xshift=2pt+3.2\pgflinewidth]{Stealth[scale=1.3]}}}
\tikzset{
    midarrow/.style={decoration={markings,mark=at position #1 with {\Stealtharrow}},postaction={decorate}},
    midarrow/.default=0.5
}
\tikzset{->-/.style 2 args={decoration={
  markings,
  mark=at position \halfway with {\arrow[rotate=#2]{Stealth[scale=1.3]}}},postaction={decorate}},
  ->-/.default = {0.55}{0}
}
\tikzset{
dot/.style = {circle, fill, minimum size=#1,
              inner sep=0pt, outer sep=0pt},
dot/.default = 5pt
}
\newcommand{\mac}{\mathcal}
\def\ins{\operatorname{ins}}
\newcommand{\vtx}[3][]{\node[dot, label={#1}] (#2) at (#3) {};}
\newcommand{\dedge}[3][]{\draw[midarrow] (#2) to[#1] (#3);}
\newcommand{\tdedge}[3][]{\draw[midarrow, line width=1.5] (#2) to[#1] (#3);}
\newcommand{\hdedge}[3][]{\draw[midarrow, very thick] (#2) to[#1] (#3);}
\newcommand{\sline}[3][]{\draw[very thick, decorate, decoration={snake,amplitude=1,segment length=10, post length=0,pre length=0}] (#2) to[#1] (#3);}
\newcommand\vset[3]{\node[very thick, draw, ellipse, inner sep=0, fit={+(-0.707,-0.353) +(0.707,0.353)}, label={center:#2}] (#1) at (#3) {};}
\newcommand\smallvset[3]{\node[very thick, draw, ellipse, inner sep=0, fit={+(-0.471,-0.235) +(0.471,0.235)}, label={center:#2}] (#1) at (#3) {};}
\newcommand{\from}{\leftarrow}
\def\DD{\text{-}}
\def\cupcup{\cup\dots\cup}
\newtheorem{thm}{}[section]
\newcounter{proof}
\newcounter{step}[proof]
\newenvironment{step}{\refstepcounter{step}\bigskip\noindent(\thestep) \em}{\bigskip}
\newcommand{\prooffont}{\bfseries}
\xpatchcmd{\proof}{\itshape}{\prooffont}{}{}
\title{When all directed cycles have the same weight}
\author{Eli Berger\\
University of Haifa, Haifa, Israel
\and
    Daniel Carter\thanks{Supported by NSF grant DGE-2444107.}\\
Princeton University, Princeton, USA
\and
Paul Seymour\thanks{Supported by AFOSR grant
FA9550-22-1-0234, and NSF grant  DMS-2154169.}\\
Princeton University, Princeton, USA}
\date{October 17, 2025; revised \today}
\begin{document}
\hypersetup{pageanchor=false}
\maketitle
\begin{abstract}
A digraph $G$ is \emph{weightable} if its edges can be weighted with real numbers such that the total weight in each directed cycle 
equals 1. There are several equivalent conditions: that $G$ admits a $0/1$-weighting with the same property, or that
$G$ contains no subdivided ``double-cycle'' as a subdigraph, or that for every triple of vertices, all directed cycles containing 
all three pass through them in the same cyclic order. And there is quite a rich supply of such digraphs: for instance, any digraph 
drawn in the plane such that each of its directed cycles rotates clockwise around the origin is weightable (let us call such digraphs ``circular''), and there are weightable
planar digraphs with much more complicated structure than this. 

Until now the general structure of weightable digraphs was not known, and that is our objective in this paper. We will show that:
\begin{itemize}
\item there is a construction that builds every planar weightable digraph from circular digraphs; and
\item there is a (different) construction that builds every weightable digraph from planar ones.
\end{itemize}
We derive a poly-time algorithm
to test if a digraph is weightable.
 
\end{abstract}
\hypersetup{pageanchor=true}

\section{Introduction}
Graphs and digraphs in this paper are finite, and may have loops or parallel edges. 
Let $G$ be a digraph drawn in the plane (without crossings), where the origin belongs to one of the regions. 
Each edge $e$ of $G$ subtends an angle at the origin (a $w(e)$
fraction of a full rotation, say). If the drawing has the property that $w(e)>0$ for every edge, then every directed cycle clearly
must wind around the origin. But more than that; every directed cycle must wind around the origin exactly once, because curves that
wind more than once intersect themselves. Let us call such a drawing \emph{circular}.

This is an intriguing property of digraphs. It is related to a theorem of Thomassen~\cite{acyclic, thomassen}, that says in one form:
\begin{thm}\label{thomassenthm}
Let $G$ be a digraph with no loops or parallel edges such that every vertex has in-degree and out-degree at least two, and suppose there are $a,b\in V(G)$ such that every directed cycle contains at least one of $a,b$.
Then there is no directed cycle containing both $a,b$ if and only if 
$G$ admits a circular drawing.
\end{thm}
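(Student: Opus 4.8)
I will prove the two implications separately; the implication ``$G$ admits a circular drawing $\Rightarrow$ no directed cycle contains both $a$ and $b$'' is the lighter half --- a planar-topology and winding-number argument --- while the converse carries the real weight. For that first implication, suppose $G$ has a circular drawing and, for a contradiction, that some directed cycle $C$ runs through both $a$ and $b$. Since the drawing is circular, $C$ winds exactly once around the origin, so it bounds a disc $D$ with the origin in its interior; write $C=P\cup Q$ with $P$ a directed $a$--$b$ path and $Q$ a directed $b$--$a$ path. As $a$ has out-degree at least two, pick an out-edge $e$ of $a$ not on $C$ and follow a directed walk $W$ forward along $e$. If $W$ repeats a vertex before it next meets $\{a,b\}$, it contains a directed cycle avoiding both $a$ and $b$, contradicting the hypothesis that every directed cycle meets $\{a,b\}$; so $W$ is a simple directed path that either returns to $a$, giving a directed cycle $C_a$ through $a$ but not $b$ and using $e$, or first reaches $b$, giving a directed $a$--$b$ path $P'$ using $e$. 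In each case I would track, edge by edge, whether $W$ leaves $a$ into $D$ or out of $D$ --- using that $W$ can cross the Jordan curve $C$ only at a vertex of $C$ --- and combine this with the in-degree-two condition at $a$ and the symmetric edges at $b$ to force either a directed cycle that winds zero or two times around the origin, or two directed cycles whose edges are compelled to cross; each of these is impossible in a circular drawing. The one delicate point is this side-of-$C$ and winding-number bookkeeping.

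For the converse, assume no directed cycle of $G$ contains both $a$ and $b$. \emph{Step 1 (structure).} Then $G-\{a,b\}$ is acyclic, every directed cycle of $G-a$ passes through $b$, and every directed cycle of $G-b$ passes through $a$; furthermore, if $F$ denotes the set of all edges with head $a$ together with all edges with head $b$, then every directed cycle of $G$ meets $F$ in exactly one edge and $G-F$ is acyclic. \emph{Step 2 (reformulation).} It suffices to produce a crossing-free plane embedding of $G$ together with a face $f_0$ such that every directed cycle encircles $f_0$ exactly once in the same rotational sense: from such an embedding one inflates the edges so that they subtend positive angles at an interior point of $f_0$, turning the edges of $F$ into the ones that sweep past $f_0$, and obtains a circular drawing. \emph{Step 3 (the embedding).} The hypothesis says precisely that $G$ has no pair of internally disjoint directed paths, one from $a$ to $b$ and one from $b$ to $a$; augmenting the acyclic digraph $G-\{a,b\}$ with source/sink terminals that represent the out- and in-neighbourhoods of $a$ and of $b$ turns this into the statement that an acyclic digraph has no $2$-linkage between a prescribed pair of terminal pairs. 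Here I would invoke --- or, in this special case, reprove --- Thomassen's structure theorem for acyclic digraphs with no such $2$-linkage, which forces the digraph to be planar and pins down its embedding; un-augmenting yields the required embedding of $G$.

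The main obstacle is Step 3: extracting planarity and a usable embedding from the non-linkage condition, together with the case analysis of the degenerate configurations (for instance when $a$ or $b$ lies on no directed cycle, which reduces to the simpler ``single hub vertex'' situation handled by the same circle of ideas, or when $a$ and $b$ are adjacent, where the linkage degenerates to a reachability question). This is exactly where the hypothesis that every in-degree and out-degree is at least two earns its keep: it excludes those degeneracies and supports an induction that deletes a vertex other than $a$ and $b$ --- after a local reduction, if necessary, to preserve the degree bounds --- applies the theorem to the smaller digraph, and reinserts the deleted vertex while keeping the drawing circular. A more hands-on alternative would construct the angular placement of Step 2 directly by sweeping through $G-F$ in a topological order, but the same obstruction analysis has to be done somewhere in any event.
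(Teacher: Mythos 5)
The first thing to note is that the paper does not prove~\ref{thomassenthm} at all: it is stated as background, attributed to Thomassen, with the proof living in the cited references \cite{acyclic,thomassen}. So there is no in-paper argument to compare yours against, and the question is whether your proposal stands on its own. It does not. For the direction carrying, in your words, ``the real weight'' (no dicycle through both $a,b$ implies a circular drawing), your Steps 1--3 correctly reformulate the problem --- a dicycle through both $a$ and $b$ is the same as internally disjoint dipaths $a$ to $b$ and $b$ to $a$; $G\setminus\{a,b\}$ is acyclic; splitting $a$ and $b$ into source/sink terminals turns the hypothesis into the failure of a $2$-linkage; wrapping a disc drawing with the terminals on the boundary into an annulus around the origin gives circularity --- but Step~3 then says ``invoke, or reprove, Thomassen's structure theorem.'' The statement you are being asked to prove \emph{is} that theorem in another guise, so this relocates the proof rather than supplying it: extracting planarity and a usable embedding from the non-linkage condition, and using the in/out-degree hypotheses to exclude the degenerate configurations, is the entire content of the result, and it is exactly the part you leave as a placeholder.

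The ``lighter half'' is also not actually carried out. You propose to track which side of the Jordan curve $C$ a walk leaves $a$ on and thereby ``force either a directed cycle that winds zero or two times around the origin, or two directed cycles whose edges are compelled to cross,'' but no such bookkeeping is done, and it is where the content of this direction lives. To see that something real is needed: the digraph on $\{a,b,x\}$ with both directed edges present between each pair of vertices satisfies every hypothesis of the theorem except the conclusion, contains dicycles through both $a$ and $b$, and fails to have a circular drawing only because the three $2$-cycles on $\{a,b\}$, $\{a,x\}$, $\{b,x\}$ cannot be simultaneously nested around the origin while pairwise sharing single vertices (this is a weak $3$-double-cycle obstruction). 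Your sketch never reaches a contradiction of this kind. A cleaner route for this direction, in the spirit of the rest of the paper, is to note that a circular drawing yields a strictly positive weighting, and then use the degree hypotheses together with ``every dicycle meets $\{a,b\}$'' to manufacture either a dicycle avoiding $\{a,b\}$ or a dicycle of total weight different from $1$; until an argument of that sort is supplied, this half too remains a gap.
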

This version of Thomassen's result looks like it ought to be made somehow more general, and that brings us to the question that was the starting point of the research in this paper:
is there a theorem that says ``every appropriately connected digraph $G$ contains no thing of type X if and only if $G$ 
admits a circular drawing''? 

Let us say a \emph{weighting} of a digraph $G$ is a real-valued function $w:E(G)\to\mathbb{R}$, such that $w(C)=1$
for every directed cycle $C$, where $w(C)$ means $\sum_{e\in E(C)}w(e)$, and if $G$ admits a weighting, we say $G$ is 
\emph{weightable}. 
In a circular drawing, the function $w$ defined earlier is a weighting, and so 
digraphs with circular drawings are weightable.
For some time we hoped that for a converse, at least for sufficiently well-connected digraphs,
that, say, every strongly 2-connected, weakly 3-connected weightable digraph admits a circular drawing (we will define these terms later).
But this turns out to be false -- see Figure~\ref{fig:carter}.

\begin{figure}[ht]

\centering

\begin{tikzpicture}[auto=left]

\vtx{a1}{-3,1};
\vtx{a2}{-1,1};
\vtx{a3}{1,1};
\vtx{a4}{3,1};
\vtx{b1}{-3,-1};
\vtx{b2}{-1,-1};
\vtx{b3}{1,-1};
\vtx{b4}{3,-1};
\vtx{c1}{-2,2};
\vtx{c2}{0,2};
\vtx{c3}{2,2};
\vtx{c4}{4,0};
\vtx{c5}{2,-2};
\vtx{c6}{0,-2};
\vtx{c7}{-2,-2};
\vtx{c8}{-4,0};

\foreach \from/\to in {a1/a2,a2/a3,a3/a4,b2/b1,b3/b2,b4/b3,a2/b2,a4/b4, c1/c2,c2/c3,c5/c6,c6/c7,a1/c1,c1/a2,a2/c2,c2/a3,a3/c3,c3/a4,b4/c5,c5/b3,b3/c6,c6/b2,b2/c7,c7/b1,a4/c4,c4/b4,b1/c8}
\dedge{\from}{\to};
\tdedge{c8}{a1};
\tdedge{b1}{a1};
\tdedge{b3}{a3};

\tdedge[bend left = 45]{c8}{c1};
\dedge[bend left = 45]{c7}{c8};
\dedge[bend left = 45]{c4}{c5};
\dedge[bend left = 45]{c3}{c4};

\end{tikzpicture}

\caption{A weightable digraph with no circular drawing. The four thick edges have weight one, and the others have weight zero.} \label{fig:carter}
\end{figure}
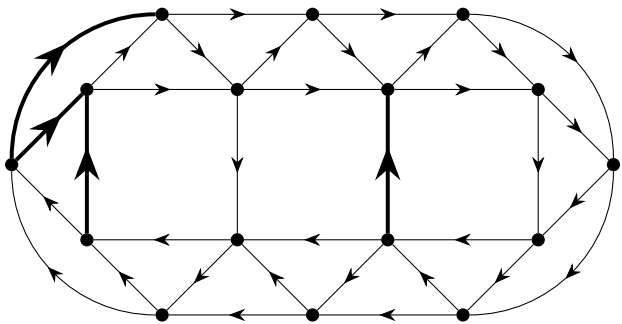

We still have not come up with the characterization we hoped for of the digraphs with circular drawings, but we 
now know which 
digraphs are weightable, and that is what we will explain in this paper.

An answer to the question ``which digraphs are weightable?'' could mean several things:
\begin{enumerate}
\item A characterization of the minimal digraphs (under subdigraph containment) that are not weightable.
\item A poly-time algorithm to test whether an input digraph is weightable.
\item A poly-time algorithm to find a weighting if there is one.
\item A poly-time algorithm to test if a given function on $E(G)$ is a weighting.
\item A method of construction that will build all (ond only) weightable digraphs by piecing together well-understood ones 
in prescribed ways.
\end{enumerate}
The first of these was already known~\cite{tricycles}, but all the others are new and obtained in this paper. 
We will show in the next section that the third and fourth are both consequences of the second. 
The second is a consequence of the fifth, the construction method, which is our main theorem.  We discuss this algorithm at the end of the paper.

Curiously, the construction method breaks into two parts: roughly, we will show how to build all planar weightable digraphs, by
piecing together those with circular embeddings; and then show how to build general weightable digraphs, by piecing together planar ones
(with a different construction).

Combining all of our constructions together yields the following theorem (we use some terms defined at the start of the next section):
\begin{thm}\label{summary}
Let $G$ be a 1-strong weightable digraph. Then at least one of the following is true:
\begin{enumerate}
    \item $G$ is not strongly 2-connected, and is obtained from two smaller 1-strong weightable digraphs by the construction described in~\ref{1reduce} and depicted in Figure~\ref{fig:1reduce}.
    \item $G$ is not 3-weak, and is obtained from two smaller 1-strong weightable digraphs by the construction described in~\ref{2weakreduce} and depicted in Figure~\ref{fig:2weakreduce}.
    \item $G$ is planar and admits a circular drawing.
    \item $G$ is planar, does not admit a circular drawing, and is obtained from two smaller 1-strong planar weightable digraphs by the construction described in~\ref{buildplanar} and depicted in Figure~\ref{fig:buildplanar}.
    \item $G$ is nonplanar, and there is a subset $Y\subseteq V(G)$ with $|Y|\in\{3,4\}$ such that $G\setminus Y$ has at least two weak 
components, and $G$ is obtained from two or three smaller 1-strong weightable digraphs by one of the constructions described 
in~\ref{firstcon}, \ref{secondcon}, or~\ref{thirdcon} and depicted in Figures~\ref{fig:firstcon}, \ref{fig:secondcon}, or~\ref{fig:thirdcon}, respectively.
\end{enumerate}
\end{thm}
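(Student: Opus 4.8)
The plan is to prove Theorem~\ref{summary} not in one stroke but as the capstone of a chain of structural results --- one tailored to each of the six constructions named in the statement --- followed by a routine case split on the connectivity and planarity of $G$. The genuine content is therefore distributed across the rest of the paper; what remains for the proof of~\ref{summary} itself is bookkeeping, so below I describe how the pieces must fit together and isolate where the real difficulty lies.

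First I would dispose of the low-connectivity cases in order of increasing connectivity. If $G$ is not strongly 2-connected, then by the structure theorem that accompanies the construction in~\ref{1reduce} there is a strong cut vertex at which $G$ decomposes as a composition of two strictly smaller 1-strong weightable digraphs, which is exactly case~(1); here one must check that weightability is inherited by both sides and that a weighting of $G$ can be reassembled from weightings of the sides, which is precisely what that construction is designed to guarantee. So assume $G$ is strongly 2-connected. If $G$ is not 3-weak, there is a weak 2-separation, and the analysis accompanying~\ref{2weakreduce} shows that $G$ is built from two smaller 1-strong weightable digraphs by that construction, giving case~(2). From now on we may assume $G$ is both strongly 2-connected and 3-weak.

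Next, split on planarity. If $G$ is planar, then either it admits a circular drawing --- case~(3) --- or it does not, in which case the planar construction theorem (the one that builds every planar weightable digraph from circular ones) shows that $G$ arises from two smaller 1-strong planar weightable digraphs by the construction in~\ref{buildplanar}, which is case~(4). The only things to verify in this branch are that the pieces are again 1-strong, weightable, and strictly smaller, all of which should be built directly into the statement of that theorem.

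The remaining case --- $G$ nonplanar, strongly 2-connected, and 3-weak --- is the heart of the matter and the step I expect to be the main obstacle. One must show that a nonplanar weightable digraph cannot be too highly connected: because $G$ contains no subdivided double-cycle, there is a vertex set $Y$ with $3\le|Y|\le 4$ whose deletion leaves at least two weak components, and, more delicately, the way the rest of $G$ attaches across $Y$ is rigid enough that $G$ is recovered from its pieces by one of the three constructions in~\ref{firstcon}, \ref{secondcon}, or~\ref{thirdcon} --- the trichotomy reflecting whether $|Y|=3$ or $|Y|=4$, and, when $|Y|=4$, the two ways the four vertices can be split between the sides. Producing such a $Y$ and then pinning down the behavior of the directed cycles through it tightly enough to match one of the three templates (so that weightings transfer both ways) is where essentially all of the work goes; once that nonplanar structure theorem is in hand, case~(5) is immediate and Theorem~\ref{summary} follows.
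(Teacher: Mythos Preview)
Your outline is essentially the paper's own, and you correctly recognize that Theorem~\ref{summary} is assembled from the component theorems rather than proved directly. Two points are worth flagging, however.

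First, your treatment of the planar branch skips a nontrivial step. The planar decomposition theorem (\ref{planarcon}) is stated and proved only for \emph{diplanar} drawings, not arbitrary planar ones, so before you can invoke it you need Theorem~\ref{diplanar}: every 2-strong, 3-weak, planar, weightable digraph is diplanar. Crucially, the paper proves \ref{diplanar} by running the nonplanar analysis (\ref{unevendecomp} together with \ref{firstoutcome}, \ref{secondoutcome}, \ref{thirdoutcome}) and observing that each of its outcomes forces nonplanarity. So the logical order is the reverse of what you propose: the nonplanar decomposition must be established first, and its byproduct is the diplanarity needed for the planar branch. Your ``planar first, nonplanar second'' ordering would leave a gap.

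Second, on the nonplanar case itself: you suggest extracting the cutset $Y$ from the absence of subdivided double-cycles, but the paper takes an entirely different route. It passes to the bipartite bisource $(H,M)$ of $G$, observes that $H$ is a Pfaffian brace (since $G$ is 2-strong and odd-weightable), and then imports the Robertson--Seymour--Thomas structure theorem for Pfaffian braces (\ref{pfaffianthm}, \ref{userotunda}) to locate an odd subdivision of Rotunda whose join yields $Y$. The three cases of \ref{unevendecomp} then arise from how the perfect matching $M$ interacts with the four join vertices. This Pfaffian-brace machinery is the key external input you did not anticipate; without it, producing $Y$ with the required directional constraints is not obviously feasible.
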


There is another way to describe the 1-strong planar weightable digraphs, as planar digraphs that admit a kind of 
tree-decomposition called a ``bond carving'' of ``diwidth two''. This is discussed in Section~\ref{sec:carving}.
%%%%%%%%%%%%%%%%%%%%%%%%%%%%%%%%%%%%%%%%%%%%%%%%%%%%%%%%%%%%%%%%%%%%%%%%%%%%%%%%%%%%%%%%%%%%%%%%%%%%%%%%%%%%%%%%%%%%%%%%%%%%%%%
\section{Preliminaries}
First, let us state some standard definitions.
A graph $G$ is \emph{$k$-connected} if it has at least $k+1$ vertices and $G\setminus X$ is connected for every $X\subseteq V(G)$ with $|X|<k$.
The \emph{underlying graph} $G^-$ of a digraph $G$ is the graph obtained for forgetting the direction of all edges.
A digraph is \emph{weakly connected} if its underlying graph is connected, and \emph{weakly $k$-connected} or \emph{$k$-weak} if its underlying graph is
$k$-connected. (We usually write ``1-weak'' for ``weakly connected''.) A \emph{weak component} of a digraph is a maximal 1-weak subdigraph.

A \emph{dipath} is a directed path, and a \emph{dicycle} is a directed cycle.
A digraph $G$ is
\emph{strongly connected} if for every two vertices $u,v$ there is a dipath from $u$ to $v$. (This is
equivalent to saying that $G$ is 1-weak and every edge is in a dicycle.) A digraph $G$ is
\emph{strongly $k$-connected} or \emph{$k$-strong} if $G\setminus X$ is strongly connected  for every $X\subseteq V(G)$ with $|X|<k$.

Let us say a drawing of a digraph $G$ (in a plane or a 2-sphere) is \emph{diplanar} if
for every vertex $v\in V(G)$, the edges of $G$ with
head $v$ form an interval in the cyclic ordering of edges incident with $v$ determined by the drawing (and a digraph is \emph{diplanar} 
if it admits a diplanar drawing).
For instance, the digraph in Figure~\ref{fig:carter} is diplanar. (This was called ``strongly planar'' in~\cite{pfaffians}, but this 
seems a confusing name since we also need to talk about 1-strong digraphs in the sense of being strongly connected.)

We need ``ears''. Let $H$ be a 1-strong subdigraph of a 1-strong digraph $G$. If $H\ne G$, there is an \emph{ear} for $H$ in $G$, that is, 
either
\begin{itemize}
\item  a dipath of $G$ with length at least one, with both ends in $V(H)$ and with no edge or internal vertex in $H$; or 
\item a dicycle of $G$ with
exactly one vertex in $H$. 
\end{itemize}
(This is a standard, elementary result.) The point of ears is that if $P$ is an ear as above, then $H\cup P$ is also 1-strong, and 
either $H\cup P=G$ or we can choose an ear for $H\cup P$ in $G$, and so on. Thus every 1-strong subdigraph of a 1-strong digraph $G$ can be 
grown by adding ears one at a time until it becomes $G$. More exactly, let $H$ be a 1-strong subdigraph of a 1-strong digraph $G$, and
let $P_1,\dots, P_n$ be a sequence of subdigraphs of $G$ such that $H\cup P_1\cupcup P_n=G$, and for $1\le i\le n$, $P_i$ is an ear for 
$H\cup P_1\cupcup P_{i-1}$ in $G$. We call the sequence $P_1,\dots, P_n$ an \emph{ear sequence} for $H$ in $G$. Then there is an ear sequence for every 1-strong subdigraph of a 1-strong digraph. (We mention that if we were working with 1-strong, 2-weak digraphs, we could change the definition of ``ear'' to exclude the dicycle case in the second bullet, and the same result is true. This is proved in~\cite{tricycles}.)

If $C$ is a (not necessarily directed) cycle of a digraph $G$, and we select one of the two cyclic orientations of $C$, let $\mathbf{c}$ be the map from 
$E(G)$ to $\mathbb{R}$ defined by $\mathbf{c}(e) = 1$ if $e\in E(C)$ in the direction of the chosen orientation, 
$\mathbf{c}(e) = -1$ if $e\in E(C)$ is in the other direction, and  $\mathbf{c}(e) = 0$ if $e\notin E(C)$.
We call 
$\mathbf{c}$
a \emph{characteristic vector} of $C$. (Thus, $C$ has two characteristic vectors, negations of each other.)
If $C$ is a dicycle, it has a non-negative characteristic vector.
Let $C_0$ be a dicycle of a 1-strong digraph $G$, and let $P_1,\dots, P_n$ be an ear
sequence for $C_0$ in $G$. For $1\le i\le n$, there is a dicycle $C_i$ consisting of $P_i$ together with a dipath of
$C_0\cup P_1\cupcup P_{i-1}$ between the ends of $P_i$. We call the sequence of nonnegative characteristic vectors of $C_0,\dots, C_n$
an \emph{ear-basis}.
Thus, if $G$ is 1-strong with $E(G)\ne \emptyset$, then it has an 
ear-basis. It is easy to see (and is proved in~\cite{evendigraph})  that for every cycle $C$, its 
characteristic vectors are integer linear combinations of the members of any ear-basis. 
This has two consequences that we will need later:
%%%%%%%%%%%%%%%%%%%%%%%%%%%%
\begin{thm}\label{cycleint}
Let $w$ be a weighting of a 1-strong digraph $G$, and let $C$ be a cycle of $G$ (not necessarily directed). If $\mathbf{c}$ denotes 
a characteristic vector of $C$, then the scalar product $w\cdot\mathbf{c}$ is an integer.
\end{thm}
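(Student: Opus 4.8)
The plan is to read the result straight off the ear-basis machinery set up just above the statement, together with the cited integrality fact. Since $C$ is a cycle of $G$ we have $E(G)\ne\emptyset$, so $G$ has an ear-basis: there is a dicycle $C_0$, an ear sequence $P_1,\dots,P_n$ for $C_0$ in $G$, and the associated dicycles $C_1,\dots,C_n$, so that the nonnegative characteristic vectors $\mathbf{c}_0,\dots,\mathbf{c}_n$ of $C_0,\dots,C_n$ form the ear-basis. (If $E(G)=\emptyset$ then $G$ has no cycle and there is nothing to prove.)

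First I would note that $w\cdot\mathbf{c}_i=1$ for each $i\in\{0,\dots,n\}$. This is immediate: $C_i$ is a dicycle, so its nonnegative characteristic vector satisfies $\mathbf{c}_i(e)=1$ for $e\in E(C_i)$ and $\mathbf{c}_i(e)=0$ otherwise, and hence $w\cdot\mathbf{c}_i=\sum_{e\in E(C_i)}w(e)=w(C_i)=1$ because $w$ is a weighting. Next I would invoke the fact quoted just before the theorem (from~\cite{evendigraph}): the characteristic vectors of any cycle of $G$ are integer linear combinations of the members of any ear-basis. So write $\mathbf{c}=\sum_{i=0}^n a_i\mathbf{c}_i$ with each $a_i\in\mathbb{Z}$. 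Then $w\cdot\mathbf{c}=\sum_{i=0}^n a_i\,(w\cdot\mathbf{c}_i)=\sum_{i=0}^n a_i\in\mathbb{Z}$, as required.

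I do not expect any real obstacle: all the substance lies in the construction of ear-bases and in the cited integrality of the ear-basis expansion, and the theorem is the one-line consequence of combining these with the defining property $w(C_i)=1$ of a weighting. The only point needing a moment's care is the degenerate case $E(G)=\emptyset$, handled above.
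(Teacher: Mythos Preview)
Your proof is correct and follows essentially the same approach as the paper: take an ear-basis, use the cited fact that $\mathbf{c}$ is an integer combination of its members, and conclude from $w\cdot\mathbf{c}_i=1$ that $w\cdot\mathbf{c}$ is an integer. The paper's version is terser (it omits the explicit verification that $w\cdot\mathbf{c}_i=1$ and the degenerate-case remark), but the argument is the same.
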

\begin{proof} Let $\mathbf{c}_0,\dots, \mathbf{c}_n$ be an ear-basis. Then there are integers $\lambda_0,\dots, \lambda_n$ such that
$\sum_{0\le i\le n}\lambda_i\mathbf{c}_i=\mathbf{c}$. But since $w\cdot \mathbf{c}_i=1$ for $0\le i\le n$, it follows that 
$w\cdot \mathbf{c}$ is an integer. This proves~\ref{cycleint}.\end{proof}
%%%%%%%%%%%%%%%%%%%%%%%%%%
\begin{thm}\label{dirsum}
Let $G$ be a weightable 1-strong digraph, and let $w:E(G)\to \mathbb{R}$ be some function. Let $\mathbf{c}_0,\dots, \mathbf{c}_n$ be an ear-basis. If $w\cdot \mathbf{c}_i=1$ for $0\le i\le n$ then $w$ is a weighting.
\end{thm}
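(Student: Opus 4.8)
The plan is to exploit the existence of \emph{some} weighting as a reference point, and then reduce everything to the fact (quoted above from~\cite{evendigraph}) that the characteristic vectors of every cycle are integer linear combinations of the members of any ear-basis. So the argument is purely linear-algebraic and quite short.

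First I would fix a weighting $w_0$ of $G$, which exists because $G$ is weightable. Each $\mathbf{c}_i$ in the ear-basis is the nonnegative characteristic vector of a dicycle $C_i$, so $w_0\cdot\mathbf{c}_i=1=w\cdot\mathbf{c}_i$ for $0\le i\le n$; hence $(w-w_0)\cdot\mathbf{c}_i=0$ for all $i$. Next, let $C$ be an arbitrary dicycle of $G$ with nonnegative characteristic vector $\mathbf{c}$. By the quoted fact there are integers $\lambda_0,\dots,\lambda_n$ with $\mathbf{c}=\sum_{0\le i\le n}\lambda_i\mathbf{c}_i$. Taking the scalar product with $w-w_0$ and using the previous sentence gives $(w-w_0)\cdot\mathbf{c}=0$, that is, $w\cdot\mathbf{c}=w_0\cdot\mathbf{c}=1$. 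Since $C$ was an arbitrary dicycle, $w$ is a weighting, as required.

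There is no real obstacle here; the only point worth flagging is that the hypothesis that $G$ is weightable is genuinely needed and is used solely to \emph{anchor} the affine condition. Without it, the ear-basis equalities $w\cdot\mathbf{c}_i=1$ only force $w\cdot\mathbf{c}$ to be an \emph{integer} for every dicycle (this is essentially~\ref{cycleint}), not necessarily $1$; it is the reference weighting $w_0$ that pins down the value. So the care in writing the proof is just to invoke the existence of some weighting rather than to attempt to construct one from the ear-basis.
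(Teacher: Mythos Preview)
Your proof is correct and is essentially identical to the paper's own argument: both fix a reference weighting (your $w_0$, the paper's $w'$), use the ear-basis expansion of an arbitrary dicycle's characteristic vector, and conclude via $(w-w_0)\cdot\mathbf{c}=0$. Your added remark about why the weightability hypothesis is genuinely needed is a nice clarification not present in the paper's proof.
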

\begin{proof} Let $w'$ be a weighting of $G$, and let $C$ be a dicycle of $G$, with characteristic vector $\mathbf{c}$. Then there are integers $\lambda_0,\dots, \lambda_n$ such that
$\sum_{0\le i\le n}\lambda_i\mathbf{c}_i=\mathbf{c}$. Since $(w-w')\cdot \mathbf{c}_i=0$ for $0\le i\le n$, it
follows that $(w-w')\cdot \mathbf{c}=0$, and so $w(C)=w'(C)=1$. This proves~\ref{dirsum}.\end{proof}

At the end of the previous section we listed five possible meanings of ``which digraphs are weightable?'' Let us prove that 
solving the second would solve the third and fourth.
We claim that: 
\begin{thm}\label{findweighting}
There are poly-time algorithms that, given a weightable digraph $G$ as input:
\begin{itemize}
\item finds a weighting of $G$, and
\item tests whether a given function on the edge set of $G$ is a weighting.
\end{itemize}
\end{thm}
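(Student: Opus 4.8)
The plan is to reduce both tasks to polynomial-size linear algebra, leaning on~\ref{dirsum} together with the standard facts that the strong components of a digraph, and an ear-sequence of a $1$-strong digraph, can be computed in polynomial time. First I would observe that every dicycle of $G$ is strongly connected, so it lies inside a single strong component of $G$; conversely every dicycle of a strong component is a dicycle of $G$ (indeed, if an edge has both ends in one strong component then that edge belongs to the component, so the edges lying in some dicycle are exactly the edges of the strong components). Hence a function $w$ on $E(G)$ is a weighting of $G$ if and only if $w|_{E(H)}$ is a weighting of $H$ for every strong component $H$ of $G$ — the remaining edges lie in no dicycle and impose no constraint. Moreover each strong component $H$ is itself weightable, since the restriction to $E(H)$ of any weighting of $G$ is a weighting of $H$. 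So it suffices to solve each task for a $1$-strong weightable digraph $H$ with $E(H)\ne\emptyset$ and then recombine, assigning (say) weight $0$ to all edges outside the strong components.

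\emph{Ear-bases, computed efficiently.} Given such an $H$, it has a dicycle $C_0$ (every edge of a $1$-strong digraph lies in a dicycle), and while the current $1$-strong subdigraph is not all of $H$, an ear can be found by a breadth-first search in $H$; iterating this standard construction produces, in polynomial time, an ear-sequence and an associated ear-basis $\mathbf{c}_0,\dots,\mathbf{c}_n$ with $n+1\le|E(H)|$, where each $\mathbf{c}_i$ is the characteristic vector of a dicycle of $H$.

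\emph{The two algorithms.} For the test: given $w$, for each strong component $H$ with an edge compute an ear-basis as above and check whether $w|_{E(H)}\cdot\mathbf{c}_i=1$ for $0\le i\le n$; answer ``yes'' precisely when every check succeeds across all components. If some check fails then $w$ is not a weighting, since $\mathbf{c}_i$ is the characteristic vector of a dicycle and $w\cdot\mathbf{c}_i=w|_{E(H)}\cdot\mathbf{c}_i$; if all succeed then applying~\ref{dirsum} to each weightable $H$ shows $w|_{E(H)}$ is a weighting of $H$, so every dicycle of $G$ has total weight $1$, i.e.\ $w$ is a weighting. Each check is a dot product of vectors indexed by $E(H)$, so the whole test is polynomial. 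For the construction: for each strong component $H$ with an edge, compute an ear-basis and solve the linear system $\{x\cdot\mathbf{c}_i=1:0\le i\le n\}$ over $\mathbb{R}^{E(H)}$ by Gaussian elimination; this system is consistent because the restriction to $E(H)$ of any weighting of $G$ solves it. Letting $w$ equal such a solution on each $E(H)$ and $0$ on all other edges,~\ref{dirsum} shows $w|_{E(H)}$ is a weighting of each $H$, hence $w$ is a weighting of $G$. All steps run in polynomial time, which proves~\ref{findweighting}.

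\emph{Where the work is.} I do not expect a genuine obstacle: the substance is entirely in~\ref{dirsum}, which guarantees that for a digraph already known to be weightable it is enough to impose the polynomially many ear-basis equations instead of one equation for each of the (possibly exponentially many) dicycles. The only points that need care are the reduction to strong components — so that ear-bases are available in the first place — and the routine verification that an ear, and hence an ear-sequence, can be found in polynomial time.
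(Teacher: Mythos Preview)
Your proposal is correct and follows essentially the same approach as the paper: reduce to the $1$-strong case, compute an ear-basis, and use~\ref{dirsum} to replace the exponentially many dicycle constraints by polynomially many. The only cosmetic difference is that the paper observes the ear-basis matrix is upper triangular (so solving is immediate), whereas you invoke Gaussian elimination; and you spell out the reduction to strong components more explicitly than the paper's ``we may assume $G$ is $1$-strong''.
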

\begin{proof}
In both cases, we may assume that the input digraph $G$ is 1-strong.
First, 
choose an ear-basis $\mathbf{c}_0,\dots, \mathbf{c}_n$. 

To find a weighting, just find a function $w:E(G)\to \mathbb{R}$ that satisfies
$w\cdot\mathbf{c}_i=1$ for $0\le i\le n$. (This can be done in poly-time in general by linear programming, but for an ear-basis it is 
particularly easy, since the corresponding matrix is upper triangular.) By~\ref{dirsum}, $w$ is a weighting. 

To test if a given function $w$ is a weighting, just test that $w\cdot\mathbf{c}_i=1$ for $0\le i\le n$.
If so then $w$ is a weighting by~\ref{dirsum}, and if not then it is not. This proves~\ref{findweighting}.\end{proof}

%%%%%%%%%%%%%%%%%%%%%%%%%%%%%%%%%%%%%%%%%%%%%%%%%%%%%%%%%%%%%%%%%%%%%%%%%%%%%%%%%%%%%%%%%%%%%%%%%%%%%%%%%%%%%%%%%%%%%%%%%%%%%%%%%%
\section{More examples}

As we said, the digraph of Figure~\ref{fig:carter} disproved our original conjecture about the structure of 2-strong weightable
digraphs, so what could be the true structure? All dicycles of the drawing in Figure~\ref{fig:carter} are clockwise
(it is important that we are talking about plane drawings here, not drawings in the sphere, so that ``clockwise''
makes sense. We hoped this was a clue, because
one can show if a digraph admits a diplanar drawing in the plane in which every dicycle is
clockwise, then it is weightable.  So our next hope was the conjecture that every 2-strong, 3-weak weightable digraph admits a
diplanar drawing in which every dicycle is clockwise. But this was disproved by the digraph in Figure
\ref{fig:notclockwise}.

\begin{figure}[htb!]
\centering
\begin{tikzpicture}[auto=left]

\begin{scope}[xscale = 2, yscale=1.7, rotate = -90]
\vtx{1}{0.5,-0.5}
\vtx{2}{1.5,-0.5}
\vtx{3}{2.5,-0.5}
\vtx{4}{0,0}
\vtx{5}{1,0}
\vtx{6}{2,0}
\vtx{7}{3,0}
\vtx{8}{-0.5,0.5}
\vtx{9}{3.5,0.5}
\vtx{10}{0,1}
\vtx{11}{1,1}
\vtx{12}{2,1}
\vtx{13}{3,1}
\vtx{14}{0.5,1.5}
\vtx{15}{1.5,1.5}
\vtx{16}{2.5,1.5}

\dedge{1}{2}
\dedge{1}{5}
\dedge{2}{3}
\dedge{2}{6}
\dedge{3}{7}
\dedge[out=0, in=270]{3}{9}
\dedge{4}{1}
\dedge{4}{5}
\dedge{5}{2}
\dedge{5}{6}
\dedge{5}{11}
\dedge{6}{3}
\dedge{6}{7}
\dedge{7}{9}
\dedge{7}{13}
\dedge[out=270, in=180]{8}{1}
\dedge{8}{4}
\dedge{9}{13}
\dedge[out=90, in=0]{9}{16}
\tdedge{10}{4}
\tdedge{10}{8}
\dedge{11}{10}
\dedge{11}{14}
\tdedge{12}{6}
\dedge{12}{11}
\dedge{12}{15}
\dedge{13}{12}
\dedge{13}{16}
\tdedge[out=180, in=90]{14}{8}
\dedge{14}{10}
\dedge{15}{11}
\dedge{15}{14}
\dedge{16}{12}
\dedge{16}{15}

\vtx{a1}{0.5,-3}
\vtx{a2}{1.5,-3}
\vtx{a3}{2.5,-3}
\vtx{a4}{0,-2.5}
\vtx{a5}{1,-2.5}
\vtx{a6}{2,-2.5}
\vtx{a7}{3,-2.5}
\vtx{a8}{-0.5,-2}
\vtx{a9}{3.5,-2}
\vtx{a10}{0,-1.5}
\vtx{a11}{1,-1.5}
\vtx{a12}{2,-1.5}
\vtx{a13}{3,-1.5}
\vtx{a14}{0.5,-1}
\vtx{a15}{1.5,-1}
\vtx{a16}{2.5,-1}

\dedge{a2}{a1}
\dedge{a5}{a1}
\dedge{a3}{a2}
\dedge{a6}{a2}
\dedge{a7}{a3}
\dedge[out=270, in=0]{a9}{a3}
\dedge{a1}{a4}
\dedge{a5}{a4}
\dedge{a2}{a5}
\dedge{a6}{a5}
\dedge{a11}{a5}
\dedge{a3}{a6}
\dedge{a7}{a6}
\dedge{a9}{a7}
\dedge{a13}{a7}
\dedge[out=180, in=270]{a1}{a8}
\dedge{a4}{a8}
\dedge{a13}{a9}
\dedge[out=0, in=90]{a16}{a9}
\tdedge{a4}{a10}
\tdedge{a8}{a10}
\dedge{a10}{a11}
\dedge{a14}{a11}
\tdedge{a6}{a12}
\dedge{a11}{a12}
\dedge{a15}{a12}
\dedge{a12}{a13}
\dedge{a16}{a13}
\tdedge[out=90, in=180]{a8}{a14}
\dedge{a10}{a14}
\dedge{a11}{a15}
\dedge{a14}{a15}
\dedge{a12}{a16}
\dedge{a15}{a16}

\dedge{a14}{1}
\dedge{3}{a16}
\tdedge[out=135, in=225]{a8}{8}
\dedge[out=315, in=45]{9}{a9}
\end{scope}
\end{tikzpicture}
\caption{A diplanar, 2-strong, weightable digraph, that cannot be drawn in the plane such that all its dicycles are 
clockwise. The thick edges have weight one.} \label{fig:notclockwise}
\end{figure}
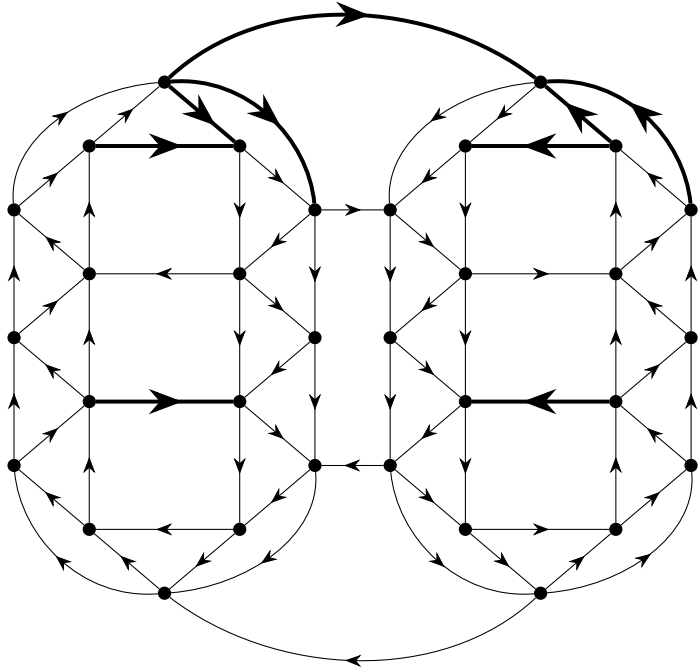

So both our conjectures were false, and we fell back to the question: is it at least true that every 2-strong, 3-weak 
weightable digraph is planar? But that too is false, disproved by the digraph in Figure~\ref{fig:nonplanar}. 

\begin{figure}[htb!]
\centering
\begin{tikzpicture}[auto=left]

\begin{scope}[scale = .7]
\vtx{a1}{-2,-2};
\vtx{a2}{2,2};
\vtx{a3}{-2,2};
\vtx{a4}{2,-2};
\vtx{a5}{-5,0};
\vtx{a6}{5,0};
\vtx{a7}{0,-1};
\vtx{a8}{0,1};

\foreach \from/\to in {a1/a7,a2/a8,a3/a1,a3/a5, a4/a2,a4/a6,a5/a1,a6/a2,a7/a4}
\dedge{\from}{\to};
\tdedge{a8}{a3};
\tdedge[in=90, out=120]{a2}{a5};
\tdedge[out=90, in=60]{a6}{a3};
\dedge[out=300, in =270]{a1}{a6};
\dedge[out=-90, in= -120]{a5}{a4};
\tdedge[out = 240, in = 120]{a8}{a7};
\dedge[out =60, in = 300]{a7}{a8};
\end{scope}

\end{tikzpicture}
\caption{A nonplanar, 2-strong, weightable digraph. Again, the thick edges have weight one.} \label{fig:nonplanar}
\end{figure}
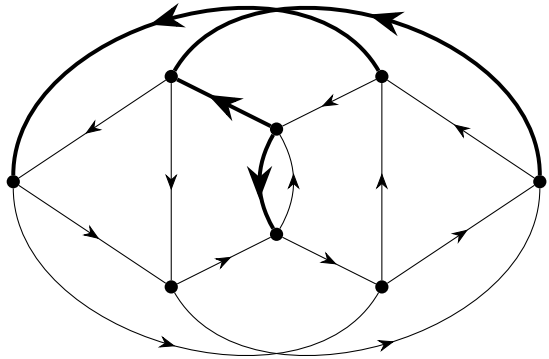

%%%%%%%%%%%%%%%%%%%%%%%%%%%%%%%%%%%%%%%%%%%%%%%%%%%%%%%%%%%%%%%%%%%%%%%%%%%%%%%%%%%%%%%%%%%%%%%%%%%%%%%%%%%%%%%%%%%%%%%%%%%%%%%%%%
\section{Known results}

Let us observe first that, 
to understand the weightable digraphs $G$, we may assume that $G$ is 1-weak (since weightings for different weak components may be considered separately), and every edge is in a dicycle (since edges not in a dicycle may be given any weight and deleted). It follows that $G$ is 1-strong.
We will often assume that the digraphs we are considering are 1-strong, without further explanation.

There are some results about weightable digraphs that were already known. 
It was shown in~\cite{tricycles} that:
\begin{thm}\label{get01}
If $G$ admits a weighting then it admits a 0/1-valued weighting.
\end{thm}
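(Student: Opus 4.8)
The plan is to build a $0/1$-weighting of $G$ from scratch, by growing $G$ from a dicycle through an ear sequence and invoking~\ref{dirsum} at each stage; the hypothesis ``$G$ admits a weighting'' enters only through the fact that every subdigraph of $G$ that arises along the way is itself weightable (restrict a weighting of $G$ to its edge set). As in the opening remarks of this section we may assume $G$ is $1$-strong. Fix a dicycle $C_0$ of $G$, an ear sequence $P_1,\dots,P_n$ for $C_0$ in $G$, and the associated dicycles $C_1,\dots,C_n$ and ear-basis $\mathbf c_0,\dots,\mathbf c_n$; put $H_i=C_0\cup P_1\cupcup P_i$, so $H_0=C_0$ and $H_n=G$. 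Each $H_i$ is $1$-strong, and since it is a subdigraph of the weightable digraph $G$ it is weightable (the restriction of a weighting of $G$ is a weighting of $H_i$, as every dicycle of $H_i$ is a dicycle of $G$), and $\mathbf c_0,\dots,\mathbf c_i$ is an ear-basis for $H_i$.

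I would then construct $w:E(G)\to\{0,1\}$ so that for each $i$ the restriction of $w$ to $E(H_i)$ is a weighting of $H_i$; applying this with $i=n$ gives the desired $0/1$-weighting of $G$. Induct on $i$. For $i=0$ set $w$ to be $1$ on one edge of $C_0$ and $0$ on the others. For the step, suppose $w$ is already defined on $E(H_{i-1})$ as required, and extend it to $E(P_i)$. If $P_i$ is a dicycle meeting $H_{i-1}$ in a single vertex, set $w$ to be $1$ on one edge of $P_i$ and $0$ on the rest. If $P_i$ is a dipath from $u$ to $v$, choose (see the next paragraph) a dipath $Q$ of $H_{i-1}$ from $v$ to $u$ with $w(Q)\le 1$; since $w\ge 0$ on $E(H_{i-1})$ we have $0\le 1-w(Q)\le 1$, and $P_i$ has at least one edge, so we may set $w$ to be $1-w(Q)$ on one edge of $P_i$ and $0$ on the rest, giving $w(C_i)=w(P_i)+w(Q)=1$. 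In either case $w\cdot\mathbf c_i=w(C_i)=1$, and $w\cdot\mathbf c_j=1$ is preserved for $j<i$ because each such $\mathbf c_j$ is supported on $E(H_{i-1})$, where $w$ has not changed. By~\ref{dirsum} applied to the weightable $1$-strong digraph $H_i$, the restriction of $w$ to $E(H_i)$ is a weighting; it is visibly $0/1$-valued, so the induction continues.

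The one thing left to justify is the choice of $Q$, and this is the heart of the matter. What is needed is the lemma: \emph{in a $0/1$-weighted weightable $1$-strong digraph $H$, for any two vertices $a,b$ there is a dipath from $a$ to $b$ of weight at most $1$} --- equivalently, using at most one edge of weight $1$, since a $0/1$-weighting makes every dipath-weight a nonnegative integer. Part of the structure comes for free: because $w$ is a $0/1$-weighting, $w(C)=1$ forces every dicycle of $H$ to contain exactly one edge of weight $1$; hence the weight-$0$ edges induce an acyclic subdigraph, and, by decomposing into dicycles the closed walk consisting of a weight-$1$ edge $(x,y)$ together with any dipath from $y$ to $x$ and examining the dicycle through $(x,y)$, for every weight-$1$ edge $(x,y)$ there is a dipath from $y$ to $x$ using only weight-$0$ edges. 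Granting these facts, one would argue by contradiction: if some pair $a,b$ admitted only dipaths with at least two weight-$1$ edges, take one, say $D$ with weight-$1$ edges $f_1=(\alpha_1,\beta_1),f_2=(\alpha_2,\beta_2),\dots$ in order, minimizing their number; minimality excludes various weight-$0$ shortcuts, and splicing the weight-$0$ dipaths supplied above in among the $f_j$ one tries to exhibit a dicycle containing two weight-$1$ edges, contradicting that $H$ is $0/1$-weighted and weightable. Making this last step rigorous is where one must invoke the structural theory of weightable digraphs --- the exclusion of subdivided double cycles, or equivalently the coherence of the cyclic orders of dicycles through a triple of vertices --- and I expect that to be the main obstacle. (Alternatively one can reformulate the extension step as the feasibility of an integer potential $p$ on $V(H_{i-1})$ with $w(e)+p(\mathrm{head}\,e)-p(\mathrm{tail}\,e)\in\{0,1\}$ for all $e$, and reduce it, via the no-negative-cycle criterion for difference constraints, to the inequality $w\cdot\mathbf c\le\lvert C^+\rvert$ for every cycle $C$ of the underlying graph with a chosen orientation, where $C^+$ is the set of edges traversed with their direction; by~\ref{cycleint} the left-hand side depends only on $C$ and not on the weighting, but proving the inequality calls for the same structural input.)
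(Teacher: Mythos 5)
You correctly identified the crux, but the key lemma you need is false, so the gap is not merely one of rigor. It is not true that in a $1$-strong weightable digraph with a fixed $0/1$-weighting every ordered pair of vertices is joined by a dipath of weight at most $1$. Take $V=\{v_0,v_1,v_2\}$ with the four edges $v_0v_1,\ v_1v_0,\ v_1v_2,\ v_2v_1$ and the weighting $w(v_0v_1)=w(v_1v_2)=1$, $w(v_1v_0)=w(v_2v_1)=0$: both digons have weight $1$, so this is a legitimate $0/1$-weighting, yet the unique dipath from $v_0$ to $v_2$ has weight $2$. This defeats your induction concretely. Run your construction with $C_0$ the digon on $v_0,v_1$ (putting the $1$ on $v_0v_1$), then the dicycle ear on $v_1,v_2$ (putting the $1$ on $v_1v_2$) --- both choices are permitted by your rules --- and let the final ear be the single edge $v_2v_0$. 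The resulting digraph $G$ is weightable (give weight $1$ to $v_0v_1$ and $v_2v_1$ and $0$ to the other three edges), but your partial weighting of $H_2$ cannot be extended, since the only closing dipath $Q$ from $v_0$ to $v_2$ has $w(Q)=2$ and you would need to assign $-1$ to the new edge. The underlying defect is that the inductive hypothesis ``the restriction of $w$ to $H_{i-1}$ is \emph{some} $0/1$-weighting'' is too weak: which $0/1$-weighting of $H_{i-1}$ extends to $H_i$ depends on ears not yet revealed, so any proof along these lines must be permitted to revise the weights on the part already built.

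The paper's proof performs exactly that revision, globally, via cut vectors: for any $X\subseteq V(G)$, the function $c_X$ (equal to $+1$ on edges leaving $X$, $-1$ on edges entering $X$, and $0$ otherwise) sums to zero around every dicycle, so $w+c_X$ is again a weighting. Adding suitable multiples of such $c_X$ makes $w$ vanish on a spanning tree, whereupon~\ref{cycleint} applied to fundamental cycles forces $w$ to be integer-valued; then, among integer weightings, one minimizing the total negative weight is shown to be nonnegative by one more cut adjustment (if $w(uv)<0$, subtract $c_X$ for $X$ the set of vertices reachable from $v$ along nonpositive-weight edges); finally, a nonnegative integer weighting in which every edge lies on a dicycle of total weight $1$ is automatically $0/1$-valued. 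Your parenthetical reformulation in terms of an integer potential $p$ on the vertices is essentially this same cut-space idea, and that --- rather than the ear-by-ear extension --- is the route that works.
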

\begin{proof} 
We may assume that $G$ is 1-strong. 
For every set $X$
of vertices, since $w$ is a weighting, so is $w+c_X$, where $c_X(e)=1$ for edges $e$ from $X$ to $V(G)\setminus X$, 
$c_X(e)=-1$ for edges from $V(G)\setminus X$ to $X$, and $c_X(e)=0$ otherwise. By adding multiples of the functions $c_X$ to $w$ for appropriate choices of 
$X$, we can obtain a weighting which is zero on every edge of some spanning tree $T$.
For $e\in E(G)\setminus E(T)$, let $C$ be the cycle that contains $e$ and is otherwise in $T$, and let $\mathbf{c}$ be its characteristic 
vector. By~\ref{cycleint}, $w\cdot\mathbf{c}$ is an integer and so $w(e)$ is an integer.
Consequently this weighting is integer-valued on every edge of $G$.

Having obtained an integer weighting, now let us choose an integer weighting $w$ that minimizes the sum of $|w(e)|$ over all edges $e$ with $w(e)<0$. Suppose there
is an edge $e=uv$ with $w(e)<0$. Let $X$ be the set of all vertices $x$ such that there is
a dipath $P$ of $G$ from $v$ to $x$ where $w(e)\le 0$ for all edges $e\in E(P)$. Then $w-c_X$ is a better choice than $w$,
a contradiction. So $w(e)\ge 0$ for all edges $e$, and the result follows.\end{proof}

Henceforth in the paper we will only work with 0/1-valued weightings.
Let $k\ge 3$. A \emph{weak $k$-double-cycle} is a digraph formed by the union of $k$ dicycles $C_1,\dots, C_k$, where
each vertex belongs to at most two of $C_1,\dots, C_k$, and $C_i\cap C_{i+1}$ is a dipath for $1\le i\le k$ (reading subscripts
modulo $k$), and $C_i$ is vertex-disjoint from $C_j$
if $j\not\equiv i-1,i,i+1\bmod k$.  (In some earlier papers $k=2$ is permitted, but we do not need that here.) An example is shown in Figure~\ref{fig:doublecycle}.

\begin{figure}[htb!]
    \centering
    \begin{tikzpicture}
        \vtx{1}{0,3};
        \vtx{2}{1,2};
        \vtx{3}{3,3};
        \vtx{4}{2,2};
        \vtx{5}{3,0};
        \vtx{6}{2,1};
        \vtx{7}{0.5,0.5};
        \vtx{8}{1.5,3};
        \vtx{9}{2.5,2.5};
        \foreach \from/\to in {1/2,2/4,4/9,9/3,3/8,8/1,3/5,5/6,6/4,6/7,7/5,7/1,2/7}
        \dedge{\from}{\to};
    \end{tikzpicture}
    \caption{An example of a weak 4-double cycle.}
    \label{fig:doublecycle}
\end{figure}
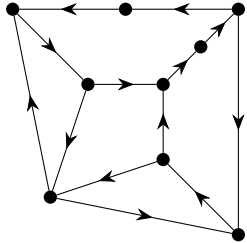

It was shown in~\cite{tricycles} that:
\begin{thm}\label{doublecycle}
A digraph $G$ is weightable if and only if for all $k\ge 3$, no subdigraph is a weak $k$-double-cycle.
\end{thm}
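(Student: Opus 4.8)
The plan is to prove the two implications separately. For ``weightable $\Rightarrow$ no weak $k$-double-cycle subdigraph'' I would argue by contradiction: suppose $w$ is a weighting of $G$ and $H\subseteq G$ is a weak $k$-double-cycle with constituents $C_1,\dots,C_k$, $k\ge3$, and write $P_i=C_i\cap C_{i+1}$ for the overlap dipaths (subscripts mod $k$). Since each vertex of $H$ lies in at most two of the $C_i$, and $C_i$ is disjoint from $C_j$ for non-consecutive $i,j$, the vertices that $C_i$ shares with the other cycles are exactly those of $P_{i-1}\cup P_i$; and because $k\ge3$, a vertex common to $P_{i-1}$ and $P_i$ would lie in three of the cycles, so $V(P_{i-1})\cap V(P_i)=\emptyset$. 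Hence, read around in its own direction, $C_i$ is the concatenation $P_{i-1}\,A_i\,P_i\,B_i$, where $A_i$ is a dipath from the head of $P_{i-1}$ to the tail of $P_i$, and $B_i$ a dipath from the head of $P_i$ to the tail of $P_{i-1}$, each running through vertices private to $C_i$ and (again since $k\ge3$) of length at least one. Now set $D_A:=A_1P_1A_2P_2\cdots A_kP_k$ and $D_B:=B_kP_{k-1}B_{k-1}P_{k-2}\cdots B_1P_k$; these concatenations close up, and since the $A_i$ and $B_i$ have pairwise-disjoint interiors (consisting of private vertices) while the $V(P_i)$ are pairwise disjoint, $D_A$ and $D_B$ are dicycles of $G$. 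Comparing, edge by edge, the multiplicities on the two sides gives the vector identity $\sum_{i=1}^{k}\mathbf c_{C_i}=\mathbf c_{D_A}+\mathbf c_{D_B}$ (an edge of some $P_i$ has coefficient $2$ on each side, an edge of some $A_i$ or $B_i$ coefficient $1$ on each side), so applying $w$ yields $k=w(D_A)+w(D_B)=2$, a contradiction.

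For the converse I would argue the contrapositive. Assume $G$ is not weightable; since weightings of distinct weak components are independent and an edge in no dicycle may be given any value, we may assume $G$ is 1-strong, and hence it has an ear-basis $\mathbf c_0,\dots,\mathbf c_n$. As in the proof of~\ref{findweighting}, fix $w:E(G)\to\mathbb R$ with $w\cdot\mathbf c_i=1$ for all $i$. If $w$ were a weighting then $G$ would be weightable, so there is a dicycle $C^*$ with $w(C^*)\ne1$; writing $\mathbf c_{C^*}=\sum_i\lambda_i\mathbf c_i$ with $\lambda_i\in\mathbb Z$ gives $\sum_i\lambda_i=w(C^*)\ne1$. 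Moving the negative terms to the other side, this becomes an identity $\sum_{A\in\mathcal A}\mathbf c_A=\sum_{B\in\mathcal B}\mathbf c_B$, where $\mathcal A$ and $\mathcal B$ are multisets of dicycles of $G$ with $|\mathcal A|-|\mathcal B|=\big(\sum_i\lambda_i\big)-1\ne0$. (Equivalently, this is the Fredholm alternative applied to the linear system $\{\,w\cdot\mathbf c_C=1 : C\text{ a dicycle}\,\}$.) So it is enough to prove the purely combinatorial statement: if a digraph admits multisets $\mathcal A,\mathcal B$ of dicycles with $\sum_{\mathcal A}\mathbf c=\sum_{\mathcal B}\mathbf c$ but $|\mathcal A|\ne|\mathcal B|$, then it contains a weak $k$-double-cycle for some $k\ge3$.

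To prove that, I would take a counterexample $(\mathcal A,\mathcal B)$ minimizing $\sum_{A\in\mathcal A}|E(A)|$ and then $|\mathcal A|+|\mathcal B|$, with $|\mathcal A|>|\mathcal B|$ say. Two easy reductions: no dicycle lies in both $\mathcal A$ and $\mathcal B$ (else remove it from each); and $|\mathcal B|\ge2$, for if $\mathcal B=\{B\}$ then $\mathbf c_B=\sum_{A\in\mathcal A}\mathbf c_A$ forces the edge sets of the $A\in\mathcal A$ to partition that of the single dicycle $B$ into $\ge2$ dicycles, which is impossible. Hence $|\mathcal A|\ge3$, and the common circulation $\phi:=\sum_{\mathcal A}\mathbf c$ carries two dicycle-decompositions of different sizes. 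The idea from here is local surgery on $\phi$: at any vertex met by two distinct cycles of $\mathcal A$ (or of $\mathcal B$) one may re-pair the incident in- and out-edges there to produce another decomposition of $\phi$, and minimality should force the cycles within $\mathcal A$, and within $\mathcal B$, to overlap only in dipaths arranged in a single cyclic pattern --- making the support of $\phi$ a weak $k$-double-cycle, with a re-routed $\mathcal A$ supplying the $k$ cycles $C_1,\dots,C_k$ and a re-routing of $\mathcal B$ into exactly two dicycles supplying $D_A,D_B$ of the first paragraph. \emph{This last step is the main obstacle.} Controlling the degenerate possibilities --- repeated cycles inside $\mathcal A$ or $\mathcal B$, edges of $\phi$ of multiplicity three or more, overlaps that collapse to single vertices, and a cycle on one side meeting three or more cycles on the other --- is the delicate bookkeeping that makes~\ref{doublecycle} genuinely hard, and is where I would expect essentially all of the effort to go.
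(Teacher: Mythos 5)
First, note that the paper does not prove~\ref{doublecycle} at all: it is quoted as a known result of~\cite{tricycles} (itself derived from the even-digraph characterization of~\cite{evendigraph}), so there is no in-paper proof to compare against. Judged on its own terms, your proposal splits into a correct half and an unproven half.

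Your forward direction is complete and correct, and is a nice self-contained argument: decomposing each $C_i$ as $P_{i-1}A_iP_iB_i$ and reassembling the pieces into the two dicycles $D_A$ and $D_B$ gives the identity $\sum_{i}\mathbf c_{C_i}=\mathbf c_{D_A}+\mathbf c_{D_B}$, hence $k=2$ under any weighting. (The checks that $A_i,B_i$ have positive length and that $D_A,D_B$ are genuine simple dicycles all go through, using $k\ge3$ and the ``at most two cycles per vertex'' condition.) This differs from what one could extract from the paper's proof of~\ref{triple}, which gets the same implication by exhibiting a bad triple inside a weak $k$-double-cycle; your version is arguably more direct.

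The converse, however, has a genuine gap, and it is exactly where the entire difficulty of the theorem lives. Your reduction to the statement ``two multisets $\mathcal A,\mathcal B$ of dicycles with $\sum_{\mathcal A}\mathbf c=\sum_{\mathcal B}\mathbf c$ and $|\mathcal A|\ne|\mathcal B|$ force a weak $k$-double-cycle'' is legitimate (and that statement is in fact equivalent to the theorem, by running your forward-direction identity backwards), but equivalence is all it is: you have restated the problem, not solved it. The concluding paragraph --- choose a minimal counterexample, do ``local surgery'' on the circulation, and assert that ``minimality should force'' the cycles to overlap in a single cyclic pattern --- is precisely the case analysis that occupies the proofs in~\cite{evendigraph} and~\cite{tricycles}, and you acknowledge as much by flagging it as ``the main obstacle'' where ``essentially all of the effort'' would go. None of the degenerate configurations you list (high-multiplicity edges, one cycle meeting three others, overlaps collapsing to vertices) is actually handled, and there is no argument showing that the surgery terminates in the desired structure rather than, say, in a configuration with no double-cycle at all. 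As it stands, the ``if'' direction of~\ref{doublecycle} is asserted, not proved.
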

A digraph $G$ is \emph{odd-weightable} if there is a function $w:E(G)\to \{0,1\}$ such that $w(C)$ is odd for every dicycle of $G$.
Thus, by~\ref{get01}, every weightable 
digraph is odd-weightable.
The proof given in~\cite{tricycles} for~\ref{doublecycle} was obtained from a similar proof in~\cite{evendigraph}, where it was shown that:
%%%%%%%%%%%%%%%%%%%%%%%%%%%%%
\begin{thm}\label{eventhm}
A digraph $G$ is odd-weightable
if and only if for all odd $k\ge 3$, no subdigraph
is a weak $k$-double-cycle.
\end{thm}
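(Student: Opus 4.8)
The plan is to prove the two implications separately. The ``only if'' direction is short, and I will sketch it essentially in full; the ``if'' direction carries all of the difficulty, and for it I would follow the approach of~\cite{evendigraph}.

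For ``only if'', suppose $w:E(G)\to\{0,1\}$ is an odd-weighting and, for a contradiction, that $C_1,\dots,C_k\subseteq G$ is a weak $k$-double-cycle with $k$ odd. For each $i$ (indices mod~$k$) put $P_i=C_i\cap C_{i+1}$, a dipath. Since no vertex lies on three of the $C_j$ (and non-consecutive $C_j$ are vertex-disjoint), the dipaths $P_i$ are pairwise disjoint, each $P_i$ is an arc of both $C_i$ and $C_{i+1}$, traversed forwards (in the direction of $P_i$) by each; deleting the two arcs $P_{i-1},P_i$ from $C_i$ leaves two further (possibly trivial) dipaths $R_i$ and $T_i$, so that $C_i$ is, read cyclically, the concatenation $P_{i-1}R_iP_iT_i$. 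One checks that $R_1P_1R_2P_2\cdots R_kP_k$ concatenates to a dicycle $W$ of $G$, that $P_1T_1P_2T_2\cdots P_kT_k$ concatenates to a dicycle $W'$ of $G$, and that, writing $\mathbf p_i,\mathbf r_i,\mathbf t_i,\mathbf w,\mathbf w'$ for the nonnegative characteristic vectors of $P_i,R_i,T_i,W,W'$,
\[
\sum_{i=1}^k\mathbf c_i\;=\;2\sum_{i=1}^k\mathbf p_i+\sum_{i=1}^k\mathbf r_i+\sum_{i=1}^k\mathbf t_i\;=\;\mathbf w+\mathbf w'.
\]
Taking the scalar product with $w$ gives $\sum_{i=1}^k w(C_i)=w(W)+w(W')$: the left-hand side is a sum of $k$ odd integers, hence odd, while the right-hand side is a sum of two odd integers, hence even — a contradiction. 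The three things to verify here — disjointness of the $P_i$, that $W$ and $W'$ are genuine dicycles, and the displayed identity — are all routine consequences of the definition of a weak $k$-double-cycle.

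For ``if'' I would first reduce and reformulate. As usual we may assume $G$ is $1$-strong. Working over $\mathbb{F}_2$, an odd-weighting is precisely a vector $w\in\mathbb{F}_2^{E(G)}$ with $w\cdot\mathbf c\equiv 1$ for the mod-$2$ reduction $\mathbf c$ of the characteristic vector of every dicycle; and a linear system over a field is solvable exactly when its right-hand side annihilates the left kernel, so such a $w$ exists if and only if $G$ has no \emph{bad family}: a set of dicycles $C_1,\dots,C_m$ with $m$ odd and $\sum_{i=1}^m\mathbf c_i\equiv 0\pmod 2$ — equivalently, with every edge of $G$ lying on an even number of the $C_i$. (If $w$ exists then $0=w\cdot\sum_i\mathbf c_i\equiv m\pmod 2$, which is absurd; the converse is the solvability criterion.) So, contrapositively, it is enough to show that any digraph possessing a bad family contains a weak $k$-double-cycle for some odd $k\ge 3$. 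This reformulation is tight: the $k$ cycles of a weak $k$-double-cycle together with the dicycles $W,W'$ above form a bad family of size $k+2$, since $\sum_i\mathbf c_i+\mathbf w+\mathbf w'=2(\mathbf w+\mathbf w')\equiv 0$.

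So assume $G$ has a bad family, choose one, $C_1,\dots,C_m$, minimal — say lexicographically minimizing $\bigl(m,\sum_i|E(C_i)|\bigr)$ — and try to read a weak odd double-cycle off it. The steps I anticipate: every edge of $H:=C_1\cup\dots\cup C_m$ lies on at least two of the $C_i$; for $i\ne j$ the intersection $C_i\cap C_j$ is empty or a single dipath (if two cycles ``cross'', an uncrossing keeps $m$ fixed but shortens the family); no vertex of $H$ lies on three of the $C_i$ (otherwise a local re-routing again produces a strictly smaller bad family); and hence the graph $\Gamma$ on $\{1,\dots,m\}$ with an edge $ij$ whenever $C_i\cap C_j$ contains an edge is a disjoint union of cycles. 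Since $m$ is odd, the component cycle-lengths of $\Gamma$ cannot all be even, so $\Gamma$ has a cycle of odd length $k\ge 3$; relabelling, the corresponding $k$ cycles of the family then meet the definition of a weak $k$-double-cycle in $G$. The genuinely hard part — the real content of~\cite{evendigraph} — is exactly this extraction: proving that a minimal bad family has the clean intersection pattern just described. Each re-routing or uncrossing operation must be designed to preserve the parity of the number of cycles (so $m$ stays odd) while strictly decreasing the chosen measure, and one must dispose of the various degenerate ways dicycles of a multigraph can meet: sharing several disjoint arcs, meeting only in isolated vertices, loops, cycles of length two, a dicycle occurring more than once, and so on. By comparison, the reduction to the $1$-strong case, the $\mathbb{F}_2$ reformulation, and the ``only if'' direction above are all routine.
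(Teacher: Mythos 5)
The paper does not prove \ref{eventhm} at all: it is quoted as a known theorem of Seymour and Thomassen~\cite{evendigraph}, so there is no in-paper argument to measure yours against. Your ``only if'' direction is correct and essentially complete, apart from a harmless indexing slip: with your conventions $T_i$ runs from the end of $P_i$ to the start of $P_{i-1}$, so the second boundary dicycle traverses the rungs in \emph{reverse} cyclic order, $P_1T_1P_kT_kP_{k-1}T_{k-1}\cdots P_2T_2$, rather than $P_1T_1P_2T_2\cdots P_kT_k$; the identity $\sum_i\mathbf c_i=\mathbf w+\mathbf w'$ and the parity contradiction are unaffected. The $\mathbb{F}_2$ reformulation of the ``if'' direction (odd-weightability fails iff some odd-sized family of dicycles has characteristic vectors summing to zero mod $2$) is also correct.

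The gap is in the extraction step, and it is not merely that the hard work is deferred to~\cite{evendigraph}: the structure you predict for a minimal bad family is impossible, so the plan as stated cannot be executed. Suppose $C_1,\dots,C_m$ had all three of your anticipated properties: every edge of $H$ on exactly two of the $C_i$ (which is what ``even and at least two'' becomes once no vertex is on three), no vertex on three of them, and each nonempty $C_i\cap C_j$ a single dipath. Fix $i$: the edge set of $C_i$ is then partitioned into the subpaths $C_i\cap C_j$ over the neighbours $j$ of $i$ in $\Gamma$, and the no-three-cycles condition makes these subpaths pairwise vertex-disjoint; but consecutive subpaths in the cyclic order around $C_i$ must share an endpoint, a contradiction (and the case of a single neighbour forces $C_i\cap C_j$ to be all of the dicycle $C_i$, which is not a dipath). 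Consistently with this, the canonical bad family attached to a weak $k$-double-cycle --- the $k$ cycles together with your $W$ and $W'$ --- has intersection graph a wheel (each $C_i$ meets $C_{i-1}$, $C_{i+1}$, $W$ and $W'$), not a disjoint union of cycles, so the witness does not arise as ``an odd cycle of $\Gamma$.'' The argument of~\cite{evendigraph} is organised quite differently (the present paper's remarks suggest it works with ear-decompositions of a minimal non-odd-weightable subdigraph rather than with a minimal dependent family of cycle vectors). Since this direction is the entire content of the theorem, the proposal as written does not amount to a proof of it.
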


There is another set of older results we need, not really about weightable digraphs, but relevant. Let $H$ be a graph with a bipartition $(A,B)$, and let $M$ be a perfect matching of 
$H$; we call the pair
$(H,M)$ a \emph{bisource}. Direct 
all the edges of $H$ from $A$ to $B$, except for the edges in $M$, and then contract all the edges in $M$. This produces some 
digraph, called a \emph{collapse} of $(H,M)$. (It also depends on the choice of the bipartition $(A,B)$, so if $H$ is connected, 
$(H,M)$ has two collapses, one obtained from the other by reversing all edges.) 
Conversely, every digraph is a collapse of some (unique) bisource. There is 
a remarkable theorem:
%%%%%%%%%%%%%%%%%%%%%%%%%%%%
\begin{thm}\label{invariant}
Let $H$ be a bipartite graph, let $M, M'$ be perfect matchings of $H$, and let $G,G'$ be corresponding collapses. Then $G$ is 
odd-weightable if and only if $G'$ is odd-weightable.
\end{thm}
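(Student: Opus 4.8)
The plan is to reduce the whole question to a property of $H$ alone that never mentions the matching. Fix orderings $A=\{a_1,\dots,a_n\}$, $B=\{b_1,\dots,b_n\}$ so that $M=\{a_ib_i:1\le i\le n\}$, and let $\beta=(\beta_{ij})$ be the $0/1$ bipartite adjacency matrix of $H$ (so $\beta_{ij}=1$ iff $a_ib_j\in E(H)$); then $\beta$ has $1$'s on the diagonal. (We may assume $H$ is simple: a parallel non-matching edge of $H$ yields a parallel edge of $G$, and a non-matching edge parallel to a matching edge yields a loop of $G$; in either case deleting the redundant edge of $H$ changes neither $G$'s odd-weightability nor $\beta$.) The key claim is that $G$ is odd-weightable if and only if there is a matrix $\Sigma\in\{+1,-1\}^{n\times n}$ with the same zero pattern as $\beta$ and $\det(\Sigma\circ\beta)=\pm\operatorname{per}(\beta)$, where $\circ$ is the entrywise product. (This is the classical condition for $H$ to be a Pfaffian bipartite graph.) Its right-hand side does not involve $M$: it is unchanged if we permute the rows or the columns of $\beta$, since $\operatorname{per}$ is permutation-invariant and $\det(P\Sigma Q\circ P\beta Q)=\det P\,\det Q\,\det(\Sigma\circ\beta)$ for permutation matrices $P,Q$, so it depends only on $H$ with its bipartition. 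Hence if $M'$ is another perfect matching of $H$, with collapse $G'$, then both ``$G$ odd-weightable'' and ``$G'$ odd-weightable'' are equivalent to this single condition on $H$, and the theorem follows.

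To prove the claim, expand $\det(\Sigma\circ\beta)$ and $\operatorname{per}(\beta)$ over $S_n$: the surviving terms in each correspond to the feasible permutations $\pi$ (those with $a_ib_{\pi(i)}\in E(H)$ for all $i$, i.e.\ perfect matchings of $H$), and such a $\pi$ contributes $\operatorname{sgn}(\pi)\prod_i\Sigma_{i\pi(i)}\in\{\pm1\}$ to the determinant and $1$ to the permanent. So $\det(\Sigma\circ\beta)=\pm\operatorname{per}(\beta)$ iff $\operatorname{sgn}(\pi)\prod_i\Sigma_{i\pi(i)}$ is constant over feasible $\pi$; comparing each $\pi$ with the identity through its cycle decomposition, and using that the cycles of a permutation are vertex-disjoint, this is equivalent to the requirement, for every dicycle $Q$ of the digraph $D$ on $\{1,\dots,n\}$ whose non-loop edge $i\to j$ exists iff $a_ib_j\in E(H)\setminus M$ (so its non-loop edges are exactly $E(G)$) and with a loop at each vertex (from the diagonal of $\beta$), that $(-1)^{|Q|-1}\prod_{i\in Q}\Sigma_{i\pi(i)}=\prod_{i\in V(Q)}\Sigma_{ii}$. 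Writing $\Sigma_{i\pi(i)}=(-1)^{w(e)}$ on each non-loop edge $e$ and $\Sigma_{ii}=(-1)^{t_i}$, this reads $\sum_{e\in E(Q)}w(e)\equiv(|Q|-1)+\sum_{i\in V(Q)}t_i\pmod2$ for every dicycle $Q$ of $G$. Absorbing the vertex potential $t$ by the (reversible) substitution $w(e)\mapsto w(e)+t_{\operatorname{head}(e)}$ — each vertex of $Q$ is the head of exactly one edge of $Q$ — turns this into $\sum_{e\in E(Q)}w(e)\equiv|Q|-1\pmod2$, and replacing $w$ by its complement $e\mapsto1-w(e)$ turns it into ``$w(Q)$ is odd for every dicycle $Q$''. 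Thus a valid $\Sigma$ exists iff $G$ has a $0/1$-weighting odd on every dicycle, i.e.\ iff $G$ is odd-weightable, which proves the claim.

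The main work is the sign bookkeeping in the determinant/permanent comparison and its localization: one checks that every dicycle of $G$ is the unique nontrivial cycle of some feasible permutation (this is where the all-$1$'s diagonal of $\beta$ is used), that, conversely, the condition holding for each dicycle forces it for every feasible $\pi$ because the latter's cycles are vertex-disjoint, and that the loop edges and the potential $t$ they encode genuinely wash out. A more self-contained alternative, bypassing Pólya's identity, would be: reduce to $M'=M\,\triangle\,E(C)$ for a single $M$-alternating cycle $C$ (since $M\,\triangle\,M'$ is a vertex-disjoint union of such cycles, swapped one at a time); observe that contracting in $G$ the dicycle corresponding to $C$, and in $G'$ the dicycle corresponding to $C$, yields the same digraph, so $G$ and $G'$ are two ``cyclic blow-ups'' of a common digraph at a common vertex, differing only by a cyclic shift in how the out-edges are attached around the inserted dicycle; and then transport an odd-weighting from $G$ to $G'$, using the gauge moves $w\mapsto w+\chi_X$ (the mod-$2$ indicator of the edge cut of a vertex set $X$, which is zero on every dicycle; cf.\ the proof of~\ref{get01}) to repair the shift. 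I would expect the first route to be the shorter to write out, with the cycle-by-cycle reduction the crux; for the second, the crux is instead identifying the vertex sets $X$ whose cuts undo the shift.
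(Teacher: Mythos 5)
Your proposal is correct, but note that the paper itself gives no proof of~\ref{invariant} at all: it explicitly says ``We do not prove it here because the result is just for background'' and defers to~\cite{pfaffians}, so there is nothing internal to compare against. Your first route is the classical P\'olya-type argument: you show that ``the collapse of $(H,M)$ is odd-weightable'' is equivalent to the existence of a sign matrix $\Sigma$ supported on the bipartite adjacency matrix $\beta$ with $\det(\Sigma\circ\beta)=\pm\operatorname{per}(\beta)$, a condition invariant under independent row and column permutations and hence independent of which perfect matching sits on the diagonal. I checked the chain of equivalences and it is sound: feasible permutations biject with perfect matchings of the (simplified) $H$; the constancy of $\operatorname{sgn}(\pi)\prod_i\Sigma_{i\pi(i)}$ localizes to single nontrivial cycles because the identity is feasible (all-ones diagonal) and cycle contributions multiply over vertex-disjoint cycles; the vertex potential $t_i=\Sigma_{ii}$ is absorbed by shifting each edge weight by the potential at its head (each vertex of a dicycle heads exactly one of its edges); and complementing the weighting converts ``$w(Q)\equiv|Q|-1$'' into ``$w(Q)$ odd''. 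The reductions you flag as ``the main work'' are exactly the right checks and all go through. Two trivial loose ends you could mention in a full write-up: the theorem permits $G$ and $G'$ to be collapses with opposite orientation conventions, but reversing all edges preserves odd-weightability; and the auxiliary digraph $D$ with loops is an unnecessary detour, since loops impose no condition. Compared with the route through~\cite{pfaffians} (which proves the statement only for two-extendible graphs, via the structural theory of Pfaffian braces), your argument is more elementary, fully general, and self-contained; your sketched second route (alternating-cycle exchange plus gauge moves $w\mapsto w+c_X$) would be more in the combinatorial spirit of the paper but, as you say, requires identifying the repairing cuts and is left incomplete, so the determinant route is the one to write up.
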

This is proved for ``two-extendible'' bipartite graphs in~\cite{pfaffians}, although it seems to be implicit in earlier papers. 
We do not prove it here because the result is just for background. But the moral is that to  
understand odd-weightable digraphs, it is better to understand the bipartite graphs of the corresponding bisouces, because the choice of 
perfect matching is irrelevant.
It would have been nice if the same simplification held for the property of being weightable, but it is not;
we shall see that whether a digraph is weightable or not depends on both terms of its bisource.

A graph is \emph{$k$-extendible} if every matching of size at most $k$ can be extended to a perfect matching; and 
a \emph{brace} is a connected 2-extendible bipartite graph.
A bipartite graph $H$ with a perfect matching is a brace if and only if the collapse of $(H,M)$ is 2-strong for some (or equivalently,
every) choice of a perfect matching $M$.
Let us say a bipartite graph $H$ is \emph{Pfaffian} if there is a perfect matching $M$ of $H$ such that the collapse of
$(H,M)$ is odd-weightable. 
(These are precisely the bipartite graphs that admit ``Pfaffian orientations'',
a topic of interest in theoretical physics and other areas that we do not define here.) In~\cite{pfaffians}, Robertson, Seymour and Thomas
gave a construction for all Pfaffian bipartite graphs. Essentially, the problem can be reduced to constructing the
Pfaffian braces; and they showed that a brace $H$ is Pfaffian if and only if either $H$ is planar, or $H$ is the Heawood graph,
or $H$ admits a decomposition into three smaller Pfaffian braces that we will discuss in more detail later. And reversing this decomposition gives a way to piece together three Pfaffian braces to make a larger Pfaffian brace. 

For our problem, we can reduce it to studying the 2-strong weightable digraphs, and such digraphs $G$ are collapses of
bisourses $(H,M)$ for which $H$ is a brace. Since, as we saw, every such $G$ is odd-weightable, and therefore $H$ is Pfaffian, we can apply
the decomposition theorem of~\cite{pfaffians} to our problem, and deduce that either $H$ is planar or admits some useful
decomposition into three parts, and therefore the same applies to $G$. The problem is that the corresponding composition operation of gluing three Pfaffian braces together to make one larger Pfaffian brace does not preserve the property that the collapse is weightable,
so this by itself does not reduce our problem to the planar case, and we will need to look carefully at the decomposition given 
by~\cite{pfaffians}. 
To illustrate: the graph Rotunda, shown in Figure~\ref{fig:rotunda}, was fundamental in the result of~\cite{pfaffians}. It 
has only three perfect matchings that are not equivalent to one 
another under symmetries of the graph, and hence it only gives rise to three nonisomorphic collapses. All three are odd-weightable;
but one is nonplanar and weightable (the digraph in Figure~\ref{fig:nonplanar}), one is nonplanar and not weightable, and one
is planar and not weightable.

Nevertheless, by refining the Pfaffian brace decomposition theorem, we will able to reduce our problem to the planar case.
And we can get a little more from it. When $G$ is the collapse of $(H,M)$, if $H$ is planar, 
then $G$ is not only planar but diplanar; and if $H$ is not planar, then it is so far from planar that if $G$ is weightable then it 
is also nonplanar.
This will imply the convenient fact that if $G$ is 2-strong, 3-weak, planar and weightable then it is diplanar. 

%%%%%%%%%%%%%%%%%%%%%%%%%%%%%%%%%%%%%%%%%%%%%%%%%%%%%%%%%%%%%%%%%%%%%%%%%%%%%%%%%%%%%%%%%%%%%%%%%%%%%%%%%%%%%%%%%%%%%
\section{Basic lemmas}

If $C$ is a dicycle of a digraph $G$, and $u,v,w\in V(C)$ are distinct,
then $C$ passes through these three vertices in some order, one of the two cyclic orders of the triple $\{u,v,w\}$. Let us say 
the ordered triple $(u,v,w)$ is \emph{in order} in $C$ if the dipath of $C$ from $u$ to $v$ does not pass through $w$. 
Thus $(u,v,w)$ is in order in $C$ if and only if $(v,w,u)$ is in order in $C$. A triple $\{u,v,w\}$ of three distinct vertices is \emph{bad}
in a digraph $G$ if  there exist 
dicycles $C,C'$ of $G$, both containing $u,v,w$, such that $(u,v,w)$ is in order in $C$ and $(w, v,u)$ is in order in $C'$.
We say such cycles $C,C'$ \emph{disagree} on $\{u,v,w\}$.

Here is a result that we will use very frequently:
%%%%%%%%%%%%%%%%%%%%%%%%%%%
\begin{thm}\label{triple}
Let $G$ be a digraph; then $G$ is weightable if and only if there is no bad triple.
\end{thm}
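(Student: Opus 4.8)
The plan is to prove both directions by connecting bad triples to the weak double-cycles of~\ref{doublecycle}, or better, to argue directly using characteristic vectors and an ear-basis. For the easy direction, suppose $G$ is weightable with weighting $w$, and suppose for contradiction that $\{u,v,w\}$ is a bad triple, witnessed by dicycles $C,C'$ with $(u,v,w)$ in order in $C$ and $(w,v,u)$ in order in $C'$. The three dipaths of $C$ (from $u$ to $v$, $v$ to $w$, $w$ to $u$) and the three dipaths of $C'$ (from $u$ to $w$, $w$ to $v$, $v$ to $u$) can be recombined: concatenating the $u$-$v$ subpath of $C$ with the $v$-$u$ subpath of $C'$ gives a closed walk, and similarly for the other two pairs; one checks that $w$ assigned to the three corresponding closed walks sums in a way that forces a contradiction with $w(C)=w(C')=1$. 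Concretely, I would write $w(C)=w(P_{uv})+w(P_{vw})+w(P_{wu})$ and $w(C')=w(Q_{vu})+w(Q_{wv})+w(Q_{uw})$, form the three cycles (in the characteristic-vector sense) $P_{uv}\cup Q_{vu}$, $P_{vw}\cup Q_{wv}$, $P_{wu}\cup Q_{uw}$, apply~\ref{cycleint} to see each has integer $w$-value, and observe that their $w$-values sum to $w(C)+w(C')=2$ while each is also expressible so that the total is an even integer only if something degenerates — the point being that generically these three cycles are themselves dicycles or negatives of dicycles, and a parity/sign count rules out $w(C)=w(C')=1$. (I expect the clean way is: at least one of these recombined closed walks, when reduced, contains a sub-dicycle on which $w$ must be $1$, and chasing the arithmetic around the configuration of six dipaths forces $1+1 \ne 1+1$, i.e.\ a genuine contradiction. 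This is really the statement that a bad triple contains a weak $3$-double-cycle or a subdivision thereof.)

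For the converse — the substantive direction — suppose $G$ has no bad triple; I want to produce a weighting. I may assume $G$ is $1$-strong. The idea is to define $w$ by fixing a reference dicycle and using the cyclic-order data. Pick any vertex $r$ and a dicycle $C_0$ through $r$; set up an ear-basis $\mathbf{c}_0,\dots,\mathbf{c}_n$ from an ear sequence $P_1,\dots,P_n$ for $C_0$ in $G$, so that each $P_i$ closes into a dicycle $C_i$. By~\ref{dirsum} it suffices to construct $w$ with $w\cdot\mathbf{c}_i=1$ for all $i$, i.e.\ $w(C_i)=1$; but that alone is automatic for any solution of the (upper-triangular) linear system, so the real content is showing that the resulting $w$ has $w(C)=1$ for \emph{every} dicycle $C$, which by~\ref{dirsum} is equivalent to the existence of \emph{some} weighting. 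So the task reduces to: show that the no-bad-triple hypothesis implies the linear system "$w(C)=1$ for all dicycles $C$" is consistent. I would do this by exhibiting an explicit consistent assignment of "positions": the no-bad-triple condition says that for every triple, all dicycles through it agree on cyclic order, so there is a well-defined ternary cyclic-order relation on $V(G)$ coming from dicycles. The hard part will be upgrading this consistent ternary cyclic order into an actual real-valued "angle" function $\theta:V(G)\to\mathbb{R}/\mathbb{Z}$ such that every dicycle visits vertices in increasing $\theta$-order and winds exactly once; then $w(uv):=\theta(v)-\theta(u)\bmod 1$, chosen consistently, gives $w(C)=1$ on every dicycle. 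Proving that a globally consistent cyclic order admits such a circular embedding of the vertex set is exactly where I expect to spend the effort — it is a Helly/consistency argument on cyclic orders, possibly by induction on $|V(G)|$ using an ear decomposition: add ears one at a time, maintain an angle function, and use the no-bad-triple property at the endpoints of each new ear to slot it in without creating an inconsistency.

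The main obstacle, then, is the converse's embedding step: translating "locally consistent cyclic order on all triples seen by dicycles" into a single circular placement of all vertices compatible with every dicycle simultaneously. I anticipate handling it via the ear sequence: starting from $C_0$, whose vertices can be placed on a circle in the order $C_0$ visits them, each subsequent ear $P_i$ has both endpoints already placed, and the no-bad-triple condition applied to the endpoints of $P_i$ together with any third already-placed vertex pins down which arc of the circle the internal vertices of $P_i$ must lie in; one then checks this is forced to be consistent (two ears cannot demand incompatible arcs, else a bad triple appears). After the placement exists, defining $w$ and verifying $w(C)=1$ for all dicycles is routine. Throughout I would lean on~\ref{dirsum} and~\ref{cycleint} to reduce "weighting" to finitely many linear conditions, and on~\ref{doublecycle}/\ref{eventhm} as a sanity check, though I expect the direct cyclic-order argument to be cleaner than routing through double-cycles.
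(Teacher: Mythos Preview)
Your forward direction is almost there but you never state the punchline. Having formed the three closed directed walks $P_{uv}\cup Q_{vu}$, $P_{vw}\cup Q_{wv}$, $P_{wu}\cup Q_{uw}$, the point is not that their $w$-values are integers, but that each is a \emph{positive} integer: a nonempty closed directed walk decomposes edge-disjointly into dicycles, and each dicycle has $w$-value $1$, so each walk has $w$-value at least $1$. The three walks together use every edge of $C$ and of $C'$ exactly once, so their $w$-values sum to $w(C)+w(C')=2$, contradicting $1+1+1\le 2$. Your talk of ``even integer only if something degenerates'' and ``generically these three cycles are themselves dicycles'' is off-target; the walks need not be simple cycles, and~\ref{cycleint} is not what you want here. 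Once you say ``each is $\ge 1$'', you're done. This is exactly the paper's argument, which phrases it via a $0/1$-weighting (using~\ref{get01}): each of the three walks contains a dicycle and hence an edge of weight $1$, but $C$ and $C'$ together only have two such edges.

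For the converse you are making life far too hard. The paper proves the contrapositive in two lines using~\ref{doublecycle}: if $G$ is not weightable it contains a weak $k$-double-cycle $C_1,\dots,C_k$; pick $v_1\in C_k\cap C_1$, $v_2\in C_1\cap C_2$, $v_3\in C_2\cap C_3$, and one reads off two dicycles (one through $C_1$ then $C_3\cup\dots\cup C_k$; one through $C_1$ backwards, $C_2$, and $C_3\cup\dots\cup C_k$ the other way) that disagree on $\{v_1,v_2,v_3\}$. Your plan to build a global angle function $\theta:V(G)\to\mathbb{R}/\mathbb{Z}$ from the partial ternary cyclic order is a much stronger statement than you need, and the step you flag as the ``main obstacle'' (extending a consistent partial cyclic order to a total one compatible with every dicycle) is genuinely nontrivial and not obviously true --- many triples of vertices lie on no common dicycle at all, so you would need a delicate consistency argument to fill in the missing comparisons. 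Moreover, your appeal to~\ref{dirsum} is circular: that lemma assumes $G$ is already weightable. You mention~\ref{doublecycle} only as a ``sanity check'' and predict the cyclic-order route is cleaner; it is not. Route through~\ref{doublecycle}.
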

\begin{proof}
First, we assume that $G$ is weightable, and suppose that $\{u,v,w\}$ is a bad triple. By~\ref{get01} we may choose a 0/1-valued weighting $w$. 
Let $F$ be the set of edges $e$ 
with $w(e) = 1$. So every dicycle has exactly one edge in $F$. Now choose dicycles $C,C'$ of $G$,
such that $(u,v,w)$ is in order in $C$ and $(w,v,u)$ is in order in $C'$. Let $C(uv)$
be the subpath of $C$ from $u$ to $v$, and define $C(vw), C(wu), C'(uw), C'(wv), C'(vu)$ similarly. Since $|E(C)\cap F|=1$
we may assume that $C(vw), C(wu)$ both have no edges in $F$, and similarly two of $C'(uw), C'(wv), C'(vu)$ have no edges in $F$.
But $C(vw)\cup C'(wv)$ includes a dicycle, which has an edge in $F$, and so $F\cap C'(wv)\ne \emptyset$; and similarly
$C(wu)\cup C'(uw)$ includes a dicycle and hence $C'(uw)$ has an edge in $F$, a contradiction. 

For the converse, now we assume that $G$ is not weightable, and therefore includes a weak $k$-double-cycle for some $k\ge 3$;
let $C_1,\dots, C_k$ be as in the definition of weak $k$-double-cycle. Choose $v_1\in V(C_k\cap C_1)$, and $v_2\in V(C_1 \cap C_2)$, and
$v_3\in V(C_2\cap C_3)$. Then there is a dipath $P_1$ of $C_1$ from $v_1$ to $v_2$, and a dipath $P_2$ of $C_1$ from 
$v_2$ to $v_3$, and a dipath $P_3$ of $C_3\cupcup C_k$ from $v_3$ to $v_1$, and the union of these three paths is a dicycle in which $(v_1,v_2,v_3)$ is in order. But similarly, there is a dipath $Q_1$ of $C_1$ from $v_2$ to $v_1$, and 
 a dipath $Q_2$ of $C_2$ from $v_3$ to $v_2$, and a dipath $Q_3$ of $C_3\cupcup C_k$ from $v_1$ to $v_3$,
giving a dicycle $C'$ in which $(v_3,v_2,v_1)$ is in order. So $\{v_1,v_2,v_3\}$ is a bad triple. 
This proves~\ref{triple}.\end{proof}

We need a lemma which allows us to to convert a 0/1-valued weighting to one that is more convenient:
%%%%%%%%%%%%%%%%%%%%%%%%%%%
\begin{thm}\label{pickroot}
Let $G$ be a weightable digraph and let $u\in V(G)$. Then there is a 0/1-valued weighting $w$ of $G$ such that $w(e) = 1$
for every edge $e$ with head $u$ and $w(e) = 0$ for every edge with tail $u$.
\end{thm}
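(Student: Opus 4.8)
The plan is to start from an arbitrary $0/1$-valued weighting $w_0$ of $G$ (which exists by~\ref{get01}), and then modify it by adding characteristic vectors of edge cuts, which preserves the property of being a weighting. Recall from the proof of~\ref{get01} that for any $X\subseteq V(G)$, if $c_X$ denotes the function that is $1$ on edges from $X$ to $V(G)\setminus X$, $-1$ on edges from $V(G)\setminus X$ to $X$, and $0$ otherwise, then $w+c_X$ is a weighting whenever $w$ is (since every dicycle crosses the cut an equal number of times in each direction). So $w_0 + c_X$ is again a $0/1$-weighting provided its values stay in $\{0,1\}$.

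First I would reduce to the case where $G$ is $1$-strong (edges in no dicycle can be weighted freely and removed, and distinct weak components are independent), so that every edge lies on a dicycle. Then I would pick $X = \{u\}$ if necessary, or more precisely iterate the cut-shifting. The key observation is: take $w_0$ to be a weighting minimizing $\sum_e w_0(e)$ among all $0/1$-valued weightings subject to $w_0(e) = 0$ for every edge $e$ with tail $u$; such a choice exists once we know the constraint set is nonempty. To see it is nonempty, note that for any edge $e = uv$ with tail $u$ and $w_0(e) = 1$, adding $-c_{\{u\}}$ to $w_0$ subtracts $1$ from all edges out of $u$ and adds $1$ to all edges into $u$; as long as no edge out of $u$ currently has weight $0$ and no edge into $u$ currently has weight $1$, this keeps us in $\{0,1\}$ and strictly decreases the total weight on out-edges of $u$. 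Iterating, we reach a $0/1$-weighting in which either every out-edge of $u$ has weight $0$, or some out-edge has weight $0$ while some out-edge has weight $1$ — and the latter is impossible for a $1$-strong digraph, since such a digraph contains a dicycle through both an out-edge of weight $1$ and... — here I need to argue more carefully, so instead I would directly take $X$ to be the set of all vertices reachable from $u$ along dipaths using only edges of weight $0$ (mirroring the final paragraph of the proof of~\ref{get01}). Then every out-edge of $u$ with weight $0$ goes into $X$, so $c_X$ has no $-1$ entry on out-edges of $u$, while every out-edge of weight $1$ leaves $X$; one checks $w_0 - c_X$ stays $0/1$-valued on those edges and kills all weight on out-edges of $u$.

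Once all edges with tail $u$ have weight $0$, I would handle the in-edges. Every dicycle $C$ through $u$ enters $u$ by some in-edge $e$ and leaves by an out-edge of weight $0$; since $w(C) = 1$ and (by choice of $X$) we can arrange that among edges not incident with $u$ the weighting is unchanged, the dicycle $C$ must have its unique weight-$1$ edge somewhere. The point is to show the unique weight-$1$ edge of every dicycle through $u$ can be forced onto the in-edge at $u$. For this I would apply the same cut trick with the set $X'$ of vertices that can \emph{reach} $u$ along dipaths of weight $0$: adding $c_{X'}$ raises by $1$ the weight of every edge entering $X'$ and lowers by $1$ every edge leaving $X'$. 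Since no edge leaves $u$ with positive weight, and every in-edge of $u$ of weight $0$ comes from a vertex that reaches $u$ with weight $0$ (namely its tail), those in-edges get raised to $1$, while in-edges already of weight $1$ come from outside $X'$ and... again I must check the signs line up; the cleanest route is to choose $w$ minimizing $\sum_{e:\, \text{head }u} (1 - w(e))$ over all $0/1$-weightings with $w$ vanishing on out-edges of $u$, and argue an edge $e_0 = xu$ with $w(e_0) = 0$ forces, via the dicycle through $e_0$, the existence of a $0$-weight dipath from $u$ back to $x$, whence shifting by the corresponding cut strictly improves the objective — a contradiction.

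\textbf{Main obstacle.} The delicate point is the bookkeeping in the second half: after zeroing out the out-edges of $u$, I must modify $w$ further to saturate the in-edges \emph{without disturbing} the out-edges (which are already correct) and without leaving $\{0,1\}$. This requires choosing the cut set so that $u$ itself lies on the correct side (so that out-edges, all of weight $0$, are not touched or are touched in the harmless direction), and verifying that every in-edge of $u$ of weight $0$ is in the cut and every in-edge of weight $1$ is not. The existence of the requisite $0$-weight dipaths back to $u$ — which is where $1$-strongness and the fact that every edge lies on a dicycle get used — is the crux, and it mirrors exactly the minimality argument at the end of the proof of~\ref{get01}, so I expect it to go through cleanly once set up correctly.
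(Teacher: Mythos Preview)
Your approach is essentially the paper's, but you have overcomplicated it by splitting into two steps. In fact your step~1 alone---the shift $w_1 := w_0 - c_X$ with $X$ the set of vertices reachable from $u$ by dipaths of $w_0$-weight zero---already yields the full conclusion, for in-edges as well as out-edges of $u$. You assert (correctly, but without proof) that every out-edge $uv$ of weight $1$ has $v\notin X$: the reason is that if $v\in X$, the zero-weight dipath from $u$ to $v$ together with the zero-weight dipath from $v$ back to $u$ (extracted from any dicycle through $uv$, whose unique weight-$1$ edge is $uv$) would contain a dicycle of weight zero. The same style of observation handles the in-edges for free: an in-edge $xu$ with $x\in X$ already has $w_0(xu)=1$, since the zero-weight dipath from $u$ to $x$ together with $xu$ is a dicycle; and an in-edge $xu$ with $x\notin X$ lies in $D^-$, hence has $w_0(xu)=0$ and thus $w_1(xu)=1$. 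You also owe the global check that $w_1\in\{0,1\}$ on all of $D^+\cup D^-$, not just on edges at $u$; this is exactly the ``every $D^+$ edge has weight $1$, every $D^-$ edge has weight $0$'' verification that the paper carries out.

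The paper frames the same construction via maximality rather than a one-shot shift: it chooses $w$ with $X_w$ maximal over all $0/1$-weightings, notes that the shift would strictly enlarge $X_w$ unless $D^+=\emptyset$, concludes $X_w=V(G)$, and then reads off both conditions for $w$ itself. Your step~2, besides being unnecessary, contains genuine errors: adding $c_{X'}$ \emph{lowers} the weight of edges entering $X'$, not raises it; and an in-edge $e_0=xu$ of weight $0$ does \emph{not} force a zero-weight dipath from $u$ to $x$ (the dicycle through $e_0$ has total weight $1$, all of it on the $u$-to-$x$ portion)---it is the in-edges of weight $1$ that produce such a dipath.
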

\begin{proof} We may assume that every edge of $G$ is in a dicycle (since edges not in a cycle can be given any weight we want). We can also assume that $G$ is 1-weak, and hence 1-strong.
By~\ref{get01}, there is a 0/1-valued weighting $w$. Let $X_w$ be the set of all vertices $v$ such that there is a dipath $P$
of $G$ from $u$ to $v$ where $w(e) = 0$ for each edge $e\in E(P)$, and choose $w$ with $X_w$ maximal. 
Let $D^+$ be the set of edges $ab$ with $a\in X$ and $b\notin X$, and let $D^-$ be the set of edges $ab$ with $a\notin X$ and $b\in X$.
From the definition of $X_w$, it follows that $w(e) = 1$ for each $e\in D^+$. Moreover, we claim that
$w(e) = 0$ for each edge $e\in D^-$. To see this, observe that $e$ is in a dicycle $C$, because $G$ is 1-strong, and so $C$ contains an edge in $D^+$; and since $w(C)=1$, it follows that $w(e) = 0$, as we claimed.

Define $w'$ by:
\[
w'(e) = \begin{cases}
    w(e)-1 & \text{if $e\in D^+$,} \\
    w(e)+1 & \text{if $e\in D^-$,} \\
    w(e) & \text{otherwise}.
\end{cases}
\]
Then $w'$ is a 
0/1-valued weighting. But $X_w\subseteq X_{w'}$, and so $X_{w'}=X_w$ from the choice of $w$. Consequently $D^+=\emptyset$, and 
therefore $X_w=V(G)$ since $G$ is 1-strong. We deduce that $w(f) = 1$ for every $f=vu$ with head $u$ since there is a dipath $P$
from $u$ to $v$ with $w(e) = 0$ for each edge $e\in E(P)$, and adding $f$ to $P$ makes a dicycle. Moreover, for every edge $f$ 
with tail $u$, since $f$ belongs to a dicycle that contains an edge with head $u$, it follows that $w(f) = 0$. This proves 
\ref{pickroot}.\end{proof}

Similarly, we can obtain a 0/1-valued weighting such that $w(e) = 0$
for every edge $e$ with head $u$, and $w(e) = 1$ for every edge with tail $u$. It is easy to convert the proof above
to a poly-time algorithm that, given 
as input a 1-strong digraph, a 0/1-valued weighting, and a vertex $u$, outputs a weighting as in~\ref{pickroot}.

If $e$ is an edge of a digraph $G$, we denote by $G/e$ the digraph obtained by contracting $e$.
If $G$ is a digraph, we say an edge $e=uv$ is a \emph{singular edge} of $G$ if $u\ne v$ and either no edge different from $e$ has head $v$ or no
edge different from $e$ has tail $u$, and the operation of contracting this edge is called \emph{singular contraction}. 
If $H$ can be obtained from a subdigraph $G'$ of a digraph $G$ by repeated singular contraction, $H$ is said to be a \emph{butterfly minor} of $G$.
\begin{thm}\label{butterfly}
If $e$ is a singular edge of $G$, then $G$ is weightable if and only if $G/e$ is weightable. Consequently, 
if $G$ is weightable and $H$ is a butterfly minor of $G$, then $H$ is weightable. 
\end{thm}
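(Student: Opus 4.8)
The plan is to set up a dicycle correspondence between $G$ and $G/e$ that alters edge sets only by the single edge $e$, and then to transport weightings across it, using the normalization $w(e)=0$ provided by~\ref{pickroot}.

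Write $e=uv$, let $z$ be the vertex of $G/e$ obtained by identifying $u$ and $v$, and identify $E(G/e)$ with $E(G)\setminus\{e\}$ in the obvious way. Since $e$ is singular, either $e$ is the only edge of $G$ with head $v$, or $e$ is the only edge with tail $u$; the two cases are interchanged by reversing all edges of $G$ (which preserves weightability, since a dicycle and its reverse have the same edge set, and commutes with contracting $e$), so I would treat only the first. The key structural point to record is that a dicycle of $G$ not using $e$ cannot pass through $v$, since the only edge into $v$ is $e$. From this one gets: every dicycle $C$ of $G$ yields a dicycle $C'$ of $G/e$ with $E(C')=E(C)$ if $e\notin E(C)$ and $E(C')=E(C)\setminus\{e\}$ if $e\in E(C)$; and conversely every dicycle $C'$ of $G/e$ lifts to a dicycle $C$ of $G$ with the same relationship between edge sets — if $C'$ misses $z$ it is already a dicycle of $G$, while if $C'$ passes through $z$ then its edge entering $z$ has head $u$ in $G$ (there being no other edge into $v$), and one either leaves $C'$ alone or splices $e$ in just after that edge, according to whether the edge of $C'$ leaving $z$ has tail $u$ or tail $v$. (A little care is needed with loops or parallel edges created by the contraction, e.g.\ a reverse edge $vu$ becoming a loop at $z$, but these fall under the same correspondence.)

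Given this, both implications are short. If $G$ is weightable, then by the variant of~\ref{pickroot} noted just after its proof there is a 0/1-valued weighting $w$ of $G$ with $w(f)=0$ for every edge $f$ with head $v$; in particular $w(e)=0$. Restricting $w$ to $E(G/e)$ and lifting each dicycle of $G/e$ to one of $G$ (whose weight is $1$, and which differs only in the edge $e$ of weight $0$) shows the restriction is a weighting of $G/e$. Conversely, given a weighting $w'$ of $G/e$, extend it to $E(G)$ by setting $w(e)=0$; for each dicycle $C$ of $G$, passing to the corresponding dicycle $C'$ of $G/e$ gives $w(C)=w'(C')=1$ (with an extra summand $w(e)=0$ when $e\in E(C)$), so $w$ is a weighting of $G$. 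This proves the first assertion. For the consequence, note first that any subdigraph of a weightable digraph is weightable, since every dicycle of a subdigraph is a dicycle of the whole digraph, so a weighting restricts; then, if $H$ is a butterfly minor of $G$, it arises from such a subdigraph by finitely many singular contractions, and applying the first assertion at each step (the contracted edge being singular in the current digraph by definition) completes an induction.

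I expect the only real obstacle to be verifying the dicycle correspondence carefully — that it is exhaustive on both sides, behaves correctly with loops and parallel edges, and that singularity of $e$ is exactly what forces a dicycle avoiding $e$ to avoid $v$. Once that is pinned down, the weighting manipulations are routine.
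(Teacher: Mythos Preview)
Your proposal is correct and follows essentially the same approach as the paper: reduce to one case of singularity via edge reversal, use~\ref{pickroot} (or its variant) to normalize $w(e)=0$, and transport weightings through the natural dicycle correspondence between $G$ and $G/e$. The paper happens to pick the other case of singularity (no other edge has tail $u$) and is terser about the correspondence, but the argument is the same.
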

\begin{proof}
Let $e=uv$ be a singular edge of $G$. We may assume that no edge different from $e$ has tail $u$ (the other case is the same, by reversing the direction of all edges). Suppose that $G$ is weightable. By~\ref{pickroot}, there is a 0/1-valued weighting $w$
of $G$ with $w(e) = 0$. But then the restriction of $w$ to $E(G')$ is a weighting of $G/e$, since for every dicycle $C'$ of $G'$,
either $C'$ is a dicycle of $G$, or there is a dicycle $C$ containing $e$ with $C/e = C'$.

Conversely, suppose that $w'$ is a weighting of $G/e$, and define $w(e) = 0$ and $w(f) =w'(f)$ for each edge $f\ne e$ of $G$. 
For every dicycle $C$ of $G$, either $C$ is a dicycle of $G/e$, or $e\in E(C)$ and $C/e$ is a dicycle of 
$G/e$, and it follows that $w$ is a weighting of $G$. This proves~\ref{butterfly}.\end{proof}

%%%%%%%%%%%%%%%%%%%%%%%%%%%%%%%%%%%%%%%%%%%%%%%%%%%%%%%%%%%%%%%%%%%%%%%%%%%%%%%%%%%%%%%%%%%%%%%%%%%%%%%%%%%%%%%%%%%%%%%%%%%%%%%%
\section{Some easy reductions}\label{sec:easy}
Let us say a digraph is \emph{simple} if it has no loops or parallel edges (it might have directed cycles of length two).
As we said before, to understand the weightable digraphs, it suffices to understand those that are 1-strong, and we can also assume that $G$ is simple.
It is slightly less clear that we may assume that $G$ is 2-strong, so let us prove that. Suppose that $G$
is a 1-strong digraph, and $c\in V(G)$, and there is a partition $(A,B)$ of $V(G)\setminus \{c\}$ into two nonempty sets $A,B$, such that 
no edge of $G$ is from $B$ to $A$. Let $G_1$ be the digraph made from $G[A\cup \{c\}]$ by adding an edge from $a\in A$ to $c$ for each $ab\in E(G)$ with $b\in B$,
and similarly let $G_2$ be obtained from $G[B\cup \{c\}]$ by adding an edge from $c$ to $b\in B$ for each such $ab\in E(G)$. Then remove any parallel edges in $G_1$ and $G_2$.

See Figure~\ref{fig:1reduce}. There will be similar drawings for other constructions, so let us explain our conventions for 
drawing constructions/decompositions. Ovals indicate sets of vertices. Thick arrows indicate (possibly empty) \emph{sets} of edges, all going in the direction of the arrow, and squiggly lines indicate sets of edges that may go in either direction. When the position of one end of an arrow or squiggly line matches in the drawing of $G$ and $G_i$ for some $i\in\{1,2\}$, it indicates that those edges of $G$ are to be ``rerouted'' in $G_i$ in the way depicted; for instance, in Figure~\ref{fig:1reduce}, all edges from $A$ to $B$ in $G$ get rerouted so they now go from $A$ to $c$ in $G_1$ and from $c$ to $B$ in $G_2$. There are a few further conventions that we explain later.

\begin{figure}[htb!]
    \centering
    \begin{tikzpicture}[scale=1.3]
        \vset{A}{$A$}{0,2};
        \vset{B}{$B$}{0,0};

        \hdedge{A.315}{B.45};

        \vtx[left:$c$]{c}{-0.5, 1};
        \sline{c}{A.225};
        \sline{c}{B.135};

        \node (G) at (0,-0.75) {$G$};

        \draw[<->, thick] (1.5,1) -- (2.5,1) node[midway, above] {\ref{1reduce}};

        \vset{A}{$A$}{4,2.5};
        
        \vtx[below:$c$]{c}{4, 1.5};
        \sline[out=180, in=270]{c}{A.225};
        \hdedge[out=270, in=0]{A.315}{c};
        \node (G1) at (4, 3.25) {$G_1$};

        \vset{B}{$B$}{4,-0.5};
        
        \vtx[above:$c$]{c}{4, 0.5};
        \sline[out=180, in=90]{c}{B.135};
        \hdedge[out=0, in=90]{c}{B.45};
        \node (G2) at (4, -1.25) {$G_2$};
    \end{tikzpicture}
    \caption{Construction for building non-2-strong weightable digraphs.}
    \label{fig:1reduce}
\end{figure}
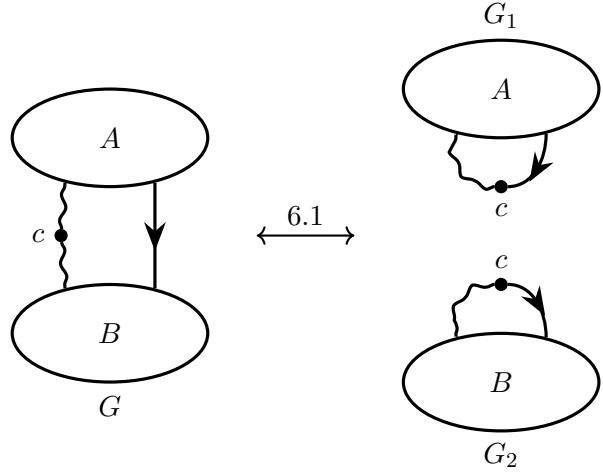

%%%%%%%%%%%%%%%%%%%%%%%%%%%%%
\begin{thm}\label{1reduce}
If $G, c, A,B,G_1,G_2$ are as above (or in Figure~\ref{fig:1reduce}), then $G$ is weightable if and only if $G_1,G_2$ are both weightable. 
%Moreover, if we are given 0/1-valued weightings of $G_1,G_2$, we can find a 0/1-valued weighting of $G$ in poly-time.
\end{thm}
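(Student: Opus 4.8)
The plan is to exploit the structural description of dicycles of $G$ in terms of dicycles of $G_1$ and $G_2$. The key observation is that since no edge of $G$ goes from $B$ to $A$, every dicycle $C$ of $G$ is of exactly one of three types: (i) $C$ lies entirely in $G[A\cup\{c\}]$ (and does not use any edge of $G$ that was ``rerouted''), hence is a dicycle of $G_1$; (ii) $C$ lies entirely in $G[B\cup\{c\}]$, hence is a dicycle of $G_2$; or (iii) $C$ passes through $c$ and meets both $A$ and $B$, in which case it enters $B$ via some edge $cb$ (present in $G$, since the only edges into $B$ from outside $B$ have tail $c$) --- wait, more carefully: $C$ must leave $A$ to reach $B$, and the only way is through an edge $ab$ with $a\in A$, $b\in B$; and it must return, necessarily through $c$ since every edge from $\{c\}\cup B$ into $A$ has tail $c$. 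So in case (iii), $C$ consists of a dipath of $G[A\cup\{c\}]$ from $c$ to some $a\in A$, the edge $ab$, and a dipath of $G[B\cup\{c\}]$ from $b$ back to $c$. Replacing $ab$ together with the $A$-portion by the rerouted edge $a\to c$ in $G_1$ gives a dicycle $C_1$ of $G_1$ through $c$; replacing $ab$ together with the $B$-portion by the rerouted edge $c\to b$ in $G_2$ gives a dicycle $C_2$ of $G_2$ through $c$. Conversely every dicycle of $G_1$ through $c$ arises this way from some $A$-portion, and similarly for $G_2$.

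First I would set up this correspondence precisely, noting it is a bijection-like correspondence at the level of edge sets away from the rerouted edges: if $w$ is a weighting of $G$, I will use \ref{pickroot} to first assume $w$ is $0/1$-valued with $w(f)=0$ for every edge $f$ with tail $c$ and $w(f)=1$ for every edge with head $c$. Then I define $w_1$ on $G_1$ by $w_1(e)=w(e)$ for edges $e$ inherited from $G[A\cup\{c\}]$, and $w_1(a\to c)=w(ab)$ for each rerouted edge (if several $ab$ with the same $a$ get identified after removing parallel edges, one checks they give the same value, or one simply keeps the construction before deletion and notes parallel edges with equal weight can be merged). Using the case analysis: dicycles of type (i) get weight $1$ directly; a dicycle $C_1$ of $G_1$ through $c$ corresponds to a dicycle $C$ of $G$ of type (iii), and $w_1(C_1)=w(\text{$A$-portion})+w(ab)$; since the $B$-portion of the corresponding $C$ runs from $b$ to $c$ and its last edge has head $c$ hence weight $1$, and by the normalization every other edge on a shortest such return path can be taken to contribute $0$ --- actually cleaner: pick any fixed dipath $Q$ in $G[B\cup\{c\}]$ from $b$ to $c$; then $w(\text{$A$-portion})+w(ab)+w(Q)=1$, and $w(Q)=1$ because $Q$'s final edge has head $c$ so weight $1$ and, again by the normalization, $w$ restricted to $G[B\cup\{c\}]$ is forced to vanish on all edges not of head $c$ along... hmm, that is not automatic. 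So instead I would argue directly: normalize the $B$-side weighting separately so that in $G_2$ every edge with tail $c$ has weight $0$, forcing $w(Q)=1$ for the fixed $Q$; this is legitimate because adding $c_X$-type corrections (as in the proof of \ref{get01}) on the $B$-side does not change $w(C)$ for any dicycle of $G$. With that normalization $w_1(C_1)=w(C)=1$, so $w_1$ is a weighting of $G_1$, and symmetrically $w_2$ is a weighting of $G_2$.

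For the converse, suppose $w_1,w_2$ are $0/1$-valued weightings of $G_1,G_2$. By \ref{pickroot} and its stated variant, normalize $w_1$ so every edge with tail $c$ in $G_1$ has weight $0$ and every edge with head $c$ has weight $1$ (note the rerouted edges $a\to c$ have head $c$, so they all get weight $1$); normalize $w_2$ so every edge with head $c$ in $G_2$ has weight $1$ --- in fact since all edges of $G_2$ incident with $c$ on the head side come from the $B[B\cup\{c\}]$ part, this is fine --- and every edge with tail $c$ (these include the rerouted edges $c\to b$) has weight $0$. Now define $w$ on $G$ by $w(e)=w_1(e)$ for $e\in E(G[A\cup\{c\}])$, $w(e)=w_2(e)$ for $e\in E(G[B\cup\{c\}])$, and $w(ab)=0$ for every rerouted edge $ab$. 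Using the case analysis: a type-(i) dicycle is a dicycle of $G_1$ so has $w$-weight $1$; type-(ii) similarly via $w_2$; a type-(iii) dicycle $C$ with $A$-portion $R_A$ from $c$ to $a$, edge $ab$, and $B$-portion $R_B$ from $b$ to $c$ has $w(C)=w_1(R_A)+0+w_2(R_B)$; and $w_1(R_A)+w_1(a\to c)=w_1$ of the corresponding dicycle of $G_1$ $=1$ with $w_1(a\to c)=1$, so $w_1(R_A)=0$; while $w_2(c\to b)+w_2(R_B)=1$ with $w_2(c\to b)=0$, so $w_2(R_B)=1$; hence $w(C)=1$. So $w$ is a weighting of $G$.

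The main obstacle I anticipate is bookkeeping around the removal of parallel edges and making the three normalizations mutually compatible rather than circular: one must be careful that the normalization chosen on the $B$-side in the forward direction does not interfere with the claim about the $A$-side, and that \ref{pickroot} is applied at $c$ consistently in both $G_1$ and $G_2$. An alternative, cleaner route that sidesteps explicit weightings would be to use \ref{triple}: show directly that $G$ has a bad triple iff $G_1$ or $G_2$ does. Since no edge goes from $B$ to $A$, any dicycle meeting both $A$ and $B$ passes through $c$, so a bad triple $\{u,v,w\}$ of $G$ witnessed by disagreeing dicycles $C,C'$ must have all of $u,v,w$ in $A\cup\{c\}$ or all in $B\cup\{c\}$ (if the triple straddles, both $C$ and $C'$ go through $c$ and one can push the triple to one side, possibly replacing one vertex by $c$); then translate $C,C'$ into disagreeing dicycles of $G_1$ (or $G_2$) via the type-(iii) rerouting, which preserves cyclic order of the surviving vertices. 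Conversely a bad triple in $G_i$ lifts to one in $G$ by reversing the rerouting. I would likely present the \ref{triple}-based argument as the main proof since it avoids the delicate weighting normalizations, and the hard part there is the case where the bad triple straddles the partition, handled by the observation that disagreeing straddling dicycles both pass through $c$ and hence restrict to disagreeing dicycles on one side after contracting the through-$c$ detour into a rerouted edge.
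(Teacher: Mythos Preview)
Your argument for ``$G_1,G_2$ weightable $\Rightarrow$ $G$ weightable'' is correct and is the paper's argument with a cosmetic change: the paper normalizes $w_1$ so that edges with head $c$ have weight $1$ and $w_2$ so that edges with tail $c$ have weight $1$, then sets $w(ab)=1$ on crossing edges, whereas you set $w(ab)=0$; the dicycle check is identical.

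Your explicit-weighting argument for ``$G$ weightable $\Rightarrow$ $G_1,G_2$ weightable'' has a genuine gap, and your own ``hmm, that is not automatic'' is exactly right. With \ref{pickroot} at $c$ (head-$c$ edges weight $1$), any path $R_B$ from $b$ to $c$ in $G[B\cup\{c\}]$ has its last edge of weight $1$, so in a $0/1$-valued weighting $w(R_B)\ge 1$, never $0$; hence your formula $w_1(a\to c):=w(ab)$ does not give a weighting of $G_1$. Concretely: take $A=\{a\}$, $B=\{b_1,b_2\}$, edges $ca,\; ab_1,\; b_1b_2,\; b_2c,\; cb_1$. The pickroot weighting at $c$ forces $w(ab_1)=0$, so your $w_1$ gives the $2$-cycle $c\DD a\DD c$ of $G_1$ total weight $0$. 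The $c_X$ patch you sketch does not rescue this: you cannot keep ``head $c$ has weight $1$'' while forcing $w(R_B)=0$, and your phrasing ``in $G_2$ every edge with tail $c$ has weight $0$'' presupposes a weighting of $G_2$, which is what you are trying to produce. (A correct explicit formula would be $w_1(a\to c):=1-w(R_A)$ for any fixed $R_A$ from $c$ to $a$; but this is not what you wrote.)

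The paper handles this direction exactly via your proposed alternative, by contrapositive through \ref{triple}: if $G_1$ is not weightable, pick dicycles $C,C'$ of $G_1$ disagreeing on some triple $\{u,v,w\}\subseteq A\cup\{c\}$, and lift each to a dicycle of $G$ by replacing any rerouted edge $a\to c$ (arising from some $ab\in E(G)$) with $ab$ followed by a dipath of $G[B\cup\{c\}]$ from $b$ to $c$; the lifted cycles still disagree on $\{u,v,w\}$. Your sketch of this lifting is correct. Note also that the ``straddling'' case you flag as hardest (a bad triple of $G$ meeting both $A$ and $B$) is for the \emph{other} implication, which you already proved cleanly with weightings, so it is not needed.
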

\begin{proof}
Since $G$ is 1-strong, it follows that for each $a\in A$ there is a dipath from $c$ to $a$, and this path is a subpath of $G[A\cup \{w\}]$ since there is no edge from $B$ to $A$. Consequently $G_1$ is 1-strong, and similarly so is $G_2$. 

Suppose first that $w_i$ is a 0/1-valued weighting of $G_i$ for $i = 1,2$. By~\ref{pickroot}, we may assume that $w_1(e) = 1$
for every edge of $G_1$ with head $c$, and $w_2(e) = 1$ for every edge of $G_2$ with tail $c$. For each $e\in E(G)$, define
$w(e)$ by: $w(e) = w_i(e)$ if $e\in E(G_i)$ for $i = 1,2$, and $w(e) = 1$ if $e$ is from $A$ to $B$. We claim $w$ is a weighting 
of $G$. To see this, it suffices to check that $w(C) = 1$ for every dicycle $C$ of $G$ that is not a subdigraph of either of 
$G[A\cup \{c\}], G[B\cup \{c\}]$. Such a cycle $C$ contains an edge $ab$ from $A$ to $B$, and since there is no edge from 
$B$ to $A$, it follows that $c\in C$, and $C$ consists of the union of a dipath $P_1$ of $G[A\cup \{c\}]$ from $c$ to $a$, the edge
$ab$, and a dipath $P_2$ of $G[B\cup \{c\}]$ from $b$ to $c$. But adding the edge $bc$ to $P_1$ makes a dicycle of $G_1$,
and since $w_1(bc)=1$, we deduce that $w_1(P_1) = 0$. Similarly $w_2(P_2) = 0$, and so $w(C)=1$ as desired. Thus $w$ is a weighting
of $G$.

Conversely, suppose that one of $G_1,G_2$, say $G_1$, is not weightable, and let 
dicycles $C,C'$ of $G_1$ disagree on some bad triple $\{u,v,w\}$ of $G_1$.
Define
a cycle $D$ of $G$ as follows. If every edge of $C$ belongs to $G$ let $D=C$. Otherwise, exactly one edge of $C$ is not an edge of 
$G$, and any such edge is from some $a\in A$ to $c$ where there is an edge $ab\in E(G)$ for some $b\in B$. Choose a dipath $P$ of $G[B]$ from $b$ to $c$, and let $D$ be the dicycle of $G$
made by the union of the dipath of $C$ from $c$ to $a$, the edge $ab$, and $P$. Define $D'$ similarly, starting from $C'$.
Then $(u,v,w)$ is in order in $D$ and $(w,v,u)$ is in order in $D'$, and so $G$ is not weightable. This proves~\ref{1reduce}.\end{proof}

In view of~\ref{1reduce}, it suffices to understand the 2-strong weightable digraphs. 
Let us see also that we can assume $G$ is 3-weak. Suppose then that $G$ is 2-strong and $|V(G)|\ge 4$, and so $G$
is 2-weak, and assume $G$ is not 3-weak. Choose distinct
$c,d\in V(G)$ and a partition $(A,B)$ of $V(G)\setminus \{c,d\}$ such that there is no edge of $G$ between $A,B$ in either direction.
Let $G_1$ be obtained from $G[A\cup \{c,d\}$ by adding an edge $cd$ and an edge $dc$ if they are not already present, and define $G_2$
similarly from $G[B\cup \{c,d\}]$. (See Figure~\ref{fig:2weakreduce}. In this figure and future (de)compositions, thin arrows 
between vertices represent single edges. Additionally, there may be edges between vertices outside of the labeled vertex sets that 
are not shown in the figure.  For example, there may be an edge from $c$ to $d$ in $G$.)

\begin{figure}[htb!]
    \centering
    \begin{tikzpicture}[scale=1.3]
        \vset{A}{$A$}{0,2};
        \vset{B}{$B$}{0,0};

        \vtx[left:$c$]{c}{-0.5, 1};
        \vtx[right:$d$]{d}{0.5, 1};
        \sline{c}{A.225};
        \sline{c}{B.135};
        \sline{d}{A.315};
        \sline{d}{B.45};

        \node (G) at (0,-0.75) {$G$};

        \draw[<->, thick] (1.5,1) -- (2.5,1) node[midway, above] {\ref{2weakreduce}};

        \vset{A}{$A$}{4,2.5};
        
        \vtx[left:$c$]{c}{3.5, 1.5};
        \vtx[right:$d$]{d}{4.5, 1.5};
        \sline{c}{A.225};
        \sline{d}{A.315};
        \dedge[bend left=45]{c}{d};
        \dedge[bend left=45]{d}{c};
        \node (G1) at (4, 3.25) {$G_1$};

        \vset{B}{$B$}{4,-0.5};
        
        \vtx[left:$c$]{c}{3.5, 0.5};
        \vtx[right:$d$]{d}{4.5, 0.5};
        \sline{c}{B.135};
        \sline{d}{B.45};
        \dedge[bend left=45]{c}{d};
        \dedge[bend left=45]{d}{c};
        \node (G2) at (4, -1.25) {$G_2$};
    \end{tikzpicture}
    \caption{Construction for building non-3-weak weightable digraphs.}
    \label{fig:2weakreduce}
\end{figure}
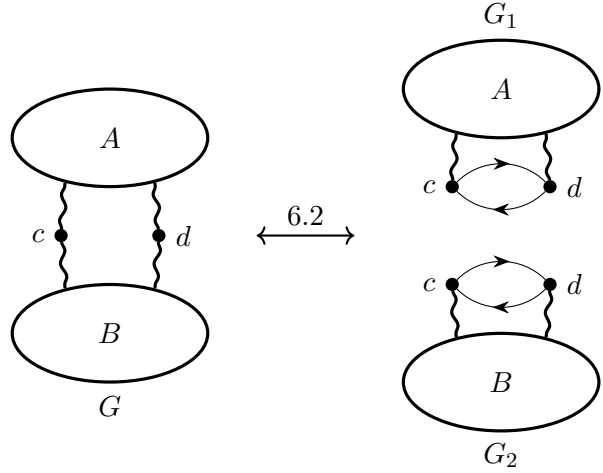

%%%%%%%%%%%%%%%%%%%%%%%%%%%%%%%%%
\begin{thm}\label{2weakreduce}
With $G, c,d,A,B,G_1,G_2$ as above (or in Figure~\ref{fig:2weakreduce}), then $G$ is weightable if and only if $G_1,G_2$ are weightable. 
\end{thm}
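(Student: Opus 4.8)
The plan is to prove~\ref{2weakreduce} by adapting the argument for~\ref{1reduce}, using the characterization of weightability by bad triples (\ref{triple}) for the ``only if'' direction and a gluing argument for weightings for the ``if'' direction. First I would observe that $G_1,G_2$ are 2-strong: since $G$ is 2-strong and there are no edges between $A$ and $B$, any dipath in $G$ from a vertex of $A\cup\{c,d\}$ to another such vertex avoiding a single vertex can be taken to lie inside $G[A\cup\{c,d\}]$, except possibly for passing through $\{c,d\}$; the added edges $cd,dc$ in $G_1$ absorb exactly the information that $G\setminus(A\cup B)$ (plus the rest of $G$) provides a dipath between $c$ and $d$ in each direction. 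In particular each $G_i$ is 1-strong, so the lemmas~\ref{pickroot},~\ref{triple} apply.

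For the ``if'' direction: suppose $w_1,w_2$ are 0/1-weightings of $G_1,G_2$. Using~\ref{pickroot} applied to vertex $c$, I would normalize $w_1$ so that every edge of $G_1$ with head $c$ has weight $1$ and every edge with tail $c$ has weight $0$; in particular the added edge $dc$ of $G_1$ has weight $1$ and $cd$ has weight $0$. Do the reverse normalization on $G_2$, so that in $G_2$ the edge $cd$ has weight $1$ and $dc$ has weight $0$ (i.e. normalize using~\ref{pickroot} so every edge with head $d$ has weight $1$, or equivalently use the ``similarly'' variant of~\ref{pickroot}). Then define $w$ on $E(G)$ by $w(e)=w_i(e)$ for $e\in E(G_i)$; edges of $G$ not in $E(G_1)\cup E(G_2)$ are exactly those incident with neither $A$ nor $B$ that are not $cd$ or $dc$ — but every such edge is inside $G\setminus(A\cup B)$, which has at most the two vertices $c,d$, so the only such edges are (copies of) $cd$ and $dc$ if present in $G$, and we set $w(cd)=0$, $w(dc)=1$. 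To check $w$ is a weighting it suffices, by considering which of $A,B$ a dicycle $C$ meets, to handle a dicycle $C$ that meets both $A$ and $B$: since there is no edge between $A$ and $B$, such a $C$ must pass through both $c$ and $d$, splitting into an $A$-part (a dipath between $\{c,d\}$ inside $G[A\cup\{c,d\}]$) and a $B$-part. Replacing the $B$-part by the single edge $cd$ or $dc$ of $G_1$ (whichever direction matches) gives a dicycle of $G_1$, so the $A$-part has total $w_1$-weight equal to $1$ minus the weight of that connector edge, i.e. $0$ or $1$; symmetrically for the $B$-part in $G_2$; and the two connector-edge weights are complementary by our normalization, so $w(C)=1$.

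For the ``only if'' direction: suppose one of $G_1,G_2$, say $G_1$, is not weightable; by~\ref{triple} there are dicycles $C,C'$ of $G_1$ disagreeing on a bad triple $\{u,v,w\}\subseteq V(G_1)$. I would lift each of $C,C'$ to a dicycle of $G$: if $C$ uses neither added edge $cd$ nor $dc$ then $C$ is already a dicycle of $G$; if it uses $cd$ (resp. $dc$), replace that edge by a dipath of $G$ from $c$ to $d$ (resp. $d$ to $c$) through $B$ (which exists since $G$ is strongly connected and, after deleting $A$, remains strongly connected as $G$ is $2$-strong, so a $c$–$d$ dipath avoiding $A$ exists) — and note $C$ uses at most one of $cd,dc$, since it is a dicycle. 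The lifted dicycle still contains $u,v,w$ and still has $(u,v,w)$ in order (the inserted dipath lies in $B\cup\{c,d\}$, hence its internal vertices are disjoint from $\{u,v,w\}\subseteq A\cup\{c,d\}$ unless a triple vertex is $c$ or $d$ — but then it is an endpoint of the inserted subpath, so the cyclic order among $u,v,w$ is unchanged). Doing the same for $C'$ yields dicycles of $G$ disagreeing on $\{u,v,w\}$, so $G$ is not weightable by~\ref{triple}.

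The main obstacle I expect is the bookkeeping around the added edges $cd,dc$ and their interaction with any edges $cd,dc$ already in $G$: one must be careful that ``removing parallel edges'' (or not) does not change weightability, that the normalizations via~\ref{pickroot} on $G_1$ at $c$ and on $G_2$ at $d$ (or $c$ with the reversed variant) are compatible so that the two connector weights are complementary, and that when $u,v,w$ include $c$ or $d$ the lifting genuinely preserves the cyclic order. None of these is deep, but getting the direction conventions consistent between $G_1$ and $G_2$ is where an error would most easily creep in; everything else is a routine adaptation of the proof of~\ref{1reduce}.
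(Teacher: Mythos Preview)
Your ``only if'' direction is essentially the paper's argument and is fine. But the ``if'' direction has a genuine error in the normalization.

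You normalize $w_1$ at $c$ so that $w_1(cd)=0$ and $w_1(dc)=1$, and then normalize $w_2$ at $d$ so that $w_2(cd)=1$ and $w_2(dc)=0$. Now take a dicycle $C$ of $G$ that meets both $A$ and $B$: say $C=P_A\cup P_B$ with $P_A$ a dipath from $c$ to $d$ in $G[A\cup\{c,d\}]$ and $P_B$ a dipath from $d$ to $c$ in $G[B\cup\{c,d\}]$. Completing $P_A$ to a dicycle of $G_1$ via $dc$ gives $w_1(P_A)=1-w_1(dc)=0$; completing $P_B$ to a dicycle of $G_2$ via $cd$ gives $w_2(P_B)=1-w_2(cd)=0$. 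So $w(C)=0$, not $1$. (In the other orientation you get $w(C)=2$.) Your connector-edge weights are not ``complementary'': you need $w_1(dc)+w_2(cd)=1$, but your normalizations give $1+1=2$. Worse, if $cd\in E(G)$ then $cd$ lies in both $G_1$ and $G_2$ with $w_1(cd)\ne w_2(cd)$, so even dicycles entirely inside $G[B\cup\{c,d\}]$ that use $cd$ get the wrong weight under your $w$.

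The fix is simple but opposite to what you did: normalize so that $w_1$ and $w_2$ \emph{agree} on $cd$ and $dc$ (for instance, apply~\ref{pickroot} at $c$ in \emph{both} $G_1$ and $G_2$). Then $w_1(dc)+w_2(cd)=w_1(dc)+w_1(cd)=1$, since $c\DD d\DD c$ is a dicycle of $G_1$, and both cases check out. This is exactly what the paper does.

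A minor point: your opening claim that $G_1,G_2$ are $2$-strong is false in general (take $|A|=1$); they are only $1$-strong, which is all that~\ref{pickroot} requires.
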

\begin{proof}
Suppose first that $w_i$ is a 0/1-valued weighting of $G_i$ for $i = 1,2$. By~\ref{pickroot}, we may assume that $w_1(cd) = w_2(cd)$.
Since $w_i(cd) = 1-w_i(dc)$ for $i = 1,2$, it follows that $w_1(dc) = w_2(dc)$. 
For each edge $e\in E(G)$, define $w(e) = w_i(e)$ where $e\in E(G_i)$. If $C$ is a dicycle of $G$ that is not a cycle of either
of $G_1,G_2$, then, by exchanging $c,d$ if necessary, we may assume that 
$C$ consists of a dipath $P_1$ of $G_1$ from $c$ to $d$ and a dipath $P_2$ of $G_2$ from $d$ to $c$. Since adding $dc$
to $P_1$ makes a dicycle of $G_1$, it follows that $w(P_1) = w_1(P_1) = 1-w_1(dc)$, and similarly $w(P_2) = 1-w_2(cd)$.
Since $w_1(dc)+w_2(cd)=w_1(dc)+w_1(cd) = 1$, it follows that $w(C)=1$, as desired. Thus $w$ is a weighting
of $G$.

Conversely, suppose that one of $G_1,G_2$, say $G_1$, is not weightable, and let $C,C'$ disagree on $\{u,v,w\}$ in $G_1$. 
We claim that  there is a 
dipath $P_2$ of $G[B\cup \{c,d\}]$ from $c$ to $d$. To see this, let $b\in B$. Since $G$ is 2-strong, there is a dipath 
from $c$ to $b$ in $G\setminus \{d\}$, and this is therefore a path of $G[B\cup \{c,d\}]$. Similarly there is a dipath
of $G[B\cup \{c,d\}]$ from $b$ to $d$, and the union of these paths includes a dipath of $G[B\cup \{c,d\}]$ from $c$ to $d$,
as claimed. Let $P(cd)$ be such a path, and similarly let $P(dc)$ be a dipath of $G[B\cup \{c,d\}]$ from $d$ to $c$.
If $C$ contains one of $cd,dc$, say $cd$, let $D$ be the cycle consisting of the path of $C$ from $d$ to $c$ together with $P(cd)$,
and define $D$ similarly if $dc\in E(C)$. (Not both $cd,dc\in E(C)$, so this is well-defined.) If $cd,dc\notin E(C)$ let $D=C$. 
Define $D'$ similarly, starting from $C'$. Then $D,D'$ are dicycles of $G$ that disagree on $\{u,v,w\}$, and so $G$ is not 
weightable. This proves~\ref{2weakreduce}.\end{proof}

In view of~\ref{2weakreduce}, it suffices to understand the simple, 2-strong, 3-weak, weightable digraphs. More exactly, we have 
shown so far that: 
\begin{itemize}
\item every weightable digraph $G$ can be built from  simple, 2-strong, 3-weak, weightable digraphs by operations
that preserve being weightable; and
\item if we have a poly-time algorithm to decide whether any 2-strong, 3-weak digraph is weightable, then in poly-time 
we can decide 
whether a general digraph is weightable, and if so, find a weighting of it.
\end{itemize}
%%%%%%%%%%%%%%%%%%%%%%%%%%%%%%%%%%%%%%%%%%%%%%%%%%%%%%%%%%%%%%%%%%%%%%%%%%%%%%%%%%%%%%%%%%%%%%%%%%%%%%%%%%%%%%%%%%%%%%%%%%%%%
\section{The planar decomposition}

Let $G$ be a digraph drawn in a plane or 2-sphere; then each edge $e$ is an open line segment, and we speak of \emph{points} of 
$e$ to refer to points in this line segment.  
Let $F$ be a simple closed curve $F$,
such that $F$ passes through no vertex of $G$, and passes through at most one point of the interior of each edge, and crosses each edge
that it intersects. Let us call such a curve $F$ a \emph{cut-curve}. Let us say a \emph{gap} of $F$ is a line segment in $F$ with both ends 
in the drawing and no internal point in the drawing. Thus, its ends necessarily belong to the interiors of distinct edges.
A \emph{change} in $F$ is a gap with ends in two edges $e,f$, such that exactly one of $e,f$ has head inside the disc bounded by $F$ 
(that is, $e,f$ cross $F$ in opposite directions). It follows that there is an even number of changes, and we call this number 
the \emph{change number} of $F$. We are interested in cut-curves with change number two; they will give the construction we need to 
build all weightable diplanar digraphs. 

Some terminology: if $G$ is a digraph drawn in a 2-sphere, and $F$ is a cut-curve, and $A$ is the 
set of vertices drawn within one of the two discs defined by $F$, we want to consider the digraph and drawing obtained by \emph{squishing} $A$, that is,
deleting all edges with both 
ends in $A$ and then identifying all the vertices in $A$ into one vertex, forming a digraph $G_1$. (For the moment, $G[A]$ might not be 
1-weak, so this is not the same as contracting the edges of $G[A]$.) This operation might introduce parallel edges, but not loops.
Thus, the edges of $G$ after squishing are the edges of $G$ before squishing that have at least 
one end not in $A$, but the incidence
relation between edges and vertices has changed. In particular, if in $G$ an edge has head in $A$ and tail in $V(G)\setminus A$, then 
in $G_1$, its head is the new vertex $a$ and its tail is the same as before. So, for clarity, we speak of the \emph{$G$-head} or 
\emph{$G_1$-head} of edges, and similarly speak of \emph{$G$-tail} and \emph{$G_1$-tail}. 

We observe, first:
%%%%%%%%%%%%%%%%%%%%%%%%%%%%%%%
\begin{thm}\label{buildplanar}
Let $G$ be a digraph drawn in a 2-sphere, and let $F$ be a cut-curve with change number two. Let $A$ be the set of vertices of $G$
inside one of the discs bounded by $F$, and let $B=V(G)\setminus A$. Let $G_1$ be obtained from $G$ by squishing $B$ into a vertex $b$,
and define $G_2, a$ similarly. (See Figure~\ref{fig:buildplanar}.) If $G_1, G_2$ are weightable and 1-strong then $G$ is weightable.
\end{thm}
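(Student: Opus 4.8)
The plan is to produce an explicit weighting of $G$ by splicing together suitably normalized $0/1$-weightings of $G_1$ and $G_2$. First I would record what the hypothesis says combinatorially. Every edge of $G$ with one end in $A$ and the other in $B$ crosses $F$, and exactly once, so writing $D^+$ for the set of edges from $A$ to $B$ and $D^-$ for the edges from $B$ to $A$, the edge set of $G$ is the disjoint union of $E(G[A])$, $E(G[B])$, $D^+$ and $D^-$, and ``change number two'' says precisely that the edges of $D^+$ all cross one arc $\alpha$ of $F$ while those of $D^-$ all cross the complementary arc $\beta$, where $\alpha$ and $\beta$ share two ``transition'' points $x,y$. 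Note that in $G_1$ the edges of $D^+$ are exactly the edges with head $b$ and those of $D^-$ exactly the edges with tail $b$, while dually in $G_2$ the edges of $D^+$ are exactly the edges with tail $a$ and those of $D^-$ exactly the edges with head $a$. Also, since change number two forces $D^+,D^-$ to be nonempty, $\Delta_A:=$ the disc bounded by $F$ containing $A$, and $\Delta_B$ the other disc, are honest discs with both $A,B\ne\emptyset$.

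The crux, and the step I expect to be the main obstacle, is the topological fact: \emph{every dicycle of $G$ meets $F$ in either $0$ or $2$ points} (equivalently, uses at most one edge of $D^+$). To see it, suppose a dicycle $C$ meets $F$ in $2k$ points with $k\ge1$, and regard $C$ as a simple closed curve on the sphere. Traversing $C$, its crossings with $F$ alternate between exits from $\Delta_A$ (which occur on edges of $D^+$, hence on $\alpha$) and entries into $\Delta_A$ (on edges of $D^-$, hence on $\beta$); so there are $k$ of each, and I would name them $u_1,\dots,u_k$ (the exit-crossings, in order along $\alpha$ from $x$) and $v_1,\dots,v_k$ (the entry-crossings, in order along $\beta$ from $x$). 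Now $C\cap\Delta_A$ is a family of $k$ pairwise-disjoint arcs, each with one endpoint on $\alpha$ and one on $\beta$, so each separates $x$ from $y$ inside the disc $\Delta_A$; such a family is linearly ordered by nesting, and an ``innermost arc'' induction forces the $i$-th arc to join $u_i$ to $v_i$. The identical argument inside $\Delta_B$ shows the $k$ arcs of $C\cap\Delta_B$ likewise join $u_i$ to $v_i$. Hence $C$ is the disjoint union of the $k$ closed curves formed, for each $i$, by the $i$-th arc of $C$ in $\Delta_A$ together with the $i$-th arc of $C$ in $\Delta_B$; since $C$ is connected this forces $k=1$, as claimed. (The fiddly part is the bookkeeping in the innermost-arc induction, i.e.\ checking that a non-crossing family of arcs joining the ``$u$-arc'' $\alpha$ to the ``$v$-arc'' $\beta$ in a disc must match $u_i$ with $v_i$.)

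Granting this, I would construct the weighting. Using~\ref{pickroot} on $G_1$ with root $b$, pick a $0/1$-weighting $w_1$ of $G_1$ with $w_1(e)=1$ for every edge $e$ with head $b$ and $w_1(e)=0$ for every edge with tail $b$; using the symmetric form of~\ref{pickroot} on $G_2$ with root $a$, pick a $0/1$-weighting $w_2$ of $G_2$ with $w_2(e)=0$ for every edge with head $a$ and $w_2(e)=1$ for every edge with tail $a$. Then $w_1$ and $w_2$ both take the value $1$ on all of $D^+$ and $0$ on all of $D^-$, so the function $w$ on $E(G)$ defined by $w=w_1$ on $E(G[A])$, $w=w_2$ on $E(G[B])$, $w=1$ on $D^+$ and $w=0$ on $D^-$ is well defined.

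Finally I would check $w(C)=1$ for every dicycle $C$ of $G$. If $C$ is disjoint from $F$, then by the fact above $C$ lies wholly in $G[A]$ or wholly in $G[B]$, hence is a dicycle of $G_1$ or of $G_2$, and $w(C)$ equals $w_1(C)=1$ or $w_2(C)=1$. Otherwise $C$ crosses $F$ exactly twice, so it uses exactly one edge $e^*=a_0b_0\in D^+$ (with $a_0\in A$, $b_0\in B$) and exactly one edge $f^*=b_1a_1\in D^-$, and so $C=P\cup\{e^*\}\cup Q\cup\{f^*\}$ where $P$ is a (possibly trivial) dipath of $G[A]$ from $a_1$ to $a_0$ and $Q$ a (possibly trivial) dipath of $G[B]$ from $b_0$ to $b_1$. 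In $G_1$, the edges $e^*$ and $f^*$ become the edges $a_0b$ (head $b$) and $ba_1$ (tail $b$), so $P$ together with these two edges is a dicycle of $G_1$; hence $w_1(P)+1+0=1$, so $w(P)=w_1(P)=0$. Symmetrically, in $G_2$ the edges $e^*$ and $f^*$ become $ab_0$ (tail $a$) and $b_1a$ (head $a$), so $Q$ together with these is a dicycle of $G_2$, giving $w_2(Q)=0$. Therefore $w(C)=w_1(P)+w(e^*)+w_2(Q)+w(f^*)=0+1+0+0=1$, so $w$ is a weighting and $G$ is weightable.
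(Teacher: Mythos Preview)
Your proof is correct and follows the same overall strategy as the paper's: normalize $0/1$-weightings of $G_1,G_2$ at the squish-vertices via \ref{pickroot}, splice them, and check that any dicycle of $G$ crosses $F$ at most twice so that it decomposes into an $A$-path and a $B$-path that each complete to dicycles of $G_1,G_2$.

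The one noteworthy difference is in how you establish the ``at most two crossings'' step. The paper argues it in one line: the crossing points of a simple closed curve $C$ with $F$, enumerated in cyclic order along $F$, must alternate direction (this is the standard sign-alternation for transverse intersections of simple closed curves on a sphere), and change number two then forces at most one crossing in each direction. Your non-crossing matching argument in each disc is a more explicit unpacking of the same topological fact and is also fine; just note that the paper's formulation lets you skip the ``innermost arc'' bookkeeping entirely.
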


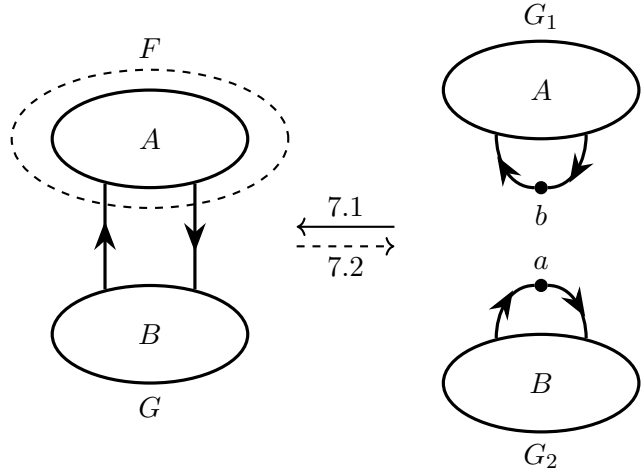
\begin{figure}[htb!]
    \centering
    \begin{tikzpicture}[scale=1.3]
        \node[thick, dashed, draw, ellipse, inner sep=0, fit={+(-1,-0.5) +(1,0.5)}, label={above:$F$}] (F) at (0,2) {};
    
        \vset{A}{$A$}{0,2};
        \vset{B}{$B$}{0,0};

        \hdedge{B.135}{A.225};
        \hdedge{A.315}{B.45};

        \node (G) at (0,-0.75) {$G$};

        \draw[<-, thick] (1.5,1.1) -- (2.5,1.1) node[midway, above] {\ref{buildplanar}};
        \draw[->, thick, dashed] (1.5,0.9) -- (2.5,0.9) node[midway, below] {\ref{goodcut}};

        \vset{A}{$A$}{4,2.5};
        
        \vtx[below:$b$]{b}{4, 1.5};
        \hdedge[out=180,in=270]{b}{A.225};
        \hdedge[out=270,in=0]{A.315}{b};
        \node (G1) at (4, 3.25) {$G_1$};

        \vset{B}{$B$}{4,-0.5};
        
        \vtx[above:$a$]{a}{4, 0.5};
        \hdedge[out=90,in=180]{B.135}{a};
        \hdedge[out=0,in=90]{a}{B.45};
        \node (G2) at (4, -1.25) {$G_2$};
    \end{tikzpicture}
    \caption{Construction for building planar weightable digraphs. Here, the pictures represent drawings of the relevant graphs in the plane or 2-sphere. It is important that a closed curve $F$ separating $A$ and $B$ has change number two, as depicted. Also note that this construction admits only a partial converse, described in~\ref{goodcut}.}
    \label{fig:buildplanar}
\end{figure}

\begin{proof} Suppose $G_1,G_2$ are weightable and 1-strong. By~\ref{pickroot}, there is a 0/1-valued weighting $w_1$ of $G_1$
such that $w_1(e) = 1$ for every edge of $G_1$ with head $b$ and $w_1(e)=0$ for every edge with tail $b$. Similarly,
there is a 0/1-valued weighting $w_2$ of $G_2$
such that $w_2(e) = 1$ for every edge of $G_2$ with tail $a$ and $w_1(e)=0$ for every edge with head $a$. 
For each edge $e\in E(G)$, choose $i\in \{1,2\}$ with 
$e\in E(G_i)$ and let $w(e) = w_i(e)$ (if $e$ belongs to both $G_1,G_2$ then it crosses $F$ and $w_1(e) =w_2(e)$, so this is well-defined). We claim that $w$ is a weighting of $G$. Let $C$ be a dicycle of $G$, we may assume that $C$ is not a cycle of $G_1$ or 
of $G_2$, so $C$ crosses $F$ at least twice. But if we enumerate the edges of $C$ that cross $F$ in their cyclic order in $F$,
then every consecutive pair cross $F$ in opposite directions, and since $F$ has change number two, it follows that $C$ crosses $F$
exactly twice. Hence there are two edges $e,f$ of $C$ that cross $F$ such that the component $P$ of $C\setminus \{e,f\}$
from the head of $e$ to the tail of $f$ is a path of $G_1$ and the component $Q$ of $C\setminus \{e,f\}$
from the head of $f$ to the tail of $e$ is a path of $G_2$. The edges of $P$, together with $e,f$, make a dicycle of $G_1$,
and since $w_1(e) = 0$ and $w_1(f) = 1$, if follows that $w_1(P) = w(P) = 0$. Similarly $w(Q) = 0$, and so 
\[ w(C)=w(P)+w(Q)+w(e)+w(f) = 1 \]
as required. This proves~\ref{buildplanar}.\end{proof}

Thus, to give a construction for all planar 1-strong weightable digraphs, it would suffice to show that each such digraph (except some small
ones that we would consider ``building blocks'') admits a cut-curve with change number two such that the digraphs $G_1,G_2$
as in~\ref{buildplanar} are weightable and smaller than $G$.  But this needs some care. It is not enough to find a cut-curve with 
change number two such that $|A|,|B|\ge 2$ (where $A,B$ are as in~\ref{buildplanar}), because the digraphs $G_1, G_2$ might not be 
weightable. Indeed,  even if $G$ is diplanar and $G[A],G[B]$ are 1-weak, the digraphs $G_1,G_2$ still
might not be weightable. This is shown by the digraph in Figure~\ref{fig:badcut}.

\begin{figure}[htb!]
\centering
\begin{tikzpicture}

\vtx{a}{2,2};
\vtx{b}{2,0};
\vtx{c}{0,2};
\vtx{d}{0,0};
\vtx{f}{-2,2};
\vtx{e}{-2,0};

\foreach \from/\to in {b/a, d/c, b/d}
\dedge{\from}{\to};

\tdedge[bend left = 30]{c}{a};
\dedge[bend left = 30]{a}{c};
\tdedge[bend right = 30]{d}{e};
\dedge[bend right = 30]{e}{d};
\tdedge[bend right = 30]{c}{f};
\dedge[bend right = 30]{f}{c};
\dedge[bend right = 60]{e}{b};
\tdedge[bend left = 30]{e}{f};
\dedge[bend left = 30]{f}{e};

\draw[dashed, thick] (2,1) ellipse (0.75 and 1.5);

\end{tikzpicture}

\caption{A cut-curve with change number two in a 1-strong diplanar weightable digraph; contracting the edge inside the cut-curve makes the digraph not weightable.} \label{fig:badcut}
\end{figure}
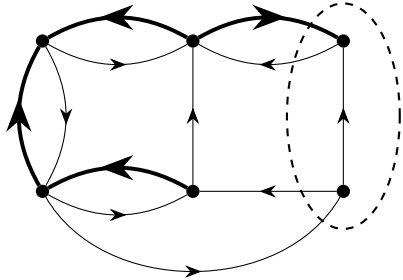

So what condition on $G$ and $F$ do we need to ensure that $G_1,G_2$ are weightable? The following is the most general we have found.
(If $P$ is a dipath and $u,v\in V(P)$, and $u$ is earlier than $v$ in $P$, then
$P[u,v]$ denotes the subpath of $P$ from $u$ to $v$.)

%%%%%%%%%%%%%%%%%%%%%%%%%%%
\begin{thm}\label{goodcut}
Let $G$ be a digraph drawn in a 2-sphere, and let $F$ be a cut-curve with change number two. Suppose that for every two edges $e,f$
crossing $F$ in opposite directions, there is a dicycle of $G$ that contains both $e,f$. Then
$G_1,G_2$ (defined as in~\ref{buildplanar}) are weightable.
\end{thm}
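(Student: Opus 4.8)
The plan is to prove that $G_1$ is weightable; the argument for $G_2$ is identical with the roles of $A$ and $B$ interchanged. I would assume, as usual, that $G$ is 1-strong, and I will use throughout that $G$ is weightable (which is needed: a cut-curve as in the hypothesis exists in plenty of non-weightable digraphs, so this is really part of the intended statement). Recall the structure of $G_1$: its vertex set is $A\cup\{b\}$, and its edges are those of $G[A]$ together with the crossing edges of $G$, where a crossing edge directed from $A$ to $B$ now runs from its ($G$-)tail to $b$, and one directed from $B$ to $A$ now runs from $b$ to its ($G$-)head; since $F$ has change number two, these are all the crossing edges of $G$. The strategy is to show, via~\ref{doublecycle}, that $G_1$ contains no weak double-cycle: I will take a weak $k$-double-cycle of $G_1$ and produce one in $G$.

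The first ingredient is the topological fact, already established inside the proof of~\ref{buildplanar} from the hypothesis that $F$ has change number two, that every dicycle of $G$ crosses $F$ either zero times or exactly twice. Consequently, for crossing edges $e$ (directed $A\to B$) and $f$ (directed $B\to A$), the dicycle of $G$ containing both (which exists by hypothesis) splits into a dipath of $G[A]$ from the head of $f$ to the tail of $e$ and a dipath of $G[B]$ from the head of $e$ to the tail of $f$; in particular such a dipath of $G[B]$ exists for every such pair $e,f$. The second ingredient follows at once: if $C=f+P+e$ is a dicycle of $G_1$ through $b$, with $P$ a dipath of $G[A]$ from the head of $f$ to the tail of $e$, then for any dipath $Q$ of $G[B]$ from the head of $e$ to the tail of $f$ the concatenation $P\cdot e\cdot Q\cdot f$ is a genuine dicycle of $G$ (its arcs $P$ and $Q$ are vertex-disjoint, lying in $A$ and $B$); call it a lift of $C$.

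Now let $H=C_1\cupcup C_k$ be a weak $k$-double-cycle in $G_1$. If $b\notin V(H)$ then $H\subseteq G[A]\subseteq G$, contradicting weightability of $G$. Otherwise $b$ lies in at most two of $C_1,\dots,C_k$; I would replace each $C_i$ through $b$ by a lift $D_i$, keep every other $C_\ell$ (which lies in $G[A]$) unchanged, and obtain $H'\subseteq G$. Because the $B$-side parts of the $D_i$ are disjoint from every unchanged $C_\ell$ and from the $A$-parts of the other lifted cycles, essentially all of the weak-double-cycle conditions for $H'$ transfer immediately from those for $H$. The delicate point — and the main obstacle — is the case where $b$ lies in two (necessarily consecutive) cycles $C_i,C_{i+1}$: then one must choose the lifting dipaths $Q$ so that $D_i\cap D_{i+1}$ is again a dipath. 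If $b$ is interior to the dipath $C_i\cap C_{i+1}$, then $C_i$ and $C_{i+1}$ use the same crossing edges $e,f$, and using a single common dipath $Q$ makes $D_i\cap D_{i+1}$ a dipath automatically; if $b$ is an endpoint of $C_i\cap C_{i+1}$, then $C_i,C_{i+1}$ share exactly one of $e,f$, and I would reroute one of the two dipaths so that it follows the other from the moment they first meet, which again makes $D_i\cap D_{i+1}$ a dipath. Granting this, $H'$ is a weak $k$-double-cycle in $G$, a contradiction, so $G_1$ has no weak double-cycle and is weightable.

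Besides the rerouting bookkeeping in that last case, the only things left to check are a handful of degenerate configurations (the $A$-part of a lifted cycle being a single vertex; relevant crossing edges sharing a head or tail) and that the ``obvious'' transfers of the weak-double-cycle conditions really are obvious. I would avoid the alternative route through~\ref{triple} (showing $G_1$ has no bad triple by lifting the two disagreeing dicycles to $G$): when the bad triple contains $b$, that approach seems to require a genuinely planar argument — that two dipaths in the disc containing $B$ whose endpoints interleave along $F$ must share a vertex — whereas the double-cycle route confines the work to elementary path surgery.
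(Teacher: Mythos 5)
Your reduction to~\ref{doublecycle} is a genuinely different route from the paper's (which argues via~\ref{triple} and a planar interleaving argument), and your observation that the statement implicitly assumes $G$ weightable is correct. But there is a real gap, and it sits exactly at the point you flag as the main obstacle. When $b$ lies in two consecutive cycles $C_i,C_{i+1}$, your dichotomy is ``$b$ interior to $C_i\cap C_{i+1}$'' versus ``$b$ an endpoint of $C_i\cap C_{i+1}$, in which case $C_i,C_{i+1}$ share exactly one of $e,f$.'' The latter assertion is false when $C_i\cap C_{i+1}$ is the one-vertex dipath consisting of $b$ alone (which the definition of a weak double-cycle permits): then the two cycles share neither crossing edge, the four crossing edges $e_i,f_i,e_{i+1},f_{i+1}$ are distinct, and the $A$-sides of $C_i,C_{i+1}$ are vertex-disjoint. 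Consequently the cyclic order of these edges on $F$ is $e_i,e_{i+1},f_{i+1},f_i$, so the endpoints of the two lifting dipaths $Q_i$ (from the head of $e_i$ to the tail of $f_i$ in $G[B]$) and $Q_{i+1}$ do \emph{not} interleave on $F$, and these paths can perfectly well be disjoint. Then $D_i\cap D_{i+1}=\emptyset$, the lifted family is not a weak double-cycle, and there is no ``other path to follow'' for any rerouting. Nothing in your argument excludes this configuration.

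Worse, this degenerate case is not a technicality: it is the crux, and it coincides with the hard case of the paper's proof (two dicycles through the squished vertex with disjoint $A$-sides). What the hypotheses actually give you there is an intersection between the \emph{crossed} pairs of $B$-paths --- every dipath of $G[B]$ from the head of $e_i$ to the tail of $f_{i+1}$ must meet every dipath from the head of $e_{i+1}$ to the tail of $f_i$, since otherwise one assembles a dicycle of $G$ crossing $F$ four times, contradicting change number two. That is precisely the planar input you hoped to avoid, recast combinatorially; and having obtained such a meeting point $w$, the contradiction one extracts is two dicycles of $G$ (routed through $w$, the crossed $B$-paths, and $C_{i+2}\cupcup C_{i-1}$) that disagree on a triple --- a bad-triple argument, not a lifted double-cycle. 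So your proof is incomplete without an argument of essentially the kind the paper uses. The remainder of your surgery (the interior case with a common $Q$; the endpoint case with a genuinely shared crossing edge, provided the rerouting is done from the \emph{last} common vertex of the two $B$-paths when the shared edge is the one entering $b$) does go through.
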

\begin{proof} Let $A,B,a,b$ be as in~\ref{buildplanar}. We will show that $G_2$ is weightable. Suppose not; then are two dicycles $C_1,C_2$
of $G_2$ that disagree on some triple $\{x,y,z\}$ of $G_2$. For $i = 1,2$, if $a\notin V(C_i)$ let $C'_i=C_i$.
If $a\in C_i$, let $e_i,f_i$ be the edges of $C_i$ that have $G_2$-head $a$ and $G_2$-tail $a$ respectively, let $P_i$ be a
dipath of $G[A]$ from the $G$-head of $e_i$ to the $G$-tail of $f_i$, let $Q_i = C_i\setminus a$,
and let $C_i'$ be the dicycle of $G$ formed by the union of $P_i, Q_i, e_i$ and $f_i$.  
(Such a path $P_i$ exists since 
there is a dicycle of $G$ containing $e_i,f_i$, and since $F$ has change number two, this cycle only crosses $F$ twice.)
Since $G$ is weightable, $C'_1,C_2'$ do not disagree on $\{x,y,z\}$ in $G$, and so one of $x,y,z$ equals $a$, say $z$. 
If some vertex $w\in A$ belongs to both $V(P_1), V(P_2)$, then the order of $x,y,z$ in $C_i$ is the same as the order of $x,y,w$ 
in $C_i'$, for $i = 1,2$, so  $C'_1,C_2'$ disagree on $\{x,y,w\}$, a contradiction. Thus, 
$P_1,P_2$ are vertex-disjoint, and so we may assume that $e_1,e_2,f_2,f_1$ appear in this order in the cyclic order of the edges 
that cross $F$. 

Now $x,y\in V(Q_1)\cap V(Q_2)$, and since $C_1,C_2$ disagree on $\{x,y,a\}$ in $G_2$, we may assume that $x$ is before $y$ in $Q_1$ and
$y$ is before $x$ in $Q_2$. Since $y$ is before $x$ in $Q_2$, there is a minimal dipath $Q_2[r,s]$ of $Q_2$, such that      
$r,s\in V(Q_1)$, and $s$ is strictly before $r$ in $Q_1$. It follows that no edge or internal vertex of $Q_2[r,s]$ belongs to $Q_1$ 
(because if some $t$
belongs to $V(Q_1)$ and the interior of $Q_2[r,s]$, then either $t$ is after $s$ or before $r$ in $Q_1$, and in either case the minimality of 
$Q_2[r,s]$ is contradicted). Let $R_2$ be the subpath of $Q_2$ from the $G$-head of $f_2$ to $r$, and let $S_2$ be the subpath from $s$ to the 
$G$-tail of $e_2$. Thus $Q_2$ is the concatenation of $R_2, Q_2[r,s]$, and $S_2$. Let $S_1$ be the subpath of $Q_1$ from the $G$-head of $f_1$
to $s$, and let $R_1$ be the subpath from $r$ to the $G$-tail of $e_1$. Thus $Q_1$ is the concatenation of $S_1$, $Q_1[s,r]$,
and $R_1$. (See Figure~\ref{fig:goodcutpf}.)

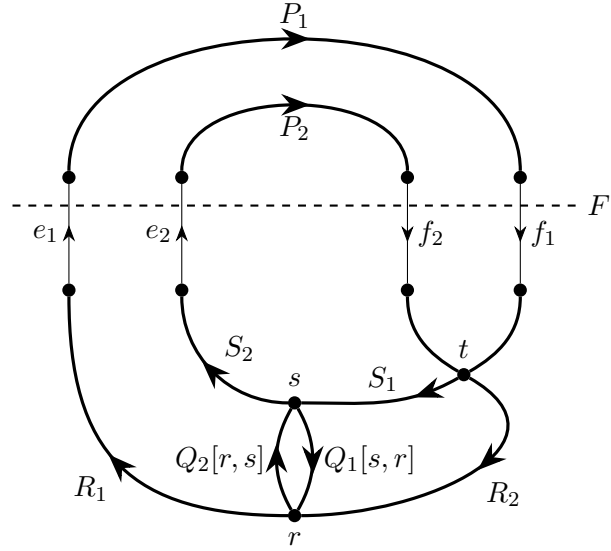
\begin{figure}[htb!]
    \centering
    \begin{tikzpicture}[scale=1.5]
        \vtx{e1h}{0,1};
        \vtx{e1t}{0,0};
        \vtx{e2h}{1,1};
        \vtx{e2t}{1,0};
        \vtx{f2t}{3,1};
        \vtx{f2h}{3,0}:
        \vtx{f1t}{4,1};
        \vtx{f1h}{4,0};
        \vtx[below:$r$]{r}{2, -2};
        \vtx[above:$s$]{s}{2, -1};
        \vtx[above:$t$]{t}{3.5, -0.75};

        \draw[dashed, thick] (-0.5, 0.75) -- (4.5, 0.75) node[right] {$F$};

        \draw[midarrow] (e1t) -- (e1h) node[midway, left] {$e_1$};
        \draw[midarrow] (e2t) -- (e2h) node[midway, left] {$e_2$};
        \draw[midarrow] (f1t) -- (f1h) node[midway, right] {$f_1$};
        \draw[midarrow] (f2t) -- (f2h) node[midway, right] {$f_2$};

        \draw[very thick, midarrow] (e1h) to[out=90, in=90] node[midway, above] {$P_1$} (f1t);
        \draw[very thick, midarrow] (e2h) to[out=90, in=90] node[midway, below] {$P_2$} (f2t);
        
        \draw[very thick, midarrow] (s) to[out=180, in=270] node[midway, above right] {$S_2$} (e2t);
        \draw[very thick, midarrow] (f2h) to[out=270, in=150] (t) to[out=330, in=0, looseness=1.5] node[midway, below right] {$R_2$} (r);

        \draw[very thick, midarrow] (r) to[out=180, in=270, looseness=1.3] node[midway, below left] {$R_1$} (e1t);
        \draw[very thick, midarrow] (f1h) to[out=270, in=30] (t) to[out=210, in=0] node[midway, above] {$S_1$} (s);
        \draw[very thick, midarrow] (r) to[out=120, in=240] node[midway, left] {$Q_2[r,s]$} (s);
        \draw[very thick, midarrow] (s) to[out=300, in=60] node[midway, right] {$Q_1[s,r]$} (r);
    \end{tikzpicture}
    \caption{One of the cases in the proof of~\ref{goodcut}, where $R_2$ and $S_1$ intersect. Here, the arrows, with the exceptions of $e_1,e_2,f_1,f_2$, represent paths (possibly of length 0), not edges. There may be other intersections between some of the paths; the simplest possibility is shown. The other cases correspond to other ways that $R_1\cup R_2$ and $S_1\cup S_2$ might intersect. (There must be some intersection since $e_1,e_2,f_2,f_1$ appear in that cyclic order on the cut $F$.)}
    \label{fig:goodcutpf}
\end{figure}

If some vertex $t\in V(R_2)$ also belongs to $R_1$ with $t\ne r$, then $t\ne s$, and $t,r,s$ appear in this order in $Q_2$, 
and in the order $s,r,t$ in $Q_1$, so
$C_1', C_2'$ disagree on $\{r,s,t\}$, a contradiction. If there is some $t\in V(R_2) \cap V(S_1)$ then $t\ne r,s$ and again
$C_1', C_2'$ disagree on $\{r,s,t\}$. (This is the case depicted in Figure~\ref{fig:goodcutpf}.) Similarly, $V(S_1\cap S_2)=\{s\}$ and $V(S_2\cap R_1)=\emptyset$. Consequently $R_1\cup R_2$ and $S_1\cup S_2$ are vertex-disjoint dipaths, which contradicts that $e_1,e_2,f_2,f_1$ appear in this order in the cyclic order of the edges
that cross $F$.

Hence there are no such $C_1,C_2$, and so $G_2$ is weightable, and similarly $G_1$ is weightable. This proves~\ref{goodcut}.\end{proof}

In the same notation, we say $w\in A$ is a \emph{centre} for $A$ if for each edge $uv$ with $u\in B$ and $v\in A$ there is a dipath of 
$G[A]$ from $v$ to $w$, and for  
each edge $uv$ with $u\in A$ and $v\in B$ there is a dipath of $G[A]$ from $w$ to $u$. We define a centre for $B$ similarly.
An easy way to arrange that a cut-curve with change number two has the property of~\ref{goodcut} is to ensure that $A,B$ have centres.
A \emph{central cycle} for $A$ is a dicycle of $G[A]$ such that all its vertices are centres for $A$.

In view of~\ref{goodcut} and~\ref{buildplanar}, we might now look for a theorem that if $G$ is 1-strong, weightable, and drawn in the plane, then it admits a cut-curve as in~\ref{goodcut}, unless 
$G$ is already suffiently simple to be understood. But we don't really need that, because we already reduced the general problem of constructing all weightable digraphs to constructing those that are 
2-strong and 3-weak; and we will show later that if $G$ is 2-strong, 3-weak, planar and weightable
then it is diplanar. So
we could confine ourselves to finding cut-curves in 2-strong 3-weak diplanar weightable digraphs if we wanted, and this extra information will be helpful.

But that turns out to be \emph{too much}
information. We can build such digraphs from smaller digraphs, but the smaller digraphs need not be 2-strong, even if the 
digraph we are building is 2-strong. That looks like a difficulty; but, fortunately, the same building method also serves to build all
strong diplanar weightable digraphs rather than just those that are 2-strong.
Thus, to get a result saying that we can build all digraphs with property X from smaller digraphs with property X, 
we will take property X to be ``strong, diplanar, and weightable''. 

Let $S$ be a 2-sphere, and let $C_1,C_2$ be dicycles, drawn
in $S$, and bounding closed discs with disjoint interiors. Fix an orientation ``clockwise'' of the 2-sphere.
The rotation of $C_1$ around its disc defines an orientation of the 2-sphere, and if $C_1,C_2$ define the same orientations of the 2-sphere,
we say they are {\em similarly-oriented}.
A great merit of working with diplanar drawings is that, 
if $C_1,C_2$ are similarly-oriented cycles that bound discs in the 2-sphere 
with disjoint interiors, 
then $C_1,C_2$ are vertex-disjoint, as is easily seen.

Let us assign an orientation ``clockwise'' to the plane.  In a digraph
drawn in the plane, each dicycle rotates clockwise or counterclockwise in the natural sense; a \emph{clockwise cycle} means a dicycle that
rotates clockwise, and a \emph{counterclockwise cycle} is defined similarly. (It is important that we are working with drawings
in the plane rather than in a 2-sphere.)

We extend the ``similarly-oriented'' terminology to planar drawings in the natural way; that is, we say two dicycles
in a planar drawing are similarly-oriented if they are similarly-oriented in the 2-sphere obtained by one-point compactification of the plane.
Thus, if $C_1,C_2$ are similarly-oriented vertex-disjoint dicycles in a planar
drawing, then each bounds a unique disc in the plane, and these discs might be disjoint, or one might contain the other. In the first
case, either both $C_1,C_2$ are clockwise in the plane, or both are counterclockwise. In the second case, one is clockwise in the plane and
the other is counterclockwise (which might seem paradoxical at first sight, since we called them ``similarly-oriented''). 
We will prove:
%%%%%%%%%%%%%%%%%%%%%%%%%%%
\begin{thm}\label{planarcut} 
Let $G$ be a 1-strong, weightable digraph with a diplanar drawing in the 2-sphere. Suppose that there are two vertex-disjoint similarly-oriented  cycles $D_1,D_2$ 
in $G$, and choose $D_1,D_2$ such that the annulus between them is minimal. Then $G$ admits a cut-curve $F$ with change number two, such that, in the usual notation, $D_1$ is 
a central cycle for $A$ and $D_2$ is a central cycle for $B$.
\end{thm}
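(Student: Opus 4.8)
The plan is to work in the closed annulus $\Lambda$ between $D_1$ and $D_2$; write $\Delta_1,\Delta_2$ for the two open discs completing $\Lambda$ to the sphere, so that $\partial\Delta_i=D_i$ and $\overline{\Delta}_i=\Delta_i\cup D_i$. The first step is to extract what minimality buys us. Suppose $C$ is a dicycle of $G$ that is not null-homotopic in $\Lambda$ with $V(C)\cap(V(D_1)\cup V(D_2))=\emptyset$. According to which way it winds around $\Lambda$, $C$ is similarly-oriented to $D_1$ or to $D_2$; it bounds with that one a pair of discs with disjoint interiors, so by the remark on diplanar drawings after the definition of ``similarly-oriented'' it is vertex-disjoint from it; and it encloses with it a strictly smaller annulus, contradicting the choice of $D_1,D_2$. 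Hence every non-null-homotopic dicycle of $G$ meets $V(D_1)\cup V(D_2)$. In particular no dicycle meets both the interior of $\Delta_1$ and the interior of $\Delta_2$, and (since $V(D_i)$ separates the interior of $\Delta_i$ from the rest of the sphere) every dicycle through a vertex drawn inside $\Delta_1$ lies inside $\overline{\Delta}_1$.

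Next I would fix the partition. Let $A$ be the vertex set of the strong component of $G\setminus V(D_2)$ containing $D_1$; since $D_1$ is strongly connected and avoids $V(D_2)$, this contains all of $V(D_1)$. Put $B=V(G)\setminus A$. As $V(D_1)$ separates the inside of $\Delta_1$ from everything else, every vertex drawn in $\overline{\Delta}_1$ lies in $A$, and symmetrically every vertex of $\overline{\Delta}_2$ lies in $B$; so $V(D_1)\subseteq A$ and $V(D_2)\subseteq B$. Now $G[A]$ is strongly connected and a proper induced subdigraph, so directly from the definitions $D_1$ is a central cycle for $A$. The work on this side is to show that $D_2$ is a central cycle for $B$; I would deduce this from~\ref{triple}: a failure would give, using $1$-strongness together with the separation properties above, two dicycles of $G$ disagreeing on a triple consisting of a vertex of $D_1$, a vertex of $D_2$, and a vertex of $B$ witnessing the failure.

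With the partition in hand, I would realize it by a cut-curve. Because $G[A]$ is connected and contains the facial cycle $D_1=\partial\Delta_1$ (and because any $B$-vertex reaches $V(D_2)$, which is not enclosed by $G[A]$), a thin closed neighbourhood of the union of $\overline{\Delta}_1$, the drawing of $G[A]$, and the faces of $G$ whose incident vertices all lie in $A$ has boundary a single simple closed curve $F$, lying in the interior of $\Lambda$, which crosses exactly the edges between $A$ and $B$, each once, with $A$ inside and $B$ outside. The change number of $F$ is even; and it is at least $2$, because a dipath of $G$ from $V(D_1)$ to $V(D_2)$, and one from $V(D_2)$ to $V(D_1)$ — these exist since $G$ is $1$-strong — each cross $F$ an odd number of times.

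The remaining task, which I expect to be the main obstacle, is to show the change number of $F$ is not more than $2$. Suppose it is at least $4$. Then there are edges $g_1,h_1,g_2,h_2$ appearing in this cyclic order along $F$, with $g_1,g_2$ directed out of $A$ and $h_1,h_2$ directed into $A$, lying in four distinct runs of $F$. Because $D_1$ is a central cycle for $A$, in $G[A]$ the head of each $h_j$ reaches the tail of each $g_i$, and this dipath can be routed through any prescribed vertex of $D_1$; symmetrically, because $D_2$ is a central cycle for $B$, in $G[B]$ the head of each $g_i$ reaches the tail of each $h_j$, through any prescribed vertex of $D_2$. Splicing, we obtain for every $i,j$ a dicycle $C_{ij}$ of $G$ that meets $F$ exactly in $g_i$ and $h_j$ and that can be made to pass through any chosen vertices of $D_1$ and of $D_2$. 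The plan is then to run through the (finitely many) ways two consecutive runs of $F$ can turn over, using diplanarity at the turn-over vertices to pin down on which side of $F$ the anchored $C_{ij}$ leave those vertices, and in each case to extract either two of the $C_{ij}$ disagreeing on a triple — contradicting~\ref{triple} — or two vertex-disjoint similarly-oriented dicycles enclosing an annulus strictly inside $\Lambda$, contradicting minimality. Making this dichotomy precise is the crux of the argument, and I expect it to absorb most of the work; the earlier steps are comparatively routine.
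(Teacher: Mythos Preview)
Your overall strategy matches the paper's: fix a partition $(A,B)$ with $D_1$ on one side and $D_2$ on the other, realize it by a cut-curve, and use weightability together with minimality of the annulus to force change number two. But your execution diverges at the point where you fix $A$, and that is where the real gap lies.

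You take $A$ to be the vertex set of the strong component of $G\setminus V(D_2)$ containing $D_1$. This is in fact the right set --- it coincides with the paper's $V(I(\mathcal C))$ --- but you do not establish the two structural facts you need before you can even draw $F$: that $G[B]$ is weakly connected (so that your thin-neighbourhood construction has a \emph{single} boundary curve), and that $D_2$ is a central cycle for $B$. You propose to derive the latter from~\ref{triple}; but weightability is the wrong tool here --- both facts follow purely from the DAG of strong components of $G\setminus V(D_2)$ together with $1$-strongness of $G$, by chasing sources and sinks. The paper takes a different and cleaner route: it grows a maximal $1$-strong union $U(\mathcal C)$ of ``free'' dicycles containing $D_1$ (dicycles bounding disjoint discs inside $D_2$), and proves the ear lemma that every ear for the enclosed region $I(\mathcal C)$ must meet $D_2$. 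This single lemma simultaneously gives the cut-curve, the centrality of $D_1$ (because $U(\mathcal C)$ is $1$-strong), and the paths from crossing edges to $D_2$ needed for centrality of $D_2$ and for the final step. Your first paragraph also overreaches: minimality of the annulus shows that no essential dicycle lies strictly in the open annulus, but it does \emph{not} give that ``every dicycle through a vertex inside $\Delta_1$ lies in $\overline{\Delta}_1$''; fortunately you do not actually need that claim.

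The last step --- ruling out change number $\ge 4$ --- is indeed the crux, and your plan is too vague to assess. The paper's argument is concrete: given four alternating crossing edges $e_1,e_2,e_3,e_4$, it extends each to a path $P_i$ reaching $V(D_2)$ with no internal vertex in $V(U(\mathcal C))\cup V(D_2)$ (this is exactly where the ear lemma is used again), shows consecutive $P_i$ are internally disjoint, and then --- using diplanarity together with the minimality fact that no dicycle in the annulus winds around $D_1$ --- forces the two dicycles $P_2\cup Q_{2,1}\cup P_1\cup R_{1,2}$ and $P_4\cup Q_{4,3}\cup P_3\cup R_{3,4}$ to cross in a pair of vertices $x,y$ that together with $b_1$ form a bad triple. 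Your cycles $C_{ij}$ are morally these, but without an analogue of the ear lemma you have no control over where the paths to $D_2$ run through the annulus, and ``running through the finitely many turn-over cases'' is not a substitute for the planarity argument that actually pins down the crossing.
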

\begin{proof} With a given planar drawing, 
for each cycle $C$ of $G$, let $\ins(C)$ be the closed disc in the plane bounded by $C$.
Since $G$ admits a diplanar drawing in a 2-sphere in which $D_1,D_2$ bound disjoint discs and $D_1,D_2$ are similarly-oriented,
$G$ also admits a 
diplanar drawing in the plane such that $D_1$ is clockwise, and $D_2$ is counterclockwise, and $\ins(D_1)\subseteq \ins(D_2)$.
Fix such a drawing. 
Let $\Sigma$ be the annulus in the plane between $D_1,D_2$ (including $D_1,D_2$). 
The choice of $D_1,D_2$ implies that:

\begin{step}\label{step1a}
There is no dicycle $C$ in $\Sigma$ different from $D_1,D_2$ such that $\ins(D_1)\subseteq \ins(C)\subseteq \ins(D_2)$.
\end{step}

Let us say a set of dicycles of $G$ is \emph{free} if its members bound discs in the plane with disjoint interiors, and each of them is 
drawn in $\ins(D_2)$ and vertex-disjoint from $D_2$. 
If $\mac C$ is a free set of dicycles,
let $U(\mac C)$ denote the union of the members of $\mac C$, and let $I(\mac C)$ be the subdigraph of $G$ consisting of all vertices and edges that are drawn in or inside some $C\in \mac C$. 

Choose a free set $\mac C$ of dicycles with $D_1\in \mac C$ such that
$U(\mac C)$ is 1-strong and, subject to that, with $I(\mac C)$ maximal. 
Thus $I(\mac C)$ is 1-strong.

\begin{step}\label{step2a}
Every ear for $I(\mac C)$ has a vertex in $D_2$.
\end{step}

Let $P$ be an ear for $I(\mac C)$, and suppose that $V(P\cap D_2) = \emptyset$. Consequently $P$ is drawn in $\Sigma$, and either 
$P$ is a dipath with both ends in $U(\mac C)$ and no internal vertex or edge in $I(\mac C)$, or $P$ is a dicycle with one 
vertex in $U(\mac C)$ and with no other vertex or edge in $I(\mac C)$.
Since $U(\mac C)$ is 1-strong, there is a dipath $Q$ 
(possibly of length zero)
of $U(\mac C)$ such that $P\cup Q$ is a dicycle. Since $Q$ intersects $I(\mac C)$ only in $U(\mac C)$, it follows that for 
each $C\in \mac C$, either $\ins(C)\subseteq \ins(P\cup Q)$, or the interiors of $\ins(C), \ins(P\cup Q)$ are disjoint. Let $\mac C'$
be the set consisting of $P\cup Q$ and all $C\in \mac C$ such that  $\ins(C), \ins(P\cup Q)$ are disjoint. From~\eqref{step1a}, $D_1\in \mac C'$,
and $\mac C'$ is free, and $I(\mac C)$ is a proper subset of $I(\mac C')$, a contradiction. This proves~\eqref{step2a}.

\begin{step}\label{step3a}
For every edge $e$ with one end in $V(U(\mac C))$ that does not belong to $I(\mac C)$, there is a dipath containing $e$
with one end in $V(U(\mac C))$, the other end in $V(D_1)$, and with no internal vertex in either set.
\end{step}

Let $e=ab$, say. From the symmetry, we may assume that $a\in V(U(\mac C))$. 
Since $G$ is 1-strong, $e$ belongs to a dicycle $C$ 
of $G$. Let $P$ be the minimal subpath of $C$ from $b$ to $V(U(\mac C))\cup V(D_2)$ (this exists since $a\in V(U(\mac C))$. 
Let the ends of $P$ be $b,p$. If $p\in V(U(\mac C))$ then the union of $P$ and $e$ is an ear violating \eqref{step2a}, so $p\in V(D_2)$, and the union
of $P$ and $e$ satisfies \eqref{step3a}. This proves \eqref{step3a}.

\bigskip

By \eqref{step2a}, every edge with both ends in $V(U(\mac C))$ belongs to $I(\mac C)$. Consequently, 
there is a cut-curve $F$, obtained by closely following the outer boundary of $U(\mac C)$, such that the edges that cross $F$
are precisely the edges $e$ with exactly one end in $V(U(\mac C))$. If $F$ has change number two, we are done, so we assume for a 
contradiction that $F$ has change number at least four. Hence there are edges $e_1,e_2,e_3,e_4$, each crossing $F$ and numbered 
according to the clockwise
cyclic order defined by $F$, such that $e_1,e_3$ have tail in $V(U(\mac C))$ and $e_2,e_4$ have head in $V(U(\mac C))$. By \eqref{step2a},
there are dipaths $P_1,\dots, P_4$ such that $e_i$ is an edge of $P_i$ for $1\le i\le 4$, and such that $P_1,P_3$ are from
the tail of $e_i$ to $D_2$ and $P_2,P_4$ are from $D_2$ to the head of $e_i$, and for $1\le i\le 4$,
no internal vertex of $P_i$ belongs to $V(U(\mac C))\cup V(D_2)$. Let $P_i$ be from $a_i$ to $b_i$ for $1\le i\le 4$. 

\begin{step}\label{step4a}
For $1\le i\le 4$, $P_i, P_{i+1}$ are internally disjoint. (Here $P_5$ means $P_1$.)
\end{step}

If $w$ belongs to the interiors of $P_i, P_{i+1}$, then $P_i\cup P_{i+1}$ includes an ear for $I(\mac C)$ violating \eqref{step2a}.
This proves \eqref{step4a}.

\bigskip

Please refer to Figure~\ref{fig:planarcutpf} for the end of the proof.

\begin{figure}[htb!]
    \centering
    \begin{tikzpicture}[scale=2]
        \vtx[{[label distance=-3pt]225:$a_3$}]{a3}{0, -1};
        \vtx[{[label distance=-3pt]225:$y$}]{y}{-1, 0};
        \vtx[{[label distance=-3pt]above left:$b_3$}]{b3}{0, -3};
        \vtx{1}{0,1};
        \vtx{2}{0.707,-0.707};
        \vtx[above left:$x$]{x}{1, 0};
        \vtx[left:$b_2$]{b2}{1.5, -0.5};
        \vtx[{[label distance=-3pt]above right:$a_2$}]{a2}{3, 0};
        \vtx[{[label distance=-3pt]above right:$b_1$}]{b1}{0, 3};
        \vtx[{[label distance=-3pt]below right:$a_4$}]{a4}{-3, 0};
        \vtx[{[label distance=-3pt]135:$b_4$}]{b4}{-1.8, 0.5};
        \vtx{3}{-1.5,1.2};
        \vtx{4}{-0.5,1.5};
        \vtx[{[label distance=-3pt]135:$a_1$}]{a1}{-1, 2};

        \hdedge[out=180, in=270]{a3}{y};
        \hdedge[out=90, in=180]{y}{1};
        \hdedge[out=0, in=90]{1}{x};
        \hdedge[out=270, in=45]{x}{2};
        \hdedge[out=225, in=0]{2}{a3};
        \hdedge[out=330, in=255]{2}{b2};
        \hdedge[out=75, in=0]{b2}{x};
        \dedge[out=0, in=270]{b3}{a2};
        \dedge[out=90, in=0]{a2}{b1};
        \dedge[out=180, in=90]{b1}{a4};
        \dedge[out=270, in=180]{a4}{b3};
        \hdedge[out=270, in=180]{b4}{y};
        \hdedge[out=210, in=90]{3}{b4};
        \hdedge[out=180, in=30]{4}{3};
        \hdedge[out=90, in=0]{1}{4};
        \hdedge[out=0, in=90]{a1}{4};
        \hdedge[in=180, out=120]{3}{a1};
        \draw[midarrow] (a3) -- (b3) node[midway, right] {$P_3$};
        \draw[midarrow] (a2) to[out=180, in=0] node[midway, above left] {$P_2$} (b2);
        \draw[midarrow] (a4) to[out=0, in=180] node[midway, below right] {$P_4$} (b4);
        \draw[midarrow] (a1) to[out=90, in=270] node[midway, below right] {$P_1$} (b1);

        \draw[dashed, very thick] ([shift={(0.1,0)}]b2.center) to[out=75, in=0] ([shift={(0,0.1)}]x.center) to[out=180, in=90] ([shift={(-0.1,0)}]x.center) to[out=270, in=0] ([shift={(0,0.1)}]a3.center) to[out=180, in=270] ([shift={(0.1,0)}]y.center) to[out=90, in=180] ([shift={(0,-0.1)}]1.center) to[out=0, in=270] ([shift={(0.1,0)}]1.center) to[out=90, in=0] node[midway, label={[label distance=-10pt]45:$Q_{2,1}$}] {} ([shift={(0,0.1)}]4.center) to[out=180, in=30] ([shift={(0.04,0.12)}]3.center) to[out=120, in=180] ([shift={(0,-0.1)}]a1.center);

        \draw[dashed, very thick] ([shift={(0,0.1)}]b1.center) to[out=180, in=90] node[midway, label={[label distance=-10pt]135:$R_{1,2}$}] {} ([shift={(-0.1,0)}]a4.center) to[out=270, in=180] ([shift={(0,-0.1)}]b3.center) to[out=0, in=270] ([shift={(0.1,0)}]a2.center);

        \draw[dotted, very thick] ([shift={(0.1,0)}]b4.center) to[out=270, in=180] ([shift={(-0.1,0.1)}]y.center) to[out=90, in=180] ([shift={(0,0.1)}]1.center) to[out=0, in=90] node[midway, label={[label distance=-10pt]45:$Q_{4,3}$}] {} ([shift={(0.1,0)}]x.center) to[out=270, in=0] ([shift={(0,-0.1)}]a3.center);

        \draw[dotted, very thick] ([shift={(0,0.1)}]b3.center) to[out=0, in=270] ([shift={(-0.1,0)}]a2.center) to[out=90, in=0] node[midway, label={[label distance=-10pt]225:$R_{3,4}$}] {}([shift={(0,-0.1)}]b1.center) to[out=180, in=90] ([shift={(0.1,0)}]a4.center);

        \node at (0.55,0.55) {$D_1$};
        \node at (2.3,2.3) {$D_2$};
    \end{tikzpicture}
    \caption{Part of the proof of~\ref{planarcut}. The innermost (clockwise) cycle is $D_1$, and the outermost (counterclockwise) cycle is $D_2$. The union of the thicker arrows in the middle of the figure is $U(\mac C)$. The dashed lines mark $R_{1,2}$ and $Q_{2,1}$, and the dotted lines mark $R_{3,4}$ and $Q_{4,3}$.}
    \label{fig:planarcutpf}
\end{figure}
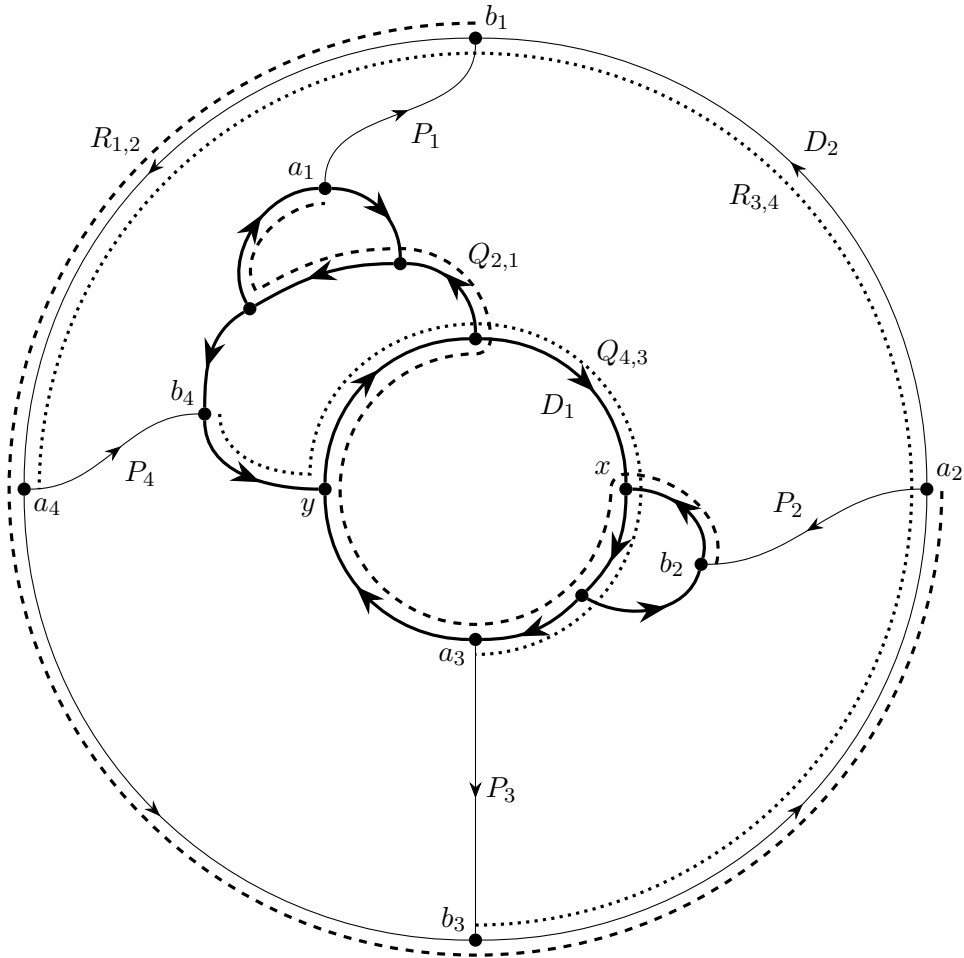

Let $Q_{2,1}$ be a path of $U(\mac C)$ from the last vertex of $P_2$ to the first vertex of $P_1$.
Suppose that $b_1 = a_2$. Since the drawing of $G$ is diplanar and $D_2$ is a counterclockwise circuit that intersects 
the dicycle  $P_1\cup P_2\cup Q_{2,1}$, it follows that $P_1\cup P_2\cup Q_{2,1}$ is counterclockwise, and 
since $e_1,e_2,e_3,e_4$
are in clockwise order in $F$, it follows 
that $b_1,a_2,b_3,a_4$ are all equal, contradicting that the drawing is diplanar. Thus $b_1\ne a_2$, and similarly $b_3\ne a_4$. 
Let $R_{1,2}$ be the subpath of 
$D_2$ from the last vertex of $P_1$ to the first vertex of $P_2$. Then $P_2\cup Q_{2,1}\cup P_1\cup R_{1,2}$ is a counterclockwise 
dicycle, and by \eqref{step1a}, $\ins(D_1)$ is not a subset of $\ins(P_2\cup Q_{2,1}\cup P_1\cup R_{1,2})$. So $\ins(D_1)$ is a subset 
of $\ins(P_2\cup Q_{2,1}\cup P_1\cup R_{2,1})$, where $R_{2,1}$ is the dipath of $D_2$ from $a_2$ to $b_1$. (This cycle is
not a dicycle.) Similarly, $\ins(D_1)$ is a subset of $\ins(P_1\cup Q_{4,3}\cup P_3\cup R_{4,3})$, where $Q_{4,3}, R_{4,3}$
are respectively a dipath of $U(\mac C)$ from $b_4$ to $a_3$ and a dipath of $D_2$ from $a_4$ to $b_3$. 
Consequently, $P_2\cup Q_{2,1}\cup P_1$ and $P_4\cup Q_{4,3}\cup P_3$ are not vertex-disjoint.
Let $x,y$ be the first and last vertices of the path $P_2\cup Q_{2,1}\cup P_1$ that belong to $P_4\cup Q_{4,3}\cup P_3$. 
Since the drawing is diplanar it follows that $x\ne y$, and hence from planarity, $y$ is strictly earlier than $x$ in the path  
$P_4\cup Q_{4,3}\cup P_3$. But then the dicycles $P_2\cup Q_{2,1}\cup P_1\cup R_{1,2}$ and $P_4\cup Q_{4,3}\cup P_3\cup R_{3,4}$
disagree on $\{x,y, b_1\}$ (where $R_{4,3}$ is the dipath of $D_2$ from $a_4$ to $b_3$), a contradiction. This proves~\ref{planarcut}.
\end{proof}

A drawing in a 2-sphere can be converted to a drawing in the plane by removing from the 2-sphere one point in some region of
the drawing. Let us call this \emph{puncturing} the drawing. 
Let us say a drawing in the plane (without crossings) of a digraph $G$ is \emph{circular} if the origin belongs to one of the regions, 
and each edge of $G$ is drawn as a curve that moves monotonically in a clockwise direction around the origin. (This is a more exact 
restatement of the definition of ``circular'' given in the introduction.) Theorem~4.2 of~\cite{auer} implies that if 
a 1-strong digraph $G$ admits a diplanar drawing in a 2-sphere with no two vertex-disjoint similarly-oriented
cycles, then $G$ admits a circular drawing in the plane. Here is a slight strengthening (we omit its proof): one can obtain a circular
drawing by puncturing the 2-sphere drawing at some point inside a region bounded by a directed cycle. 

Thus, from~\ref{planarcut} we deduce our first main result:
\begin{thm}\label{planarcon}
Let $G$ be a 1-strong weightable digraph with a diplanar drawing in the 2-sphere. Then either:
\begin{itemize}
\item the drawing of $G$ can be constructed by the construction of~\ref{buildplanar} from diplanar drawings of two smaller 1-strong 
weightable digraphs, in such a way that the sets $A,B$ of~\ref{buildplanar} have similarly-oriented central cycles, or
\item by puncturing the drawing one can obtain a circular drawing in the plane.
\end{itemize}
\end{thm}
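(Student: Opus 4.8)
The plan is to split into two cases according to whether $G$ contains two vertex-disjoint similarly-oriented cycles. If it does not, then $G$ is 1-strong and diplanar in the 2-sphere with no such pair, so the strengthened form of Theorem~4.2 of~\cite{auer} recorded above applies directly: puncturing the given drawing at a point inside a region bounded by a dicycle yields a circular drawing in the plane, which is the second alternative. (If $G$ has no dicycle at all, then $G$ is a single vertex and the second alternative holds trivially.) So from now on assume $G$ has two vertex-disjoint similarly-oriented cycles, and aim for the decomposition.

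First I would invoke~\ref{planarcut}. Choosing $D_1,D_2$ vertex-disjoint and similarly-oriented with the annulus between them minimal, that lemma supplies a cut-curve $F$ with change number two such that, writing $A$ for the set of vertices of $G$ inside one of the discs bounded by $F$ and $B=V(G)\setminus A$, the cycle $D_1$ is a central cycle for $A$ and $D_2$ is a central cycle for $B$. Let $G_1$ (with squished vertex $b$) and $G_2$ (with squished vertex $a$) be obtained from $G$ by squishing $B$, respectively $A$, as in~\ref{buildplanar}, each carrying the drawing obtained by collapsing the corresponding disc to a point. Since $F$ has change number two, the edges of $G_1$ with head $b$ are exactly the edges of $G$ directed from $A$ to $B$, and these cross $F$ along a single arc, hence form an interval in the rotation at $b$; the rotations at all other vertices are as in $G$; so the inherited drawing of $G_1$ is diplanar, and likewise for $G_2$.

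It then remains to check that $G_1$ and $G_2$ are 1-strong, weightable, and smaller than $G$. Strong connectivity is immediate, since squishing is a quotient operation and a dipath of $G$ projects to a dipath of the quotient once the deleted loops are discarded. For weightability I would verify the hypothesis of~\ref{goodcut}: given two edges $e,f$ of $G$ crossing $F$ in opposite directions, say $e$ with head in $A$ and tail in $B$ and $f$ with tail in $A$ and head in $B$, pick a vertex $w_A$ of $D_1$ (a centre for $A$) and a vertex $w_B$ of $D_2$ (a centre for $B$); by the definition of centre there is a dipath of $G[A]$ from the head of $e$ to $w_A$ and one from $w_A$ to the tail of $f$, hence a dipath of $G[A]$ from the head of $e$ to the tail of $f$, and symmetrically a dipath of $G[B]$ from the head of $f$ to the tail of $e$; concatenating $e$, the first dipath, $f$, and the second dipath yields a dicycle of $G$ through both $e$ and $f$. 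So~\ref{goodcut} applies and $G_1,G_2$ are weightable. Finally $D_1$ is a dicycle of $G[A]$ and $D_2$ a dicycle of $G[B]$, so $E(G[A])$ and $E(G[B])$ are nonempty and squishing either side deletes some edges, whence $G_1$ and $G_2$ each have strictly fewer edges than $G$. Since the construction of~\ref{buildplanar} applied to $G$ and $F$ produces exactly $G_1$ and $G_2$, reversing it recovers the drawing of $G$ from the drawings of $G_1$ and $G_2$, and $D_1,D_2$ are the required similarly-oriented central cycles; this is the first alternative.

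The substantive work in this argument is carried entirely by~\ref{planarcut} and~\ref{goodcut}, so I expect no genuine obstacle at this stage; the assembly is routine. The two places that deserve care are the direction bookkeeping in the~\ref{goodcut} verification --- confirming that ``crossing $F$ in opposite directions'' is matched correctly with ``one edge has head in $A$ and the other has tail in $A$'', and that the centre conditions are applied to the right edges --- and the (easy) check that collapsing a disc across a change-number-two cut-curve of a diplanar drawing in the 2-sphere leaves a diplanar drawing.
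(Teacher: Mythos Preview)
Your proposal is correct and follows essentially the same approach as the paper: split on whether two vertex-disjoint similarly-oriented cycles exist, invoking the strengthened Auer result in one case and \ref{planarcut} (together with \ref{goodcut}) in the other. The paper's own proof is a one-line ``Thus, from~\ref{planarcut} we deduce\dots'', and you have filled in exactly the routine verifications (diplanarity of the squished drawings, 1-strongness, weightability via centres and \ref{goodcut}, and strict decrease in size) that make this deduction go through.
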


%%%%%%%%%%%%%%%%%%%%%%%%%%%%%%%%%%%%%%%%%%%%%%%%%%%%%%%%%%%%%%%%%%%%%%%%%%%%%%%%%%%%%%%%%%%%%%%%%%%%%%%%%%%%%%%%%%%%%%%%%%%%%%%%%%
\section{Carvings of planar digraphs}
\label{sec:carving}

There is another interesting way to view 1-strong diplanar weightable digraphs, following an approach in~\cite{ratcatcher} for 
decomposing planar graphs. 
Let $V$ be a finite set with $|V| \ge 2$. 
Two subsets $A, B\subseteq V$ \emph{cross} if $A\cap B$, $A\setminus B$, $B\setminus A$ and $V\setminus (A\cup B)$ are all nonempty.
A \emph{carving} in $V$ is a set $\mac C$ of subsets of $V$, such that:
\begin{itemize}
\item $\emptyset, V\ne \mac C$;
\item no two members of $\mac C$ cross; and
\item $\mac C$ is maximal with this property.
\end{itemize}
It follows that if $A\in \mac C$ then $V\setminus A\in \mac C$, and $\{v\}\in \mac C$ for each $v\in V$.

One can view a carving as arising from a tree, as follows.
(The leaves of a tree are its vertices of degree 1.)
\begin{thm}\label{carvings}
Let $V$ be a finite set with $|V|\ge 2$, let $T$ be a tree in which every vertex has
degree 1 or 3, and let $\tau$ be a bijection from $V$ onto the set of leaves of $T$. For each
edge $e$ of $T$ let $T_1(e), T_2(e)$ be the two components of $T\setminus e$, and let
\[\mac C = \{\{v \in  V : \tau(v)\in V(T_i(e))\}:  e\in  E(T), i = 1, 2\}.\]
Then $\tau$ is a carving in $V$. Conversely, every carving in $V$ arises from some tree $T$
and bijection $\tau$ in this way. 
\end{thm}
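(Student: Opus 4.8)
The plan is to prove the two implications separately: that a tree $T$ (with all degrees $1$ or $3$) together with a bijection $\tau$ onto its leaves yields a carving, by a direct argument; and that every carving arises this way, by induction on $n=|V|$. Writing $\mathcal C$ for the family defined in the statement, I would first check $\emptyset,V\notin\mathcal C$: each component of $T\setminus e$ with at least two vertices has at least two leaves, and since deleting $e$ lowered the degree of only one of its vertices, at least one of those is a leaf of $T$, while a one-vertex component is an end of $e$ and hence a leaf of $T$; so both sides of each $e$ give nonempty proper subsets. Next, no two members cross: for distinct edges $e,f$ the forest $T\setminus\{e,f\}$ has exactly three components $X,Y,Z$ with $Y$ the one meeting both $e$ and $f$, and writing $x,y,z$ for the corresponding subsets of $V$, the two sets at $e$ are $x$ and $y\cup z$ while those at $f$ are $z$ and $x\cup y$, and each of the four possible pairs visibly has an empty part. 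The substantive point is \emph{maximality}: suppose $D\subseteq V$ with $\emptyset\ne D\ne V$ that crosses no member of $\mathcal C$ but $D\notin\mathcal C$. Not crossing the two sets at an edge $e=pq$ means $D$ is nested with one side of $e$, so I can orient $e$ ``towards $D$'' in a well-defined way (here $D\notin\mathcal C$ rules out the two ambiguous cases). Since $T$ is acyclic this orientation has a sink $s$; if $s$ is a leaf the unique edge at $s$ forces $D$ to be a singleton or a set $V\setminus\{v\}$, hence in $\mathcal C$; if $s$ has degree $3$ the leaf-sets $L_1,L_2,L_3$ of the three components of $T\setminus s$ partition $V$ into nonempty parts, the orientation forces each $L_i$ to be disjoint from $D$ or contained in $D$, and then $D$ is the union of one or two of the $L_i$, again a member of $\mathcal C$ -- contradiction.

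For the converse I would induct on $n=|V|$. When $n=2$ the only carving is $\{\{v\}:v\in V\}$, realized by a single edge. For $n\ge3$ the crucial step is the lemma that $\mathcal C$ contains a set of size exactly $2$. To prove it, take $A\in\mathcal C$ with $|A|\ge2$ and $|A|$ minimal (such $A$ exists since every $V\setminus\{v\}$ lies in $\mathcal C$), and suppose $|A|\ge3$. Pick distinct $u,w\in A$; minimality gives $\{u,w\}\notin\mathcal C$, so by maximality $\{u,w\}$ crosses some $B\in\mathcal C$, which forces exactly one of $u,w$ to lie in $B$, a point of $B$ outside $\{u,w\}$, and a point outside $B\cup\{u,w\}$. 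Since $A,B$ do not cross, one deduces that $B$ or $V\setminus B$ is contained in $A$ and unequal to $A$, and either way that set is a non-singleton member of $\mathcal C$ strictly inside $A$, contradicting minimality. Hence $|A|=2$, so some $\{u,w\}\in\mathcal C$.

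Given such a pair I would contract it. Put $V'=(V\setminus\{u,w\})\cup\{*\}$ and let $\mathcal C'$ consist of the sets $B\in\mathcal C$ with $u,w\notin B$ together with the sets $(B\setminus\{u,w\})\cup\{*\}$ for those $B\in\mathcal C$ with $u,w\in B$. Because $\{u,w\}\in\mathcal C$, any $B\in\mathcal C$ either keeps $u$ and $w$ on the same side (both in or both out) or is one of the four sets $\{u\},\{w\},V\setminus\{u\},V\setminus\{w\}$; using this, cross-freeness and closure under complement pass to $\mathcal C'$ at once, and $\emptyset,V'\notin\mathcal C'$. For maximality of $\mathcal C'$, a putative extra set $D'\subseteq V'$ lifts to a set $D\subseteq V$ keeping $u,w$ together; if $D\notin\mathcal C$ it crosses some $B\in\mathcal C$, such a $D$ cannot cross any of the four exceptional sets, so $B$ keeps $u,w$ together and the crossing descends to $\mathcal C'$. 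Thus $\mathcal C'$ is a carving on $n-1$ points, realized by induction by some $T'$ and $\tau':V'\to\mathrm{leaves}(T')$. I would then form $T$ from $T'$ by replacing the pendant edge at the leaf $\tau'(*)$ with a new degree-$3$ vertex $c$ adjacent to the other end of that edge and to two new leaves $\ell_u,\ell_w$, and set $\tau(u)=\ell_u$, $\tau(w)=\ell_w$, $\tau=\tau'$ elsewhere; then $T$ has all degrees $1$ or $3$ and $|\mathrm{leaves}(T)|=n$. Finally I would run over the edges of $T$: the two new pendant edges give $\{u\},\{w\}$ and their complements, the edge at $c$ gives $\{u,w\}$ and its complement, and every other edge gives the lift to $V$ of the corresponding member of $\mathcal C'$; comparing with the description above of which members of $\mathcal C$ keep $u,w$ together, one checks that the family of $(T,\tau)$ is exactly $\mathcal C$, completing the induction.

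I expect the main obstacles to be the two maximality arguments. In the forward direction the delicate part is organizing the orientation-and-sink analysis so that \emph{every} case deposits $D$ back into $\mathcal C$. In the converse the key difficulties are proving the two-element lemma by the minimality argument, and then verifying that maximality is not lost under the contraction $\mathcal C\mapsto\mathcal C'$ -- which is precisely where one needs the fact that the only sets of $\mathcal C$ separating $u$ from $w$ are $\{u\},\{w\},V\setminus\{u\},V\setminus\{w\}$, and that a lift of a subset of $V'$ cannot cross any of them.
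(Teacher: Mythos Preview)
The paper does not prove this statement; it simply cites it as Theorem~1.1 of~\cite{ratcatcher}. So there is no ``paper's own proof'' to compare against, and your proposal supplies a full argument where the paper gives none.

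Your proof is correct. A couple of minor comments:

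In the forward direction, the cleanest way to set up the orientation is ``orient $e$ towards the mixed side'', i.e.\ towards the unique side containing leaves from both $D$ and $V\setminus D$ (uniqueness follows because $D$ does not cross $A_e$; existence because $D\notin\mathcal C$). With this orientation a leaf can never be a sink, since the singleton side is always pure; so your leaf-sink case is vacuous. That is fine --- your conclusion there (``$D$ is $\{v\}$ or $V\setminus\{v\}$'') is vacuously true --- but phrasing the orientation explicitly would make it clear why the degree-$3$ analysis is the only real case.

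In the converse, your two-element lemma and the contraction step are both right. One small point worth making explicit: when checking that $\mathcal C'$ is closed under complements you use that if $B\in\mathcal C$ with $u,w\notin B$ then $V\setminus B\in\mathcal C$ with $u,w\in V\setminus B$, which rests on the (easy but unstated) fact that any carving is closed under complements; this follows from maximality since $V\setminus A$ crosses nothing that $A$ does not cross. Otherwise the induction is clean, and the final bookkeeping matching the edges of $T$ to the members of $\mathcal C$ is exactly right.
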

(This is theorem 1.1 of~\cite{ratcatcher}.)

The main result of~\cite{ratcatcher} is a poly-time algorithm that, given as input some planar graph $G$ with $|V(G)|\ge 2$,
finds a carving $\mac C$ of $V(G)$ such that $\max_{C\in \mac C}|\delta(C)|$ is as small as possible, where $\delta(C)$ denotes the 
set of edges with an end in $C$ and an end in $V(G)\setminus C$. (Its running time was
$O((|V(G)|+|E(G)|)^2)$, where the multiplicative constant was reasonable.) But now we want to use carvings for 
planar digraphs, and instead of minimizing $\max_{C\in \mac C}|\delta(C)|$, we want to minimize something else, related to change number.

This can be done in a few different ways, but the neatest is only possible if we assume that the digraph $G$ is 1-strong, 2-weak, and loopless.
If $G$ is drawn in the plane and $A\subseteq V(G)$ such that $G[A]$ and $G[V(G)\setminus A]$ are both nonnull and 1-weak, 
the set of edges between $C$ and $V(G)\setminus C$ is a ``bond'' of 
the graph underlying $G$, that is, a minimal edge-cutset, and so it corresponds to a cycle of the dual graph. 
Hence there is a cut-curve $F$ separating $C,V(G)\setminus C$, and the edges crossing $F$ are the edges in $\delta(C)$. Let us say the
\emph{change number} of $C$ is the change number of $F$.

Let us say a \emph{bond carving} of a planar digraph $G$ is a carving $\mac C$ of $G$ such that $G[C]$ is 1-weak 
for all $C\in \mac C$. 
The \emph{diwidth} of $\mac C$ is the 
maximum over all $C\in \mac C$ of the change number of $C$. 
We will prove:
%%%%%%%%%%%%%%%%%%%%%%%%%%%%%%
\begin{thm}\label{widthweight}
Let $G$ be a 1-strong, 2-weak, loopless digraph with a diplanar drawing in the plane. Then $G$ is weightable if and only if $G$ admits
a bond carving of diwidth two.
\end{thm}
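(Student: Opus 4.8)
The plan is to prove the two implications separately, each by induction on $|V(G)|$, with~\ref{buildplanar} and~\ref{goodcut}/\ref{planarcut} as the engines and with the digraphs on three vertices and the digraphs with circular drawings as the base cases. One fact underlies the ``gluing'' in both directions — call it \textbf{change-number invariance}: if $F$ is a cut-curve of change number two, $A,B$ are its two sides, $G_1$ is obtained by squishing $B$ to a vertex $b$, and $D\subseteq A$ with $G[D]$ and $G[V(G)\setminus D]$ both $1$-weak, then $D$ has the same change number in $G$ as in $G_1$; indeed a cut-curve for $D$ can be drawn inside the disc of $F$ containing $A$, and squishing $B$ alters the edges of $\delta(D)$ only outside that disc, hence not at the points where, nor in the directions in which, they cross the cut-curve for $D$ (the case $D\supseteq B$ follows since a set and its complement share a cut-curve). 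I will also use repeatedly that, when $F$ has change number two, squishing $B$ keeps $G_1$ loopless, $1$-strong and diplanar (at $b$ the in-edges form an interval, since the cross-edges of $F$ alternate only twice), and keeps $G_1$ $2$-weak provided $G$ is $2$-weak and $G[A],G[B]$ are both $1$-weak (contracting a connected vertex set whose complement is also connected preserves $2$-connectedness of the underlying graph, as long as at least three vertices remain).

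\textbf{``$G$ has a bond carving of diwidth two'' $\Rightarrow$ ``$G$ is weightable''.} Let $\mac C$ be such a carving, arising by~\ref{carvings} from a tree $T$. If $|V(G)|=3$ then $G^-$ is $2$-connected, so $G$ cannot contain both directed triangles on $V(G)$ (its bisource would then be $K_{3,3}$, which is non-planar, contradicting diplanarity), and $G$ is weightable by~\ref{triple}. If $|V(G)|\ge 4$, choose an edge $e$ of $T$ with both ends of degree three; it yields a partition $(A,B)$ of $V(G)$ with $A,B\in\mac C$, with $|A|,|B|\ge 2$, with $G[A],G[B]$ $1$-weak, and of change number at most two, hence exactly two (since $G$ is $1$-strong the cross-edges go both ways). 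Form $G_1,G_2$ as in~\ref{buildplanar} by squishing $B$, resp.\ $A$; by the remarks above these are $1$-strong, $2$-weak, loopless and diplanar. The two subtrees of $T\setminus e$, each with a new leaf grafted at the severed end, give carvings of $G_1,G_2$, and using change-number invariance (and that squishing a $1$-weak set keeps an ambient $1$-weak induced subgraph connected) these are bond carvings of diwidth two. By induction $G_1,G_2$ are weightable, so by~\ref{buildplanar} $G$ is weightable.

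\textbf{``$G$ is weightable'' $\Rightarrow$ ``$G$ has a bond carving of diwidth two''.} If $|V(G)|=3$ the unique carving of $V(G)$ is a bond carving ($G^-$ is $2$-connected) and has diwidth two: every singleton, and hence every complement, has change number two, because in a diplanar drawing the in-edges and the out-edges at a vertex each form an interval and both are nonempty by strong connectivity. If $|V(G)|\ge 4$ and $G$ has two vertex-disjoint similarly-oriented cycles, apply~\ref{planarcut} to obtain a cut-curve $F$ of change number two with $A,B$ having central cycles $D_1,D_2$. Then $|A|,|B|\ge 2$, and any two edges crossing $F$ oppositely lie on a common dicycle (route through $D_1$ and through $D_2$), so by~\ref{goodcut} the squished digraphs $G_1,G_2$ are weightable; moreover $G[A],G[B]$ are $1$-weak (every weak component of $G^-[A]$ meets the $A$-end of a cross-edge, and by the centre property all such ends lie in the weak component of $D_1$ in $G[A]$), so $G_1,G_2$ are also $2$-weak, loopless, diplanar and smaller. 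By induction each of $G_1,G_2$ has a bond carving of diwidth two; grafting the two carving trees together along the leaves $b$ and $a$ (reversing the~\ref{buildplanar} split) and invoking change-number invariance yields a bond carving of diwidth two for $G$. In the remaining case, $G$ has no two vertex-disjoint similarly-oriented cycles, so by the strengthening of Theorem~4.2 of~\cite{auer} noted above $G$ has a circular drawing, and we appeal to the base-case claim that every $1$-strong digraph with a circular drawing admits a bond carving of diwidth two.

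\textbf{The main obstacle} is exactly that last claim about circular digraphs, since it is the only case of the ``only if'' direction that does not descend to a smaller instance through~\ref{planarcut}. The natural approach is to build the carving by repeatedly cutting along a simple closed curve that, in the circular drawing, winds once around the origin — each such curve has change number two — after first subdividing edges so that these curves cross each edge at most once (harmless for weightability by~\ref{butterfly}, and easily seen to preserve admitting a diwidth-two carving). Making this rigorous seems to want the ``unrolled'' (universal-cover) picture of a circular drawing, in which $G$ lifts to a periodic upward-planar digraph and a winding-number-one cut-curve lifts to a single monotone curve; the work is in arranging the cuts so that the resulting pieces remain $1$-strong, loopless, diplanar and weightable and so that their carvings reassemble. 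A secondary, purely bookkeeping obstacle is checking in full detail that the constructions of~\ref{buildplanar} and~\ref{goodcut} transport bond carvings of diwidth two in both directions, where change-number invariance is the key ingredient.
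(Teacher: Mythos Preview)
Your overall architecture is exactly the paper's: both directions by induction on $|V(G)|$, with~\ref{buildplanar} driving the ``if'' direction and~\ref{planarcut}/\ref{goodcut} driving the non-circular branch of the ``only if'' direction, and with your ``change-number invariance plus tree-grafting'' playing the role of the paper's Lemma~\ref{usecut}. Those parts are correct and match the paper.

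The gap is precisely where you flag it: the claim that every $1$-strong, $2$-weak, loopless digraph with a circular drawing admits a bond carving of diwidth two. The paper does not treat this as a base case to be taken on faith; it is Lemma~\ref{breakcircular}, and its proof is quite different from the winding-curve/universal-cover idea you sketch. The argument goes: the outer boundary of the drawing is a clockwise dicycle $D$; after contracting away any vertex of $D$ of in- and out-degree one (harmless for the carving), one shows that deleting any two edges of $D$ keeps $G^-$ connected (otherwise the resulting two-edge cut already gives a change-number-two split and~\ref{usecut} plus induction finishes). From this one finds an edge $e\in E(D)$ with $G\setminus e$ still $1$-strong, and then argues that $G\setminus e$ is also $2$-weak --- a putative cut-vertex would force two clockwise dicycles sharing a single vertex and both meeting the infinite region, impossible when both must enclose the origin. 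Now induction gives a diwidth-two bond carving of $G\setminus e$; the punchline is that \emph{the same carving works for $G$}, because in any cut containing $e$, the two edges cyclically adjacent to $e$ lie on $D$ and on the inner boundary path $P$ of the other region incident with $e$, and these are oppositely oriented, so inserting $e$ cannot raise the change number.

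Your proposed route via monotone cuts in the unrolled picture may well work, but the delicate point --- keeping the pieces $2$-weak and $1$-strong after each cut so that induction applies --- is exactly what the paper's edge-deletion trick sidesteps. As written, the proposal is incomplete at this one lemma.
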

To prove this we need a couple of lemmas. First:
%%%%%%%%%%%%%%%%%%%%%%%%%%
\begin{thm}\label{usecut}
Let $G$ be a 1-strong, 2-weak, loopless digraph drawn in the plane, and let $A,B$ be a partition of $V(G)$ into two nonnull subsets such that $G[A],G[B]$ are both 
1-weak. Let $G_1,G_2$ be the drawings obtained by contracting all the edges of $G[A]$ (respectively, all edges of $G[B]$). Then 
both $G_1,G_2$ are 1-strong, 2-weak, and loopless, and if they both admit bond carvings of diwidth two then so does $G$.
\end{thm}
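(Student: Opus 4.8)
The plan is to first dispose of the routine structural claims, then build a bond carving of $G$ by gluing the two given carving-trees, using a single topological fact about collapsing a disc.

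For the structural claims: contracting edges preserves strong connectivity, so $G_1,G_2$ are $1$-strong; since $G[A]$ and $G[B]$ are \emph{induced} subdigraphs, all edges with both ends in $A$ (resp.\ $B$) get contracted and none survives, so $G_1,G_2$ are loopless; and for $2$-weakness, assuming $|B|\ge 2$, I would argue that $G_1^-$ — which is $G^-$ with the connected induced subgraph $G^-[A]$ contracted to a vertex $a$ — has at least three vertices and no cut-vertex: deleting $a$ leaves $G^-[B]$, which is connected, and deleting any $z\in B$ leaves a graph that is the image of the connected graph $G^-\setminus z$ under contracting $G^-[A]$, hence connected. Note that if $|A|=1$ then $G_1=G$ and there is nothing more to prove, and symmetrically if $|B|=1$; so from now on I would assume $|A|,|B|\ge 2$.

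Next I would fix the planar picture. Since $G[A],G[B]$ are both $1$-weak, $\delta(A)$ is a bond of $G^-$, so there is a cut-curve $F_0$ separating $A$ from $B$; let $\Delta_0$ be the closed disc it bounds with $A\subseteq\Delta_0$. Then the drawing $G_1$ is (up to homeomorphism) the image of the drawing $G$ under the map $\phi$ of the $2$-sphere that collapses $\Delta_0$ to a single point $a$ and restricts to a homeomorphism off $\Delta_0$, and symmetrically for $G_2$ with the complementary disc. The one real point is a correspondence of cut-curves: if $F_1$ is any cut-curve of $G_1$, then, since $F_1$ avoids the vertex $a$, the curve $\phi^{-1}(F_1)$ avoids $\Delta_0$ entirely (so meets no edge of $G[A]$), is a simple closed curve, meets exactly the same edges of $G$ that $F_1$ meets, in the same cyclic order, and for each such edge the side of $\phi^{-1}(F_1)$ lying towards the $G$-head corresponds under $\phi$ to the side of $F_1$ lying towards the $G_1$-head — for an edge incident to $a$ in $G_1$ this is because $\Delta_0$, which contains the $G$-head, lies on the $a$-side of $F_1$. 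Hence $\phi^{-1}(F_1)$ is a cut-curve of $G$ with the same change number as $F_1$ (change number being, by definition, a function only of the cyclic sequence of crossed edges together with their directions relative to the curve), and the analogous statement holds for $G_2$. This step is where essentially all the work lies and is the one I expect to need the most care.

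Finally I would assemble the carving. Take bond carvings $\mac C_1,\mac C_2$ of $G_1,G_2$ of diwidth two, presented by trees $T_1,T_2$ with leaves labelled by $V(G_1)=B\cup\{a\}$ and $V(G_2)=A\cup\{b\}$ via~\ref{carvings}. Let $u_1,u_2$ be the neighbours in $T_1,T_2$ of the leaves labelled $a,b$ respectively; these are internal (since $|V(G_i)|\ge 3$), hence of degree $3$. Form $T$ by deleting the two leaves $a,b$ and adding an edge $u_1u_2$. Then every vertex of $T$ has degree $1$ or $3$, and its leaves are labelled bijectively by $V(G)=A\cup B$, so~\ref{carvings} turns $T$ into a carving $\mac C$ of $V(G)$. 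Up to complementation, each member of $\mac C$ is one of: (i) a set $X\subseteq B$ with $X\in\mac C_1$, arising from an edge of $T_1$ not at the $a$-leaf; (ii) a set $X\subseteq A$ with $X\in\mac C_2$, symmetrically; or (iii) the set $A$, arising from the new edge $u_1u_2$. In case (i), $G[X]=G_1[X]$ is $1$-weak, its complement is $G_1$ restricted to the other side of $X$ with $a$ ``blown up'' to the connected graph $G[A]$ and so is $1$-weak, and pulling a cut-curve for $X$ in $G_1$ back through $\phi$ gives a cut-curve for $X$ in $G$ of change number at most two; case (ii) is symmetric; and in case (iii), $G[A]$ and $G[B]$ are $1$-weak by hypothesis, while pulling back through $\phi$ a small cut-curve encircling the vertex $a$ of $G_1$ — which has change number at most two since $\{a\}\in\mac C_1$ — gives a cut-curve separating $A$ from $B$ of change number at most two. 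Hence $\mac C$ is a bond carving of $G$ of diwidth two, which completes the proof.
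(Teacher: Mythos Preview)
Your proof is correct and takes essentially the same approach as the paper: both combine the two carvings by substituting $A$ for $a$ in $\mathcal C_1$ and $B$ for $b$ in $\mathcal C_2$, yielding the same carving $\mathcal C$. You phrase the construction via the tree representation of~\ref{carvings} and supply an explicit collapse-map argument for why change numbers are preserved, whereas the paper defines $\mathcal C$ set-theoretically as $\mathcal C_1'\cup\mathcal C_2'$ and simply asserts the diwidth-two conclusion without further justification; your extra care is warranted but does not constitute a different method.
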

\begin{proof} Clearly $G_1,G_2$ are 1-strong and loopless. Let $a$ be the vertex made by contracting $G[A]$ into a vertex, and define $b$ similarly.
To see that $G_1,G_2$ are 2-weak, note that $G_1\setminus v$ is 1-weak for all $v\ne a$
since $G$ is 2-weak, and $G_1\setminus a=G[B]$ is 1-weak. So $G_1$ is 2-weak, and similarly so is $G_2$. 

Suppose that for $i = 1,2$, $\mac C_i$ is a bond carving of $G_i$ with diwidth two. Thus $\{a\}\in \mac C_1$ and $\{b\}\in \mac C_2$.
Let 
\[ \mac C_1'=\{C\in \mac C_1:a\notin C\}\cup \{A\cup (C\setminus \{a\}):C\in \mac C_1\text{ with } a\in C\}, \]
and define $\mac C_2'$ similarly. Thus $A,B$ belong to both $\mac C_1, \mac C_2$. Let $\mac C = \mac C_1'\cup \mac C_2'$; then 
$\mac C$ is a bond carving of $G$ with diwidth two. This proves~\ref{usecut}.\end{proof}

Second, we need:
%%%%%%%%%%%%%%%%%%%%%%%%%%%%%%%%
\begin{thm}\label{breakcircular}
Let $G$ be 1-strong, 2-weak, and loopless, and suppose $G$ admits a diplanar drawing in the plane such that every dicycle is clockwise and 
bounds a disc including the origin. Then $G$ admits a  bond carving of diwidth two.
\end{thm}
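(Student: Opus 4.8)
The plan is to construct a bond carving explicitly from the given circular drawing, building the carving tree as a ``caterpillar''; an equivalent recursive route uses \ref{usecut}. First I would record two easy facts. Since $G$ is $2$-weak, $G^-$ is $2$-connected, so $n:=|V(G)|\ge 3$; and since $G$ is $1$-strong, loopless, and the drawing is diplanar, the small circle around any vertex $v$ crosses exactly the edges at $v$, which (by diplanarity) split into one arc of heads followed by one arc of tails, so this circle has change number exactly two. Hence a bond carving all of whose non-singleton cuts are ``nice'' will automatically have diwidth two at its singleton cuts. For $n=3$ (the base case) the carving tree is one degree-three vertex with three leaves, every cut is a singleton cut, and we are done.

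For $n\ge 4$ I would aim to produce an ordering $v_1,\dots,v_n$ of $V(G)$ that is an \emph{st-ordering} of $G^-$ — every $v_i$ with $1<i<n$ has a neighbour with smaller index and one with larger index — and is moreover compatible with the rotation about the origin. Any st-ordering makes every prefix $\{v_1,\dots,v_k\}$ and every suffix $\{v_{k+1},\dots,v_n\}$ induce a $1$-weak subdigraph, so the caterpillar tree $T$ (a path $p_1\cdots p_{n-2}$ with leaves $v_1,v_2$ at $p_1$, leaf $v_{i+1}$ at $p_i$ for $2\le i\le n-3$, and leaves $v_{n-1},v_n$ at $p_{n-2}$, via the bijection $v_i\mapsto$ leaf) yields, by \ref{carvings}, a bond carving whose cuts are exactly the singletons $\{v_i\}$ and the prefixes $\{v_1,\dots,v_k\}$ together with their complements. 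By the first paragraph it then only remains to choose the ordering so that each \emph{prefix cut} $\delta(\{v_1,\dots,v_k\})$ has change number two. To get such an ordering I would ``unroll'' the drawing about the origin: in the universal cover of the punctured plane, $G$ lifts to a digraph $\widetilde G$ carrying a $\mathbb Z$-action by one full turn. Since every dicycle of $G$ encloses the origin and is clockwise, it has winding number $+1$; it follows that $\widetilde G$ is acyclic (a dicycle of $\widetilde G$ would project to a null-homotopic, hence winding-$0$, closed directed walk in the punctured plane, yet such a walk decomposes into $G$-dicycles of total winding $\ge 1$), and that the turn-generator is monotone for $\widetilde G$ (following a lift of a dicycle through $v$ gives a directed path from one lift of $v$ to the next). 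A $\mathbb Z$-equivariant topological order of $\widetilde G$ restricts, on one fundamental block, to an ordering $v_1,\dots,v_n$ of $V(G)$ and to a ray $R$ from the origin (through no vertex) such that every edge $v_iv_j$ with $i<j$ avoids $R$ while every edge with $i>j$ crosses $R$ exactly once. The remaining freedom in the topological order, together with refinement along an ear decomposition of $G^-$, is what I would use to upgrade this to an st-ordering without destroying the ray-compatibility.

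The heart of the argument — and the step I expect to be the main obstacle — is to verify that for this ordering and ray each prefix cut $\delta(\{v_1,\dots,v_k\})$ has change number exactly two. The content is combinatorial-topological: one follows the boundary of the (connected) subdrawing on $\{v_1,\dots,v_k\}$, which may itself wind once around the origin once the prefix contains a dicycle, and shows that along it the edges of $\delta(\{v_1,\dots,v_k\})$ pointing \emph{into} the prefix (necessarily wrap-around edges $v_iv_j$ with $j\le k<i$, all entering near $R$ from the clockwise-trailing side) occur consecutively, and likewise the edges pointing \emph{out} (the forward edges $v_iv_j$ with $i\le k<j$ and the wrap edges $v_iv_j$ with $i\le k<j$, leaving on the clockwise-leading side) occur consecutively, giving exactly two changes. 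The delicate points are (a) that the two arcs are really unbroken — this is where diplanarity and the monotone-around-the-origin structure must be combined, via an exchange/planarity argument in the spirit of the one ending the proof of \ref{planarcut}, to forbid an edge of one type ``nested inside'' a run of the other; and (b) that ray-compatibility can be maintained simultaneously with the st-ordering property, so that the caterpillar really is a bond carving. Once both are secured the carving has diwidth two, which is \ref{breakcircular}.
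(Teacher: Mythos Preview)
Your plan is genuinely different from the paper's argument, and the two gaps you flag are real --- particularly~(b), which as stated does not go through.

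The paper proves \ref{breakcircular} by a short induction on $|V(G)|+|E(G)|$ that never leaves the class of circular drawings. The key object is the outer boundary dicycle $D$ (the boundary of the infinite region, which is forced to be a clockwise dicycle). After two easy reductions --- contracting an edge at any vertex of $D$ with in- and out-degree one, and invoking \ref{usecut} whenever two edges of $D$ form a $2$-edge-cut of $G^-$ --- the main step is to find an edge $e\in E(D)$ such that $G\setminus e$ is still $1$-strong; one then checks $G\setminus e$ is also $2$-weak, applies the inductive hypothesis to $G\setminus e$, and verifies that the resulting bond carving still has diwidth two once $e$ is reinstated (this last check is a one-line observation about the face of $G$ bounded by $e$). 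No st-ordering, no universal cover.

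Your gap (b) is a genuine obstruction as written. Fixing $R$ and then appealing to ``remaining freedom in the topological order'' is not enough: the cut-at-$R$ DAG can have several sources, and then \emph{no} linear extension is an st-ordering. Concretely, take $V=\{1,2,3,4\}$ with edges $4\to 1$, $4\to 2$, $1\to 3$, $2\to 3$, $3\to 4$, drawn circularly with clockwise angular order $4,1,2,3$. This is $1$-strong, $2$-weak, loopless, and every dicycle encloses the origin. If you place $R$ between $4$ and $1$, the non-crossing edges are $1\to 3$, $2\to 3$, $3\to 4$, giving a DAG with two sources $1,2$; its only linear extensions are $1,2,3,4$ and $2,1,3,4$, neither an st-ordering since $1$ and $2$ are non-adjacent in $G^-$. (Placing $R$ elsewhere does work here --- for instance between $3$ and $4$ --- but you have not argued that a good $R$ always exists, and ``ear decomposition of $G^-$'' does not help once the DAG is fixed.) What one would actually need is that some choice of $R$ makes the cut DAG a bipolar orientation (unique source, unique sink) of $G^-$; this is closely tied to the outer dicycle $D$ and is exactly the structure the paper exploits directly rather than via an ordering. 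Gap (a) would then still require a separate planarity argument, which you only gesture at.
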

\begin{proof}
We proceed by induction on $|V(G)|+|E(G)|$. Thus, we may assume that $G$ is simple. The boundary of the infinite region is a cycle
(because $G$ is a 2-weak graph drawn in the plane), and it is a clockwise dicycle (because each of its edges is in a directed cycle, 
since $G$ is 1-strong, and this directed cycle is clockwise by hypothesis). Let $D$ be this cycle. 

\begin{step}\label{step1b}
We may assume that no vertex $v$ of $D$ has outdegree one and indegree one.
\end{step}

Suppose that $v\in V(D)$ has outdegree one and indegree one. Let $uv$ be the edge with head $v$. If $D$ has length two, then 
either $u$ is a 1-vertex cutset or $|V(G)|=2$,
in either case contradicting that $G$ is 2-weak. So $D$ has length at least three. If $|V(G)|=3$, then $G=D$ and the theorem is true; 
so we assume that $|V(G)|\ge 4$. Hence the digraph $G_1$ obtained by contracting the edge $uv$ is 1-strong and 2-weak.
From the inductive hypothesis, $G_1$ admits a bond carving $\mac C_1$ with diwidth two. Let $w$ be the vertex made by identifying 
$u,v$ under contraction. Let 
\[\mac C=\{C\in \mac C_1:w\notin C\}\cup \{\{u,v\}\cup (C\setminus \{w\}):C\in \mac C_1, w\in C\}\cup \{\{u\}, \{v\}\}.\]
Then $\mac C$ is a bond carving of $G$ of diwidth two, as required. This proves~\eqref{step1b}.

\begin{step}\label{step2b}
We may assume that $G\setminus \{e,f\}$ is 1-weak for all distinct $e,f\in E(D)$.
\end{step}

Suppose not, and let $e = a_1b_1$ and $f=b_2a_2$. Since $G$ is 2-weak and by~\eqref{step1b}, it follows that $a_1,b_1,b_2,a_2$ are all distinct. 
Since $G\setminus \{e,f\}$ is not weakly connected, it has exactly two weak components, one (say $A$) including the dipath of $D$ 
from $a_2$ to $a_1$, and the other ($B$)  including the dipath from $b_1$ to $b_2$.  Let $G_1$ be obtained from $G$ by contracting 
all edges  in $G[A]$, and $G_2$ by contracting the edges of $G[B]$. Since $A,B$ have change number two, and hence $G_1,G_2$ admit
diplanar drawings that satisfy the hypothesis of the theorem, we deduce from the inductive hypothesis that $G_1,G_2$ both
admit bound carvings of diwidth two. But then so does $G$, by~\ref{usecut}. This proves~\eqref{step2b}.

\bigskip

From~\eqref{step2b} and theorem 2.1 of~\cite{dirSP} (or just by choosing an edge $e$ of $D$ in as few directed cycles as possible), we deduce that
there is an edge $e\in E(D)$ such that $G\setminus e$ is 1-strong. Let $e = ab$.

\begin{step}\label{step3b}
$G\setminus e$ is 2-weak.
\end{step}

Suppose not; then there is a vertex $w$ of the path $D\setminus e$ with $w\ne a,b$, such that $a,b$ belong to different weak components (say, $A,B$ respectively) of $(G\setminus e)\setminus w$. Since $G\setminus w$ is weakly connected, $(G\setminus e)\setminus w$ has at most two weak components, and so 
$A\cup B= (G\setminus e)\setminus w$. Since $G\setminus e$ is 1-strong, there is a directed path in $G\setminus e$ from $a$ to $w$, which is 
therefore a dipath of $A$,  
and similarly there is a dipath of $B$ from $w$ to $b$. Consequently there is a directed cycle of $A$ that contains the edge of $D$
with tail $w$, and a directed cycle of $B$ that contains the edge of $D$ with head $w$. These two cycles are both clockwise, by hypothesis, 
and share exactly one vertex, and both contain an edge incident with the infinite region of $G$, which is impossible since 
each bounds an open disc including the origin.

\bigskip

From~\eqref{step3b}, the boundary of $G\setminus e$ is a directed cycle. The edge $e$ is incident with two regions of the drawing of $G$, one 
the infinite region outside $D$, and the other, $r$ say, inside $D$. The cycle of $G$ that forms the boundary of $r$ consists of $e$
and a path $P$ joining the ends of $e$, and $P$ is part of the boundary of the infinite region of $G\setminus e$. Hence $P$ is a 
directed path from $a$ to $b$. From the inductive hypothesis and~\eqref{step3b}, $G\setminus e$ admits a bond carving $\mac C$ of diwidth two. 
Hence $\mac C$ is also a bond carving of $G$, and we claim that it still has diwidth two. If not, then there exists $A\in \mac C$,
containing exactly one of $a,b$, say $a$, such that in the cyclic order of edges in $\delta(A)$, the edges before and after $e$ have 
head in $A$. But that is impossible, since one of these two edges belongs to $P$. This proves~\ref{breakcircular}.\end{proof}
%%%%%%%%%%%%%%%%%%%%%%%%%%%%%%%%%%%%%%%%%%%
\begin{proof}[Proof of~\ref{widthweight}.]
Let $G$ be a 
1-strong, 2-weak, loopless digraph with a diplanar drawing in the plane, and we assume first that $G$ is weightable. We must show that 
$G$ admits
a bond carving of diwidth two, and we prove this by induction on $|V(G)|$. 

\begin{step}\label{step1c}
We may assume that there is no partition $(A,B)$ of $V(G)$ with $|A|,|B|\ge 2$ such that $G[A], G[B]$ are both 1-weak
and $A$ has change number two and such that 
the digraphs $G_1,G_2$ obtained by squishing $A$ and squishing $B$, respectively,
are weightable.
\end{step}

Suppose that such $A,B$ exist. Then $G_1,G_2$ are both loopless, 1-strong, and 2-weak, and admit diplanar drawings in the plane 
(since $A$ has change number two). Since $|A|,|B|\ge 2$, we can apply the inductive hypothesis to $G_1,G_2$, and deduce that they 
both admit
bond carvings of diwidth two. But then so does $G$, by~\ref{usecut}. This proves~\eqref{step1c}.

\bigskip

Suppose that there are two vertex-disjoint similarly-oriented  cycles in $G$. By~\ref{planarcut},
$G$ admits a cut-curve $F$ with change number two, such that both parts $A,B$ of the corresponding partition have central cycles. 
But then $G[A],G[B]$ 
are 1-weak, and the correspond digraphs $G_1,G_2$ are weightable, by~\ref{goodcut}, contrary to~\eqref{step1c}.
Thus there are no two vertex-disjoint similarly-oriented  cycles in $G$. Hence $G$ admits a diplanar drawing in the plane such that every directed cycle
is clockwise and bounds an open disc containing the origin. But then the result holds by~\ref{breakcircular}. 
This proves the ``only if'' part of the theorem. 

For the ``if'' part of the theorem, assume now that $G$ admits a bond carving $\mac C$ of diwidth two, and we must prove that 
$G$ is weightable.  We proceed by induction on $|V(G)|$. Suppose that there exists $A\in \mac C$ such that $|A|, |B|\ge 2$, where $B=V(G)\setminus A$. 
Let $G_1,G_2$ be obtained by squishing $A$ and squishing $B$, respectively. Then $G_1,G_2$ both have diplanar drawings, both are 1-strong, 2-weak and loopless, and both admit bond carvings of diwidth two.
From the inductive hypothesis both are weightable. But then, from~\ref{buildplanar}, $G$ is weightable. Thus we may assume that 
there is no such $A\in \mac C$. Hence $\mac C$ is the set of all singleton subsets of $V(G)$ and their complements. From the 
maximality condition in the definition of a carving, it follows that $|V(G)|\le 3$. Since $G$ is diplanar it follows that $G$ is weightable. This proves the ``if'' part, and so 
proves~\ref{widthweight}.\end{proof}

Could we extend this further?
If we want a bond carving, $G$ must be 2-weak, because planar digraphs that are not 2-weak do not admit bond carvings.
But we could drop the ``2-weak'' hypothesis if we were willing to weaken the requirement that the corresponding edge-cutsets must
be bonds. 
Instead of requiring $G[A]$ to be 1-weak for each $A\in \mac C$, we could just ask that for each $A\in \mac C$, there is a 
cut-curve with change number two that separates $A$ and $V(G)\setminus A$ in the natural sense. Every diplanar 1-strong weightable digraph admits a carving with this property, but we omit the details.

%%%%%%%%%%%%%%%%%%%%%%%%%%%%%%%%%%%%%%%%%%%%%%%%%%%%%%%%%%%%%%%%%%%%%%%%%%%%%%%%%%%%%%%%%%%%%%%%%%%%%%%%%%%%%%%%%%%%%%%%%%%%%%%%%

\section{Nonplanar compositions}\label{sec:nonplanarcon}

We will prove that every 2-strong, 3-weak,  weightable digraph can be built from planar ones by composition operations that preserve being weightable, and in this section we explain the compositions we will use.

For brevity, if $G$ is a digraph and $A$ is a subdigraph or a subset of $V(G)$, and $v$ is a vertex of $G$ not in $A$, we write $v\to A$ to mean that every edge of $G$ between $v$ and $A$ is from $v$ to $A$, and $v\from A$ to mean that every edge of $G$
between $v$ and $A$ is from $A$ to $v$.
If $G$ is a digraph and $Y\subseteq V(G)$, a \emph{$Y$-wing} is a subdigraph $W$ with $Y\subseteq V(W)$ such that every edge of $G$ 
with an end in $V(W)\setminus Y$ belongs to $W$. (Edges with both ends in $Y$ might or might not belong to $W$.) A $Y$-wing $W$ is
\emph{non-trivial} if $V(W)\ne Y$ and \emph{non-separable} if it is non-trivial and $W\setminus Y$ is 1-weak.
Two $Y$-wings $W_1,W_2$
are \emph{internally disjoint} if $V(W_1)\cap V(W_2) = Y$ and $E(W_1)\cap E(W_2) = \emptyset$. If $W$ is a $Y$-wing, 
a \emph{$W$-path} is a dipath of $W$ with distinct ends both in $Y$ such that none of its internal vertices belong to $Y$ (it might have no internal vertices).

From now on, we use the convention in figures that an arrow drawn inside a vertex set indicates the existence of a path between the two attached vertices; for instance, in Figure~\ref{fig:firstcon}, there must be a path from $y_2$ to $y_1$ in $W_1$.

%%%%%%%%%%%%%%%%%%%%%%%%%%%%
\begin{thm}\label{firstcon}
Let $G$ be a digraph, let $y_1,y_2,y_3$ be distinct, and let $Y=\{y_1,y_2,y_3\}$. Let $W_1,W_2$
be internally disjoint $Y$-wings with union $G$. Suppose that:
\begin{itemize}
\item there is a $W_1$-path from $y_2$ to $y_1$, and $y_1$ is a source of $W_2$, and $y_2$ is a sink of $W_2$;
\item the digraph $G_1$ obtained
from $W_1$ by adding the edges $y_1y_2, y_1y_3,y_3y_2$ (if they are not already present) is 1-strong and weightable; and
\item the digraph $G_2$
obtained from $W_2$ by adding the edges $y_2y_1, y_1y_3,y_3y_2$ (if they are not already present) is 1-strong and weightable
(equivalently, by~\ref{butterfly}, the digraph obtained from $G_2$ by contracting $y_2y_1$ is 1-strong and weightable).
\end{itemize}
See Figure~\ref{fig:firstcon}. Then $G$ is weightable.
\end{thm}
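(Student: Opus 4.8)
The plan is to use the characterization from~\ref{triple}: $G$ is weightable if and only if it has no bad triple. So I would assume for contradiction that there are dicycles $C, C'$ of $G$ that disagree on some triple $\{x, y, z\}$, and derive a bad triple in $G_1$ or in $G_2$, contradicting that both are weightable. The strategy is to take each of $C$ and $C'$ and, wherever it wanders from one wing into the other, ``splice in'' a detour through $Y$ so as to produce a dicycle living entirely in $G_1$ or entirely in $G_2$; then argue that the spliced cycles still disagree on some triple (possibly a modified one).

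First I would set up notation: since $W_1, W_2$ are internally disjoint with union $G$ and $W_1 \cap W_2$ has vertex set $Y$, any dicycle $C$ of $G$ decomposes along $Y$ into a cyclic sequence of ``segments,'' each of which is a $W_1$-path or a $W_2$-path (a maximal subpath lying in one wing with ends in $Y$). The key structural facts I'd want to exploit are the asymmetry between the wings: in $W_2$, $y_1$ is a source and $y_2$ is a sink, so any $W_2$-path through the interior of $W_2$ has $y_1$ as its first vertex or $y_2$ as its last vertex — in fact a $W_2$-path that uses a vertex of $W_2 \setminus Y$ must run from $y_1$ or $y_3$ to $y_2$ or $y_3$, and cannot start at $y_2$ or end at $y_1$. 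Meanwhile in $W_1$ there is a $W_1$-path $R$ from $y_2$ to $y_1$, which is exactly the ``return route'' needed to close up a $W_2$-portion into a dicycle of $G_2$, using the added edge $y_2 y_1$ as an alternative. This is why $G_2$ is allowed to contain (or be contracted along) $y_2 y_1$.

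The main steps: (1) Show that if $C$ uses only one wing then it is already a dicycle of the corresponding $G_i$, and if it uses both wings, then because of the source/sink constraint at $y_1, y_2$ the trace of $C$ on $W_2$ is a single subpath from $y_1$ (or $y_3$) to $y_2$ (or $y_3$), which can be completed to a dicycle $\widehat C$ of $G_2$ by appending the appropriate added edge among $y_2 y_1, y_1 y_3, y_3 y_2$; symmetrically, the trace of $C$ on $W_1$ completes, using $R$ or the added edges $y_1 y_2, y_1 y_3, y_3 y_2$, to a dicycle $\widetilde C$ of $G_1$. (2) Do the same for $C'$. (3) Now $C$ and $C'$ disagree on $\{x,y,z\}$; I'd do a case analysis on where $x, y, z$ sit relative to the wings. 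If at least two of them, say $x, y$, together with the way $C$ and $C'$ pass through them, can be located inside a common wing's trace for both cycles, then the corresponding completed cycles in that $G_i$ still disagree on $\{x, y, z'\}$ for some third vertex $z'$ in $Y$ (replacing $z$ by whichever $Y$-vertex the detour passes, exactly as in the proofs of~\ref{1reduce} and~\ref{goodcut}). The delicate subcase is when $x, y, z$ are split across the wings; here I'd use that each of $C, C'$ crosses into $W_2$ in a single arc (by the source/sink property) to show the three vertices and the two cycles' orderings collapse onto three vertices all lying in one $G_i$, or onto a triple within $Y \cup (\text{interior of one wing})$.

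I expect the main obstacle to be the bookkeeping in step (3): verifying that after splicing, the two cycles genuinely \emph{disagree} on some triple of the relevant $G_i$ — i.e., that the cyclic order is actually reversed and not accidentally made consistent by the detour through $Y$. The source/sink hypothesis on $y_1, y_2$ in $W_2$ is what rules out the bad configurations (it forces each cycle's $W_2$-portion to be a single monotone arc rather than several interleaved arcs, so the relative order of any two vertices in that arc is inherited faithfully), and the existence of the $W_1$-path $R$ from $y_2$ to $y_1$ is what guarantees the completion in $G_1$ is always possible. Once those two observations are in hand, the argument parallels the converse direction of~\ref{1reduce} and~\ref{2weakreduce} closely, so I would model the write-up on those proofs.
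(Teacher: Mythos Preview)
Your approach via the bad-triple characterization~\ref{triple} is genuinely different from the paper's. The paper gives a direct constructive argument: using~\ref{pickroot}, it picks a $0/1$-weighting $w_1$ of $G_1$ with $w_1(y_1y_2)=w_1(y_1y_3)=1$ (and deduces $w_1(y_3y_2)=0$ from the $W_1$-path hypothesis), and a $0/1$-weighting $w_2$ of $G_2$ with $w_2(e)=1$ for all edges with tail $y_1$ (forcing $w_2(y_2y_1)=w_2(y_3y_2)=0$). These glue to a function $w$ on $E(G)$, and one checks that every $W_2$-path $P$ from $y$ to $y'$ satisfies $w_2(P)=w_1(yy')$; hence any dicycle $C$ of $G$ has the same $w$-weight as the dicycle of $G_1$ obtained by replacing each $W_2$-path in $C$ by its substitute edge. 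This is the same substitution you describe for building $\widetilde C$, but used to verify a weighting rather than to transport bad triples. The whole thing is under half a page and uses the $W_1$-path from $y_2$ to $y_1$ only once, to pin down $w_1(y_3y_2)$.

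Your route is plausible --- the observation that the $W_2$-portion of any dicycle is a single connected arc is correct and is the key structural fact --- but step~(3) is heavier than your analogy with~\ref{1reduce} and~\ref{2weakreduce} suggests. In those results the interface is a single vertex or a $2$-cycle, so replacing a vertex of the ``wrong'' side by a $Y$-vertex is automatic. Here the interface has three vertices and the two wings are asymmetric: you can freely substitute $W_2$-paths by $G_1$-edges, but for $\widehat C$ in $G_2$ the closing path may visit a $Y$-vertex not on $C$, and when one of $x,y,z$ lies in $V(W_1)\setminus Y$ and another in $V(W_2)\setminus Y$ you must find a $z'\in Y$ lying on \emph{both} completed cycles in the correct cyclic position in each. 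This can be done (any two $2$-subsets of $Y$ meet, and one can check the positions line up), but it is a genuine case analysis that you have not carried out, and the argument is several times longer than the paper's direct proof.
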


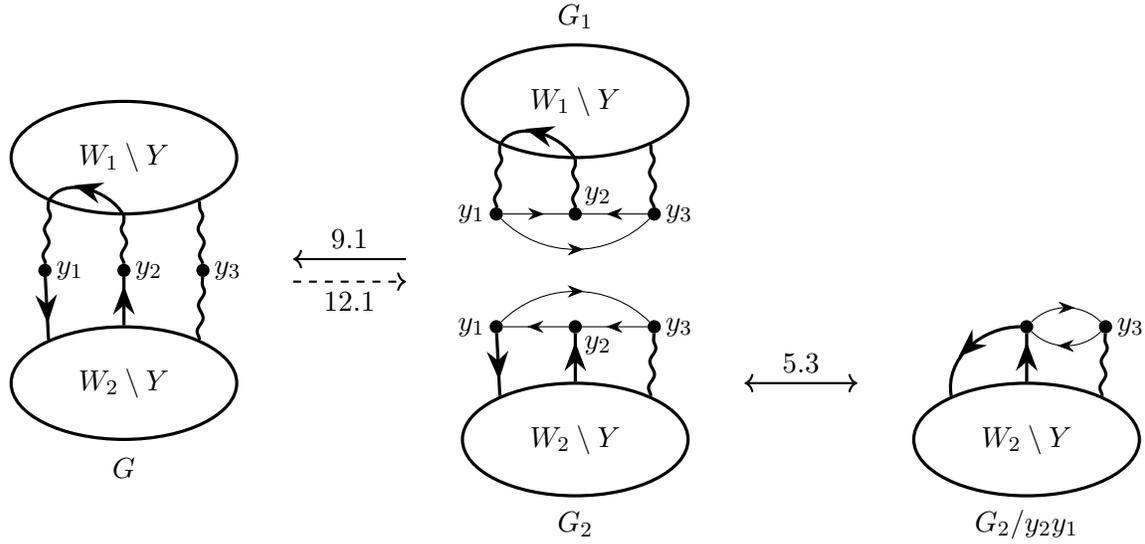
\begin{figure}[htb!]
    \centering
    \begin{tikzpicture}[scale=1.5]
        \tikzset{every label/.style={label distance=-2pt}};
        \vset{W1}{$W_1\setminus Y$}{0,2};
        \vset{W2}{$W_2\setminus Y$}{0,0};

        \vtx[right:$y_1$]{y1}{-0.7, 1};
        \vtx[right:$y_2$]{y2}{0, 1};
        \vtx[right:$y_3$]{y3}{0.7, 1};

        \hdedge{y1}{W2.150};
        \hdedge{W2.90}{y2};
        \sline{W2.30}{y3};

        \sline{y1}{W1.210};
        \sline{y2}{W1.270};
        \sline{y3}{W1.330};
        \hdedge[out=120, in=60]{W1.270}{W1.210};

        \node (G) at (0,-0.75) {$G$};

        \draw[<-, thick] (1.5,1.1) -- (2.5,1.1) node[midway, above] {\ref{firstcon}};
        \draw[->, thick, dashed] (1.5,0.9) -- (2.5,0.9) node[midway, below] {\ref{firstoutcome}};

        \vset{W1}{$W_1\setminus Y$}{4,2.5};
        
        \vtx[left:$y_1$]{y1}{3.3, 1.5};
        \vtx[{[label distance=-3pt]45:$y_2$}]{y2}{4, 1.5};
        \vtx[right:$y_3$]{y3}{4.7, 1.5};
        \sline{y1}{W1.210};
        \sline{y2}{W1.270};
        \sline{y3}{W1.330};
        \dedge[out=315, in=225]{y1}{y3};
        \dedge{y1}{y2};
        \dedge{y3}{y2};
        \hdedge[out=120, in=60]{W1.270}{W1.210};
        \node (G1) at (4, 3.25) {$G_1$};

        \vset{W2}{$W_2\setminus Y$}{4,-0.5};
        
        \vtx[left:$y_1$]{y1}{3.3, 0.5};
        \vtx[{[label distance=-3pt]315:$y_2$}]{y2}{4, 0.5};
        \vtx[right:$y_3$]{y3}{4.7, 0.5};
        \hdedge{y1}{W2.150};
        \hdedge{W2.90}{y2};
        \sline{W2.30}{y3};
        \dedge{y2}{y1};
        \dedge[out=45, in=135]{y1}{y3};
        \dedge{y3}{y2};
        \node (G2) at (4, -1.25) {$G_2$};

        \draw[<->, thick] (5.5,0) -- (6.5,0) node[midway, above] {\ref{butterfly}};

        \vset{W2}{$W_2\setminus Y$}{8,-0.5};
        
        \vtx{w}{8, 0.5};
        \vtx[right:$y_3$]{y3}{8.7, 0.5};
        \hdedge[out=180, in=100]{w}{W2.150};
        \hdedge{W2.90}{w};
        \sline{W2.30}{y3};
        \dedge[out=45, in=135]{w}{y3};
        \dedge[out=225, in=315]{y3}{w};
        \node (G2) at (8, -1.25) {$G_2/y_2y_1$};
    \end{tikzpicture}
    \caption{The first nonplanar construction.}
    \label{fig:firstcon}
\end{figure}

\begin{proof} By~\ref{pickroot}, there is a 0/1-valued weighting $w_1$ of $G_1$ such that $w_1(y_1y_2) = w_1(y_1y_3) = 1$. Since
there is a $W_1$-path from $y_2$ to $y_1$, it follows that $w_1(y_3y_2) = 0$.
By~\ref{pickroot}, there is
a 0/1-valued weighting $w_2$ of $G_2$ such that $w_2(e)=1$ for every edge $e$ with tail $y_1$, and we have $w_2(y_2y_1) = 0$. It follows 
that $w_2(y_3y_2) = 0$ since $y_1\DD y_3\DD y_2\DD y_1$ is a dicycle.
For each $e\in E(G)$, define $w(e) = w_i(e)$ where $i\in \{1,2\}$ and $e\in E(W_i)$ ($i$ exists and is unique
since $W_1\cup W_2=G$ and $W_1,W_2$ are internally disjoint).

For every $W_2$-path $P$ from $y$ to $y'$ with $y,y'\in Y$,
it follows that $(y,y')$ is one of
$(y_1,y_3)$, $(y_3,y_2)$, $(y_1,y_2)$ since $y_1\to A_i$ and $y_3\from A_i$, and so $yy'$ is an edge of $G_1$ that we call the
\emph{$P$-substitute}. We claim that:

\begin{step}\label{step1d}
For every $W_2$-path $P$, $w_2(P) = w_1(yy')$
where $yy'$ is the $P$-substitute.
\end{step}

Let $P$ be from $y$ to $y'$. Then $(y,y')$ is one of
$(y_1,y_3), (y_3,y_2), (y_1,y_2)$.
If $(y,y')=(y_1,y_3)$, then $P$ can be completed to a dicycle of $G_2$ by adding the edges $y_3y_2$
and $y_2y_1$. Since $w_2(y_3y_2), w_2(y_2y_1) = 0$, it follows that $w_2(P) = 1=w_1(y_1y_3)$.
Similarly, if $(y,y')=(y_3,y_2)$ then $w_2(P) = 0=w_1(y_3y_2)$ (by adding $y_2y_1$ and $y_1y_3$), and if $(y,y')= (y_1,y_2)$
then $w_2(P) = 1=w_1(y_1y_2)$ (by adding $y_2y_1$). This proves~\eqref{step1d}.

\bigskip

We claim that $w$ is a weighting of $G$. To see this,
let $C$ be a dicycle of $G$. The $W_2$-paths included in $C$ are pairwise edge-disjoint and include all edges of $C$ not in $W_1$. 
By replacing each such $W_2$-path $P$ by the $P$-substitute, we obtain a dicycle $C'$  of $G_1$ such that $w_1(C') = w(C)$, and consequently $w(C)=1$.
This proves that $w$ is a weighting, and so proves~\ref{firstcon}.\end{proof}

For the second construction, it is easier to break it into two parts. First, we have:
%%%%%%%%%%%%%%%%%%%%%%%%%%%%%%%%
\begin{thm}\label{secondcon1}
Let $G$ be a digraph, let $y_1,y_2,y_3,y_4$ be distinct, and let $Y=\{y_1,\dots, y_4\}$. Let $W_1,W_2$
be internally disjoint $Y$-wings with union $G$. Suppose that:
\begin{itemize}
\item $y_1$ is a source of $W_1$, and $y_2$ is a sink of $W_1$, and $y_3$ is a source of $W_2$, and $y_4$ is a sink of $W_2$;
\item there is a dipath of $W_1$ from $y_1$ to $y_2$, and there is a dipath of $W_2$ from $y_3$ to $y_4$;
\item the digraph $G_1$ obtained
from $W_1$ by adding a new vertex $v_1$ and the edges
\[ v_1y_1, y_2v_1, y_3v_1, v_1y_4, y_2y_1 \]
is 1-strong and weightable; and
\item the digraph $G_2$ 
obtained from $W_2$ by adding a new vertex $v_2$ and the edges
\[ y_1v_2, v_2y_2, v_2y_3, y_4v_2, y_4y_3 \]
is 1-strong and weightable.
\end{itemize}
See Figure~\ref{fig:secondcon1}. Then $G$ is weightable.
\end{thm}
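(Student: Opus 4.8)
The plan is to follow the template of the proof of~\ref{firstcon}: pick $0/1$-valued weightings $w_1$ of $G_1$ and $w_2$ of $G_2$ that agree on the shared structure at $Y$, glue them into a function $w$ on $E(G)=E(W_1)\cup E(W_2)$ (since $W_1,W_2$ are internally disjoint with union $G$, every edge of $G$ lies in exactly one $W_i$, so this is well-defined), and then verify that $w(C)=1$ for every dicycle $C$ of $G$.

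First I would fix the weightings and record the values that will be needed. Apply~\ref{pickroot} to $G_1$ and to $G_2$, both rooted at $y_1$ and with the same orientation, so that $w_1$ and $w_2$ agree on every edge incident with $y_1$; in particular $w_1(y_2y_1)=0$. Using the two prescribed dipaths (a $W_1$-dipath $y_1\to y_2$ and a $W_2$-dipath $y_3\to y_4$), the added edges $y_2y_1$ and $y_4y_3$, and the short dicycles through $v_1$ and through $v_2$, deduce the common $w_i$-value of each $W_i$-path whose end-pair is forced by the source/sink hypotheses (a $W_1$-path starts in $\{y_1,y_3,y_4\}$ and ends in $\{y_2,y_3,y_4\}$; symmetrically for $W_2$-paths), and of the four $v_1$-spokes $y_2v_1,y_3v_1,v_1y_1,v_1y_4$ and the four $v_2$-spokes $y_1v_2,y_4v_2,v_2y_2,v_2y_3$. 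The content of this step is that the equations relating those eight spoke values that are forced by the shared dipaths are mutually consistent and are satisfied by the weightings just chosen; this is a finite linear check driven by the hypotheses. Then set $w(e)=w_i(e)$ for the $i$ with $e\in E(W_i)$.

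Next I would analyse an arbitrary dicycle $C$ of $G$. If $E(C)\subseteq E(W_1)$ then $C$ is a dicycle of $G_1$ and $w(C)=w_1(C)=1$; likewise if $E(C)\subseteq E(W_2)$. Otherwise $C$ meets $Y$, and I cut $C$ at the vertices of $Y$ into its maximal runs of $W_1$-edges and of $W_2$-edges. The four source/sink hypotheses force every $W_1$-to-$W_2$ transition of $C$ to occur at $y_2$ or $y_3$ and every $W_2$-to-$W_1$ transition at $y_1$ or $y_4$; hence $C$ has equally many $W_1$-runs and $W_2$-runs, this common number is $1$ or $2$, the runs alternate cyclically, and when it is $2$ the cyclic order of $y_1,\dots,y_4$ along $C$ is one of exactly two possibilities. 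In each case, one of $G_1,G_2$ (which one depends on the crossing pattern) permits the corresponding runs to be rerouted: at most one such run needs to pass through $v_i$ (via a length-two path $y_jv_iy_k$), and a second run, if present, is of the special type that can be rerouted through the edge $y_2y_1$ or $y_4y_3$ instead. This turns $C$ into a dicycle $C'$ of that $G_i$ retaining the $W_i$-part of $C$ and replacing each run of the other type by a segment of equal $w_i$-weight, using the values computed in the first step; hence $w_i(C')=w(C)$, and since $w_i$ is a weighting, $w(C)=1$.

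The main obstacle is the interplay of the last two steps: one must check, by the finite case-analysis of how a dicycle can meet the $4$-element separator $Y$, that the runs of one type can always be rerouted into a \emph{single} dicycle of $G_1$ or of $G_2$ (a naive rerouting would want to re-use the vertex $v_i$), and, in tandem, that the applications of~\ref{pickroot} can be made so that all the compatibility equations between $w_1$-values and $w_2$-values hold at once. Everything else — that $G$ is connected enough for the argument to make sense, and that the $v_i$-gadgets behave as claimed — is routine once these two points are settled.
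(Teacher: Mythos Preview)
Your template is right and matches the paper's: choose $0/1$-weightings $w_1,w_2$, set $w=w_i$ on $E(W_i)$, and check dicycles according to how they cross $Y$. But there is a real gap in the step you wave at as ``a finite linear check''.

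\medskip
\textbf{The weighting choice.} Rooting both $w_1$ and $w_2$ at $y_1$ via~\ref{pickroot} does not pin down the four spoke weights at $v_1$ (nor those at $v_2$): the edges $y_3v_1,\,v_1y_4$ are not incident with $y_1$, so their $w_1$-values are left to the internal structure of $W_1$. Yet the cross-equations you need (so that a $W_2$-run from $y'$ to $y$ can be replaced by the spoke path in $G_1$ of equal weight) are precisely equalities like $w_1(y_3v_1)+w_1(v_1y_4)=w_2(Q)$ for a $W_2$-path $Q$ from $y_3$ to $y_4$, and nothing in ``root at $y_1$'' forces these. The paper makes a different, two-step choice: apply~\ref{pickroot} at $v_1$ (so all four $v_1$-spokes are fixed at once, say $w_1'(v_1y_1)=w_1'(v_1y_4)=1$ and $w_1'(y_2v_1)=w_1'(y_3v_1)=0$), deduce $w_1'(y_2y_1)=1$ and that every out-edge at $y_1$ has weight~$0$, and then shift by $c_{\{y_1\}}$ (which stays $0/1$ because $y_1$ is a source of $W_1$). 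This yields the specific values $w_1(v_1y_4)=1$ with the other three $v_1$-spokes and $y_2y_1$ all~$0$; the symmetric construction for $w_2$ roots at $v_2$ and shifts at $y_4$. With these values the four one-run cases check directly.

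\medskip
\textbf{The two-run case.} Your plan to reroute (one run through $v_i$, the other through the edge $y_2y_1$ or $y_4y_3$) is geometrically sound and would reduce to the same compatibility equations. The paper takes a different route: it shows the two-run case simply cannot occur. If $C\cap W_1$ were the disjoint union of a path $P_1\colon y_1\to y_3$ and a path $P_2\colon y_4\to y_2$, then using the hypothesised $W_1$-dipath $R\colon y_1\to y_2$, the two dicycles
\[
v_1\text{-}y_1\text{-}R\text{-}y_2\text{-}v_1
\qquad\text{and}\qquad
v_1\text{-}y_4\text{-}P_2\text{-}y_2\text{-}y_1\text{-}P_1\text{-}y_3\text{-}v_1
\]
of $G_1$ disagree on $\{v_1,y_1,y_2\}$, contradicting weightability of $G_1$; the other two-run pattern is ruled out symmetrically in $G_2$. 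This is exactly where the ``there is a dipath of $W_1$ from $y_1$ to $y_2$'' and ``of $W_2$ from $y_3$ to $y_4$'' hypotheses are consumed, and your sketch does not otherwise use them.
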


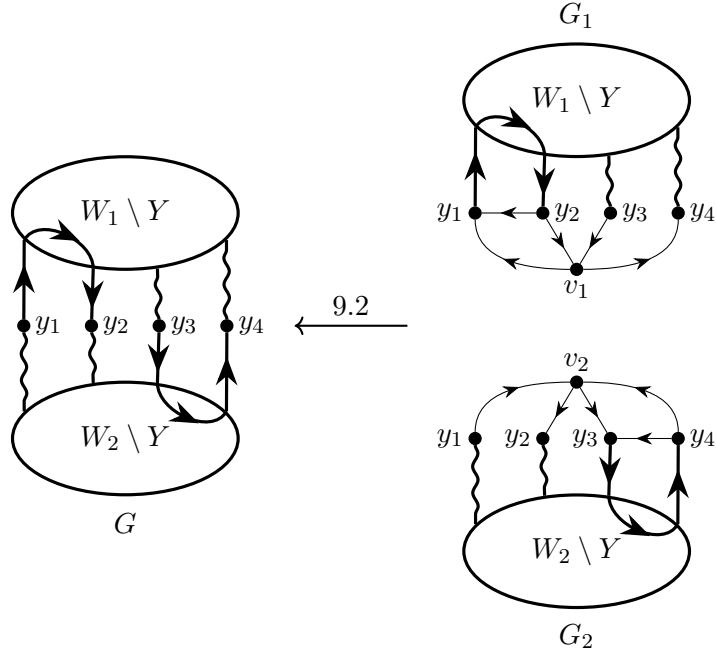
\begin{figure}[htb!]
    \centering
    \begin{tikzpicture}[scale=1.5]
        \tikzset{every label/.style={label distance=-2pt}};
        \vset{W1}{$W_1\setminus Y$}{0,2};
        \vset{W2}{$W_2\setminus Y$}{0,0};

        \vtx[right:$y_1$]{y1}{-0.9, 1};
        \vtx[right:$y_2$]{y2}{-0.3, 1};
        \vtx[right:$y_3$]{y3}{0.3, 1};
        \vtx[right:$y_4$]{y4}{0.9, 1};

        \hdedge{y1}{W1.195};
        \hdedge{W1.240}{y2};
        \sline{y3}{W1.300};
        \sline{y4}{W1.345};
        \hdedge[out=60, in=90]{W1.195}{W1.240};

        \sline{y1}{W2.165};
        \sline{y2}{W2.120};
        \hdedge{y3}{W2.60};
        \hdedge{W2.15}{y4};
        \hdedge[out=270, in=240]{W2.60}{W2.15};

        \node (G) at (0,-0.75) {$G$};

        \draw[<-, thick] (1.5,1) -- (2.5,1) node[midway, above] {\ref{secondcon1}};

        \vset{W1}{$W_1\setminus Y$}{4,3};

        \vtx[left:$y_1$]{y1}{3.1, 2};
        \vtx[right:$y_2$]{y2}{3.7, 2};
        \vtx[right:$y_3$]{y3}{4.3, 2};
        \vtx[right:$y_4$]{y4}{4.9, 2};
        \vtx[below:$v_1$]{v1}{4, 1.5};

        \hdedge{y1}{W1.195};
        \hdedge{W1.240}{y2};
        \sline{y3}{W1.300};
        \sline{y4}{W1.345};
        \hdedge[out=60, in=90]{W1.195}{W1.240};
        \dedge{y2}{y1};
        \dedge[out=180, in=270]{v1}{y1};
        \dedge{y2}{v1};
        \dedge{y3}{v1};
        \dedge[out=0, in=270]{v1}{y4};
        \node (G1) at (4, 3.75) {$G_1$};

        \vset{W2}{$W_2\setminus Y$}{4,-1};

        \vtx[left:$y_1$]{y1}{3.1, 0};
        \vtx[left:$y_2$]{y2}{3.7, 0};
        \vtx[left:$y_3$]{y3}{4.3, 0};
        \vtx[right:$y_4$]{y4}{4.9, 0};
        \vtx[above:$v_2$]{v2}{4, 0.5};

        \sline{y1}{W2.165};
        \sline{y2}{W2.120};
        \hdedge{y3}{W2.60};
        \hdedge{W2.15}{y4};
        \hdedge[out=270, in=240]{W2.60}{W2.15};
        \dedge{y4}{y3};
        \dedge[out=90, in=180]{y1}{v2};
        \dedge{v2}{y2};
        \dedge{v2}{y3};
        \dedge[out=90, in=0]{y4}{v2};
        \node (G2) at (4, -1.75) {$G_2$};
    \end{tikzpicture}
    \caption{The first constituent of the second nonplanar construction.}
    \label{fig:secondcon1}
\end{figure}

\begin{proof} 
By~\ref{pickroot}, there is a 0/1-valued weighting $w_1'$ of $G_1$ such that $w_1'(e)=1$ for $e\in \{v_1y_1,v_1y_4\}$ and
$w_1'(e) = 0$ for $e\in \{y_2v_1,y_3v_1\}$. Since there is a dipath of $W_1$ from $y_1$ to $y_2$, which can be completed to a dicycle of $G_1$ via $y_2\DD y_1$ or via $y_2\DD v_1\DD y_1$, it follows that $w_1'(y_2y_1) = 1$. Since $G_1$ is 1-strong, every edge $e$ of $G_1$ with tail $y_1$ belongs to a dicycle which therefore contains one of $v_1y_1,y_2y_1$, and so $w_1'(e) = 0$.
Consequently, since all in-edges of $y_1$ have weight one, and all out-edges of $y_1$ have weight zero, there is a 0/1-valued
weighting $w_1$ of $G_1$ such that $w_1(v_1y_4)=1$, $w_1(e)=0$ for each of the other three edges incident with $v_1$, $w_1(y_2y_1) = 0$,
and $w_1(e)=1$ for every edge $e$ of $G_1$ with tail $y_1$. Similarly, there is a 0/1-valued
weighting $w_2$ of $G_2$ such that $w_2(y_1v_2)=1$, $w_2(e)=0$ for each of the other three edges incident with $v_2$, $w_2(y_4y_3) = 0$,
and $w_2(e)=1$ for every edge $e$ of $G_2$ with head $y_4$. For each edge $e$ of $G$, let $w(e) = w_i(e)$ where $i\in \{1,2\}$ satisfies
$e\in E(W_i)$. We claim that $w$ is a weighting of $G$. To see this, let $C$ be a dicycle of $G$. We may assume that 
$C$ is not a dicycle of $G_1$ or of $G_2$. Thus either $C\cap W_1$ and $C\cap W_2$ are both dipaths, or
$C\cap W_1$ is the disjoint union of two dipaths and so is $C\cap W_2$. 

Suppose first that $C\cap W_1$ is a dipath $P$ from $y$ to $y'$ with $y,y'\in Y$. Thus $y,y'$ are distinct, and
$C\cap W_2$ is a dipath $Q$ from $y'$ to $y$. Since $y_1$ is a source of $W_1$, it follows that $y'\ne y_1$, and similarly $y\ne y_2$, $y'\ne y_3$ and $y\ne y_4$. So, of the twelve pairs $(y,y')$ where $y,y'\in Y$ are distinct, only four possibilities remain:
the pairs $(y_1,y_3),(y_1,y_2),(y_4,y_3),(y_4,y_2)$. If $(y,y') = (y_1,y_3)$, then $w_1(P) = 1$, since $P$ can be completed to a dicycle of $G_1$ via $y_3\DD v_1\DD y_1$, and $w_2(Q)=0$, since $Q$ can be completed to a dicycle of $G_2$ via
$y_1\DD v_2\DD y_3$, and so $w(C)=1$. The other three cases are can all be handled similarly, and we omit the details. (See the table below.)
\begin{center}
\begin{tabular}{c|c c c}
    $(y,y')$ & $w_1(P)$ & $w_2(Q)$ & $w(C)$ \\
    \hline
    $(y_1,y_3)$ & 1 & 0 & 1 \\
    $(y_1,y_2)$ & 1 & 0 & 1 \\
    $(y_4,y_3)$ & 0 & 1 & 1 \\
    $(y_4,y_2)$ & 0 & 1 & 1
\end{tabular}
\end{center}

Now we assume (for a contradiction) that $C\cap W_1$ is the disjoint union of two dipaths $P_1,P_2$, and so $C\cap W_2$ is also the union of two dipaths $Q_1,Q_2$. Since $y_1$ is a source of $W_1$, neither of $P_1,P_2$ has last vertex $y_1$, and so neither of $Q_1,Q_2$
has first vertex $y_1$. By the same arguments for $y_2,y_3,y_4$, it follows that $P_1,P_3$ are both from $\{y_1,y_4\}$ to $\{y_2,y_3\}$,
and therefore $Q_1,Q_2$ are both from $\{y_2,y_3\}$ to $\{y_1,y_4\}$. Suppose first that $P_1$ is from $y_1$ to $y_3$, and so 
$P_2$ is from $y_4$ to $y_2$. 
We recall that there is a dipath $R$ in $W_1$ from $y_1$ to $y_2$. But then 
\[v_1\DD y_1\DD R\DD y_2\DD v_1,\]
\[v_1\DD y_4\DD P_2\DD y_2\DD y_1\DD P_1\DD y_3\DD v_1\]
are dicycles of $G_1$ that disagree on $\{v_1,y_1,y_2\}$, a contradiction. So neither of $P_1,P_2$ is from $y_1$ to $y_3$;
and similarly, neither of $Q_1,Q_2$ is from $y_3$ to $y_1$, which is impossible. Thus there is no such cycle $C$, and consequently $w$ is
a weighting. This proves~\ref{secondcon1}.\end{proof}

We also need:
\begin{thm}\label{secondcon2}
Let $G$ be a digraph, let $y_1,y_2,y_3,y_4$ be distinct, and let $Y=\{y_1,\dots, y_4\}$. 
Suppose that:
\begin{itemize}
\item $y_1, y_3$ are sources of $G$, and $y_2, y_4$ are sinks of $G$; and
\item the digraph $G_0$ obtained from $G$ by adding the edges $y_1y_3,y_3y_1$ and making the identifications $y_1=y_2$ and $y_3=y_4$
is 1-strong and weightable.
\end{itemize}
Let $H$ be obtained from $G$ by adding two new vertices $v_1,v_2$ and the edges 
\[y_2y_1,y_4y_3, v_1y_1,y_2v_1,y_3v_1,v_1y_4,y_1v_2,v_2y_2,v_2y_3,y_4v_2\]
(see Figure~\ref{fig:secondcon2}) is weightable.
\end{thm}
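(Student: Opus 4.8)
The plan is to follow the template of the proofs of~\ref{buildplanar} and~\ref{secondcon1}: I will exhibit an explicit $0/1$-valued weighting $w$ of $H$, assembled from a weighting of $G_0$, and verify that $w(C)=1$ for every dicycle $C$ of $H$. To set things up, write $y$ for the vertex of $G_0$ obtained by identifying $y_1,y_2$, and $y'$ for the one obtained by identifying $y_3,y_4$. By~\ref{pickroot} applied to $G_0$ at $y$, choose a $0/1$-valued weighting $w_0$ of $G_0$ with $w_0(e)=1$ for every edge with head $y$ and $w_0(e)=0$ for every edge with tail $y$. Then the added $2$-cycle through $y,y'$ gives $w_0(y'y)=1$ and $w_0(yy')=0$; and since such a $w_0$ exists, $G_0$ has no loop at $y$, so (as $y_1$ is a source and $y_2$ a sink of $G$) $G$ has no edge from $y_1$ to $y_2$. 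Now define $w\colon E(H)\to\{0,1\}$ by: $w(e)=w_0(\bar e)$ for $e\in E(G)$, where $\bar e$ is the image of $e$ in $G_0$; $w(y_3v_1)=w(v_2y_2)=1$; and $w(e)=0$ on each of the remaining eight added edges $y_2y_1$, $y_4y_3$, $v_1y_1$, $y_2v_1$, $v_1y_4$, $y_1v_2$, $v_2y_3$, $y_4v_2$.

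The key lemma computes the $w$-weights of certain dipaths. Call a dipath of $H$ with both ends in $Y$ and no internal vertex in $Y$ a \emph{$Y$-link}. Since $v_1,v_2$ are adjacent only to $Y$, and $y_1,y_3$ are sources and $y_2,y_4$ sinks of $G$, every $Y$-link is either a dipath of $G$ all of whose internal vertices lie outside $Y$, or is of the form $y_i\DD v_k\DD y_j$. I claim a $Y$-link from $y_i$ to $y_j$ always has $w$-weight $\omega(i,j)$, where $\omega(1,2)=\omega(3,1)=\omega(3,2)=\omega(3,4)=\omega(4,2)=1$ and $\omega(1,3)=\omega(1,4)=\omega(2,1)=\omega(2,4)=\omega(4,3)=0$, while no $Y$-link has ends $(y_i,y_j)$ for any other ordered pair (the source/sink hypotheses pin down which pairs can occur). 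The paths $y_i\DD v_k\DD y_j$ are immediate from the eleven assigned values. For a $Y$-link $P$ that is a dipath of $G$: if $P$ runs $y_1$ to $y_2$ then $\bar P$ is a dicycle of $G_0$ through $y$, so $w(P)=w_0(\bar P)=1$; if $P$ runs $y_1$ to $y_4$ then $\bar P$ together with the edge $y'y$ is a dicycle, so $w(P)=1-w_0(y'y)=0$; symmetrically a $y_3$-to-$y_4$ dipath of $G$ has weight $1$, and a $y_3$-to-$y_2$ dipath of $G$ has weight $1-w_0(yy')=1$.

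Finally I verify $w(C)=1$ for every dicycle $C$ of $H$. If $C$ avoids $Y$ then, since $v_1,v_2$ are isolated in $H\setminus Y$, $C$ is a dicycle of $G$, and $w(C)=w_0(\bar C)=1$. Otherwise $C$ meets $Y$ and is the concatenation of $Y$-links realizing a cyclic sequence $(i_1,\dots,i_m)$ of distinct elements of $\{1,2,3,4\}$, so by the lemma $w(C)=\sum_t\omega(i_t,i_{t+1})$. A short case-check over all such cyclic sequences shows this sum equals $1$ in every case except the two length-four sequences $(1,2,4,3)$ and $(1,3,4,2)$, for which the formula gives $2$. The main obstacle, and the crux of the proof, is to show these two sequences cannot occur. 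If $C$ realized $(1,2,4,3)$, it would contain a $Y$-link from $y_2$ to $y_4$ and one from $y_3$ to $y_1$; but inspecting the out-edges of $y_2$ (which are $y_2y_1$ and $y_2v_1$) and of $y_3$ (its $G$-out-edges, which lead only to $y_2$ or $y_4$, together with $y_3v_1$) shows that both of these $Y$-links must have $v_1$ as an internal vertex, whereas the arcs of the simple dicycle $C$ are internally disjoint and $v_1$ occurs at most once on $C$ --- a contradiction. The sequence $(1,3,4,2)$ is excluded symmetrically using $v_2$. Hence $w(C)=1$ for every dicycle of $H$, so $w$ is a weighting and $H$ is weightable.
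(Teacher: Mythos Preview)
Your proof is correct and takes a route that is close in spirit to the paper's but organised differently. Both arguments pull a $0/1$-weighting out of $G_0$ via~\ref{pickroot} and extend it to $H$; the difference is in the bookkeeping. The paper introduces an intermediate digraph $G'$ (obtained from $H$ by deleting four edges $F=\{y_4y_3,v_1y_4,v_2y_2,y_2y_1\}$), observes that $G'$ is a butterfly expansion of $G_0$ and hence weightable by~\ref{butterfly}, and then verifies the extension to $H$ by a minimal-counterexample argument on $|E(C)\cap F|$. You skip the intermediate digraph entirely, push the weighting of $G_0$ straight to $H$, and instead verify via an exhaustive case-check on the cyclic sequence in which a dicycle visits $Y$. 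Your enumeration is clean and makes the two obstructive sequences $(1,2,4,3)$ and $(1,3,4,2)$ pop out explicitly, with the nice observation that each forces $v_1$ (resp.\ $v_2$) to be used twice. The paper's approach has the advantage of reusing the butterfly-minor machinery already in place; yours is more self-contained.

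One small quibble: your aside that ``since such a $w_0$ exists, $G_0$ has no loop at $y$'' is not quite right as stated (\ref{pickroot} does not exclude a loop at the chosen vertex; a loop there simply gets weight~$1$), but it is also unnecessary---if $G$ did have an edge $y_1y_2$, its image in $G_0$ is a loop of weight~$1$, which is consistent with $\omega(1,2)=1$, so nothing in your argument breaks. Similarly, your parenthetical about the out-edges of $y_3$ leading ``only to $y_2$ or $y_4$'' should read that any $G$-path out of $y_3$ with no internal vertex in $Y$ must terminate at $y_2$ or $y_4$; the conclusion you draw from it is correct.
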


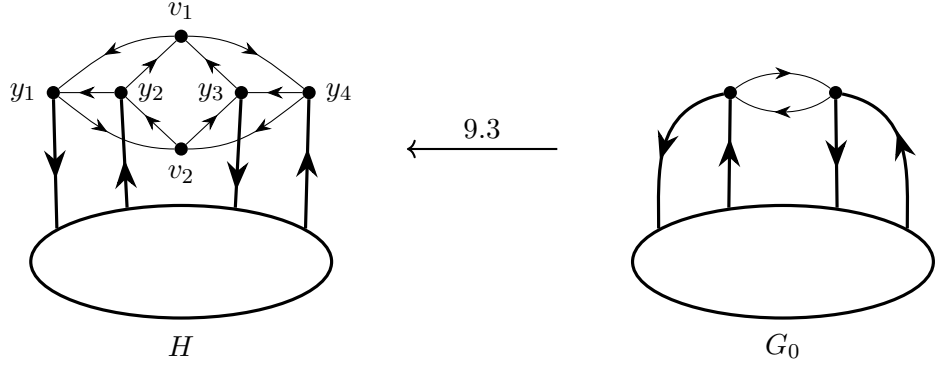
\begin{figure}[htb!]
    \centering
    \begin{tikzpicture}[yscale=1.5,xscale=2]
        \vset{V}{}{0,0};
        \vtx[left:$y_1$]{y1}{-0.85, 1.5};
        \vtx[right:$y_2$]{y2}{-0.4, 1.5};
        \vtx[left:$y_3$]{y3}{0.4, 1.5};
        \vtx[right:$y_4$]{y4}{0.85, 1.5};
        \vtx[above:$v_1$]{v1}{0, 2};
        \vtx[below:$v_2$]{v2}{0, 1};

        \hdedge{y1}{V.165};
        \draw[very thick, midarrow=0.3] (V.135) -- (y2);
        \draw[very thick, midarrow=0.7] (y3) -- (V.45);
        \hdedge{V.15}{y4};

        \dedge{y2}{y1};
        \dedge{y4}{y3};
        \dedge[out=180, in=45]{v1}{y1};
        \dedge{y2}{v1};
        \dedge{y3}{v1};
        \dedge[out=0, in=135]{v1}{y4};
        
        \draw[midarrow=0.4] (y1) to[out=315, in=180] (v2);
        \dedge{v2}{y2};
        \dedge{v2}{y3};
        \draw[midarrow=0.4] (y4) to[out=225, in=0] (v2);
        \node (H) at (0,-0.75) {$H$};

        \draw[<-, thick] (1.5, 1) -- (2.5, 1) node[midway, above] {\ref{secondcon2}};

        \vset{V}{}{4,0};

        \vtx{w1}{3.65,1.5};
        \vtx{w2}{4.35,1.5};

        \hdedge[out=195, in=90]{w1}{V.165};
        \hdedge{V.135}{w1};
        \hdedge{w2}{V.45};
        \hdedge[out=90, in=345]{V.15}{w2};

        \dedge[out=45, in=135]{w1}{w2};
        \dedge[out=225, in=315]{w2}{w1};

        \node (G0) at (4,-0.75) {$G_0$};
    \end{tikzpicture}
    \caption{The second constituent of the second nonplanar construction.}
    \label{fig:secondcon2}
\end{figure}

\begin{proof}
Let $G'$ be obtained from $G$ by adding two new vertices $v_1,v_2$ and the edges 
\[y_1v_2, v_2y_3, v_1y_1,y_3v_1, y_2v_1,y_4v_2.\]
Let $F=\{y_4y_3,v_1y_4,v_2y_2,y_2y_1\}$; thus, $G'$ is obtained from $H$ by deleting the edges in $F$.
The edges $v_1y_1, v_2y_3, y_2v_1,y_4v_2$ are all singular edges of $G'$ (because $y_2, y_4$ are sinks of $G$), and $G_0$
is obtained by contracting these edges. Consequently, $G'$ is 1-strong, and weightable 
by~\ref{butterfly}. By~\ref{pickroot} we may choose a 0/1-valued weighting $w'$ of $G'$ such that $w'(e) = 1$ for 
all edges $e$ with tail $y_1$ and $w'(v_1y_1)=0$. Since $y_1\DD v_2\DD y_3\DD v_1\DD y_1$ is a dicycle of $G'$,
it follows that $w'(e)=0$ for $e\in \{v_2y_3,y_3v_1,v_1y_1\}$. Since $G_0$ is 1-strong,
and $y_3$ is a source and $y_2$ a sink of $G$, there
is a dipath $R$ of $G$ from $y_1$ to $y_4$. Consequently, $w'(y_4v_2) = 0$ (because $R$ can be extended to a dicycle via 
$y_4\DD v_2\DD y_3\DD v_1\DD y_1$). Similarly, $w'(y_2v_1) = 0$. For each edge $e$ of $H$, define $w(e) = w'(e)$ if $e\in E(G')$,
and $w(y_4y_3) = 0$, $w(v_1y_4) = 1$, $w(v_2y_2) = 0$ and $w(y_2y_1) = 0$. We claim that $w$ is a weighting of $H$. 
To show this, suppose that there is a dicycle $C$ of $H$ with $w(C)\ne 1$, and choose it with $E(C)\cap F$ minimal.
Since $w'$ is a weighting of $G'$, it follows that $E(C)\cap F\ne \emptyset$. 

Suppose that $y_4y_3\in E(C)$. If $v_2\notin V(C)$, then we can replace the edge $y_4y_3$ of $C$ by the path $y_4\DD y_2\DD y_3$
and obtain another dicycle $C'$ say, with $w(C') = w(C)\ne 1$, contrary to the minimality of $E(C)\cap F$. Thus,
$v_2\in V(C)$. Hence $C$ is the concatenation of three dipaths: from $y_4$ to $y_3$ (of length one); a dipath $P$, say, from 
$y_3$ to $v_2$; and a dipath $Q$, say, from $v_2$ to $y_4$. Since $y_3\notin V(Q)$, it follows that $y_2$ is the second vertex of $Q$,
and since $y_2, y_4\notin V(P)$, it follows that $y_1$ is the penultimate vertex of $P$. Hence $y_1\notin V(Q)$, and so $v_1$ is the third vertex of $Q$, but then both in-neighbours of $y_1$ belong to $Q$, and yet one of them belongs to $P$, a contradiction. Thus
$y_4y_3\notin E(C)$, and similarly $y_2y_1\notin E(C)$. 

Suppose that $v_1y_4\in E(C)$. Then $y_4v_2\in E(C)$ (because $y_4y_3\notin E(C)$), and since we cannot replace the subpath
$v_1\DD y_4\DD v_2$ in $C$ by $v_1\DD y_1\DD v_2$ (because these two paths have the same total weight and the latter has smaller intersection with $F$) it follows that $y_1\in V(C)$.
But there is no dipath in $H$ from $v_2$ to $v_1$ passing through $y_1$ and not containing $y_4$, a contradiction. 
So $v_1y_4\notin E(C)$, and similarly $v_2y_2\notin E(C)$. This proves that $w$ is a weighting of $H$, and this proves~\ref{secondcon2}.\end{proof}

By combining~\ref{secondcon1} and~\ref{secondcon2}, we obtain what we really wanted:

\begin{thm}\label{secondcon}
Let $y_1,\dots, y_4$ be distinct vertices of a digraph $G$, and let $Y=\{y_1,\dots, y_4\}$. Let $W_1,W_2,W_3$ be pairwise internally disjoint $Y$-wings with union $G$. Suppose that:
\begin{itemize}
\item $y_1$ is a source of $W_1\cup W_3$, and $y_2$ is a sink of $W_1\cup W_3$, and $y_3$ is a source of $W_2\cup W_3$, and $y_4$ is a sink of $W_2\cup W_3$;
\item there is a dipath of $W_1$ from $y_1$ to $y_2$, and there is a dipath of $W_2$ from $y_3$ to $y_4$;
\item the digraph $G_1$ obtained
from $W_1$ by adding a new vertex $v_1$ and the edges
\[ v_1y_1, y_2v_1, y_3v_1, v_1y_4, y_2y_1 \]
is 1-strong and weightable; 
\item the digraph $G_2$
obtained from $W_2$ by adding a new vertex $v_2$ and the edges
\[ y_1v_2, v_2y_2, v_2y_3, y_4v_2, y_4y_3 \]
is 1-strong and weightable; and
\item the digraph $G_0$ obtained from $W_3$ by adding the edges $y_1y_3,y_3y_1$ and making the identifications $y_1=y_2$ and $y_3=y_4$
is 1-strong and weightable.
\end{itemize}
See Figure~\ref{fig:secondcon}. Then $G$ is weightable.
\end{thm}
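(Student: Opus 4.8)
The plan is to derive~\ref{secondcon} from~\ref{secondcon1} and~\ref{secondcon2}, by running~\ref{secondcon1} twice and using~\ref{secondcon2} to manufacture one of the ``weightable blocks'' that the second application needs. Throughout I will use freely that a subdigraph of a weightable digraph is weightable (immediate from~\ref{butterfly}, a subdigraph being a butterfly minor).

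Fix once and for all two new vertices $v_1,v_2$ and let $E^\ast$ be the ten edges
\[ y_2y_1,\ y_4y_3,\ v_1y_1,\ y_2v_1,\ y_3v_1,\ v_1y_4,\ y_1v_2,\ v_2y_2,\ v_2y_3,\ y_4v_2. \]
Let $M$ be $W_3$ together with $v_1,v_2$ and these ten edges. Since $W_3\subseteq W_1\cup W_3$ and $W_3\subseteq W_2\cup W_3$, the source/sink hypotheses of~\ref{secondcon} give that $y_1,y_3$ are sources and $y_2,y_4$ are sinks of $W_3$; and $G_0$ is exactly the digraph that~\ref{secondcon2} builds from $W_3$. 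Hence~\ref{secondcon2} applies with its ``$G$'' equal to $W_3$, so $M$ (its ``$H$'') is weightable --- and also $1$-strong, since the proof of~\ref{secondcon2} shows the auxiliary digraph $G'\subseteq M$ is $1$-strong and $M$ is obtained from $G'$ by adding edges. In particular the subdigraph $N$ of $M$ obtained by deleting $v_1$ and the edge $y_2y_1$, i.e.\ $W_3$ together with $v_2$ and the edges $y_1v_2,v_2y_2,v_2y_3,y_4v_2,y_4y_3$, is weightable.

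The first application of~\ref{secondcon1} shows that $P:=W_2\cup N$ is weightable. One checks $N,W_2$ are internally disjoint $Y$-wings of $P$ (their vertex sets meet exactly in $Y$ because $v_2\notin V(W_2)$, and they share no edge because $E(W_2)\cap E(W_3)=\emptyset$, the $v_2$-edges touch $v_2$, and $y_4y_3\notin E(W_2)$ as $y_3$ is a source of $W_2$). Adjoining $y_1v_2$ keeps $y_1$ a source of $N$, adjoining $v_2y_2$ keeps $y_2$ a sink of $N$, and $y_3,y_4$ remain a source and a sink of $W_2$; the two dipaths~\ref{secondcon1} requires are $y_1\DD v_2\DD y_2$ in $N$ and the given dipath from $y_3$ to $y_4$ in $W_2$. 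Finally, after renaming the fresh vertex that~\ref{secondcon1} adjoins, the two digraphs it needs to be $1$-strong and weightable are precisely $M$ (handled above) and the digraph $G_2$ of~\ref{secondcon} (a hypothesis). So~\ref{secondcon1} yields that $P$ is weightable.

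The second application of~\ref{secondcon1} uses the decomposition $G=W_1\cup(W_2\cup W_3)$, with $W_1$ and $W_2\cup W_3$ as the two $Y$-wings (internally disjoint with union $G$, since $W_1,W_2,W_3$ are pairwise internally disjoint). Its source/sink and dipath hypotheses are literally those~\ref{secondcon} assumes for $W_1$ and for $W_2\cup W_3$, the digraph ``$G_1$'' it needs is the $G_1$ of~\ref{secondcon} (a hypothesis), and the digraph ``$G_2$'' it needs is $W_2\cup W_3$ together with $v_2$ and the edges $y_1v_2,v_2y_2,v_2y_3,y_4v_2,y_4y_3$ --- which is exactly $P$. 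Thus the one remaining point is that $P$ is $1$-strong, and this is the only step of the argument that is not pure bookkeeping: I would prove it by observing that $G_2\subseteq P$ is $1$-strong and contains $Y$, while (lifting a dicycle of the $1$-strong digraph $G_0$ through the identifications $y_1=y_2$, $y_3=y_4$, and using that $y_1,y_3$ are sources and $y_2,y_4$ sinks of $W_3$) every vertex and every edge of $W_3$ lies on a directed path inside $W_3$ from $\{y_1,y_3\}$ to $\{y_2,y_4\}$, which closes up to a dicycle of $P$ through $G_2$. Given that $P$ is $1$-strong and weightable,~\ref{secondcon1} gives that $G$ is weightable. The main obstacle throughout is organisational --- choosing labels and fresh vertices so that the ``derived'' digraphs at each step coincide on the nose with $M$, $G_1$, $G_2$ --- together with this single $1$-strongness check.
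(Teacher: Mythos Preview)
Your proof is correct and follows essentially the same approach as the paper: apply~\ref{secondcon2} to $W_3$ to get the ``double-gadget'' $M$, then apply~\ref{secondcon1} twice. The only difference is cosmetic --- the paper first glues $W_3$ to $W_1$ (obtaining that $(W_1\cup W_3)+v_1+\{v_1y_1,y_2v_1,y_3v_1,v_1y_4,y_2y_1\}$ is $1$-strong and weightable) and then glues $W_1\cup W_3$ to $W_2$, whereas you glue $W_3$ to $W_2$ first and then $W_1$ to $W_2\cup W_3$; your explicit check that $P$ is $1$-strong is a detail the paper leaves implicit.
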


\begin{figure}[htb!]
    \centering
    \begin{tikzpicture}[scale=1.4]
        \tikzset{every label/.style={label distance=-2pt}}
        \vset{W1}{$W_1\setminus Y$}{-1.5,2.5};
        \vset{W2}{$W_2\setminus Y$}{1.5,2.5};
        \vset{W3}{$W_3\setminus Y$}{0,-0.5};

        \vtx[{[yshift=-3pt]right:$y_1$}]{y1}{-0.9, 0.9};
        \vtx[{[yshift=-3pt]right:$y_2$}]{y2}{-0.3, 0.9};
        \vtx[{[yshift=-3pt]right:$y_3$}]{y3}{0.3, 0.9};
        \vtx[{[yshift=-3pt]right:$y_4$}]{y4}{0.9, 0.9};

        \hdedge{y1}{W1.215};
        \hdedge{W1.270}{y2};
        \sline{y3}{W1.315};
        \sline{y4}{W1.345};
        \hdedge[out=60, in=120]{W1.215}{W1.270};

        \sline{y1}{W2.195};
        \sline{y2}{W2.225};
        \hdedge{y3}{W2.270};
        \hdedge{W2.325}{y4};
        \hdedge[out=60, in=120]{W2.270}{W2.325};

        \hdedge{y1}{W3.165};
        \hdedge{W3.120}{y2};
        \hdedge{y3}{W3.60};
        \hdedge{W3.15}{y4};

        \node (G) at (0,-1.25) {$G$};

        \draw[<-, thick] (2.25,1.1) -- (3.25,1.1) node[midway, above] {\ref{secondcon}};
        \draw[->, thick, dashed] (2.25,0.9) -- (3.25,0.9) node[midway, below] {\ref{secondoutcome}};

        \vset{W1}{$W_1\setminus Y$}{4.5,2.75};
        \vset{W2}{$W_2\setminus Y$}{7.5,2.75};
        \vset{W3}{$W_3\setminus Y$}{6,-0.75};

        \vtx[left:$y_1$]{y1}{3.6, 1.75};
        \vtx[right:$y_2$]{y2}{4.2, 1.75};
        \vtx[right:$y_3$]{y3}{4.8, 1.75};
        \vtx[right:$y_4$]{y4}{5.4, 1.75};
        \vtx[below:$v_1$]{v1}{4.5, 1.25};

        \hdedge{y1}{W1.195};
        \hdedge{W1.240}{y2};
        \sline{y3}{W1.300};
        \sline{y4}{W1.345};
        \hdedge[out=60, in=90]{W1.195}{W1.240};
        \dedge{y2}{y1};
        \dedge[out=180, in=270]{v1}{y1};
        \dedge{y2}{v1};
        \dedge{y3}{v1};
        \dedge[out=0, in=270]{v1}{y4};
        \node (G1) at (4.5, 3.5) {$G_1$};

        \vtx[left:$y_1$]{y1}{6.6, 1.75};
        \vtx[left:$y_2$]{y2}{7.2, 1.75};
        \vtx[left:$y_3$]{y3}{7.8, 1.75};
        \vtx[right:$y_4$]{y4}{8.4, 1.75};
        \vtx[below:$v_2$]{v2}{7.5, 1.25};

        \sline{y1}{W2.195};
        \sline{y2}{W2.240};
        \hdedge{y3}{W2.300};
        \hdedge{W2.345}{y4};
        \hdedge[out=90, in=120]{W2.300}{W2.345};
        \dedge{y4}{y3};
        \dedge[out=270, in=180]{y1}{v2};
        \dedge{v2}{y2};
        \dedge{v2}{y3};
        \dedge[out=270, in=0]{y4}{v2};
        \node (G2) at (7.5, 3.5) {$G_2$};

        \vtx{w1}{5.65,0.5};
        \vtx{w2}{6.35,0.5};

        \hdedge[out=195, in=90]{w1}{W3.165};
        \hdedge{W3.125}{w1};
        \hdedge{w2}{W3.55};
        \hdedge[out=90, in=345]{W3.15}{w2};

        \dedge[out=45, in=135]{w1}{w2};
        \dedge[out=225, in=315]{w2}{w1};

        \node (G0) at (6,-1.5) {$G_0$};
    \end{tikzpicture}
    \caption{The second nonplanar construction.}
    \label{fig:secondcon}
\end{figure}
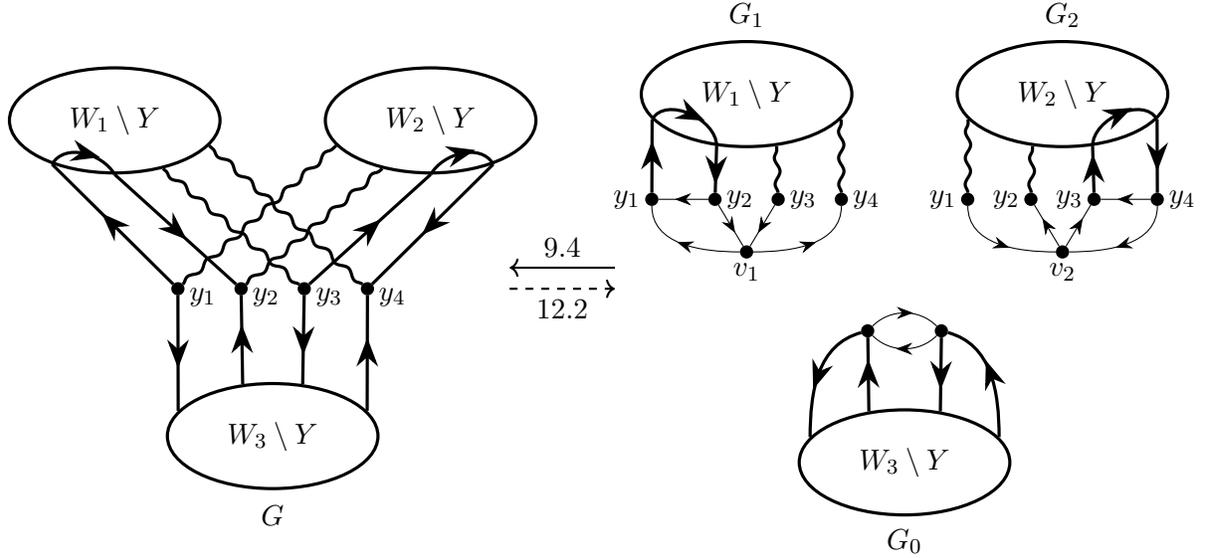

\begin{proof}
Let $W_3^+$ be obtained from $W_3$ as in~\ref{secondcon2}; that is, by adding two new vertices $v_1,v_2$ and the edges
\[y_1v_2, v_2y_3, v_1y_1,y_3v_1, y_2v_1,y_4v_2.\]
Then $W_3^+$ is 1-strong and weightable, by~\ref{secondcon2}. By~\ref{secondcon1} applied to $W_1$ and $W_3^+$, we deduce that 
the digraph obtained from $W_1\cup W_3$ by adding a new vertex $v_1$ and the edges $v_1y_1,y_2v_1, y_3v_1,v_1y_4, y_2y_1$ is 
strong and weightable. By~\ref{secondcon1}, applied to $W_1\cup W_3$  and $W_2$, it follows that $G$ is weightable. This proves~\ref{secondcon}.\end{proof}

Finally, we need:
\begin{thm}\label{thirdcon}
Let $y_1,\dots, y_4$ be distinct vertices of a digraph $G$, and let $Y=\{y_1,\dots, y_4\}$. Let $W_1,W_2$ be internally disjoint
$Y$-wings with union $G$. Suppose that:
\begin{itemize}
\item $y_1,y_3$ are sources of $W_1$ and $y_2,y_4$ are sinks of $W_1$;
\item there are dipaths of $W_1$ from $y_1$ to $y_4$ and from $y_3$ to $y_2$, and 
there are dipaths of $W_2$ from $y_2$ to $y_1$ and from $y_4$ to $y_3$;
\item the digraph $G_1$  obtained from $W_1$ by adding the edges $y_2y_1,y_4y_3,y_1y_3,y_3y_1$ is 1-strong and weightable 
(equivalently, by~\ref{butterfly}, the digraph obtained from $G_1$ by contracting $y_2y_1,y_4y_3$ is 1-strong and weightable);
\item the digraph $G_2$ obtained from $W_2$ by adding two new vertices $v_1,v_2$ and the edges
\[ y_1v_1,y_3v_2,v_1y_4,v_2y_2,y_3y_4,v_1v_2,v_2v_1 \]
is 1-strong and weightable.
\end{itemize}
See Figure~\ref{fig:thirdcon}. Then $G$ is weightable.
\end{thm}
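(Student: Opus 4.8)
The plan is to follow the template of the proofs of~\ref{firstcon},~\ref{secondcon1} and~\ref{secondcon2}. I would choose $0/1$-valued weightings $w_1$ of $G_1$ and $w_2$ of $G_2$ (applying~\ref{pickroot}, after a little preprocessing), then form $w\colon E(G)\to\{0,1\}$ by setting $w(e)=w_i(e)$ whenever $e\in E(W_i)$; this is well-defined because $W_1,W_2$ are internally disjoint with union $G$, so $E(W_1),E(W_2)$ partition $E(G)$, and $E(W_i)\subseteq E(G_i)$. It then suffices to check that $w(C)=1$ for every dicycle $C$ of $G$, which I would do by ``substituting'' each portion of $C$ lying in $W_1$ by a dipath of $G_2$ running through the gadget vertices $v_1,v_2$.

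For $w_1$ I would use the equivalent form of the hypothesis stated in the theorem: the digraph $\bar G_1$ obtained from $G_1$ by contracting the edges $y_2y_1$ and $y_4y_3$ (which are singular in $G_1$ because $y_2,y_4$ are sinks of $W_1$) is $1$-strong and weightable. Applying~\ref{pickroot} to $\bar G_1$ rooted at the vertex $\bar y$ that is the common image of $y_1$ and $y_2$ --- so that every edge with head $\bar y$ has weight $1$ and every edge with tail $\bar y$ has weight $0$ --- and lifting back through~\ref{butterfly} twice, I get a $0/1$-weighting $w_1$ of $G_1$ with $w_1(y_2y_1)=w_1(y_4y_3)=0$; and since in $\bar G_1$ the edge $y_1y_3$ has tail $\bar y$ while $y_3y_1$ has head $\bar y$, also $w_1(y_1y_3)=0$ and $w_1(y_3y_1)=1$. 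For $w_2$ I would apply the variant of~\ref{pickroot} to $G_2$ rooted at $v_2$, giving every edge with head $v_2$ weight $0$ and every edge with tail $v_2$ weight $1$, so that $w_2(y_3v_2)=w_2(v_1v_2)=0$ and $w_2(v_2y_2)=w_2(v_2v_1)=1$. Writing $Q_{21}$ for the hypothesised dipath of $W_2$ from $y_2$ to $y_1$ and $Q_{43}$ for the one from $y_4$ to $y_3$, the dicycle $y_1\DD v_1\DD v_2\DD y_2\CC y_1$ with last stretch $Q_{21}$ then forces $w_2(y_1v_1)=0$, the dicycle $v_1\DD y_4\CC y_3\DD v_2\DD v_1$ through $Q_{43}$ forces $w_2(v_1y_4)=0$, and the dicycle $y_3\DD y_4\CC y_3$ through $Q_{43}$ gives $w_2(y_3y_4)=1$.

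Since $y_1,y_3$ are the only sources and $y_2,y_4$ the only sinks of $W_1$ among $Y$, every $W_1$-path starts in $\{y_1,y_3\}$ and ends in $\{y_2,y_4\}$; call the ordered pair of its ends its \emph{type}. Completing a $W_1$-path to a dicycle of $G_1$ using only the four added edges $y_2y_1,y_4y_3,y_1y_3,y_3y_1$ shows that all $W_1$-paths of a given type have the same $w_1$-weight, equal (with the choices above) to $1,1,0,1$ for the types $(y_1,y_2),(y_3,y_2),(y_1,y_4),(y_3,y_4)$. To a $W_1$-path $P$ of each of these types I assign a \emph{substitute} --- the dipath of $G_2$ from the start of $P$ to its end given by $y_1\DD v_1\DD v_2\DD y_2$, by $y_3\DD v_2\DD y_2$, by $y_1\DD v_1\DD y_4$, or by the single edge $y_3y_4$, respectively --- and from the values just listed one checks case by case that $w_2(\mathrm{sub}(P))=w_1(P)$ in every case. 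Now let $C$ be a dicycle of $G$: if $C\subseteq W_i$ then $C$ is a dicycle of $G_i$ and $w(C)=w_i(C)=1$; otherwise $C$ has an edge in each wing, hence meets $Y$ in at least two vertices, and cutting $C$ at its vertices in $Y$ expresses it as a cyclic concatenation of $W_1$-paths and $W_2$-paths. Since $C$ meets each of $y_1,y_3$ at most once it contains at most two $W_1$-paths, and if it contains two, then (their starts lying in $\{y_1,y_3\}$ and their ends in $\{y_2,y_4\}$, all four distinct) the substitutes of the two use disjoint subsets of $\{v_1,v_2\}$; since the $W_2$-paths contain neither $v_1$ nor $v_2$, replacing each $W_1$-path $P$ of $C$ by $\mathrm{sub}(P)$ produces a genuine dicycle $C'$ of $G_2$. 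Then $w_2(C')-w(C)=\sum_P\bigl(w_2(\mathrm{sub}(P))-w_1(P)\bigr)=0$, so $w(C)=w_2(C')=1$, and hence $w$ is a weighting of $G$.

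I expect the main obstacle to be the coordination of the two weightings --- ensuring $w_1(P)=w_2(\mathrm{sub}(P))$ \emph{simultaneously} for all four $W_1$-path types. A single application of~\ref{pickroot} to $G_2$ leaves $w_2(y_1v_1)$ and $w_2(v_1y_4)$ undetermined, so these must be pinned down via the explicit dicycles built from the hypothesised $W_2$-dipaths; and on the $G_1$ side one cannot force an in-edge and an out-edge of $y_1$ both to have weight $0$ by rooting at $y_1$, which is why $y_2y_1$ and $y_4y_3$ are contracted and~\ref{butterfly} is applied first. The remaining steps are routine bookkeeping, the one point needing attention being that each closed walk $C'$ obtained by substitution is actually a dicycle (no repeated vertices), which uses the complementarity of the (at most two) $W_1$-paths of $C$ and the fact that $W_2$-paths avoid $v_1$ and $v_2$.
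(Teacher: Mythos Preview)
Your proof is correct and follows essentially the same strategy as the paper: pick $0/1$-weightings $w_1,w_2$ of $G_1,G_2$ via~\ref{pickroot}, glue them to form $w$, and verify $w(C)=1$ for every dicycle $C$ of $G$. The differences are cosmetic. The paper roots $w_1$ at $y_1$ directly (deducing $w_1(y_4y_3)=0$ from the $W_1$-dipath $y_1\to y_4$) and roots $w_2$ at $v_1$; you instead go through $\bar G_1$ and root $w_2$ at $v_2$, obtaining the ``mirror'' values $w_1(y_1y_3)=0$, $w_1(y_3y_1)=1$. Your detour through~\ref{butterfly} is unnecessary but harmless. Your substitution device is a tidy repackaging of the paper's case analysis: the paper treats the one-$W_1$-path case by a four-row table and the two-$W_1$-path case by computing all four contributions (showing the $\{(y_1,y_2),(y_3,y_4)\}$ subcase is impossible via a weight-$\ge 2$ dicycle of $G_2$), whereas your substitutes handle both uniformly once you check they occupy disjoint subsets of $\{v_1,v_2\}$. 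In particular your argument in the $\{(y_1,y_2),(y_3,y_4)\}$ subcase is vacuously correct, since the resulting $C'$ would have $w_2$-weight $0+0+1+w_2(P_2)+1+w_2(Q_2)\ge 2$, so that subcase cannot arise.
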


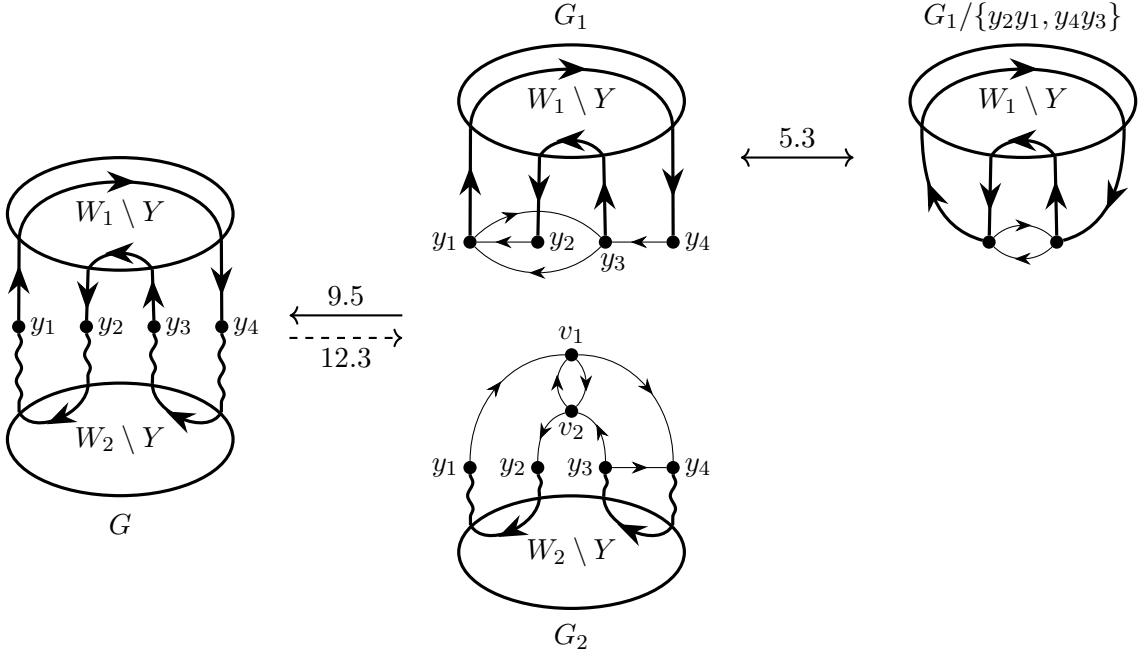
\begin{figure}[htb!]
    \centering
    \begin{tikzpicture}[scale=1.5]
        \tikzset{every label/.style={label distance=-2pt}};
        \vset{W1}{$W_1\setminus Y$}{0,2};
        \vset{W2}{$W_2\setminus Y$}{0,0};

        \vtx[right:$y_1$]{y1}{-0.9, 1};
        \vtx[right:$y_2$]{y2}{-0.3, 1};
        \vtx[right:$y_3$]{y3}{0.3, 1};
        \vtx[right:$y_4$]{y4}{0.9, 1};

        \hdedge{y1}{W1.195};
        \hdedge{W1.240}{y2};
        \hdedge{y3}{W1.300};
        \hdedge{W1.345}{y4};
        \hdedge[out=90, in=90]{W1.195}{W1.345};
        \hdedge[out=120, in=60]{W1.300}{W1.240};

        \sline{y1}{W2.165};
        \sline{y2}{W2.120};
        \sline{y3}{W2.60};
        \sline{y4}{W2.15};
        \hdedge[out=270, in=300]{W2.120}{W2.165};
        \hdedge[out=240, in=270]{W2.15}{W2.60};

        \node (G) at (0,-0.75) {$G$};

        \draw[<-, thick] (1.5,1.1) -- (2.5,1.1) node[midway, above] {\ref{thirdcon}};
        \draw[->, thick, dashed] (1.5,0.9) -- (2.5,0.9) node[midway, below] {\ref{thirdoutcome}};

        \vset{W1}{$W_1\setminus Y$}{4,3};
        
        \vtx[left:$y_1$]{y1}{3.1, 1.75};
        \vtx[right:$y_2$]{y2}{3.7, 1.75};
        \vtx[{[xshift=3pt]below:$y_3$}]{y3}{4.3, 1.75};
        \vtx[right:$y_4$]{y4}{4.9, 1.75};
        \hdedge{y1}{W1.195};
        \draw[very thick, midarrow=0.4] (W1.240) -- (y2);
        \hdedge{y3}{W1.300};
        \hdedge{W1.345}{y4};
        \hdedge[out=90, in=90]{W1.195}{W1.345};
        \hdedge[out=120, in=60]{W1.300}{W1.240};
        \dedge{y2}{y1};
        \dedge{y4}{y3};
        \draw[midarrow=0.3] (y1) to[out=45, in=135] (y3);
        \dedge[out=225, in=315]{y3}{y1};
        \node (G1) at (4, 3.75) {$G_1$};

        \vset{W2}{$W_2\setminus Y$}{4,-1};
        
        \vtx[left:$y_1$]{y1}{3.1, -0.25};
        \vtx[left:$y_2$]{y2}{3.7, -0.25};
        \vtx[left:$y_3$]{y3}{4.3, -0.25};
        \vtx[right:$y_4$]{y4}{4.9, -0.25};
        \vtx[above:$v_1$]{v1}{4, 0.75};
        \vtx[below:$v_2$]{v2}{4, 0.25};
        
        \sline{y1}{W2.165};
        \sline{y2}{W2.120};
        \sline{y3}{W2.60};
        \sline{y4}{W2.15};
        \hdedge[out=270, in=300]{W2.120}{W2.165};
        \hdedge[out=240, in=270]{W2.15}{W2.60};

        \dedge[out=90, in=180]{y1}{v1};
        \dedge[out=90, in=345]{y3}{v2};
        \dedge[out=0, in=90]{v1}{y4};
        \dedge[out=195, in=90]{v2}{y2};
        \dedge{y3}{y4};
        \dedge[out=315, in=45]{v1}{v2};
        \dedge[out=135, in=225]{v2}{v1};
        \node (G2) at (4, -1.75) {$G_2$};

        \draw[<->, thick] (5.5,2.5) -- (6.5,2.5) node[midway, above] {\ref{butterfly}};

        \vset{W1}{$W_1\setminus Y$}{8,3};

        \vtx{w1}{7.7, 1.75};
        \vtx{w2}{8.3, 1.75};
        
        \hdedge[out=165, in=270]{w1}{W1.195};
        \hdedge{W1.240}{w1};
        \hdedge{w2}{W1.300};
        \hdedge[out=270, in=15]{W1.345}{w2};
        \dedge[out=45, in=135]{w1}{w2};
        \dedge[out=225, in=315]{w2}{w1};
        \hdedge[out=90, in=90]{W1.195}{W1.345};
        \hdedge[out=120, in=60]{W1.300}{W1.240};
        \node (G1) at (8, 3.75) {$G_1/\{y_2y_1,y_4y_3\}$};
    \end{tikzpicture}
    \caption{The third nonplanar construction. Note that the pair of paths depicted in $W_1\setminus Y$ need not be disjoint, and likewise for $W_2\setminus Y$.}
    \label{fig:thirdcon}
\end{figure}
\begin{proof}
By~\ref{pickroot}, there is a 0/1-valued weighting $w_1$ of $G_1$ such that $w_1(e) = 1$ for the edges of $G_1$ with tail $y_1$
and $w_1(e) = 0$ for those with head $y_1$. Since there are dipaths of $W_1$ from $y_1$ to $y_4$ and from $y_3$ to $y_2$, it follows 
that $w_1(y_4y_3) = w_1(y_2y_1) = 0$. By~\ref{pickroot}, there is a 0/1-valued weighting $w_2$ of $G_2$ such that 
$w_2(e) = 1$ for the edges with head $v_1$ and $w_2(e) = 0$ for the edges with tail $v_1$. Since 
there are dipaths of $W_2$ from $y_2$ to $y_1$ and from $y_4$ to $y_3$, it follows that $w_2(v_2y_2) = w_2(y_3y_2) = 0$. Let $P$ be a path in $W_2$ from $y_4$ to $y_3$; we have $w_2(P)=0$ since $P$ completes to a cycle in $G_2$ via $y_3\DD v_2\DD v_1\DD y_4$. Therefore $w_2(y_3y_4)=1$, since this edge completes to a cycle via $P$.

For each edge $e\in E(G)$, define $w(e) = w_i(e)$ where $e\in E(G_i)$; note that $w_1$ and $w_2$ agree on any edges in common between $G_1,G_2$ (namely edges with both ends in $Y$). We claim that $w$ is a weighting of $G$. To show this, let $C$
be a dicycle of $G$. We may assume that $C$ is not a cycle of $W_1$ or of $W_2$, so either $C\cap W_1$,  $C\cap W_2$ are both 
paths, or $C\cap W_1$,  $C\cap W_2$ are both the disjoint union of two paths. Suppose first that $C\cap W_1$ is a path $P$ from $y$ 
to $y'$, where $y,y'\in Y$ are distinct, so $C\cap W_2$ is a path $Q$ from $y'$ to $y$. Since $y_1,y_3$ are sources of $W_1$, and $y_2,y_4$ are sinks of $W_1$, it follows that
$(y,y')$ is one of the pairs $(y_1,y_2), (y_1,y_4), (y_3,y_2), (y_3,y_4)$. If $(y,y') = (y_1,y_2)$, then $w_1(P)=0$
(because $P$ can be completed to a dicycle of $G_1$ via $y_3\DD y_2$), and $w_2(Q) = 0$ (because 
$Q$ can be completed via $y_1\DD v_1\DD v_2\DD y_2$), and so $w(C)=1$. The argument is similar in the other three cases and we omit the details. (See the table below.)
\begin{center}
\begin{tabular}{c|c c c}
    $(y,y')$ & $w_1(P)$ & $w_2(Q)$ & $w(C)$ \\
    \hline
    $(y_1,y_2)$ & 1 & 0 & 1 \\
    $(y_1,y_4)$ & 1 & 0 & 1 \\
    $(y_3,y_2)$ & 0 & 1 & 1 \\
    $(y_3,y_4)$ & 1 & 0 & 1
\end{tabular}
\end{center}

Now we assume that $C\cap W_1$,  $C\cap W_2$ are both the disjoint union of two paths. Since $y_1,y_3$ are sources of $W_1$, and $y_2,y_4$ are sinks of $W_1$, the two components of $C\cap W_1$ are both from $\{y_1,y_3\}$ to $\{y_2,y_4\}$, so there are two 
possibilities:
\begin{itemize}
\item $C\cap W_1$ is the disjoint union of $P_1$ from $y_1$ to $y_4$ and $Q_1$ from $y_3$ to $y_2$, and $C\cap W_2$ is the disjoint
union of $P_2$ from $y_2$ to $y_1$ and $Q_2$ from $y_4$ to $y_3$; or
\item $C\cap W_1$ is the disjoint union of $P_1$ from $y_1$ to $y_2$ and $Q_1$ from $y_3$ to $y_4$, and $C\cap W_2$ is the disjoint 
union of $P_2$ from $y_2$ to $y_3$ and $Q_2$ from $y_4$ to $y_1$.
\end{itemize}
In the first case, $w_1(P_1) = 1$ (by completing via $y_4\DD y_3\DD y_1$), and similarly $w_1(Q_1) = 0$, $w_2(P_2) = 0$, and $w_2(Q_2) = 0$, so $w(C) = 1$ as required. In the second case, since $P_2,Q_2$ are disjoint, it follows that 
\[y_2\DD P_2\DD y_3\DD y_4\DD Q_2\DD y_1\DD v_1\DD v_2\DD y_2\]
is a dicycle of $G_2$. But $w_2(P_2) = 1$ (by completing via $y_3\DD v_2\DD y_2$) and 
$w_2(y_1v_1) = 1$, so this second case cannot occur. This proves that $w$ is a weighting, and so proves~\ref{thirdcon}.\end{proof}

%%%%%%%%%%%%%%%%%%%%%%%%%%%%%%%%%%%%%%%%%%%%%%%%%%%%%%%%%%%%%%%%%%%%%%%%%%%%%%%%%%%%%%%%%%%%%%%%%%%%%%%%%%%%%%%%%%%%%%%%%%%%%%%%
\section{Decompositions in the nonplanar case}

If a 2-strong weightable graph is nonplanar, we can deduce from the main theorem of~\cite{pfaffians} that it has a 2-, 3-, or 4-vertex cutset $X$ such that its deletion from $G$ makes at least three weak components, and this will allow us to construct $G$ from 
smaller weightable 1-strong digraphs by the constructions of the previous section.

\begin{figure}[htb!]
\centering
\begin{tikzpicture}[scale=0.75]
\vtx{a1}{-5,1};
\vtx{a2}{-3,1};
\vtx{a3}{-5,-1};
\vtx{a4}{-3,-1};

\vtx{b1}{-1,1};
\vtx{b2}{1,1};
\vtx{b3}{-1,-1};
\vtx{b4}{1,-1};

\vtx{c1}{3,1};
\vtx{c2}{5,1};
\vtx{c3}{3,-1};
\vtx{c4}{5,-1};

\vtx{d1}{-2,2.5};
\vtx{d2}{2,2.5};
\vtx{d3}{-2,-2.5};
\vtx{d4}{2,-2.5};

\foreach \from/\to in {a1/a2,a2/a4,a4/a3,a3/a1, b1/b2,b2/b4,b4/b3,b3/b1, c1/c2,c2/c4,c4/c3,c3/c1, a1/d1,a2/d2,a3/d3,a4/d4, b1/d1,b2/d2,b3/d3,b4/d4, c1/d1,c2/d2,c3/d3,c4/d4}
\draw (\from) -- (\to);

\end{tikzpicture}

\caption{Rotunda.} \label{fig:rotunda}
\end{figure}

The graph ``Rotunda'' is shown in Figure~\ref{fig:rotunda}. Note that Rotunda is bipartite.
There are four vertices of Rotunda (two at the top and two at the bottom of the figure) such that deleting them 
makes a graph with three
components, each a cycle of length four. We call the set of these four vertices the \emph{join} of Rotunda. 
By an \emph{odd subdivision} of Rotunda, we mean a graph $R$ obtained from a copy of Rotunda by replacing each edge by a path of odd length, all
internally disjoint. Such a graph $R$ is bipartite, and the set of four vertices of $R$ that corresponds to the join of Rotunda is called the
\emph{join} of $R$. (It is uniquely defined by $R$.) 

The \emph{Heawood graph} is shown in Figure~\ref{fig:heawood}. For every perfect matching $M$ of the Heawood graph $H$, the collapse of $(H,M)$ is isomorphic to the digraph $F_7$ in Figure~\ref{fig:heawood}. 

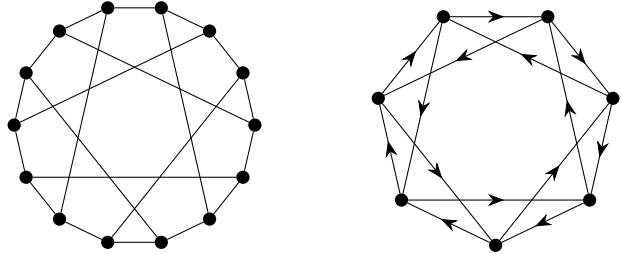
\begin{figure}[htb!]
\centering
\begin{tikzpicture}[auto=left, scale = .8]

\def\r{2}
\def\s{360/14}
\vtx{0}{{\r}, 0};
\vtx{1}{{\r*cos(\s)}, {\r*sin(\s)}};
\vtx{2}{{\r*cos(2*\s)}, {\r*sin(2*\s)}};
\vtx{3}{{\r*cos(3*\s)}, {\r*sin(3*\s)}};
\vtx{4}{{\r*cos(4*\s)}, {\r*sin(4*\s)}};
\vtx{5}{{\r*cos(5*\s)}, {\r*sin(5*\s)}};
\vtx{6}{{\r*cos(6*\s)}, {\r*sin(6*\s)}};
\vtx{7}{{\r*cos(7*\s)}, {\r*sin(7*\s)}};
\vtx{8}{{\r*cos(8*\s)}, {\r*sin(8*\s)}};
\vtx{9}{{\r*cos(9*\s)}, {\r*sin(9*\s)}};
\vtx{10}{{\r*cos(10*\s)}, {\r*sin(10*\s)}};
\vtx{11}{{\r*cos(11*\s)}, {\r*sin(11*\s)}};
\vtx{12}{{\r*cos(12*\s)}, {\r*sin(12*\s)}};
\vtx{13}{{\r*cos(13*\s)}, {\r*sin(13*\s)}};

\foreach \from/\to in {0/1,1/2,2/3,3/4,4/5,5/6,6/7,7/8,8/9,9/10,10/11,11/12,12/13,13/0,0/5,2/7,4/9,6/11,8/13,10/1,12/3}
\draw (\from)--(\to);
\begin{scope}[shift ={(6,0)}]
\def\r{2}
\def\s{360/7}
\vtx{1}{{\r*cos(90+0.5*\s)}, {\r*sin(90+0.5*\s)}};
\vtx{2}{{\r*cos(90+1.5*\s)}, {\r*sin(90+1.5*\s)}};
\vtx{3}{{\r*cos(90+2.5*\s)}, {\r*sin(90+2.5*\s)}};
\vtx{4}{{\r*cos(90+3.5*\s)}, {\r*sin(90+3.5*\s)}};
\vtx{5}{{\r*cos(90+4.5*\s)}, {\r*sin(90+4.5*\s)}};
\vtx{6}{{\r*cos(90+5.5*\s)}, {\r*sin(90+5.5*\s)}};
\vtx{7}{{\r*cos(90+6.5*\s)}, {\r*sin(90+6.5*\s)}};

\foreach \from/\to in {7/6,6/5,5/4,4/3,3/2,2/1,1/7,1/3,2/4,3/5,4/6,5/7,6/1,7/2}
\dedge{\from}{\to};

\end{scope}
\end{tikzpicture}

\caption{The Heawood graph (left) and its unique collapse $F_7$ (right).} \label{fig:heawood}
\end{figure}

The following two results are proved in~\cite{pfaffians}:
\begin{thm}\label{pfaffianthm}
Let $H$ be a Pfaffian brace. Then either $H$ is planar, $H$ is the Heawood graph, or there is a subgraph $R$ of $H$ that is 
an odd subdivision
of Rotunda such that
$H\setminus V(R)$ has a perfect matching.
\end{thm}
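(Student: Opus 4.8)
Theorem~\ref{pfaffianthm} is essentially the main structural theorem of~\cite{pfaffians} (and was proved independently by McCuaig), so I sketch the shape of the argument rather than a full proof. Call a subgraph $R\subseteq H$ \emph{conformal} if $H\setminus V(R)$ has a perfect matching. The plan is an induction on $|V(H)|$, organised around the brace generation theorem (every brace is built from the $4$-cycle by a short list of ``expansion'' operations), so that a minimal counterexample is analysed by how it lies just above a smaller brace. After the trivial cases, assume $H$ is non-planar, $H$ is not the Heawood graph, and inductively that every smaller Pfaffian brace is planar, is the Heawood graph, or has a conformal odd subdivision of Rotunda.

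The first step is to locate a conformal subgraph $H_0\subseteq H$ that is an odd subdivision of $K_{3,3}$. This rests on the matching structure of braces: a brace is planar if and only if it has no conformal odd subdivision of $K_{3,3}$ (equivalently, no $K_{3,3}$ matching minor), the point being that $2$-extendibility prevents non-planarity from hiding in a matching-irrelevant part of the graph. (Little's theorem already furnishes such an $H_0$ when $H$ is non-Pfaffian, which is not our case; here we are in the Pfaffian but non-planar case, so it is this planarity version that is needed.)

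The heart of the proof is the structural analysis of how $H$ attaches around $H_0$. Using $2$-extendibility to push perfect matchings and alternating paths through $H_0$, the Pfaffian property --- in the form that no conformal subgraph of $H$ can cross another ``the wrong way'' --- and the high matching-connectivity of braces, one pins the structure down to the following dichotomy: either $H$ is the Heawood graph, or there is a set $X\subseteq V(H)$ with $|X|=4$ such that $H\setminus X$ has at least three components and, for every component $P$, the graph obtained from $H[V(P)\cup X]$ by adding a suitable gadget on $X$ is a smaller Pfaffian brace. This $4$-cut is precisely a trisum decomposition of $H$. To finish, one converts the decomposition into the advertised witness: if all three pieces are ``basic'' $4$-cycle gadgets, an odd subdivision of Rotunda can be read off directly --- $X$ becomes its join, each of the three components supplies one subdivided blade meeting $X$ as prescribed, and the complementary perfect matching is assembled piece by piece --- while otherwise some piece, being a smaller non-planar non-Heawood Pfaffian brace, already contains a conformal odd Rotunda-subdivision by the inductive hypothesis, and one lifts it back through the expansion step, keeping it conformal by $2$-extendibility.

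The main obstacle is that structural case analysis: ruling out every attachment pattern of $H$ around the conformal $K_{3,3}$-subdivision other than the Heawood graph and a clean $4$-cut, and then checking that the pieces cut off by $X$ really are \emph{braces} (which fails for plain induced subgraphs --- hence the gadget added on $X$) and really are \emph{Pfaffian}. A second, pervasive nuisance is bookkeeping the conformal condition throughout --- both when extracting $R$ from the trisum and when lifting an $R$ found in a piece --- which is exactly the role played by $2$-extendibility of braces over and over.
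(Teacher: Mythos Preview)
The paper does not prove this theorem; it simply cites it from~\cite{pfaffians} (and you correctly identify it as the Robertson--Seymour--Thomas/McCuaig structure theorem). So there is no ``paper's own proof'' to compare against, and your opening sentence acknowledging this is exactly right.

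As a sketch of what~\cite{pfaffians} actually does, your outline has the right large-scale shape --- locate a conformal odd $K_{3,3}$-subdivision in a non-planar brace, then analyse attachments using $2$-extendibility and the Pfaffian property to reach either the Heawood graph or a $4$-cut $X$ with at least three components --- but the final step is not right. You propose a dichotomy: either all three pieces are ``basic'' (and you read off Rotunda directly), or some piece is a smaller non-planar non-Heawood Pfaffian brace and you recurse into it. This recursion does not work as stated: the pieces of a trisum can perfectly well all be planar, so the inductive hypothesis may give you nothing, and even when a piece is non-planar, a conformal Rotunda inside the piece (with its added gadget on $X$) need not lift to a conformal subgraph of $H$. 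The actual argument extracts the Rotunda \emph{directly} from the trisum structure in every case: once you have the $4$-cut $X$ and three components, each component together with $X$ contains a conformal subdivided $4$-cycle meeting $X$ in the prescribed way (this uses the brace property of the pieces, not any planarity/non-planarity of them), and the union of three such cycles on the common join $X$ is the odd Rotunda subdivision, with conformality assembled from the three pieces. No induction on the pieces is needed for this step.

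A smaller point: the ``brace generation theorem'' framing (building braces from $C_4$ by expansions) is closer to McCuaig's approach than to the argument in~\cite{pfaffians}, which works more directly with matching minors and the attachment analysis around the conformal $K_{3,3}$.
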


\begin{thm}\label{userotunda}
Let $H$ be a Pfaffian brace, and suppose that $R$ is a subgraph of $H$ that is
an odd subdivision
of Rotunda such that 
$H\setminus V(R)$ has a perfect matching. Let $X$ be the join of $R$. Then the three components of $R\setminus X$ are contained in three
distinct components of $H\setminus X$.
\end{thm}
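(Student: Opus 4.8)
The statement is in essence a reformulation of the trisum decomposition of~\cite{pfaffians}: the trisum operation there builds a brace by gluing three smaller braces along a four-vertex set, and the proof of~\ref{pfaffianthm} produces the odd-subdivision-of-Rotunda $R$ precisely as the certificate that such a four-vertex set $X$ exists and that $H\setminus X$ splits into (at least) three parts, one carrying each cell of $R\setminus X$. So the quickest route is to deduce~\ref{userotunda} from the relevant lemma of~\cite{pfaffians} directly; the plan below is a self-contained alternative, offered mainly to indicate where the real work sits.

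Argue by contradiction. Write $R_1,R_2,R_3$ for the three components of $R\setminus X$, each an odd subdivision of a $4$-cycle, and suppose two of them lie in the same component of $H\setminus X$. Among all paths of $H\setminus X$ joining two distinct cells, pick one $Q$ meeting $V(R)$ in as few vertices as possible; by minimality the interior of $Q$ can meet $V(R)$ only inside the third cell, and if it does, an initial segment of $Q$ joins two cells while avoiding the third, so after relabelling we may assume there is a path $P$ of $H\setminus X$ from $V(R_1)$ to $V(R_2)$ whose interior meets neither $V(R)$ nor $X$. Now $R\cup P\subseteq H$. Since $R$ is an odd subdivision of the cubic bipartite graph Rotunda it has a perfect matching $M_R$, and $H\setminus V(R)$ has a perfect matching $M_0$ by hypothesis, so $M_R\cup M_0$ is a perfect matching of $H$ whose restriction to $V(R)$ is a perfect matching of $R$. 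The claim is that $R\cup P$ contains a subgraph $R'$ which is a subdivision of $K_{3,3}$ into paths of odd length and for which $H\setminus V(R')$ still has a perfect matching; by the classical excluded-subgraph characterisation of Pfaffian bipartite graphs this contradicts the hypothesis that $H$ is Pfaffian. Producing $R'$ is a finite case analysis on which of the four ``corner'' paths of each of $R_1$ and $R_2$ the two ends of $P$ attach to; equivalently, after contracting each subdivided path, it reduces to examining Rotunda together with an added path between two distinct cells and checking that each resulting graph is non-Pfaffian. (Note that Rotunda alone, being cubic, non-planar and bipartite, already contains an even $K_{3,3}$-subdivision, but since Rotunda is Pfaffian that subdivision is never conformal, so the path $P$ is genuinely what creates the conformal one.)

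The main obstacle is exactly this last verification, and within it the bookkeeping that makes $R'$ conformal in $H$: the internal vertices of $P$ are saturated by $M_0$, but possibly to vertices off $P$, so one must choose $M_R$ and $M_0$ compatibly and then choose $R'$ to be saturated by $M_R\cup M_0$ before the contradiction is clean; the ``brace'' hypothesis is what keeps the ambient structure tame enough for this. All of this is routine but fiddly, which is why in the paper it is cleanest simply to cite~\cite{pfaffians}.
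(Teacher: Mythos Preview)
The paper does not prove this result: it simply cites~\cite{pfaffians}, exactly as you recommend in your final sentence. So at the level of ``what the paper does'', your proposal matches.

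Your alternative sketch has the right architecture---find a path $P$ in $H\setminus X$ between two cells with interior off $R$, then exhibit inside $R\cup P$ a conformal odd $K_{3,3}$-subdivision, contradicting Pfaffianness---and your minimality argument for $P$ is essentially correct (though phrased a little loosely; taking $Q$ shortest among all paths of $H\setminus X$ joining two distinct cells already forces the interior of $Q$ off $V(R)$, with no need to track which third cell it might meet). The part you flag as ``routine but fiddly'' is, however, a genuine gap rather than mere bookkeeping. The difficulty is that the internal vertices of $P$ are matched by $M_0$ to vertices that need not lie on $P$, so no choice of $R'\subseteq R\cup P$ is automatically conformal; one has to reroute using $M$-alternating paths and invoke 2-extendibility in a nontrivial way. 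This is exactly the kind of argument that occupies several pages of~\cite{pfaffians}, and the brace hypothesis is used there in a more structured fashion than ``keeps things tame''. So your sketch correctly locates the obstacle but does not surmount it, and you are right that citing~\cite{pfaffians} is the clean route---which is what the paper does.
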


We will deduce from~\ref{pfaffianthm} and~\ref{userotunda} that:
%%%%%%%%%%%%%%%%%%%%%%%%%%%%%%%%%%%%
\begin{thm}\label{unevendecomp}
Let $G$ be a 2-strong, 3-weak odd-weightable digraph that is not diplanar. Then either $G$ is $F_7$ (shown in Figure~\ref{fig:heawood}), or
there is a set $Y\subseteq V(G)$ such that one of the following holds (shown in Figure~\ref{fig:unevendecomp}):
\begin{enumerate}
    \item\label{case1} $|Y|=3$, $Y=\{y_1,y_2,y_3\}$ say, and $G\setminus Y$ has the following properties:
    \begin{enumerate}
        \item It has at least three weak components $D_1, D_2,\dots$.
        \item For $i=1,2$, $|V(D_i)|\ge 2$, $y_1\to D_i$, and $y_2\from D_i$.
    \end{enumerate}
    
    \item\label{case2} $|Y|=4$, $Y=\{y_1,y_2,y_3, y_4\}$ say, where $y_4y_1,y_2y_3\notin E(G)$, and $G\setminus Y$ has the following properties:
    \begin{enumerate}
        \item It has at least three weak components $D_1,D_2,\dots$.
        \item $y_1\to D_1$ and $y_2\from D_1$.
        \item $y_3\to D_2$ and $y_4\from D_2$,
        \item For $i>2$, $y_1,y_3\to D_i$ and $y_2,y_4\from D_i$.
    \end{enumerate}
    
    \item\label{case3} $|Y|=4$, $Y=\{y_1,y_2,y_3, y_4\}$ say, and $G\setminus Y$ has the following properties:
    \begin{enumerate}
        \item It has at least two weak components $D_1,D_2,\dots$.
        \item For $i=1,2$, $|V(D_i)|\ge 2$, $y_1, y_3\to D_i$, and $y_2,y_4\from D_i$.
    \end{enumerate}
\end{enumerate}
\end{thm}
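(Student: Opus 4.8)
The plan is to bring the Pfaffian brace structure theorem~\ref{pfaffianthm} to bear on $G$ through the bisource correspondence, locate a subdivided copy of Rotunda inside the bisource of $G$, and then read the three outcomes off the way the perfect matching meets the join of that Rotunda.

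First I would set up the reduction. Since $G$ is $2$-strong, $G$ is the collapse of a bisource $(H,M)$ in which $H$ is a brace; since $G$ is odd-weightable, $H$ is Pfaffian; and since $G$ is not diplanar, $H$ is not planar, because a planar $H$ would have a diplanar collapse. So~\ref{pfaffianthm} applies. If $H$ is the Heawood graph then $G$ is its unique collapse $F_7$ and we are done, so assume instead that there is a subgraph $R\subseteq H$ that is an odd subdivision of Rotunda with $H\setminus V(R)$ having a perfect matching. Let $X$ be the join of $R$, so $|X|=4$, and let $R_1,R_2,R_3$ be the three components of $R\setminus X$, each an odd subdivision of a $4$-cycle; by~\ref{userotunda} they lie in three distinct components of $H\setminus X$.

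Next I would transport this to $G$. If $S$ is a union of edges of $M$, then deleting the corresponding vertices of $G$ yields exactly the collapse of the bisource $(H\setminus S,M\setminus S)$, so the weak components of that deletion correspond to the components of $H\setminus S$. Apply this with $S=S^\ast$ the union of the edges of $M$ meeting $X$, and let $Y$ be the corresponding set of vertices of $G$. Then $X\subseteq S^\ast$, whence $|Y|\le 4$, and $H\setminus S^\ast$ is $H\setminus X$ with at most four further vertices removed (the $M$-mates of the vertices of $X$ lying outside $X$). Since each $R_i$ has at least four vertices and they lie in distinct components of $H\setminus X$, at least two of the $R_i$ survive in $H\setminus S^\ast$ and remain in distinct components; hence $G\setminus Y$ has at least two weak components, and since $G$ is $3$-weak this forces $|Y|\ge 3$, so $|Y|\in\{3,4\}$.

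It remains to decide which of~\ref{case1}--\ref{case3} holds and to verify the source/sink and non-adjacency conditions. The key local fact is that in the bisource every non-$M$-edge of $H$ runs from the $A$-side to the $B$-side, and each vertex of $X$ lies wholly on one side, two on each; so for a join vertex $d\in X$ with $M$-mate $\mu(d)$, the vertex $y$ of $G$ arising from $d\mu(d)$ has all of its edges coming from non-$M$-edges at $d$ pointing one way through $y$ and all those coming from non-$M$-edges at $\mu(d)$ pointing the other way. Thus whether $y$ is a source or a sink towards a given weak component $D_i$ is decided purely by which side $d$ lies on and where $\mu(d)$ sits. Running through the possible configurations of $M$ on $X$ together with the first vertices of the wings of $R$ at $X$ --- which, up to the symmetries of Rotunda and reversal of all edges, are exactly the three configurations that give Rotunda its three nonisomorphic collapses (one planar and not weightable, one nonplanar and not weightable, and the weightable one of Figure~\ref{fig:nonplanar}) --- produces cases~\ref{case1},~\ref{case2} and~\ref{case3} respectively; in each case $Y$ is the set above, the count of weak components comes from the correspondence together with~\ref{userotunda}, the directions of the edges between $Y$ and the $D_i$ come from the side-of-the-bipartition bookkeeping, the bounds $|V(D_i)|\ge 2$ hold because each relevant component of $H\setminus S^\ast$ still contains most of an odd subdivision of a $4$-cycle, and the non-adjacencies $y_4y_1,y_2y_3\notin E(G)$ in case~\ref{case2} reflect that the corresponding join vertices lie on a common side of the bipartition. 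I expect the main obstacle to be precisely this last step: organizing the case analysis cleanly, and, inside it, controlling the edges of $H$ that do not belong to the subdivided Rotunda $R$, since a stray edge of $H$ near $X$ could spoil a source/sink or non-adjacency requirement. This is likely handled by choosing $R$ judiciously --- for instance with $H\setminus V(R)$ matchable and $V(R)$ maximal subject to that --- so that no such stray edge can occur.
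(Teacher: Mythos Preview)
Your overall architecture is the paper's: pass to the bisource $(H,M)$, observe that $H$ is a nonplanar Pfaffian brace, invoke~\ref{pfaffianthm} and~\ref{userotunda} to find an odd Rotunda subdivision $R$ with join $X$, and set $Y$ to be the image in $G$ of the $M$-edges meeting $X$. That much is correct. But the final step, where you ``run through the possible configurations of $M$ on $X$'', has a genuine gap.

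The missing ingredient is a \emph{balance lemma}: for every component $C$ of $H\setminus X$ one has $|V(C)\cap A|=|V(C)\cap B|$, and hence the number of $M$-edges from $X\cap A$ into $C$ equals the number from $X\cap B$ into $C$. This is what forces the $M$-mates of the four join vertices to fall into one of exactly three patterns: one $M$-edge lies inside $X$ (giving $|Y|=3$ and case~\ref{case1}); or no $M$-edge lies inside $X$ and the four mates land in a single component of $H\setminus X$ (case~\ref{case3}); or they split two-and-two between two components, with one $A$-mate and one $B$-mate in each (case~\ref{case2}). Without this lemma there is nothing to stop, say, the mate of $x_1$ going to $C_1$ and the mate of $x_2$ going to $C_2$, which fits none of the three outcomes. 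The lemma is proved using that $H$ is $2$-extendible (equivalently, that $G$ is $2$-strong): pick $M$-edges at $x_1,x_3$ into two of the $R_i$-components and extend to a perfect matching; this forces the remaining components to be balanced.

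Two smaller corrections. First, the trichotomy has nothing to do with ``the three nonisomorphic collapses of Rotunda'': those are about perfect matchings of Rotunda itself, whereas here the case split is governed by where in $H\setminus X$ the $M$-mates of the four join vertices lie. Second, your explanation of $y_4y_1\notin E(G)$ in case~\ref{case2} is wrong: $x_1\in A$ and $x_4\in B$ lie on \emph{opposite} sides of the bipartition, so that argument cannot work. The correct reason is that an edge $y_4y_1$ of $G$ would correspond to an $H$-edge between the $M$-mates $z_4$ and $z_1$, and in case~\ref{case2} these lie in different components of $H\setminus X$. Finally, your worry about ``stray edges'' spoiling the source/sink conditions is unfounded and no maximal-$R$ trick is needed: once you know which component contains a join vertex's $M$-mate, the direction of every edge between the corresponding $y_i$ and each $D_j$ is forced by the bipartition alone.
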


\begin{figure}[htb!]
    \centering
    \begin{tikzpicture}[scale=1.3]
        \tikzset{every label/.style={label distance=-2pt}};
        \vset{C1}{$D_1$}{-1.5,2.25};
        \vset{C2}{$D_2$}{1.5,2.25};
        \vset{R}{\footnotesize$\ge 1\text{ component}$}{0,-0.25};

        \vtx[{[yshift=-3pt]right:$y_1$}]{y1}{-0.7,1};
        \vtx[{[yshift=-3pt]right:$y_2$}]{y2}{0,1};
        \vtx[{[yshift=-3pt]right:$y_3$}]{y3}{0.7,1};

        \sline{y1}{R.152};
        \sline{y2}{R.90};
        \sline{y3}{R.28};
        
        \hdedge{y1}{C1.267};
        \draw[very thick, midarrow=0.35] (C1.320) -- (y2);
        \sline{y3}{C1.348};

        \draw[very thick, midarrow=0.75] (y1) -- (C2.192);
        \draw[very thick, midarrow=0.35] (C2.220) -- (y2);
        \sline{y3}{C2.273};

        \node (1) at (0,-1) {Case~\ref{case1}};

        \vset{C1}{$D_1$}{4.5,2.25};
        \vset{C2}{$D_2$}{7.5,2.25};
        \vset{R}{\footnotesize$\ge 1\text{ component}$}{6,-0.25};

        \vtx[{[yshift=-3pt]right:$y_1$}]{y1}{5.1,1};
        \vtx[{[yshift=-3pt]right:$y_2$}]{y2}{5.7,1};
        \vtx[{[yshift=-3pt]right:$y_3$}]{y3}{6.3,1};
        \vtx[{[yshift=-3pt]right:$y_4$}]{y4}{6.9,1};

        \hdedge{y1}{R.165};
        \hdedge{R.122}{y2};
        \hdedge{y3}{R.58};
        \hdedge{R.15}{y4};
        
        \hdedge{y1}{C1.242};
        \draw[very thick, midarrow=0.35] (C1.300) -- (y2);
        \sline{y3}{C1.333};
        \sline{y4}{C1.355};

        \sline{y1}{C2.185};
        \sline{y2}{C2.207};
        \draw[very thick, midarrow=0.65] (y3) -- (C2.240);
        \hdedge{C2.298}{y4};

        \node (2) at (6,-1) {Case~\ref{case2}};

        \vset{R}{\footnotesize$\ge 0\text{ components}$}{3,-0.75};
        \vset{C1}{$D_1$}{1.5,-3.25};
        \vset{C2}{$D_2$}{4.5,-3.25};

        \vtx[{[yshift=3pt]right:$y_1$}]{y1}{2.1,-2};
        \vtx[{[yshift=3pt]right:$y_2$}]{y2}{2.7,-2};
        \vtx[{[yshift=3pt]right:$y_3$}]{y3}{3.3,-2};
        \vtx[{[yshift=3pt]right:$y_4$}]{y4}{3.9,-2};

        \sline{y1}{R.195};
        \sline{y2}{R.238};
        \sline{y3}{R.302};
        \sline{y4}{R.345};
        
        \hdedge{y1}{C1.118};
        \draw[very thick, midarrow=0.35] (C1.60) -- (y2);
        \draw[very thick, midarrow=0.8] (y3) -- (C1.27);
        \draw[very thick, midarrow=0.2] (C1.5) -- (y4);

        \draw[very thick, midarrow=0.8] (y1) -- (C2.175);
        \draw[very thick, midarrow=0.2] (C2.153) -- (y2);
        \draw[very thick, midarrow=0.65] (y3) -- (C2.120);
        \hdedge{C2.62}{y4};

        \node (2) at (3,-4) {Case~\ref{case3}};
    \end{tikzpicture}
    \caption{The three decompositions of~\ref{unevendecomp}. Note that some of the ovals may correspond to more than one weak component or may be empty, as labeled. Also, edges with both ends in $Y$ are not shown, and there are some additional size constraints on certain $D_i$; check the theorem statement for details.}
    \label{fig:unevendecomp}
\end{figure}
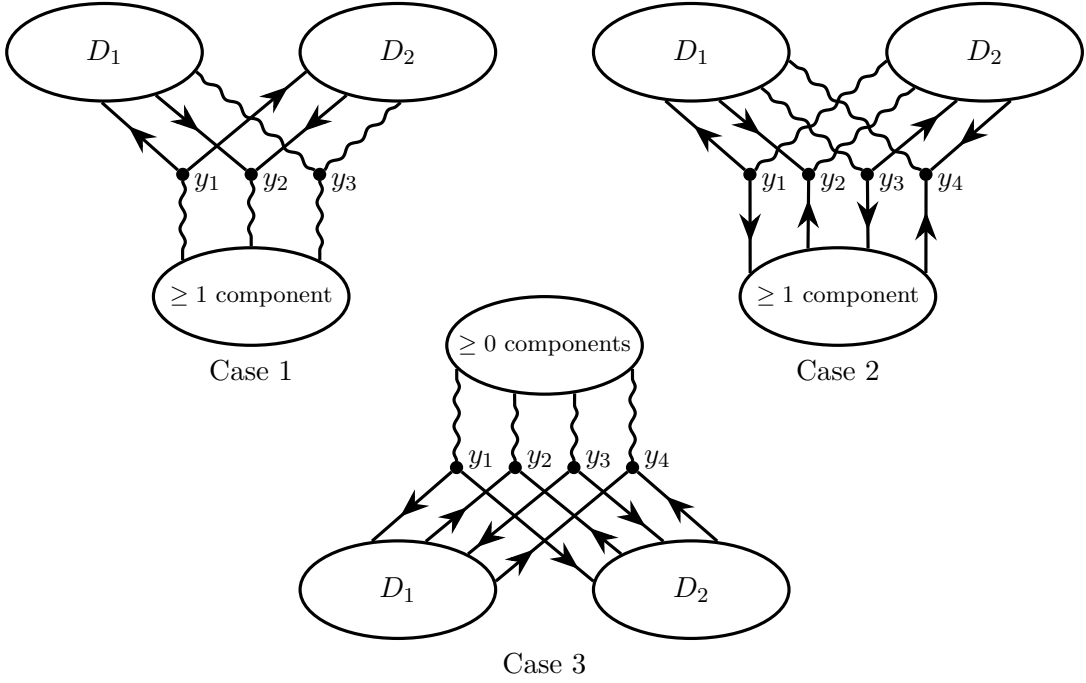

\begin{proof}
Let $(H,M)$ be the bisource for $G$. Since $G$ is 2-strong and odd-weightable, it follows that $H$ is a Pfaffian brace, by theorems 
of~\cite{pfaffians}. Since $G$ is not diplanar, it follows that $H$ is not planar, so we can apply~\ref{pfaffianthm}
and~\ref{userotunda}.
If $H$ is the Heawood graph, then $G$ is the digraph $F_7$ of Figure~\ref{fig:heawood}, so we assume not. Hence by
\ref{pfaffianthm} and~\ref{userotunda}, there is a subgraph $R$ of $H$ that is
an odd subdivision
of Rotunda, with join $X$ say, such that:
\begin{itemize}
\item $H\setminus V(R)$ has a perfect matching, and
\item the three components of $R\setminus X$ are contained in three distinct components of $H\setminus X$.
\end{itemize}
Let $(A,B)$ be a bipartition of $R$. Let $R_1,R_2,R_3$ be the three components of $R\setminus X$.
Since $R$ is an odd subdivision of Rotunda, it follows 
that $|A\cap X|=2$. Let $X=\{x_1,\dots, x_4\}$ where $X\cap A=\{x_1,x_3\}$.
Let $C_1,\dots, C_k$ be the components of $H\setminus X$, and let $R_i\subseteq C_{t_i}$ for $ i = 1,2,3$. We have $k\ge 3$, and by~\ref{userotunda}, we have that $t_1,t_2,t_3$
are all different.

\begin{step}\label{step1e}
$|V(C)\cap A|=|V(C)\cap B|$ for each component $C$ of $H\setminus X$, and therefore the number of edges in $M$ between   
$X\cap A$ and $V(C)\cap B$ equals the number between $X\cap B$ and $V(C)\cap A$.
\end{step}

By relabeling the components of $H\setminus X$ if necessary, we may assume $t_i=i$ for $i=1,2,3$. We may also assume $C\ne C_1,C_2$ 
by the symmetry between $C_1,C_2,C_3$. Choose edges $e_1,e_2$ between $x_1$ and $V(C_1)$ and 
between $x_3$ and $V(C_2)$, respectively. 
Since $H$ is 2-extendible, the matching $\{e_1,e_2\}$ can be extended to a perfect matching of $H$, and consequently
$|V(C)\cap A|\le |V(C)\cap B|$. Similarly, $|V(C)\cap B|\le |V(C)\cap A|$, and so equality holds. This proves the first statement of~\eqref{step1e}, and the
second follows since $M$ is a perfect matching. This proves~\eqref{step1e}.

\bigskip

For $1\le i\le k$, let $M_i$ be the set of edges in $M$ that have both ends in $V(C_i)$. 
We produce $G$ from $H$ by directing the edges of $H$ from $A$ to $B$, and then contracting the edges of $M$; for each edge $e\in M$, let $\phi(e)$ be the vertex of $G$ made by
contracting $e$, and for $N\subseteq M$ let $\phi(N) = \{\phi(e):e\in N\}$. Let $Y$ be the set of vertices $\phi(e)$ such that 
$e\in M$ has an end in $X$. For each $i$, let $D_i=G[\phi(M_i)]$; then the nonnull digraphs among $D_1,\dots,D_k$ are precisely the
weak components of $G\setminus Y$. Since $G$ is 3-weak, $3\le |Y|\le 4$, so at most one edge of $M$ has both ends in $X$. 

The relationship between the matching $M$ and the vertices $X$ determines the outcome of the theorem. The three cases are depicted in Figure~\ref{fig:matching}.

\begin{figure}[htb!]
    \centering
    \begin{tikzpicture}[scale=1.3]
        \tikzset{every label/.style={label distance=-2pt}};
        \vset{C1}{$C_1$}{-1.5,2.25};
        \vset{C2}{$C_2$}{1.5,2.25};
        \vset{R}{$C_j$}{0,-0.25};

        \vtx[above:$x_1$]{x1}{-0.9,1};
        \vtx[above:$x_2$]{x2}{-0.3,1};
        \vtx[above:$x_3$]{x3}{0.3,1};
        \vtx[above:$x_4$]{x4}{0.9,1};

        \draw (x1) -- (R.165);
        \draw (x2) -- (R.122);
        \draw (x3) -- (x4);

        \node (1) at (0,-1) {Case~\ref{case1}};

        \vset{C1}{$C_1$}{4.5,2.25};
        \vset{C2}{$C_2$}{7.5,2.25};
        \vset{R}{}{6,-0.25};

        \vtx[below:$x_1$]{x1}{5.1,1};
        \vtx[below:$x_2$]{x2}{5.7,1};
        \vtx[below:$x_3$]{x3}{6.3,1};
        \vtx[below:$x_4$]{x4}{6.9,1};
        
        \draw (x1) -- (C2.185);
        \draw (x2) -- (C2.207);

        \draw (x3) -- (C1.333);
        \draw (x4) -- (C1.355);

        \node (2) at (6,-1) {Case~\ref{case2}};

        \vset{R}{$C_j$}{3,-0.75};
        \vset{C1}{$C_1$}{1.5,-3.25};
        \vset{C2}{$C_2$}{4.5,-3.25};

        \vtx[below:$x_1$]{x1}{2.1,-2};
        \vtx[below:$x_2$]{x2}{2.7,-2};
        \vtx[below:$x_3$]{x3}{3.3,-2};
        \vtx[below:$x_4$]{x4}{3.9,-2};

        \draw (x1) -- (R.195);
        \draw (x2) -- (R.238);
        \draw (x3) -- (R.302);
        \draw (x4) -- (R.345);

        \node (2) at (3,-4) {Case~\ref{case3}};
    \end{tikzpicture}
    \caption{The cases of $M\subseteq E(H)$ that lead to the three outcomes of~\ref{unevendecomp}. Here we just draw the edges in $M$ and 
the components of $H\setminus X$ that play an important role in the proof. The labels of the elements drawn are consistent with the 
proof of~\ref{unevendecomp}, and their positions are consistent with Figure~\ref{fig:unevendecomp}.}
    \label{fig:matching}
\end{figure}
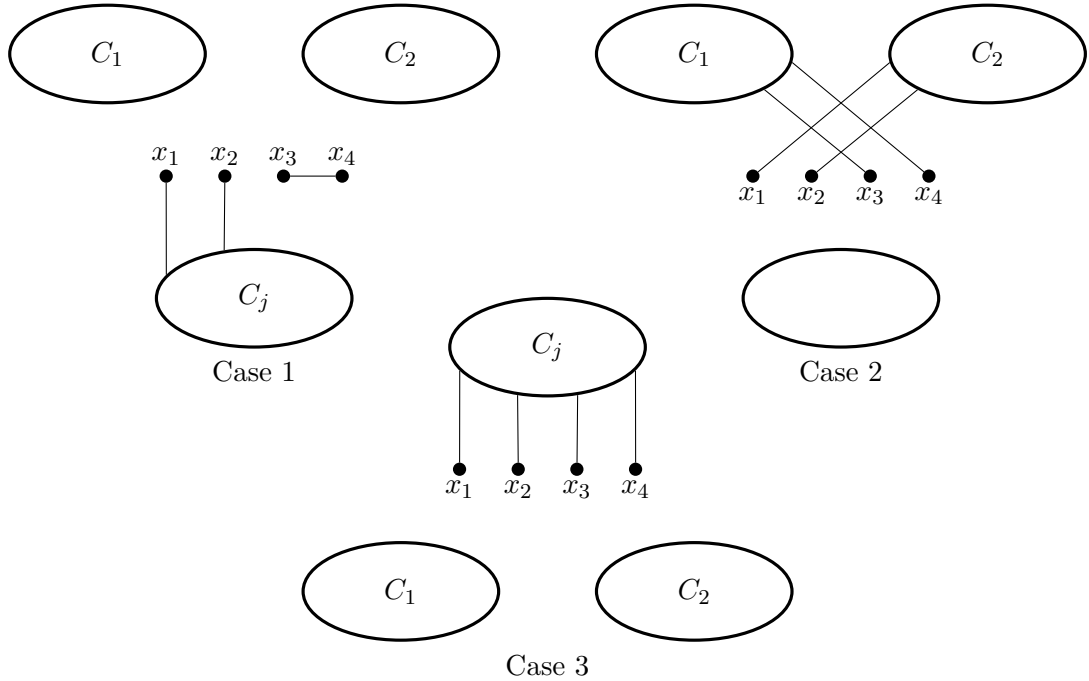

Suppose first that there is an edge of $M$ with both ends in $X$, 
say $x_3x_4\in M$. In this case, we may assume that $t_i=i$ for $i=1,2,3$. Let $e_1,e_2$ be the edges in $M$ incident with $x_1,x_2$ respectively. 
By~\eqref{step1e}, there exists $j\in \{1,\dots, k\}$ such that $e_1,e_2$
both have an end in $V(C_j)$. From the symmetry between $C_1,C_2,C_3$, we may assume that $j\ne 1,2$.
Since $C_1,C_2$ each have at least four vertices, it follows that $D_1,D_2$ each have at least two vertices.
Moreover,
for $i = 1,2$, all edges of $H$ between $V(C_i)$
and an end of $e_1$ are incident with $x_1$ (since the other end of $e_1$ is in $V(C_j)$), and so in $G$,
all edges between $V(D_i)$ and $y_1$ are directed from $y_1$
to $V(D_i)$, that is, $y_1\to D_i$. Similarly, $y_2\from D_i$ for $i = 1,2$. Since $|V(C_3)|\ge 4$, it follows that $|D_3|\ge 1$ (even if $j=3$),
so $G\setminus Y$ has at least three weak components, and therefore case~\ref{case1} theorem holds. 

Thus we may assume that no edge of $M$ has both ends in $X$. For $1\le i\le 4$, let $e_i$ be the edge in $M$ incident with $x_i$, and let $\phi(e_i) = y_i$.

Next, we assume that there exist distinct $j_1,j_2\in \{1,\dots, k\}$ such that $e_3, e_4$ have an end in $V(C_{j_1})$
and $e_1,e_2$ each have an end in $V(C_{j_2})$; and so, by relabeling the components of $H\setminus X$ if necessary, we may assume $j_1=1$ and $j_2=2$. It follows that $y_4y_1\notin E(G)$ (because the end of $e_1$ in $B$ and the end
of $e_4$ in $A$ belong to different components of $H\setminus X$), and similarly $y_2y_3\notin E(G)$. 

If no edge in $M$ joins $X$ and $V(C_i)$, then $y_1,y_3\to D_i$ and $y_2,y_4\from D_i$. This is the case for all $i$ except for $i=1,2$. 
Additionally, we have $y_1\to D_1$, $y_2\from D_1$, $y_3\to D_2$, and $y_4\from D_2$. Also, since $|V(C_{t_i})|\ge 4$ for $i=1,2,3$, the 
corresponding subdigraphs $D_{t_i}$ are nonnull (even if $t_i\in\{j_1,j_2\}$). Therefore case~\ref{case2} of the theorem holds.

Finally, we assume that for some $j\in \{1,\dots, k\}$, $e_1,\dots, e_4$ all have an end in $V(C_j)$. In this case, we may again assume that $t_i=i$ for $i=1,2,3$, and we may further assume that $j\ne 1,2$. Then for $i=1,2$, we have
$y_1, y_3\to D_i$ and $y_2,y_4\from D_i$. Also, $C_1,C_2$ both have at least four vertices, so $D_1,D_2$ both have at least two vertices (possibly $V(D_3)=\emptyset$). Thus case~\ref{case3} of the theorem holds.
This proves~\ref{unevendecomp}.\end{proof}

This result tells us about decompositions in odd-weightable digraphs. Since weightable 
digraphs are odd-weightable,~\ref{unevendecomp} applies also to weightable digraphs, but for weightable digraphs we can refine these 
decompositions so that they become the reverse of constructions, as we show in the next few sections.

%%%%%%%%%%%%%%%%%%%%%%%%%%%%%%%%%%%%%%%%%%%%%%%%%%%%%%%%%%%%%%%%%%%%%%%%%%%%%%%%%%%%%%%%%%%%%%%%%%%%%%%%%%%%%%%%%%%%%%%%%%%%
\section{Path lemmas}

We need some lemmas about dipaths. We recall that if $P$ is a dipath and $u,v\in V(P)$, and $u$ is earlier than $v$ in $P$, then 
$P[u,v]$ denotes the subpath of $P$ from $u$ to $v$. 
\begin{thm}\label{pathsmeet}
Let $G$ be a digraph, and let $a_1,a_2,b_1,b_2\in V(G)$ be distinct. Suppose that:
\begin{itemize}
\item $|V(G)|\ge 5$;
\item no edge has head $a_1$ or $a_2$ and no edge has tail $b_1$ or $b_2$;
\item  for each vertex $v\in V(G)\setminus \{b_1,b_2\}$ and $i = 1,2$, there is a dipath from $v$ to $b_i$; and
\item  for every vertex $v\in V(G)\setminus \{a_1,a_2\}$ and $i = 1,2$, there is a dipath from $a_i$ to $v$.
\end{itemize}
Then there are two dipaths from $\{a_1,a_2\}$ to $\{b_1,b_2\}$ that are not vertex-disjoint, such that the ends of these paths are 
all distinct.
\end{thm}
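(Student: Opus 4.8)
The plan is to prove the statement by contradiction. Suppose no two transversal dipaths meet — equivalently, every dipath from $a_1$ to $b_1$ is vertex-disjoint from every dipath from $a_2$ to $b_2$, and every dipath from $a_1$ to $b_2$ is vertex-disjoint from every dipath from $a_2$ to $b_1$. Since $b_1,b_2\notin\{a_1,a_2\}$, the last two hypotheses supply dipaths $a_i\to b_j$ for all $i,j\in\{1,2\}$; in particular the pair consisting of some $a_1$-$b_1$ dipath $P$ and some $a_2$-$b_2$ dipath $Q$ is transversal, so by hypothesis $P,Q$ are vertex-disjoint. Among all vertex-disjoint transversal pairs — those pairing $a_1$-$b_1$ with $a_2$-$b_2$ and those pairing $a_1$-$b_2$ with $a_2$-$b_1$, two families interchanged by relabelling $b_1,b_2$ — I would fix one, $(P,Q)$, with $|V(P)\cup V(Q)|$ maximum, say $P$ from $a_1$ to $b_1$ and $Q$ from $a_2$ to $b_2$.

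First I would show $V(P)\cup V(Q)=V(G)$. If some vertex $v$ lies outside, then $v$ is reachable from $a_1\in V(P)$ and reaches $b_1\in V(P)$, so, after deleting shortcuts, one obtains either (i) a dicycle meeting $V(P)\cup V(Q)$ in a single vertex, necessarily an internal vertex of $P$ or $Q$, or (ii) a dipath $F$ with distinct ends $s,t\in V(P)\cup V(Q)$, at least one internal vertex, and all internal vertices outside $V(P)\cup V(Q)$; moreover $s\notin\{b_1,b_2\}$ and $t\notin\{a_1,a_2\}$ since $s$ carries an out-edge and $t$ an in-edge of $F$. In case (i), splicing the dicycle into $P$ (or $Q$) at its attachment vertex produces a strictly longer vertex-disjoint transversal pair, contradicting maximality. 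Case (ii), where $F$ acts as an ``ear'', is the delicate part and is the step I expect to be the main obstacle: a naive splice of $F$ between $s$ and $t$ deletes the $s$-$t$ segment of $P$ or $Q$, so it need not enlarge $V(P)\cup V(Q)$. One resolves this by the right choice of $v$ and of the routes producing $F$ — choosing them so that $s,t$ are consecutive on whichever of $P,Q$ carries both of them (making the splice strictly enlarge $V(P)\cup V(Q)$), and, in the remaining configuration where $s\in V(P)$ and $t\in V(Q)$, reading off instead two transversal dipaths through $v$ that meet, contradicting the standing assumption. This casework, bookkeeping ``last vertex on $P$'' and ``first vertex on $Q$'' along each route, is the technical core.

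Finally, assuming $V(P)\cup V(Q)=V(G)$: since $|V(G)|\ge5$, one of $P,Q$, say $P$, has an internal vertex $p$. Then $p$ is reached from $a_2$ and reaches $b_2$; a dipath from $a_2$ to $p$ runs from $V(Q)$ to $V(P)$ and so contains a ``jump'' touching $V(P)\cup V(Q)$ only at its ends, from some $w\in V(P)$ to some $w'\in V(Q)$, and symmetrically a dipath from $p$ to $b_2$ yields a jump from $V(P)$ to $V(Q)$. Splicing these jumps into $P$ and $Q$ produces an $a_1$-$b_2$ dipath and an $a_2$-$b_1$ dipath, which are transversal and hence vertex-disjoint by assumption; vertex-disjointness forces the four attachment points to appear in a prescribed order along $P$ and along $Q$, and comparing this with the position of $p$ — iterating the construction if necessary — gives incompatible orderings, the desired contradiction. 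As a sanity check, when $|V(G)|=5$ there is a unique internal vertex $v$, and the reachability hypotheses force $a_1v$, $a_2v$, $vb_1$, $vb_2$ all to be edges, so $a_1\DD v\DD b_1$ and $a_2\DD v\DD b_2$ already meet at $v$.
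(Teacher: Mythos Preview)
Your argument has a genuine gap in Step 1, specifically in case (i). You claim that ``splicing the dicycle into $P$ (or $Q$) at its attachment vertex produces a strictly longer vertex-disjoint transversal pair,'' but this is simply false for directed paths: a dipath cannot absorb a dicycle, since traversing the cycle returns you to the attachment vertex $w$, which you have already visited. There is no way to enlarge $P$ using the dicycle while keeping it a dipath. Worse, the conclusion $V(P)\cup V(Q)=V(G)$ can genuinely fail: take any vertex $v$ for which some single vertex $w$ on $P$ separates $v$ from the rest of $V(P)\cup V(Q)$ (so the only route in or out of $v$'s component goes through $w$). Then $v$ lies on no dipath from $\{a_1,a_2\}$ to $\{b_1,b_2\}$ at all, so no transversal pair can cover it, yet all the hypotheses of the theorem are preserved by adding such a $v$. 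Your ear-decomposition picture does not detect this obstruction. The case-(ii) analysis and the final step are also under-specified --- you flag them as ``the technical core'' but do not carry them out --- and both lean on the covering claim from Step 1, so the whole argument collapses.

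The paper's proof proceeds quite differently. Rather than maximizing a disjoint pair, it builds a \emph{fork}: three paths $P_1,P_2,Q_1$ meeting only at a single internal vertex $v$, with $P_i$ from $a_i$ to $v$ and $Q_1$ from $v$ to $b_1$. Choosing the fork with $Q_1$ maximal forces any path between $V(P_1)$ and $V(P_2)$ to touch $Q_1$; one then takes a shortest path $Q_2$ from $V(Q_1)$ to $b_2$ and reads off the two intersecting transversal dipaths $P_1\cup Q_1$ and $P_2\cup Q_1[v,u]\cup Q_2$ directly. This sidesteps any need to cover all of $V(G)$ and avoids the dicycle issue entirely.
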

\begin{proof} We begin with:

\begin{step}\label{step1f}
There is a dipath from $a_1$ to $b_1$ with length at least two.
\end{step}

Since $|V(G)|\ge 5$, there is a vertex $v$ different from $a_1,a_2,b_1,b_2$; by the third and fourth bullets, there is a dipath 
from $a_1$ to $v$ and from $v$ to $b_1$. Therefore the union of these paths includes
a dipath from $a_1$ to $b_1$ of length at least two. This proves~\eqref{step1f}.

\bigskip

Let us say a \emph{fork} means a triple $(P_1,P_2,Q_1)$, where $P_1,P_2,Q_1$ are dipaths, pairwise vertex-disjoint except 
that they have a common end $v\ne a_1,a_2,b_1,b_2$, and 
$P_1$ is from $a_1$ to $v$, $P_2$ is from $a_2$ to $v$, and $Q_1$ is from $v$ to $b_1$. 

\begin{step}\label{step2f}
There is a fork.
\end{step}

By~\eqref{step1f}, there is a dipath from $a_1$ to $b_1$ with non-null interior. By the fourth bullet of the theorem, there is a 
dipath from $a_2$ to the interior of $P_1$; choose a minimal such path $P_2$, and let $v$ be the end of $P_2$ in the interior of $R$. Let $P_1,Q_1$ be the subpaths of $R$ from $a_1$ to $v$ and from $v$ to $b_1$, respectively. Then $(P_1,P_2,Q_1)$ is a fork. This
proves~\eqref{step2f}.

\bigskip
Let us choose a fork $(P_1,P_2,Q)$ with $Q$ maximal, and let $v$ be the common end of the three paths. 

\begin{step}\label{step3f}
There is no dipath between $V(P_1)$, $V(P_2)$ that is vertex-disjoint from $V(Q_1)$.
\end{step}

Suppose there is a such a path from $V(P_1)$ to $V(P_2)$, say, and let
$R$ be a minimal such path, with ends $r_i\in V(P_i)$ for $i = 1,2$. Then
\[(P_1[a_1,r_1]\cup R, P_2[a_2,r_2], P_2[r_2,v]\cup Q_2)\]
is a fork, contrary to the maximality of $Q_1$. This proves~\eqref{step3f}.

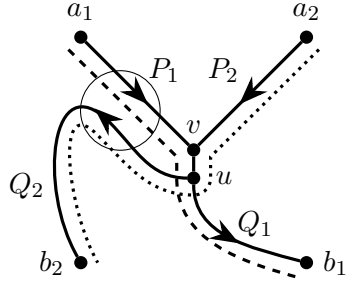
\begin{figure}[htb!]
    \centering
    \begin{tikzpicture}[scale=1.5]
        \vtx[above:$a_1$]{a1}{-1,2};
        \vtx[above:$a_2$]{a2}{1,2};
        \vtx[left:$b_2$]{b2}{-1,0};
        \vtx[right:$b_1$]{b1}{1,0};
        \vtx[above:$v$]{v}{0,1};
        \vtx[{[label distance=2pt]right:$u$}]{u}{0,0.75};

        \node[circle, draw, minimum size=2.75em] (C) at (-0.65, 1.35) {};

        \draw[very thick, midarrow] (a1) -- (v) node[midway, above right] {$P_1$};
        \draw[very thick, midarrow] (a2) -- (v) node[midway, above left] {$P_2$};
        \draw[very thick, midarrow=0.35] (u) to[out=180, in=315] (-0.75, 1.25) to[out=135, in=120, looseness=1.5] node[pos=0.7, left] {$Q_2$} (b2);

        \draw[very thick, midarrow] (v) -- (u) to[out=270, in=165] node[pos=0.7, above] {$Q_1$} (b1);
        
        \draw[very thick, dashed] ([shift={(-0.11,-0.11)}]a1.center) -- ([shift={(-0.15, -0.06)}]v.center) -- ([shift={(-0.15, 0)}]u.center) to[out=270, in=165] ([shift={(-0.04, -0.14)}]b1.center);

        \draw[very thick, dotted] ([shift={(0.11,-0.11)}]a2.center) -- ([shift={(0.15, -0.06)}]v.center) -- ([shift={(0.15, 0)}]u.center) to[out=270, in=0] ([shift={(0, -0.15)}]u.center) to[out=180, in=315] (-0.86, 1.14) to[out=135, in=120, looseness=1.2] ([shift={(0.15, 0)}]b2.center);
    \end{tikzpicture}
    \caption{The end of the proof of~\ref{pathsmeet}. Inside the circle, the paths $P_1$ and $Q_2$ may intersect in an arbitrary way; none of the other paths may intersect. The dashed and dotted paths are the ones we desired.}
    \label{fig:pathsmeet}
\end{figure}
There is a dipath from $V(Q_1)$ to $b_2$; let $Q_2$ be a minimal such path, with ends $u\in V(Q_1)$ and $b_2$.
Thus, $u$
is the only vertex of $Q_2$ in $V(Q_1)$, and $u\ne b_1$. By~\eqref{step3f}, $Q_2$ does not meet both $V(P_1)\setminus \{v\}$ and
$V(P_2)\setminus \{v\}$, and so, exchanging $a_1,a_2$ if necessary, we may assume that
$V(Q_2)\cap V(P_2)\subseteq \{u\}\cap \{v\}$. But then the dipaths
$P_2\cup Q_1[v,u]\cup Q_2$ and
$P_1\cup Q_1$ have nonempty intersection and satisfy the theorem. (See Figure~\ref{fig:pathsmeet}.)
This proves~\ref{pathsmeet}.\end{proof}

Let $G$ be a digraph, let $a_1,a_2,b_1,b_2\in V(G)$, and for $i = 1,2$ let $P_i$ be a dipath from $a_i$ to $b_i$, chosen such that $P_1,P
_2$ intersect, and subject to that their union is minimal. We say that $P_1,P_2$ are in \emph{bubble form}.

\begin{thm}\label{bubble}
Let $G,a_1,a_2,b_1,b_2, P_1,P_2$ be as above, where $P_1,P_2$ are in bubble form.
Then $P_1\cap P_2$ is the disjoint union of dipaths $Q_1,\dots, Q_k$ for some $k\ge 1$, such that $Q_1,\dots, Q_k$ are in this order in $P_1$, and in the reverse order in $P_2$. (See Figure~\ref{fig:bubble}.)
\end{thm}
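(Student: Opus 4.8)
I would split the statement into an ``easy'' structural part and the substantive ordering claim, the latter proved by a rerouting argument that exploits the minimality built into ``bubble form''.

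First the structural part. Since $P_1\cap P_2$ is a subgraph of the directed path $P_1$, it is automatically a vertex-disjoint union of directed subpaths $Q_1,\dots,Q_k$ of $P_1$ (isolated common vertices count as length-$0$ dipaths), and $k\ge 1$ because $P_1,P_2$ intersect by the definition of bubble form. Each $Q_i$ is also a \emph{contiguous} directed subpath of $P_2$: its edge set lies in $E(P_2)$, it is connected, and a common edge is traversed in the same direction in both paths since an edge carries a fixed direction. Label the $Q_i$ so they occur in the order $Q_1,\dots,Q_k$ along $P_1$; this is well-defined because distinct components of a subgraph of a path are pairwise non-interleaving, and for the same reason the $Q_i$ occur in some definite order along $P_2$. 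The entire content of the theorem is then that this second order is the reverse of the first.

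Now the ordering claim, which I would prove by contradiction. If the $P_2$-order is not the reverse of the $P_1$-order, there are indices $p<q$ with $Q_p$ occurring before $Q_q$ in $P_2$ as well; choose such a pair with $q-p$ minimal. Comparing $Q_{p+1}$ against $Q_p$ and $Q_q$ inside $P_2$ forces $q=p+1$ (in one case $(p,p+1)$ is a closer pair, in the other $(p+1,q)$ is). So $Q:=Q_p$ and $Q':=Q_{p+1}$ are \emph{consecutive} common subpaths along $P_1$ but occur ``in the same order'' along $P_2$. Let $t$ be the last vertex of $Q$ and $s'$ the first vertex of $Q'$; then $t\ne s'$ (otherwise $Q\cup Q'$ would be a single common subpath), the subpath $R_1:=P_1[t,s']$ has at least one edge, and $R_1$ contains no edge and no interior vertex of $P_2$ (any such would be a component of $P_1\cap P_2$ lying strictly between $Q$ and $Q'$ in $P_1$, contradicting consecutiveness). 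Because $Q$ precedes $Q'$ in $P_2$, the subpath $R_2:=P_2[t,s']$ is well-defined. I would then reroute: set $W:=P_1[a_1,t]\,\cup\,R_2\,\cup\,P_1[s',b_1]$, a walk from $a_1$ to $b_1$, and let $P_1'$ be any $a_1$–$b_1$ dipath inside $W$. Since the two $P_1$-end-segments of $W$ are vertex-disjoint with no edge of $W$ between them, deleting $V(R_2)$ from $W$ separates $a_1$ from $b_1$; hence $P_1'$ must visit a vertex of $R_2\subseteq V(P_2)$, so $(P_1',P_2)$ is again a valid intersecting pair. Finally $E(P_1'\cup P_2)\subseteq E(W)\cup E(P_2)\subseteq E(P_1)\cup E(P_2)$, while any edge of $R_1$ lies in $E(P_1)$ but in neither $E(P_2)$ nor $E(W)$ (the $P_1$-edges of $W$ come only from the two end-segments). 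Thus $E(P_1'\cup P_2)\subsetneq E(P_1\cup P_2)$ with no new vertices, contradicting the minimality of $P_1\cup P_2$; so the $P_2$-order is the reversal, proving the theorem.

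The step I expect to be the main obstacle is the rerouting: one has to be careful to check that the rerouted dipath $P_1'$ still meets $P_2$ (the reason I isolate the disjointness of the two end-segments of $W$ and phrase it as a separation statement, so it holds without appealing to any degree hypotheses) and that the union genuinely shrinks (which needs the fact that $R_1$ contributes an edge surviving in neither $P_2$ nor $P_1'$). The reduction to the consecutive case $q=p+1$ and the observations in the structural part are routine. One should also fix at the outset what ``union minimal'' means; the argument goes through for minimality in $|E(P_1\cup P_2)|$, in $|V|+|E|$, or under subgraph containment, since we strictly reduce edges while adding no vertices.
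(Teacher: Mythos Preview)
Your proof is correct and uses the same rerouting idea as the paper (replace a segment of one path by the corresponding segment of the other and invoke minimality of the union), but the paper organizes it more economically. Instead of first reducing to a consecutive pair $Q_p,Q_{p+1}$ and then rerouting $P_1$, the paper proves directly that whenever two common vertices $u,v$ occur in the same order in both $P_1$ and $P_2$ one has $P_1[u,v]=P_2[u,v]$: reroute $P_2$ through $P_1[u,v]$ to obtain some $P_2'\subseteq P_2[a_2,u]\cup P_1[u,v]\cup P_2[v,b_2]$, note that $P_1\cup P_2'\subseteq P_1\cup P_2$, and conclude by minimality that every edge of $P_2[u,v]$ already lies in $P_1$. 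This single claim immediately yields the reverse ordering (picking $u\in Q_i$, $v\in Q_j$ in the same $P_2$-order would force $Q_i,Q_j$ into one component), so your reduction to $q=p+1$ and your separation argument showing $P_1'$ still meets $P_2$ are correct but unnecessary once one has this cleaner intermediate statement.
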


\begin{figure}[htb!]
    \centering
    \begin{tikzpicture}[scale=1.5]
        \vtx[above:$a_1$]{a1}{-4.5,3};
        \vtx[above:$a_2$]{a2}{-3,3};
        \vtx[below:$b_2$]{b2}{-4.5,0};
        \vtx[below:$b_1$]{b1}{-3,0};
        \vtx{c1}{-4.5,2};
        \vtx{c2}{-3,2};
        \vtx{d1}{-4.5,1};
        \vtx{d2}{-3,1};

        \foreach \from/\to in {a1/c1, c1/d1, d1/b2, a2/c2, c2/d2, d2/b1}
        \hdedge{\from}{\to};

        \draw[very thick, midarrow=0.25] (d1) -- (c2);
        \draw[very thick, midarrow=0.25] (d2) -- (c1);

        \node[left] (P1) at (-4.6,2.75) {$P_1$};
        \node[right] (P2) at (-2.9,2.75) {$P_2$};
        \node[left] (Q1) at (-4.8,1.5) {$Q_1$};
        \node[right] (Q2) at (-2.7,1.5) {$Q_2$};
        \draw ([xshift=-2pt]Q1.east) -- (-4.5, 1.5);
        \draw ([xshift=2pt]Q2.west) -- (-3, 1.5);

        \draw[dashed, very thick] ([shift={(-0.15, 0)}]a1.center) -- ([shift={(-0.15, 0)}]d1.center) to[out=270, in=207] ([shift={(0.067, -0.134)}]d1.center) -- ([shift={(-0.15, -0.243)}]c2.center) -- ([shift={(-0.15, 0)}]b1.center);

        \draw[dotted, very thick] ([shift={(0.15, 0)}]a2.center) -- ([shift={(0.15, 0)}]d2.center) to[out=270, in=333] ([shift={(-0.067, -0.134)}]d2.center) -- ([shift={(0.15, -0.243)}]c1.center) -- ([shift={(0.15, 0)}]b2.center);
    
        \vtx[above:$a_1$]{a1}{-1,3};
        \vtx[above:$a_2$]{a2}{1,3};
        \vtx[below:$b_2$]{b2}{-1,0};
        \vtx[below:$b_1$]{b1}{1,0};
        \vtx{c1}{-1,2};
        \vtx{c2}{0,2};
        \vtx{c3}{1,2};
        \vtx{d1}{-1,1};
        \vtx{d2}{0,1};
        \vtx{d3}{1,1};

        \foreach \from/\to in {a1/c1, c1/d1, d1/b2, c2/c1, d1/d2, d2/c2, c2/c3, d3/d2, a2/c3, c3/d3, d3/b1}
        \hdedge{\from}{\to};

        \node[left] (P1) at (-1.1,2.75) {$P_1$};
        \node[right] (P2) at (1.1,2.75) {$P_2$};
        \node[right] (Q1) at (-0.7,1.5) {$Q_1$};
        \node[right] (Q2) at (0.3,1.5) {$Q_2$};
        \node[right] (Q3) at (1.3,1.5) {$Q_3$};
        \draw ([xshift=2pt]Q1.west) -- (-1, 1.5);
        \draw ([xshift=2pt]Q2.west) -- (0, 1.5);
        \draw ([xshift=2pt]Q3.west) -- (1, 1.5);

        \draw[dashed, very thick] ([shift={(-0.15, 0)}]a1.center) -- ([shift={(-0.15, 0)}]d1.center) to[out=270, in=180] ([shift={(0, -0.15)}]d1.center) -- ([shift={(0, -0.15)}]d2.center) to[out=0, in=270] ([shift={(0.15, 0)}]d2.center) -- ([shift={(0.15, -0.15)}]c2.center) -- ([shift={(-0.15, -0.15)}]c3.center) -- ([shift={(-0.15, 0)}]b1.center);

        \draw[dotted, very thick] ([shift={(0.15, 0)}]a2.center) -- ([shift={(0.15, 0)}]d3.center) to[out=270, in=0] ([shift={(0, -0.15)}]d3.center) -- ([shift={(0, -0.15)}]d2.center) to[out=180, in=270] ([shift={(-0.15, 0)}]d2.center) -- ([shift={(-0.15, -0.15)}]c2.center) -- ([shift={(0.15, -0.15)}]c1.center) -- ([shift={(0.15, 0)}]b2.center);
    \end{tikzpicture}
    \caption{Two examples of paths $P_1$ (dashed) and $P_2$ (dotted) in bubble form as described in~\ref{bubble}, here with $k=2$ (left) or 3 (right) paths in $P_1\cap P_2$.}
    \label{fig:bubble}
\end{figure}
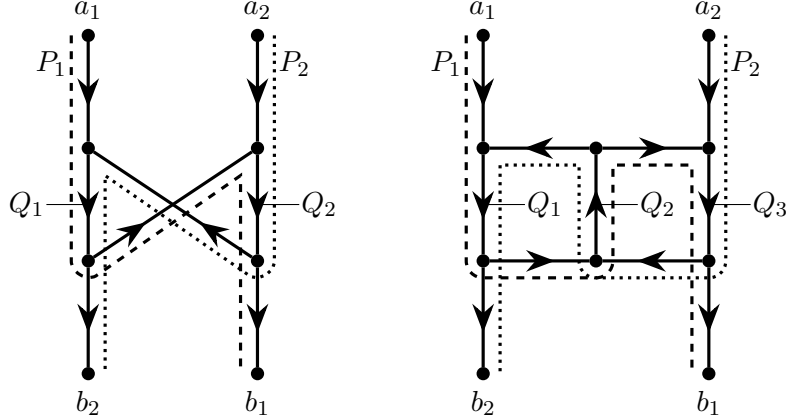

\begin{proof} We claim that:

\begin{step}\label{step1g}
If $u,v\in V(P_1\cap P_2)$, then either $u$ is before $v$ in one of $P_1,P_2$ and after $v$ in the other,
or the subpaths of $P_1,P_2$ joining $u,v$ are equal.
\end{step}

To see this, suppose that $u$ is before $v$ in both $P_1,P_2$.
There is a dipath $P_2'$ from $a_2$ to $b_2$, 
included in $P_2[a_2,u]\cup P_1[u,v]\cup P_2[v, b_2]$. 
Since $P_2'\subseteq P_1\cup P_2$, the minimality of $P_1\cup P_2$ implies that $P_2 \subseteq P_1\cup P_2'$, and in 
particular every edge of $P_2[u,v]$ belongs to $E(P_1\cup P_2')$ and hence to $E(P_1)$ (because it cannot belong to $P_2'$ 
unless it is in $P_1[u,v]$).  So $P_2[u,v]$ is a subpath of $P_1$, and therefore $P_2[u,v]=P_1[u,v]$. This proves~\eqref{step1g}. 

\bigskip

Certainly $P_1\cap P_2$ is the disjoint union of some number of dipaths $Q_1,\dots, Q_k$; let us 
number them in their order they appear in $P_1$. By~\eqref{step1g}, these subpaths appear in $P_2$ in reverse order. This proves~\ref{bubble}.\end{proof}

A similar statement appears as Lemma~4.3 in~\cite{2strong}. If $P_1,P_2$ are in bubble form, we call the number of components of $P_1\cap P_2$ their \emph{intersection number}, and we say
$P_1,P_2$ \emph{make a bubble} if they have intersection number two.

%%%%%%%%%%%%%%%%%%%%%%%%%%%%%%%%%%%%%%%%%%%%%%%%%%%%%%%%%%%%%%%%%%%%%%%%%%%%%%%%%%%%%%%%%%%%%%%%%%%%%%%%%%%%%%%%%%%%%%%%%%
\section{Refining the outcomes of~\ref{unevendecomp}}

Now we will start to convert the outcomes of~\ref{unevendecomp} to reversible decompositions for weightable digraphs. We begin with the the first
outcome of~\ref{unevendecomp}. 
Two dipaths are \emph{internally disjoint} if every vertex in their intersection is an end of each of them.
%%%%%%%%%%%%%%%%%%%%%%%%%%%%%%%%
\begin{thm}\label{firstoutcome}
Let $y_1,y_2,y_3$ be distinct vertices of a 2-strong, 3-weak, weightable digraph $G$, and let $Y=\{y_1,y_2,y_3\}$.  Suppose that
$G\setminus Y$ has at least three weak components $D_i$, and for $i=1,2$,
$y_1\to D_i$ and $y_2\from D_i$. Then $G$ is nonplanar and can be built from two smaller weightable digraphs by an application of the construction of~\ref{firstcon}.
\end{thm}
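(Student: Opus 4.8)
The plan is to exhibit $G$ explicitly as the composition of~\ref{firstcon}. Write $D_3,D_4,\dots$ for the weak components of $G\setminus Y$ other than $D_1,D_2$; by hypothesis there is at least $D_3$. Let $W_1=G[\,V(G)\setminus(V(D_1)\cup V(D_2))\,]$ (the induced subdigraph on $Y$ together with $D_3,D_4,\dots$, which in particular keeps all edges of $G$ with both ends in $Y$), and let $W_2$ be the $Y$-wing with $V(W_2)=Y\cup V(D_1)\cup V(D_2)$ whose edges are exactly the edges of $G$ having at least one end in $V(D_1)\cup V(D_2)$. Then $W_1,W_2$ are internally disjoint $Y$-wings with union $G$; since $W_2$ contains no edge with both ends in $Y$, and $y_1\to D_i$, $y_2\from D_i$ for $i=1,2$, the vertex $y_1$ is a source of $W_2$ and $y_2$ is a sink of $W_2$. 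Form $G_1,G_2$ from $W_1,W_2$ as in~\ref{firstcon}. Since $V(D_1),V(D_2)$ are nonempty, $G_1$ has fewer vertices than $G$; since $V(D_3)$ is nonempty and disjoint from $V(G_2)$, so has $G_2$. So it remains to produce a $W_1$-path from $y_2$ to $y_1$, to check that $G_1,G_2$ are $1$-strong, and to check that $G_1,G_2$ are weightable. (For nonplanarity, I would note that $2$-strongness forces each of $D_1,D_2$ to be joined to all three of $y_1,y_2,y_3$, in both directions, so within each $D_k$ there is a vertex joined to $y_1,y_2,y_3$ by three internally disjoint dipaths; together with a third such vertex in another weak component, or --- in the degenerate case where no further component is joined to all of $Y$ --- an edge inside $Y$ that $2$-strongness forces, this yields a subdivision of $K_{3,3}$ with branch set $\{y_1,y_2,y_3\}$. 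Alternatively: if $G$ were planar it would be diplanar by the fact recorded in Section~4, and one checks no diplanar drawing realises this $Y$-structure.)

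Next I would assemble the dipaths forced by $2$-strongness. Fix $k\in\{1,2\}$. In $G\setminus y_3$ the only edges between $D_k$ and its complement run from $y_1$ into $D_k$ and from $D_k$ to $y_2$; since $G\setminus y_3$ is strongly connected, there is such an edge of each kind, every vertex of $D_k$ is reachable from $y_1$ by a dipath inside $\{y_1\}\cup V(D_k)$, and every vertex of $D_k$ reaches $y_2$ by a dipath inside $V(D_k)\cup\{y_2\}$. Likewise, strong connectivity of $G\setminus y_1$ forces an edge from $y_3$ into $D_k$, and that of $G\setminus y_2$ forces an edge from $D_k$ to $y_3$. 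Stitching these together, $G$ contains a dipath from $y_1$ to $y_2$, a dipath from $y_1$ to $y_3$, and a dipath from $y_3$ to $y_2$, each with all internal vertices in $V(D_k)$ (call these \emph{$D_k$-bypasses}). Finally, since $G\setminus y_3$ is strongly connected there is a dipath $\gamma$ from $y_2$ to $y_1$; any such dipath avoids $D_1,D_2$ (entering $D_i$ would require first reaching $y_1$, its last vertex) and avoids $y_3$, so its internal vertices lie in $\bigcup_{i\ge3}V(D_i)$. Thus $\gamma$ is a $W_1$-path from $y_2$ to $y_1$, the last wing-hypothesis of~\ref{firstcon}. For $1$-strongness: the added edges $y_1y_3,y_3y_2$, together with $y_1y_2$ and $\gamma$ in $G_1$ and with $y_2y_1$ in $G_2$, make $Y$ strongly connected inside each of $G_1,G_2$; and if $v$ lies in a weak component $D$ of $G\setminus Y$ then, taking the subpath of a $G$-dipath from $y_1$ to $v$ beginning at its last vertex of $Y$, we see $v$ is reached from $Y$ within $Y\cup V(D)$, and symmetrically $v$ reaches $Y$ within $Y\cup V(D)$; hence $G_1,G_2$ are strongly connected.

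The heart of the argument is the weightability of $G_1$ and $G_2$, which I would establish via~\ref{triple}. Suppose $G_1$ (or $G_2$) has a bad triple, witnessed by dicycles $C,C'$. I will lift $C$ to a dicycle $\widetilde C$ of $G$ on the same vertex set and with the same cyclic order on $V(C)$, by replacing each occurrence in $C$ of one of the three added edges by a bypass of $G$ with the same ends and interior disjoint from the rest of the cycle; doing this to $C'$ as well produces dicycles of $G$ disagreeing on the triple, contradicting that $G$ is weightable. For $G_1$: since $V(C)$ misses $V(D_1)\cup V(D_2)$, I route any use of $y_1y_2$ or $y_1y_3$ through a $D_1$-bypass and any use of $y_3y_2$ through a $D_2$-bypass; because two of $y_1y_2,y_1y_3,y_3y_2$ always share a head or a tail, a dicycle of $G_1$ uses either at most one of these edges or the pair $\{y_1y_3,y_3y_2\}$, and the $D_1$- and $D_2$-bypasses have disjoint interiors, so $\widetilde C$ is a genuine dicycle. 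For $G_2$: the only edge of $G_2$ into $y_1$ is $y_2y_1$ and the only edge of $G_2$ out of $y_2$ is $y_2y_1$, so any dicycle of $G_2$ that uses an added edge uses $y_2y_1$; I always replace $y_2y_1$ by $\gamma$ (whose interior lies outside $V(G_2)$), and a short case check shows that a dicycle of $G_2$ using $y_1y_3$ or $y_3y_2$ meets at most one of $D_1,D_2$, so I route that edge's bypass through whichever of $D_1,D_2$ the dicycle avoids; a dicycle using only $y_2y_1$ needs no $D$-bypass. In every case $\widetilde C$ is simple. Hence $G_1,G_2$ have no bad triple, so they are weightable, and~\ref{firstcon} delivers the decomposition.

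The hard part will be the bookkeeping in the weightability step --- in particular, for $G_2$, pinning down exactly which sets of added edges a dicycle can carry (and that such a dicycle meets at most one component among $D_1,D_2$ unless it uses only $y_2y_1$), so that the bypasses can always be chosen pairwise disjoint and disjoint from the rest of the cycle. The remaining potential snag is making the nonplanarity argument airtight in the degenerate cases (few components, small components, few edges inside $Y$); here the point is that $2$-strongness of $G\setminus y_1$, $G\setminus y_2$, $G\setminus y_3$ forces enough additional edges that a Kuratowski subgraph is always present, and it is this forced connectivity that needs to be tracked carefully.
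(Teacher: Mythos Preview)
Your proof is correct and arrives at the same conclusion, but the route differs from the paper's in two meaningful ways.

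\textbf{Choice of wings.} You set $W_2$ to be the $Y$-wing over $D_1\cup D_2$ and $W_1$ the rest. The paper instead takes $W_2$ to be the wing over $D_1$ alone (so $W_1=G\setminus V(D_1)$ still contains $D_2$). Both are valid instantiations of the construction in~\ref{firstcon}.

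\textbf{Weightability of $G_1,G_2$.} The paper argues via butterfly minors: it first proves (its step~(3)) that every choice of $B[1,3]$ and $B[3,2]$ through $D_2$ are internally disjoint, and then realises each $G_i$ as a butterfly minor of $G$ by contracting suitable internally disjoint bypasses. You instead argue directly from~\ref{triple}: a bad triple in $G_i$ lifts to a bad triple in $G$ by replacing each of the added edges $y_1y_2,y_1y_3,y_3y_2$ (resp.\ $y_2y_1,y_1y_3,y_3y_2$) by a bypass through whichever of $D_1,D_2$ (resp.\ $\gamma$, $D_1$, $D_2$) is disjoint from the cycle in question. Your case analysis is sound: at most two of the added edges can occur in a dicycle, and when two do the interiors of the chosen bypasses live in different components. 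This lifting argument is more elementary---it avoids the internally-disjoint-paths lemma entirely---while the paper's butterfly-minor approach is more uniform with how the other nonplanar decompositions are handled.

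\textbf{Nonplanarity.} Your sketch here is more complicated than necessary and the ``degenerate case'' you worry about does not arise. Since $G$ is $3$-weak, \emph{every} weak component of $G\setminus Y$ has an edge to each of $y_1,y_2,y_3$ (otherwise two of $y_1,y_2,y_3$ would separate it), so contracting three components yields a $K_{3,3}$ minor immediately. This is the paper's one-line argument.
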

\begin{proof}
Since $G$ is 3-weak, each component of $G\setminus Y$ has an edge to each of $y_1,y_2,y_3$, so the underlying graph $G^-$ has a $K_{3,3}$ minor and is not planar. In this proof, let $A=D_1$ and $B=D_2$. 
Let $C=G\setminus V(A\cup B)$. For distinct $y_i,y_j\in Y$, we mean by $A[i,j]$ a dipath from $y_i$ to $y_j$ with all internal 
vertices in $V(A)$, and the same for $B$. By $C[i,j]$ we mean a dipath of $C$ from $y_i$ to $y_j$ with no internal vertex in $Y$.

\begin{step}\label{step1h}
$C[2,1], C[3,2]$ exist.
\end{step}

Since $G$ is 2-strong, there are two internally disjoint dipaths from $\{y_2,y_3\}$ to $y_1$, and since $y_1\to A\cup B$, it follows
that each of these dipaths has no vertex in $V(A\cup B)$. 
This proves~\eqref{step1h}.

\begin{step}\label{step2h}
$A[1,3], A[3,2], B[1,3], B[3,2]$ exist.
\end{step}

Choose $v\in V(A)$. Since $G$ is 2-strong, there are two internally disjoint dipaths of $G$ from $v$ to $Y$, and since 
$y_1\to A$, all their vertices belong to $V(A)\cup \{y_2,y_3\}$. Similarly, there are dipaths from $y_1,y_3$ to $v$ such that all their
vertices belong to $\{y_1,y_3\}\cup V(A)$. Consequently, $A[1,3], A[3,2]$ exist, and the same holds for $B$. This proves~\eqref{step2h}.

\begin{step}\label{step3h}
$B[1,3], B[3,2]$ are internally disjoint, for every choice of $B[1,3], B[3,2]$.
\end{step}

Suppose not, and choose
$b\in V(B)$ that belongs to both paths. Then the dicycles
$B[1,3]\cup C[3,1]$ and $B[3,2]\cup C[2,1]\cup A[1,3]$ disagree on $\{b,y_3,y_1\}$, contrary to~\ref{triple}.
This proves~\eqref{step3h}.

\bigskip
Let $W_1=G\setminus V(A)$, and let $W_2$ be the subdigraph obtained from $A$ by adding $Y$ and all edges of $G$ between $Y, V(A)$.
Thus, $W_1,W_2$
are internally disjoint $Y$-wings with union $G$.
We claim:
\begin{itemize}
\item There is a $W_1$-path from $y_2$ to $y_1$. This is true because 
$C[2,1]$
is such a path.
\item $y_1$ is a source of $W_2$ and $y_2$ is a sink of $W_2$. This is true because $y_1\to A$ and $y_2\from A$.
\item The digraph $G_1$ obtained
from $W_1$ by adding the edges $y_1y_2, y_1y_3,y_3y_2$ is weightable and 1-strong. This is true because
$G_1$ is clearly 1-strong, and it is weightable by~\ref{butterfly} and~\eqref{step3h} since it can be obtained as a butterfly minor from $G$ by contracting singular edges of
$W_1\cup A[1,2]\cup B[1,3]\cup B[3,2]$. 
\item The digraph $G_2$
obtained from $W_2$ by adding the edges $y_2y_1, y_1y_3,y_3y_2$ is weightable and 1-strong.
This is true because $G_2$ is clearly 1-strong, and it is weightable by~\ref{butterfly} and~\eqref{step3h} since it can be obtained as a butterfly minor from $G$ by contracting singular edges 
of $W_2\cup C[2,1]\cup B[1,3]\cup B[3,2]$.
\end{itemize}
This proves~\ref{firstoutcome}.\end{proof}

Now we handle the second outcome of~\ref{unevendecomp}:
%%%%%%%%%%%%%%%%%%%%%%%%%%%%%%%%
\begin{thm}\label{secondoutcome}
Let $y_1,\dots, y_4$ be distinct vertices of a 2-strong, 3-weak, weightable digraph $G$, and let 
$Y=\{y_1,\dots, y_4\}$. Suppose that $G\setminus Y$ has at least three weak components $D_i$ such that the four assumptions of~\ref{unevendecomp} case~\ref{case2} hold. Then $G$ is nonplanar and can be constructed from three smaller weightable digraphs by the construction of~\ref{secondcon}.
\end{thm}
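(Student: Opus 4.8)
The plan is to exhibit $G$ as an instance of the composition of~\ref{secondcon}, so that the whole proof reduces to choosing three $Y$-wings and checking the five hypotheses listed there. Let $D_1,D_2$ be the weak components of $G\setminus Y$ named in the hypothesis (those appearing in conditions (b) and (c) of case~\ref{case2} of~\ref{unevendecomp}), and let $C$ be the union of the remaining weak components of $G\setminus Y$, which is non-null since there are at least three of them. Let $W_1$ be the $Y$-wing with $W_1\setminus Y=D_1$ — it consists of $D_1$, the vertices in $Y$, all edges of $G$ meeting $V(D_1)$, and a collection of edges with both ends in $Y$ that we now specify — and define $W_2,W_3$ analogously from $D_2$ and $C$. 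For the edges of $G$ with both ends in $Y$: put each one directed into $y_1$ or out of $y_2$ into $W_2$, each one directed into $y_3$ or out of $y_4$ into $W_1$, and every remaining one into $W_3$. Since $y_4y_1,y_2y_3\notin E(G)$, no edge receives two incompatible instructions, so this is well defined and $W_1,W_2,W_3$ are pairwise internally disjoint with union $G$. Combining this distribution with conditions (b)--(d) of case~\ref{case2} (which control all edges between $Y$ and the components) one checks directly that $y_1$ is a source of $W_1\cup W_3$, $y_2$ a sink of $W_1\cup W_3$, $y_3$ a source of $W_2\cup W_3$, and $y_4$ a sink of $W_2\cup W_3$; this is the first hypothesis of~\ref{secondcon}.

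Nonplanarity is established separately. Since $G$ is $3$-weak, in $G^-$ each weak component of $G\setminus Y$ has at least three neighbours in $Y$, while each component contained in $C$ has a neighbour at every vertex of $Y$. A short case analysis — organised by how many of $D_1$, $D_2$ and the components of $C$ reach all four vertices of $Y$, and routing, in the tight cases, one branch path of a would-be $K_{3,3}$ along an edge of $G^-$ with both ends in $Y$ (which is again where $y_2y_3,y_4y_1\notin E(G)$ is used) — produces a $K_{3,3}$-subdivision in $G^-$, so $G$ is not planar. This is of the same flavour as the nonplanarity step in the proof of~\ref{firstoutcome}.

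It remains to verify the dipath hypothesis and, crucially, that the three gadget digraphs $G_0,G_1,G_2$ of~\ref{secondcon} are $1$-strong and weightable. One checks from $2$-strongness of $G$ (with a little care about how $D_1,D_2$ attach to $Y$) that $W_1$ has a dipath from $y_1$ to $y_2$ and $W_2$ has one from $y_3$ to $y_4$; $1$-strongness of $G_0,G_1,G_2$ then follows routinely from $2$-strongness together with the source/sink structure. For weightability the strategy mirrors the corresponding step in the proof of~\ref{firstoutcome}: realize each $G_i$ as a butterfly minor of $G$ and invoke~\ref{butterfly}. Concretely, $G_i$ is obtained by taking the subdigraph of $G$ consisting of $W_i$ together with a small family of dipaths of $G$ playing the roles of the edges (and, for $G_1,G_2$, the extra vertex $v_1$ or $v_2$) adjoined in forming $G_i$: for $G_1$ this family is a dipath from $y_2$ to $y_1$, together with two dipaths meeting only in a single common vertex, one from $y_3$ to $y_1$ and one from $y_2$ to $y_4$, so that contraction produces $v_1$ with its four incident edges; for $G_2$ the mirror image; for $G_0$, dipaths realizing the edges $y_1y_3,y_3y_1$, while the identifications $y_1=y_2$ and $y_3=y_4$ are effected by singular contraction of dipaths from $y_2$ to $y_1$ and from $y_4$ to $y_3$ — and then one contracts all singular edges. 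The substance of the argument is that these auxiliary dipaths can be \emph{chosen} so that they meet $W_i$, and one another, only as the definition of $G_i$ prescribes. This uses $2$-strongness and Menger's theorem to produce the dipaths, the path lemmas~\ref{pathsmeet} and~\ref{bubble} to force the two-dipath configurations into the single-common-vertex form needed for the contraction to produce a single new vertex, and~\ref{triple} (via the hypothesis that $G$ is weightable, hence has no bad triple) to exclude the ``wrong'' crossings — exactly as~\ref{triple} is used to prove internal disjointness inside the proof of~\ref{firstoutcome}.

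This last step is the main obstacle: bookkeeping the possible interactions of the auxiliary dipaths with $W_i$ and with each other, and showing that weightability of $G$ pins them down. Once $G_0,G_1,G_2$ are known to be $1$-strong and weightable, the five hypotheses of~\ref{secondcon} hold, so $G$ is weightable; and since $D_2$ and $C$ are non-null, each of $G_0,G_1,G_2$ has strictly fewer vertices than $G$, so $G$ is indeed built from three smaller weightable digraphs. Together with the nonplanarity established in the second paragraph, this is the statement.
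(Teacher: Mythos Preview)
Your high-level plan matches the paper's: set up the three $Y$-wings, verify the source/sink conditions, and realize each of $G_0,G_1,G_2$ as a butterfly minor of $G$ by adjoining a small family of auxiliary dipaths and contracting. But there is a genuine gap, and it is not merely bookkeeping.

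The missing engine is the following lemma (the paper's step~(2)): \emph{in $G$, every dipath from $y_1$ to $y_3$ meets every dipath from $y_4$ to $y_2$.} Its proof is not just Menger plus~\ref{triple}; it uses~\ref{pathsmeet} applied \emph{inside a component $D_i$ with $i>2$} to produce intersecting $D_i[1,2]$ and $D_i[3,4]$ paths, together with a preliminary disjointness result for $D_i[3,2]$ and $D_i[1,4]$, and then a bad-triple argument. Without this lemma several of your claims do not follow. First, the dipath of $W_1$ from $y_1$ to $y_2$ (and of $W_2$ from $y_3$ to $y_4$) is \emph{not} a consequence of $2$-strongness alone: nothing in case~\ref{case2} forces $y_1$ to have a neighbour in $D_1$ at all, and indeed in a non-weightable $G$ it need not. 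The paper obtains $A[1,2]$ only after showing $A[1,3]$ and $A[4,2]$ exist \emph{and intersect}, which is exactly the content of step~(2). Second, the very existence of the paths $B[3,1],B[2,4],B[2,1]$ you want for $G_1$ (and their counterparts for $G_0,G_2$) requires step~(2) for the same reason. Third, your nonplanarity sketch relies on a case analysis over which three of $y_1,\dots,y_4$ meet $D_1$ and $D_2$, patching the missing adjacency with an edge inside $Y$; but in the tight case ($D_1$ missing $y_1$, $D_2$ missing $y_4$, a single component in $C$, and no edge $y_1y_4$) that patch is unavailable. The paper's route is instead to use step~(2) to show each of $y_1,\dots,y_4$ has a neighbour in both $D_1$ and $D_2$, from which $K_{3,3}$ is immediate.

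There is also a smaller inaccuracy. You ask for the two dipaths (from $y_3$ to $y_1$ and from $y_2$ to $y_4$) to meet in a \emph{single common vertex}; what is actually true, and what the paper proves (its step~(3), again resting on step~(2)), is that in bubble form they have intersection number one, i.e.\ they meet in a single common \emph{subpath}. Contracting that subpath to produce $v_1$ is fine, but then the third auxiliary path (playing the role of the edge $y_2y_1$) must be chosen compatibly with the first two: pairwise intersections are paths and the triple intersection is empty. That is the content of the paper's step~(4), which takes a separate three-case argument; your proposal does not anticipate this.
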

\begin{proof} For each weak component $D$ of $G\setminus Y$ and
all distinct $i,j\in \{1,\dots, 4\}$,  $D[i,j]$ denotes
a dipath of $G$ from $y_i$ to $y_j$ with all internal vertices in $V(D)$, and $D^+$ is the digraph formed by adding to $D$ the vertices of $Y$ and all edges between $D$ and $Y$. Let $A=D_1$, $B=D_2$, and $C=D_3$.

\begin{step}\label{step1i}
For each $i>2$, $D_i[1,2],D_i[1,4],D_i[3,2], D_i[3,4]$ exist, and every choice of $D_i[3,2]$ is vertex-disjoint from every choice of $D_i[1,4]$.
\end{step}

It suffices to prove this for $i=3$ (so $D_i=C$). Choose $v\in V(C)$. Since $G$ is 2-strong, there are two dipaths from $v$ to $Y$, vertex-disjoint except for $v$, and since
$y_1,y_3\to C$, it follows that these paths are from $v$ to $y_2,y_4$ respectively. Similarly, there are dipaths from $y_1,y_3$
to $v$, and consequently $C[1,2],C[1,4],C[3,2], C[3,4]$ exist.

Suppose that there is a vertex $c$ in some $C[3,2]$ and in some $C[1,4]$. There are two vertex-disjoint dipaths 
from $y_4$ to $y_1$, say $P,Q$. Since each weak component $D$ of $G\setminus Y$ has either $y_1\to D$ or $y_4\from D$ (or both), each of $P$ and $Q$ contains one of $y_2,y_3$, so we may assume that $P$ contains 
$y_2$ and $Q$ contains $y_3$. Since $y_4\from D_i$, $y_1\to D_i$, and $y_2\notin V(Q)$, it follows that 
$V(Q)\cap V(C)=\emptyset$. But then $C[1,4]\cup Q$ is a dicycle containing $c,y_3,y_1$ in this order.

Similarly, there are two internally disjoint paths $P',Q'$ from $y_2$ to $y_3$.
These each must contain one of $p_1,p_4$; we may assume $Q'$ contains $p_1$. Then $V(Q')\cap V(C)=\emptyset$ and $C[3,2]\cup Q'$ contains $c,y_1,y_3$ in this order, so $\{c,y_1,y_3\}$ is a bad triple, contradicting~\ref{triple}. 
This proves~\eqref{step1i}. 

\begin{step}\label{step2i}
In $G$, every dipath from $y_1$ to $y_3$ intersects every dipath from $y_4$ to $y_2$.
\end{step}

Suppose that $P,Q$ are dipaths from $y_1$ to $y_3$ and from $y_4$ to $y_2$, respectively, that are vertex-disjoint.
In particular, no internal vertex of $P$ or $Q$ belongs to $Y$. Hence, either $P$ has length one or $P$ is a path of $A^+$. Similarly, either $Q$ has length one or
it is a path of $A^+$. There is a dipath $R$ from 
$y_2$ to $y_1$ in 
$G\setminus \{y_4\}$. Either $R$ has length one, or its second vertex belongs to $V(B)$; if the latter, then $R$ is a path of $B^+$ since $y_3\to D_1$. In particular, $R$ is internally disjoint from $P,Q$.

\begin{figure}[htb!]
    \centering
    \begin{tikzpicture}[scale=2]
        \tikzset{every label/.style={label distance=-2pt}};
        \smallvset{B}{}{-2, 0.8};
        \vset{A}{}{0, 1};
        \vset{C}{}{0,-1};
        
        \node[above] at (B.north) {$B$};
        \node[above] at (A.north) {$A$};
        \node[below] at (C.south) {$C$};

        \vtx[right:$y_1$]{y1}{-0.9,0};
        \vtx[right:$y_2$]{y2}{-0.3,0};
        \vtx[right:$y_3$]{y3}{0.3,0};
        \vtx[right:$y_4$]{y4}{0.9,0};

        \vtx[above:$b$]{b}{0, -0.8};
        \vtx[below:$a$]{a}{0, -1.2};

        \draw[very thick, midarrow=0.6] (y1) to[out=270, in=210] node[pos=0.4, right] {$L$} (a);
        \hdedge[out=30, in=330]{a}{b};
        \draw[very thick, midarrow=0.7] (b) to[out=150, in=270] (y2);

        \draw[very thick, midarrow=0.3] (y3) to[out=270, in=30] (b);
        \hdedge[out=210, in=150]{b}{a};
        \draw[very thick, midarrow=0.4] (a) to[out=330, in=270] node[pos=0.6, left] {$M$} (y4);

        \draw[very thick, midarrow] (y2) to[out=135, in=30] (B.center) node[above] {$R$} to[out=210, in=150, looseness=0.6] (y1);
        \draw[very thick, midarrow] (y1) to[out=90, in=180] (-0.5, 1) node[above] {$P$} to[out=0, in=90] (y3);
        \draw[very thick, midarrow] (y4) to[out=90, in=0] (0.5, 1) node[above] {$Q$} to[out=180, in=90] (y2);
    \end{tikzpicture}
    \caption{End of the proof of~\eqref{step2i}. Note that $P,Q,R$ could each be a single edge.}
    \label{fig:step2i}
\end{figure}

By~\ref{pathsmeet}, for $i>2$ there are two dipaths $L,M$ in $C^+$ from $\{y_1,y_3\}$ to $\{y_2,y_4\}$ that are not vertex-disjoint, and such 
that their ends are all distinct. By~\eqref{step1i}, it must be that $L$ is from $y_1$ to $y_2$ and $M$ is from $y_3$ to $y_4$.
If $L,M$ have intersection number one, let $u\in L\cap M$; then $L[y_1,u]\cup M[u,y_4]$ and $M[y_3,u]\cup L[u, y_2]$ are $C[1,4]$ and $C[3,2]$ paths that intersect, contradicting~\eqref{step1i}. Thus $L,M$ do not have intersection number one, so there exist $a,b$ such that $y_1,a,b,y_2$ are in order in $L$ and
$y_3,b,a,y_4$ are in order in $M$. But then $R\cup L$ and $P\cup M\cup Q\cup R$ are both dicycles, and they disagree
on $\{a,b,y_1\}$, a contradiction. (See Figure~\ref{fig:step2i}.) This proves~\eqref{step2i}.

\bigskip
Consequently, $A[1,3], A[4,2]$ exist and intersect, and therefore $A[1,2],A[4,3]$ exist. Similarly,
$B[3,1], B[2,4]$ exist and intersect, and $B[3,4], B[2,1]$ exist. Thus, each vertex in $Y$ has a neighbour in $V(B)$ and a neighbour 
in $V(A)$, and at least three vertices in $Y$ have a neighbour in $V(C)$ since 
$G$ is 3-weak. Thus the underlying graph $G^-$ contains a $K_{3,3}$ minor, so $G$ is nonplanar.

\begin{step}\label{step3i}
There do not exist distinct vertices $u,v$ in both $A[1,3], A[4,2]$ such that $y_1,u,v,y_3$ are in order in $A[1,3]$ and 
$y_4,v,u,y_2$ are in order in 
$A[4,2]$. Consequently, if $A[1,3], A[4,2]$ are in bubble form, they have intersection number one, and the same for $B[3,1], A[2,4]$.
\end{step}

Suppose that $u,v$ belong to both $A[1,3], A[4,2]$ such that $y_1,u,v,y_3$ are in order in $A[1,3]$ and 
$y_4,v,u,y_2$ are in order in 
$A[4,2]$. Choose $w$ in both $B[3,1], B[2,4]$. Then $A[1,3]\cup B[3,1]$ and $A[4,2]\cup B[2,4]$ are dicycles that disagree on $\{u,v,w\}$, a contradiction. This proves~\eqref{step3i}.

\begin{step}\label{step4i}
There is a choice of the three dipaths $A[1,3], A[4,2], A[4,3]$ such that the intersection of each pair of them is a path and the intersection of all three is null. Similarly there is a choice of $B[3,1], B[2,4], B[2,1]$ with the same property.
\end{step}

If $y_4y_3\in E(G)$ then the claim is true by~\eqref{step3i}, so we assume that $y_4y_3\notin E(G)$.
There are two internally disjoint dipaths $P,Q$ in $G$ from $y_4$ to $y_3$, both of length at least two. By~\eqref{step2i}, the second vertex of
$P$ is not equal to $y_2$, and so it belongs to $V(A)$. Similarly, the penultimate vertex of $P$ belongs to $V(A)$.
Suppose that $P$ has some internal vertex not in $V(A)$. Since $y_1\to A$, $P\cap A$ contains two disjoint subpaths, one from 
$y_4$ to $y_2$, and the other from $y_1$
to $y_3$, contrary to~\eqref{step2i}. Hence all vertices of $P$ belong to $V(A)\cup Y$, and so $y_1,y_2\notin V(P)$ (because $y_1\to A$ and $y_2\from A$, and so neither is an interior vertex of a dipath of $A^+$). 
Thus $P$ is a choice for $A[4,3]$, and so is $Q$.

Let $R$ be a choice of 
$A[1,3]$ and $S$ a choice of $A[4,2]$, and, let us choose $R,S$ with $P\cup Q\cup R\cup S$ minimal. 
Let $R_1$ be the 
minimal path of $R$ from $y_1$ to $V(P\cup Q)$, and we may assume that $R_1$ has ends $y_1$ and $r\in V(P)$. (Possibly $r=y_3$,
but if not then $R_1$ is disjoint from $Q$.) Similarly, let $S_1$ be the minimal subpath of $S$ from $V(P\cup Q)$ to $y_2$, with ends 
$s,y_2$. Thus $s$ belongs to one of $P,Q$.

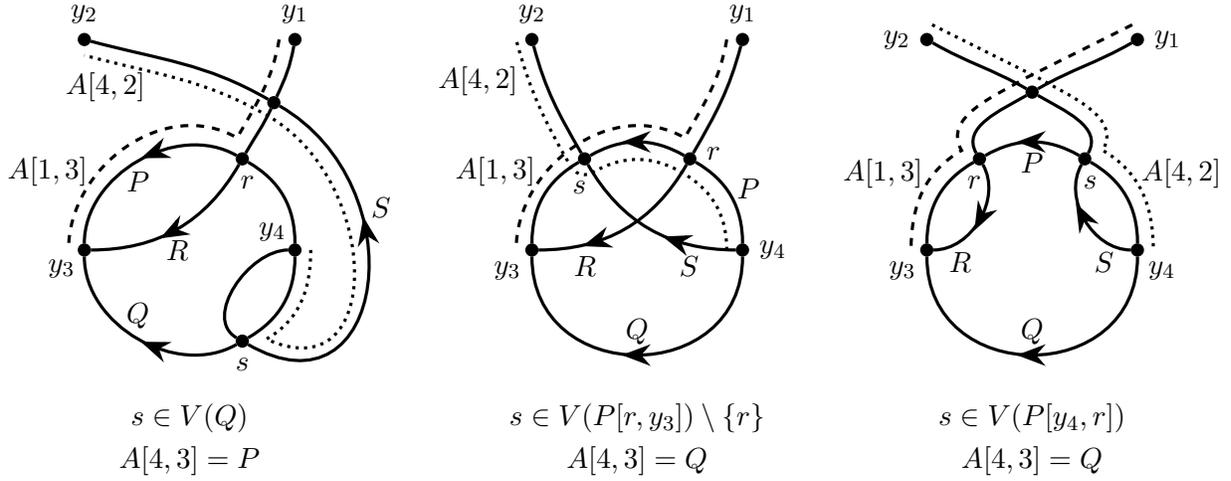
\begin{figure}[htb!]
    \centering
    \begin{tikzpicture}[scale=1.4]
        \vtx[above:$y_2$]{y2}{-1,2};
        \vtx[above:$y_1$]{y1}{1,2};
        \vtx[{[label distance=-3pt]below left:$y_3$}]{y3}{-1,0};
        \vtx[{[label distance=-3pt]above left:$y_4$}]{y4}{1,0};
        \vtx[{[xshift=2pt]below:$r$}]{r}{0.5,0.866};
        \vtx[below:$s$]{s}{0.5,-0.866};
        \vtx{t}{0.8, 1.4};

        \draw[very thick, midarrow=0.6] (y4) to[out=90, in=330] (r) to[out=150, in=90] node[midway, below] {$P$} (y3);
        \draw[very thick, midarrow=0.6] (y4) to[out=270, in=30] (s) to[out=210, in=270] node[midway, above] {$Q$} (y3);

        \draw[very thick, midarrow=0.7] (y1) to[out=260, in=65] (t) to[out=245, in=60] (r) to[out=240, in=0] node[midway, below] {$R$} (y3);
        \draw[very thick, midarrow] (y4) to[out=180, in=150] (s) to[out=330, in=330, looseness=1.7] node[pos=0.7, right] {$S$} (t) to[out=150, in=345] (y2);

        \draw[very thick, dashed] ([shift={(-0.15,0)}]y1.center) to[out=260, in=60] ([shift={(-0.05,0.2)}]r.center) to[out=150, in=90, looseness=1.05] node[pos=0.65, left] {$A[1,3]$} ([shift={(-0.15,0)}]y3.center);

        \draw[very thick, dotted] ([shift={(0.15,0)}]y4.center) to[out=270, in=30] ([shift={(0.23,0.02)}]s.center) to[out=335, in=330, looseness=1.5] ([shift={(-0.05,-0.13)}]t.center) to[out=150, in=345] node[pos=0.9, below] {$A[4,2]$} ([shift={(0,-0.15)}]y2.center);

        \node at (0, -1.6) {$s\in V(Q)$};
        \node at (0, -2) {$A[4,3]=P$};

        \vtx[above:$y_2$]{y2}{3.25,2};
        \vtx[above:$y_1$]{y1}{5.25,2};
        \vtx[{[label distance=-2pt]below left:$y_3$}]{y3}{3.25,0};
        \vtx[right:$y_4$]{y4}{5.25,0};
        \vtx[{[yshift=2pt]right:$r$}]{r}{4.75,0.866};
        \vtx[{[label distance=2pt, xshift=-2pt]below:$s$}]{s}{3.75,0.866};

        \draw[very thick, midarrow] (y4) to[out=90, in=330] node[pos=0.6, right] {$P$} (r) to[out=150, in=30] (s) to[out=210, in=90] (y3);
        \draw[very thick, midarrow] (y4) to[out=270, in=270, looseness=1.6] node[midway, above] {$Q$} (y3);

        \draw[very thick, midarrow=0.8] (y1) to[out=255, in=60] (r) to[out=240, in=0] node[pos=0.75, below] {$R$} (y3);
        \draw[very thick, midarrow=0.2] (y4) to[out=180, in=300] node[pos=0.25, below] {$S$} (s) to[out=120, in=285] (y2);

        \draw[very thick, dashed] ([shift={(-0.15,0)}]y1.center) to[out=260, in=60] ([shift={(-0.05,0.2)}]r.center) to[out=150, in=90, looseness=1.05] node[pos=0.65, left] {$A[1,3]$} ([shift={(-0.15,0)}]y3.center);

        \draw[very thick, dotted] ([shift={(-0.15,0)}]y4.center) to[out=90, in=330] ([shift={(-0.075,-0.13)}]r.center) to[out=150, in=30] ([shift={(0.075,-0.13)}]s.center) to[out=210, in=300] ([shift={(-0.13,-0.075)}]s.center) to[out=120, in=285] node[pos=0.7, left] {$A[4,2]$} ([shift={(-0.15,0)}]y2.center);

        \node at (4.25, -1.6) {$s\in V(P[r,y_3])\setminus\{r\}$};
        \node at (4.25, -2) {$A[4,3]=Q$};

        \vtx[left:$y_2$]{y2}{7,2};
        \vtx[right:$y_1$]{y1}{9,2};
        \vtx[{[label distance=-2pt]below left:$y_3$}]{y3}{7,0};
        \vtx[{[label distance=-2pt]below right:$y_4$}]{y4}{9,0};
        \vtx[{[xshift=2pt]below:$s$}]{s}{8.5,0.866};
        \vtx[{[xshift=-2pt]below:$r$}]{r}{7.5,0.866};
        \vtx{t}{8, 1.5};

        \draw[very thick, midarrow] (y4) to[out=90, in=330] (s) to[out=150, in=30] node[midway, below] {$P$} (r) to[out=210, in=90] (y3);
        \draw[very thick, midarrow] (y4) to[out=270, in=270, looseness=1.6] node[midway, above] {$Q$} (y3);

        \draw[very thick, midarrow=0.8] (y1) to[out=210, in=30] (t) to[out=210, in=120] (r) to[out=300, in=0] node[pos=0.75, below] {$R$} (y3);
        \draw[very thick, midarrow=0.2] (y4) to[out=180, in=240] node[pos=0.25, below] {$S$} (s) to[out=60, in=330] (t) to[out=150, in=330] (y2);

        \draw[very thick, dashed] ([shift={(-0.05,0.14)}]y1.center) -- ([shift={(-0.05,0.14)}]t.center) to[out=210, in=120] ([shift={(-0.17,0.08)}]r.center) to[out=210, in=90] node[pos=0.3, left] {$A[1,3]$} ([shift={(-0.15,0)}]y3.center);

        \draw[very thick, dotted] ([shift={(0.05,0.14)}]y2.center) -- ([shift={(0.05,0.14)}]t.center) to[out=330, in=60] ([shift={(0.17,0.08)}]s.center) to[out=330, in=90] node[pos=0.3, right] {$A[4,2]$} ([shift={(0.15,0)}]y4.center);

        \node at (8, -1.6) {$s\in V(P[y_4,r])$};
        \node at (8, -2) {$A[4,3]=Q$};
    \end{tikzpicture}
    \caption{The three cases in the proof of~\eqref{step4i}.}
    \label{fig:step4i}
\end{figure}

Suppose first that $s\in V(Q)$. By~\eqref{step3i} applied to $R_1\cup P[r,y_3]$ and $Q[y_4,s]\cup S_1$,
it follows that $R_1,S_1$ have intersection number one, and by the minimality of $P\cup Q\cup R\cup S$ it follows that 
$R_1\cap S_1$ is a path. But then the claim holds, taking
\begin{align*}
    A[1,3] &= R_1\cup P[r,y_3], \\
    A[4,2] &= Q[y_4,s]\cup S_1, \\
    A[4,3] &= P.
\end{align*}

Thus we may assume that $s\notin V(Q)$, and so $s\in V(P)$. Next suppose that $s$ belongs to $P[r,y_3]$ and $s\ne r$. Then from~\eqref{step3i} applied to $R_1\cup P[r,y_3]$ and $S$,
it follows that $R_1,S_1$ are disjoint, and so the claim holds, taking
\begin{align*}
    A[1,3] &= R_1\cup P[r,y_3], \\
    A[4,2] &= P[y_4,s]\cup S_1, \\
    A[4,3] &= Q.
\end{align*}

So we may 
assume that $s$ belongs to $P[y_4,r]$.  By~\eqref{step3i} applied to $R_1\cup P[r,y_3]$ and $P[y_4,s]\cup S_1$, we have that $R_1,S_1$ have intersection number one, and so 
by the minimality of $P\cup Q\cup R\cup S$ it follows that
$R_1\cap S_1$ is a path. But then taking
\begin{align*}
    A[1,3] &= R_1\cup P[r,y_3], \\
    A[4,2] &= P[y_4,s]\cup S_1, \\
    A[4,3] &= Q
\end{align*}
again satisfies the claim. (See Figure~\ref{fig:step4i}.) This proves~\eqref{step4i}.

\bigskip
Let $W_1$ be the $Y$-wing obtained from $A$ by adding $Y$, all edges between $Y$ and $V(A)$, and the edge $y_4y_3$ if it exists.
Let $W_2$ be the $Y$-wing obtained from $B$ by adding $Y$, all edges between $Y$ and $V(B)$, and the edge $y_2y_1$ if it exists.
Let $W_3$ be the $Y$-wing such that $W_1,W_2,W_3$ are pairwise internally disjoint and have union $G$.
Then we claim:
\begin{itemize}
\item $y_1$ is a source of $W_1\cup W_3$, and $y_2$ is a sink of $W_1\cup W_3$, and $y_3$ is a source of $W_2\cup W_3$, and $y_4$ is 
a sink of $W_2\cup W_3$. To see this, it is clear that $y_1$ is a source of $W_1$, but we must check that it is a source of $W_3$, and similarly 
we must check the other three statements for $W_3$. For each $i>2$, we have $y_1,y_3\to D_i$ and $y_2,y_4\from D_i$, so 
we only need to check the edges of $W_3$ with both ends in $Y$. Let $yy'\in E(G)$, where $y,y'\in Y$. We must show that either 
$(y,y') = (y_4,y_3)$
(in which case this edge is included in $W_1$, not $W_3$), or $(y,y') = (y_2,y_1)$ (similarly), or $y\ne y_2,y_4$ and $y'\ne y_1,y_3$. Suppose that $y\in \{y_2,y_4\}$.
By~\eqref{step2i} and since $A[1,3], B[3,1]$ exist, it follows that $y'\notin \{y_2,y_4\}$. But there are no edges $y_4y_1$ or $y_2y_3$, by hypothesis, so we may assume that $y\notin \{y_2,y_4\}$, and similarly
$y'\notin \{y_1,y_3\}$, as required. 
\item There is a dipath of $W_1$ from $y_1$ to $y_2$, and there is a dipath of $W_2$ from $y_3$ to $y_4$. This is true since $A[1,2]$
and $B[3,4]$ exist. 
\item The digraph $G_1$ obtained
from $W_1$ by adding a new vertex $v_1$ and the edges
\[ v_1y_1,y_2v_1, y_3v_1,v_1y_4, y_2y_1 \]
is 1-strong and weightable. This is true by~\ref{butterfly} since this digraph is a butterfly minor of $G$, obtained from $W_1\cup B[3,1]\cup B[2,4]\cup B[2,1]$ by contracting singular edges, where
$B[3,1], B[2,4],\allowbreak B[2,1]$ are as in~\eqref{step4i}.
\item The digraph $G_2$
obtained from $W_2$ by adding a new vertex $v_2$ and the edges
\[ y_1v_2,v_2y_2, v_2y_3, y_4v_2, y_4y_3 \]
is 1-strong and weightable. 
This is true for the same reason as the previous bullet.
\item The digraph $G_0$ obtained from $W_3$ by adding the edges $y_1y_3,y_3y_1$ and making the identifications $y_1=y_2$ and $y_3=y_4$
is 1-strong and weightable. This is true by~\ref{butterfly} since this digraph is a butterfly minor of $G$, obtained by contracting singular edges of 
$W_3\cup A[1,3]\cup A[4,3]\cup B[3,1]\cup B[2,1]$, where $A[1,3], A[4,3], B[3,1], B[2,1]$ are as in~\eqref{step4i}.
\end{itemize}
This proves~\ref{secondoutcome}.\end{proof}

Finally, we handle the third outcome:
\begin{thm}\label{thirdoutcome}
Let $y_1,\dots, y_4$ be distinct vertices of a 2-strong, 3-weak, weightable digraph $G$, and let $Y=\{y_1,\dots, y_4\}$. Suppose that $G\setminus Y$ has at least two weak components $D_i$ such that $y_1, y_3\to D_i$ and $y_2,y_4\from D_i$ for $i=1,2$. Then $G$ is nonplanar, and
$G$ can be built from two smaller weightable digraphs by the construction of~\ref{thirdcon}.
\end{thm}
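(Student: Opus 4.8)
The plan is to apply~\ref{thirdcon} with $W_1$ the $Y$-wing consisting of $A:=D_1$, the set $Y$, and all edges of $G$ between $Y$ and $V(A)$ (so $W_1$ has no edge with both ends in $Y$), and $W_2:=G\setminus V(A)$, the complementary $Y$-wing, which contains $B:=D_2$, all the other weak components of $G\setminus Y$, and every edge with both ends in $Y$. Since $y_1,y_3\to A$ and $y_2,y_4\from A$, the vertices $y_1,y_3$ are sources and $y_2,y_4$ sinks of $W_1$. As in~\ref{firstoutcome} and~\ref{secondoutcome}, the first task is to produce the dipaths the construction needs. Writing $D[i,j]$ for a dipath from $y_i$ to $y_j$ with all internal vertices in $V(D)$, the fact that $G$ is $2$-strong gives (for each of $D=A,B$) the four dipaths $D[1,2],D[1,4],D[3,2],D[3,4]$ --- take two internally disjoint dipaths from an interior vertex of $D$ to $Y$, which must end in $\{y_2,y_4\}$ since $y_1,y_3\to D$, and two from $\{y_1,y_3\}$ to that vertex --- and likewise there are dipaths from $y_2$ to $y_1$ and from $y_4$ to $y_3$ lying in $W_2$ (a dipath from $y_2$ to $y_1$ in $G\setminus y_3$ has no internal vertex in $V(A)\cup V(B)$, since the only edges into $A$ or $B$ issue from $y_1,y_3$, and $y_1$ is its last vertex). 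This settles the first two bullets of~\ref{thirdcon}, and nonplanarity of $G$ follows as in~\ref{secondoutcome}: since $G$ is $3$-weak each of $A,B$ is joined to at least three vertices of $Y$, so contracting $V(A)$ and $V(B)$ produces a $K_{3,3}$-minor in $G^-$.

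It remains to check that $G_1$ and $G_2$ (formed from $W_1,W_2$ as in~\ref{thirdcon}) are $1$-strong and weightable. Strongness is routine: in $G_1$ the added edges $y_2y_1,y_4y_3$, together with $A[1,4]$, $A[3,2]$, and the digon $y_1y_3,y_3y_1$, close $A^+$ up into a strongly connected digraph, and similarly for $G_2$. Weightability is obtained by exhibiting each of $G_1,G_2$ as a butterfly minor of $G$ and appealing to~\ref{butterfly}. For $G_1$ it is convenient to realise instead the contraction $G_1/\{y_2y_1,y_4y_3\}$ (legitimate since $y_2y_1,y_4y_3$ are singular in $G_1$, as $y_2,y_4$ are sinks of $W_1$): take $W_1$ together with a dipath from $y_2$ to $y_1$ and a dipath from $y_4$ to $y_3$ through the components other than $A$, and the dipaths $B[1,4]$, $B[3,2]$; contracting the first two identifies $\{y_1,y_2\}$ and $\{y_3,y_4\}$, and $B[1,4],B[3,2]$ then become the digon. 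For $G_2$ one dually takes $W_2$ together with a carefully chosen quadruple $A[1,4],A[1,2],A[3,2],A[3,4]$ whose union, after the portions in $V(A)$ are contracted, collapses onto the two-vertex gadget $v_1,v_2$, the two ``crossing'' paths $A[1,2],A[3,4]$ supplying the digon $v_1v_2,v_2v_1$ and the direct edge $y_3y_4$ coming from the $B$- and $Z$-side.

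The main obstacle is precisely to choose these auxiliary dipaths so that the prescribed unions do singularly contract to $G_1$, $G_2$: the dipaths must be pairwise internally disjoint (or meet in exactly one subpath each) and must avoid the unwanted vertices of $Y$. I expect this to need a multi-case analysis in the spirit of~\eqref{step4i} in the proof of~\ref{secondoutcome}, driven by~\ref{triple} and the bubble lemma~\ref{bubble}: whenever two candidate paths overlap in a forbidden way, one completes them --- using the $A$-dipaths, the $B$-dipaths, and the dipaths through the remaining components found above --- into two dicycles of $G$ that disagree on a triple of vertices, contradicting that $G$ is weightable. Once the dipaths have been pinned down, each of the five bullets of~\ref{thirdcon} is verified exactly as in~\ref{secondoutcome}, and the theorem follows.
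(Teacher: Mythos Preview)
Your plan diverges from the paper's at the very first step, and this creates a real obstacle later.

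You take $W_1$ to be the $Y$-wing on $A=D_1$ alone and $W_2=G\setminus V(A)$; the paper instead takes $W_1$ to be the $Y$-wing on $A\cup B$ and $W_2=C:=G\setminus V(A\cup B)$. The difference matters when you try to realise $G_2$ as a butterfly minor. In~\ref{thirdcon}, $G_2$ is $W_2$ together with the two new vertices $v_1,v_2$ and \emph{seven} new edges, one of which is $y_3y_4$. The bubble $A[1,2]\cup A[3,4]$ (intersection number two, as the paper proves) contracts to $v_1,v_2$ and exactly six of those edges; the seventh, $y_3y_4$, has to come from elsewhere. In the paper's set-up it comes from contracting $B[3,4]$, which is available because $B$ sits in the paper's $W_1$. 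In your set-up $B\subseteq W_2$, so $B[3,4]$ is a path \emph{inside} $W_2$ and cannot be contracted to manufacture an additional edge of $G_2$. Your sentence ``the direct edge $y_3y_4$ coming from the $B$- and $Z$-side'' does not resolve this (and $Z$ is undefined). One can try to argue separately that $G_2'+y_3y_4$ is weightable whenever $G_2':=G_2\setminus y_3y_4$ is, but this is not automatic---adding a directed edge to a weightable digraph need not preserve weightability---and would require a further case analysis you have not supplied.

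Two further points. Your nonplanarity argument is incomplete: contracting $A$ and $B$ yields only two high-degree vertices attached to $Y$, so at most a $K_{2,4}$ minor, not $K_{3,3}$. The paper obtains the third branch vertex by proving that every $C[4,1]$ meets every $C[2,3]$, forcing a component of $G\setminus Y$ outside $A\cup B$ to see all of $y_1,\dots,y_4$. And with your split, $|V(G_2)|=|V(G)|-|V(A)|+2$, which equals $|V(G)|$ when $|V(A)|=2$; the paper's split gives $|V(G_2)|=|V(G)|-|V(A)|-|V(B)|+2\le |V(G)|-2$, so both pieces are genuinely smaller.
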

\begin{proof}
Let $A=D_1$, $B=D_2$, and $C=G\setminus V(A\cup B)$. 
For distinct $y_i,y_j\in Y$, $A[i,j]$ means a dipath from $y_i$ to $y_j$ with all internal vertices in $V(A)$, and $B[i,j]$ is defined analogously. By $C[i,j]$ we mean a dipath of $C$ from $y_i$ to $y_j$ with no internal vertex in $Y$.

\begin{step}\label{step1j}
We may assume that there are choices of $C[2,1], C[4,3]$ that are vertex-disjoint.
\end{step}

Since $G$ is 2-strong, there are two vertex-disjoint dipaths from $\{y_2,y_4\}$ to $\{y_1,y_3\}$, and by exchanging $y_1,y_3$ if necessary,
we may assume that there are disjoint dipaths from $y_2$ to $y_1$ and from $y_4$ to $y_3$. Since neither path has any internal vertex 
in $Y$, and $y_1,y_3\to A\cup B$, it follows that both paths are paths of $C$. This proves~\eqref{step1j}.

\bigskip
Since $V(A)\ne \emptyset$ and $G$ is 2-strong, it follows as usual that $A[1,2],A[1,4],A[3,2],A[3,4]$ exist, and the same for $B$.

\begin{step}\label{step2j}
$A[1,4], A[3,2]$ are vertex-disjoint for every choice of $A[1,4], A[3,2]$, and the same for $B$.
\end{step}

Suppose there is a vertex $c$ in both  $A[1,4], A[3,2]$. Then, choosing $C[2,1], C[4,3]$ as in~\eqref{step1j},
\begin{align*}
&A[1,4]\cup C[4,3]\cup B[3,2]\cup C[2,1],\\
&A[3,2]\cup C[2,1]\cup B[1,4]\cup C[4,3]
\end{align*}
are dicycles that disagree on $\{y_1,y_3,c\}$, contradicting~\ref{triple}. This proves~\eqref{step2j}.

\begin{step}\label{step3j}
There are choices of $A[1,2], A[3,4]$ that intersect, and the same for $B$.
\end{step}

This is immediate from~\eqref{step2j} and~\ref{pathsmeet}.

\begin{step}\label{step4j}
$C[4,1], C[2,3]$ intersect for every choice of $C[4,1], C[2,3]$, and hence $G$ is nonplanar.
\end{step}

Otherwise, choosing $A[1,2]$ and $A[3,4]$ as in~\eqref{step3j} and $c$ in both paths, 
\begin{align*}
&A[3,4]\cup C[4,1]\cup B[1,2]\cup C[2,3],\\
&A[1,2]\cup C[2,3]\cup B[3,4]\cup C[4,1]
\end{align*}
are dicycles that disagree on $\{y_1,y_3,c\}$, a contradiction. This proves the first claim. Since the common vertices of 
$C[4,1], C[2,3]$ are not in $Y$, the internal vertices of $C[4,1], C[2,3]$ all belong to a weak component $D$ of $G\setminus Y$ and
each of $y_1,\dots, y_4$ has a neighbour in $V(D)$. Since each of $y_1,\dots, y_4$ also has a neighbour in $V(A)$ and a neighbour in $V(B)$, the underlying graph
$G^-$ has a $K_{3,3}$ minor, so $G$ is nonplanar. This proves~\eqref{step4j}.

\begin{step}\label{step5j}
There is a choice of $A[1,2]$ and $A[3,4]$ that make a bubble, and the same for $B$.
\end{step}

By~\eqref{step3j}, we may choose $P=A[1,2]$ and $Q=A[3,4]$ in bubble form. If they have intersection number one, choose $v\in P\cap Q$. Then $P[y_1,v]\cup Q[v,y_4]$ and $Q[y_3,v]\cup P[v,y_2]$ are $A[1,4],A[3,2]$ paths that intersect, contradicting~\eqref{step2j}. So $P\cap Q$ consists
of at least two disjoint paths $R_1,\dots, R_k$. 
Suppose that $k\ge 3$, and choose $p_i\in V(R_i)$ for $i = 1,2,3$. Then, by~\ref{bubble}, the dicycles 
$P\cup C[2,1]$ and $Q\cup C[4,3]$ disagree on $\{p_1,p_2,p_3\}$, a contradiction. Thus $k=2$, and this proves~\eqref{step5j}.

\begin{step}\label{step6j}
$C[2,1], C[4,3]$ are vertex-disjoint for every choice of $C[2,1], C[4,3]$.
\end{step}

Suppose some vertex $c$ belongs to both $C[2,1], C[4,3]$, and choose $A[1,2],A[3,4]$  as in\eqref{step5j}. Choose $p_1,p_2$ in both
$A[1,2],A[3,4]$ such that $y_1,p_1,p_2,y_2$ are in order in $A[1,2]$ and $y_3,p_2,p_1,y_4$ are in order in $A[3,4]$. Then 
$A[1,2]\cup C[2,1]$, $A[3,4]\cup C[4,3]$ are dicycles that disagree on $\{p_1,p_2,c\}$, a contradiction. This proves~\eqref{step6j}.

\begin{step}\label{step7j}
There is a choice of $C[4,1], C[2,3]$ that make a bubble.
\end{step}

From~\eqref{step4j} we may choose $C[4,1], C[2,3]$ in bubble form. By~\eqref{step6j}, their intersection is not one path, so it is the disjoint union of at 
least two. If it is the disjoint union of at least three paths, choose vertices $c_1,c_2,c_3$ from three of these paths; 
then $C[4,1]\cup A[1,4]$, $C[2,3]\cup A[3,2]$ disagree on $\{c_1,c_2,c_3\}$, a contradiction. So $C[4,1], C[2,3]$ make a bubble. This proves~\eqref{step7j}. 

\bigskip

Let $W_1$ be the $Y$-wing obtained from $A\cup B$ by adding $Y$ and all edges between $V(A\cup B)$ and $Y$, and let $W_2=C$. Thus,
$W_1,W_2$ are internally disjoint $Y$-wings with union $G$. 
We claim:
\begin{itemize}
\item $y_1,y_3$ are sources of $W_1$ and $y_2,y_4$ are sinks of $W_1$. This is true because $y_1,y_3\to A\cup B$ and $y_2,y_4\from A\cup B$.
\item There are dipaths of $W_1$ from $y_1$ to $y_4$ and from $y_3$ to $y_2$, and
there are dipaths of $W_2$ from $y_2$ to $y_1$ and from $y_4$ to $y_3$. This is true because $A[1,4], A[3,2], C[2,1],C[4,3]$ exist.
\item The digraph $G_1$  obtained from $W_1$ by adding the edges $y_2y_1,y_4y_3, y_1y_3,y_3y_1$ is 1-strong and weightable. This is true by~\ref{butterfly} since
$G_1$ is a butterfly minor of $G$, obtained by contracting singular edges of $W_1\cup C[4,1]\cup C[2,3]$, where $C[4,1], C[2,3]$ are chosen as in~\eqref{step7j}.
\item The digraph $G_2$ obtained from $W_2$ by adding two new vertices $v_1,v_2$ and the edges
\[y_1v_1,y_3v_2, v_1y_4, v_2y_3, y_3y_4\] 
is 1-strong and weightable. This is true by~\ref{butterfly} since $G_2$ is a butterfly minor of $G$, obtained by contracting singular edges of $W_2\cup A[1,2]\cup A[3,4]\cup B[3,4]$, where $A[1,2], A[3,4]$ are as in~\eqref{step5j}.
\end{itemize}
This proves~\ref{thirdoutcome}.\end{proof}

We deduce:
\begin{thm}\label{diplanar}
Every 2-strong 3-weak weightable planar digraph is diplanar.
\end{thm}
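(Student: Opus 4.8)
The plan is to prove this by contradiction, essentially assembling the decomposition theorem~\ref{unevendecomp} with the three outcome theorems~\ref{firstoutcome}, \ref{secondoutcome} and~\ref{thirdoutcome}, each of which already concludes that the relevant digraph is nonplanar. So I would suppose that $G$ is a $2$-strong, $3$-weak, weightable planar digraph that is \emph{not} diplanar, and derive a contradiction. Since $G$ is weightable it is odd-weightable by~\ref{get01}, so the hypotheses of~\ref{unevendecomp} are satisfied; that theorem leaves exactly four possibilities, namely that $G$ is isomorphic to $F_7$, or that there is a set $Y\subseteq V(G)$ realising one of the three cases of~\ref{unevendecomp}.

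Suppose first that one of the three $Y$-cases holds. Case~\ref{case1} of~\ref{unevendecomp} gives precisely the hypotheses of~\ref{firstoutcome} (the extra clauses $|V(D_i)|\ge 2$ being harmless), so by~\ref{firstoutcome} the digraph $G$ is nonplanar; likewise case~\ref{case2} gives the hypotheses of~\ref{secondoutcome}, and case~\ref{case3} gives those of~\ref{thirdoutcome}, so in each case $G$ is nonplanar, contrary to assumption. It then remains to rule out $G=F_7$, and for this I would give a self-contained argument that $F_7$ is nonplanar. Its underlying graph is the circulant graph on $\mathbb{Z}_7$ with connection set $\{1,2\}$, which has $7$ vertices and $14$ edges; if it were planar, Euler's formula would give $9$ faces, and since this graph is $2$-connected every face boundary is a cycle, so the face lengths would sum to $2\cdot 14=28$, forcing eight of the faces to be triangles. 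But the only triangles of this graph are the seven sets $\{i,i+1,i+2\}$ for $i\in\mathbb{Z}_7$, and each of them bounds at most one face, a contradiction. Hence $G=F_7$ is impossible, which completes the contradiction and proves the theorem.

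The one step that needs genuinely new reasoning is the last one: confirming that the exceptional digraph $F_7$, which is the unique collapse of the (nonplanar) Heawood graph, is itself nonplanar. This is the main, and really the only, obstacle, since nonplanarity of a bisource does not formally transfer to its collapse, so one cannot simply invoke that the Heawood graph is nonplanar; a direct verification such as the triangle-counting argument above — or, alternatively, exhibiting a $K_{3,3}$-subdivision in the circulant graph $C_7(\{1,2\})$ — is required. Everything else in the proof is a routine combination of results already established in the previous sections.
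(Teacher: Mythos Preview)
Your proof is correct and follows essentially the same route as the paper: assume $G$ is not diplanar, invoke~\ref{unevendecomp}, and use~\ref{firstoutcome}, \ref{secondoutcome}, \ref{thirdoutcome} to deduce nonplanarity in each of the three $Y$-cases.

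The only point of divergence is how the exceptional case $G=F_7$ is disposed of. The paper simply notes that $F_7$ is not weightable (one checks, e.g.\ via~\ref{triple}, that $\{5,6,7\}$ is a bad triple), so a weightable $G$ cannot equal $F_7$. You instead argue that $F_7$ is nonplanar via an Euler-formula triangle count on $C_7(\{1,2\})$, which is equally valid and fits the contrapositive framing nicely. Both arguments are short; the paper's is one line once one knows $F_7$ is not weightable, while yours is self-contained and does not require checking anything about weightings of $F_7$.
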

\begin{proof}
We assume that $G$ is  2-strong, 3-weak, weightable and not diplanar, and we need to show that $G$ is not planar. Since 
$G$ is weightable, $G\ne F_7$, and it follows from~\ref{unevendecomp} that $G$ satisfies one of the three
outcomes of that theorem, and so $G$ is nonplanar by \ref{firstoutcome}, \ref{secondoutcome}, or~\ref{thirdoutcome}. This proves~\ref{diplanar}.\end{proof}

And our second main theorem:
\begin{thm}\label{mainthm2}
Every 1-strong weightable digraph can be built by means of the  constructions of Sections~\ref{sec:easy} and~\ref{sec:nonplanarcon} from 1-strong diplanar digraphs.
\end{thm}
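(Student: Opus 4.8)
The plan is to prove~\ref{mainthm2} by induction on $|V(G)|+|E(G)|$, assembling reductions and constructions that are already in hand. As usual we may assume $G$ is 1-strong and simple, and if $G$ is already diplanar there is nothing to prove, so suppose it is not. If $G$ is not 2-strong, then $|V(G)|\ge 3$ and some vertex $c$ has $G\setminus c$ not strongly connected; letting $B$ be a sink strong component of $G\setminus c$ and $A=V(G)\setminus(B\cup\{c\})$ gives a partition with no edge from $B$ to $A$, so~\ref{1reduce} exhibits $G$ as built by the construction of~\ref{1reduce} from two strictly smaller 1-strong weightable digraphs $G_1,G_2$, and the inductive hypothesis applied to $G_1,G_2$ finishes this case. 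If $G$ is 2-strong but not 3-weak, then $|V(G)|\ge 4$ --- a simple 2-strong digraph on three vertices is the complete digraph on three vertices, which has two dicycles disagreeing on its vertex set and hence (by~\ref{triple}) is not weightable, and a 2-strong digraph on at most two vertices is diplanar, neither of which can occur here --- so $G^-$ has a 2-element cut giving a partition of the kind in~\ref{2weakreduce}, and~\ref{2weakreduce} again writes $G$ as built from two strictly smaller 1-strong weightable digraphs, to which induction applies.

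So we may assume $G$ is 2-strong and 3-weak. If $G$ is planar then by~\ref{diplanar} it is diplanar, and we are done, since $G$ is then itself one of the permitted building blocks. If $G$ is not planar, then it is not diplanar; being weightable it is odd-weightable and not isomorphic to $F_7$ (the latter is the fact already used in the proof of~\ref{diplanar}), so~\ref{unevendecomp} places $G$ into one of the three outcomes~\ref{case1}, \ref{case2}, \ref{case3}. In outcome~\ref{case1} I would apply~\ref{firstoutcome} to realise $G$ by the construction of~\ref{firstcon} from two smaller 1-strong weightable digraphs; in outcome~\ref{case2}, apply~\ref{secondoutcome} and the construction of~\ref{secondcon} (from three smaller 1-strong weightable digraphs); in outcome~\ref{case3}, apply~\ref{thirdoutcome} and the construction of~\ref{thirdcon} (from two smaller ones). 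In every case the constituent digraphs are 1-strong, weightable, and strictly smaller than $G$, so the inductive hypothesis applies to each, and therefore $G$ is built from 1-strong diplanar digraphs by the constructions of Sections~\ref{sec:easy} and~\ref{sec:nonplanarcon}, completing the induction.

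I do not expect a substantive obstacle: each of the five constructions invoked was already proved to preserve weightability (\ref{1reduce}, \ref{2weakreduce}, \ref{firstcon}, \ref{secondcon}, \ref{thirdcon}), and each case above was already proved to be the reverse of one of them (\ref{firstoutcome}, \ref{secondoutcome} and~\ref{thirdoutcome}, together with~\ref{diplanar} for the planar case), so the argument is essentially bookkeeping. The one point I would take care to spell out is that the recursion terminates precisely at the 1-strong diplanar digraphs: a 1-strong weightable digraph that is not diplanar is either not 2-strong, or 2-strong but not 3-weak, or 2-strong, 3-weak and --- by~\ref{diplanar} --- not planar, and in each of these three situations one of the decompositions above strictly reduces it; hence the only digraphs the recursion cannot reduce are the 1-strong diplanar ones, as the theorem requires. (The building blocks produced are in fact 1-strong diplanar \emph{weightable} digraphs, which is stronger than asked for.)
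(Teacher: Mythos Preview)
Your proposal is correct and follows essentially the same approach as the paper: induct on the size of $G$, reduce via \ref{1reduce} and \ref{2weakreduce} to the 2-strong, 3-weak case, invoke \ref{diplanar} if $G$ is planar, and otherwise apply \ref{unevendecomp} together with \ref{firstoutcome}, \ref{secondoutcome}, \ref{thirdoutcome}. You spell out a few small-case checks (e.g.\ ruling out $|V(G)|\le 3$ in the non-3-weak case) that the paper's proof leaves implicit, but the structure is the same.
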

\begin{proof}
Let $G$ be a weightable digraph; we prove that $G$ can be so constructed by induction on $|V(G)|$. We can assume that $G$ is simple.
We may assume that $G$ is 2-strong and 3-weak, because otherwise
it can be built from smaller weightable digraphs by the constructions of Sections~\ref{sec:easy}. If $G$ is planar, then it is
diplanar by~\ref{diplanar} and we are done. Otherwise, by~\ref{unevendecomp}
and \ref{firstoutcome}, \ref{secondoutcome}, and~\ref{thirdoutcome}, $G$ can be built by the constructions of Section~\ref{sec:nonplanarcon} from smaller 1-strong digraphs, and the result follows from the inductive hypothesis. This proves~\ref{mainthm2}.\end{proof}

Finally, let us derive an algorithm to test whether a digraph $G$ is weightable. Let $n=|V(G)|+|E(G)|$; we will give an algorithm with running time
$O(n^6)$. Given $G$, we can search for one of the decompositions listed in \ref{summary} (or deduce that $G$ is circular)
in time $O(n^5)$. The slowest case is testing for the second and third nonplanar decompositions, which can be done by testing each set of 
four vertices for the necessary properties to be the set $Y$ (this test can be done in linear time in $E$, for a total running time of $O(n^5)$).
If we find a decomposition, continue recursively, until either we have constructed $G$ from circular digraphs (and it is therefore weightable),
or one of the parts does not admit any further decomposition and yet is not circular (when $G$ is not weightable).
Then the induction adds one more factor of $n$ to the total running time, since in the worst case every step of the decomposition splits 
the graph into one piece of size $n-c_1$ and one or two pieces of size $c_2$.

\section*{Acknowledgements}
Thanks to Stephen Bartell, Maria Chudnovsky, Julien Codsi, Tung Nguyen, Alex Scott and Raphael Steiner, who at various times
discussed with us the question answered in this paper.

\end{document}